\declaretheorem[numberwithin=section]{corollary}
\declaretheorem[sibling=corollary]{lemma}
\declaretheorem[sibling=corollary]{proposition}
\declaretheorem[sibling=corollary]{theorem}
\theoremstyle{definition}
\declaretheorem[sibling=corollary]{definition}
\theoremstyle{remark}
\declaretheorem{remark}
\title{On the image of the Galois representation associated to a non-CM Hida family}
\author{Jaclyn Lang\thanks{The author is supported by NSF grant DGE-1144087.}\\
\textbf{Keywords}: Galois representation; Galois deformation; Hida family\\
\textbf{2010 Mathematics Subject Classification Codes}: 11F80; 11F85; 11F11}
\date{}
\newcommand{\leftexp}[2]{{\vphantom{#2}}^{#1}{#2}}
\newcommand{\Aa}{\mathfrak{a}}
\newcommand{\bb}{\mathfrak{b}}
\newcommand{\bchi}{\bar{\chi}}
\newcommand{\bdelta}{\bar{\delta}}
\newcommand{\bbeta}{\bar{\eta}}
\newcommand{\bgamma}{\bar{\gamma}}
\newcommand{\blambda}{\bar{\lambda}}
\newcommand{\bpi}{\bar{\pi}}
\newcommand{\brho}{\bar{\rho}}
\newcommand{\bsigma}{\bar{\sigma}}
\newcommand{\bvarepsilon}{\bar{\varepsilon}}
\newcommand{\bvarphi}{\bar{\varphi}}
\newcommand{\bzeta}{\bar{\zeta}}
\newcommand{\cc}{\mathfrak{c}}
\newcommand{\cC}{\mathcal{C}}
\newcommand{\E}{\mathbb{E}}
\newcommand{\F}{\mathbb{F}}
\newcommand{\G}{\mathbb{G}}
\newcommand{\cG}{\mathcal{G}}
\newcommand{\h}{\mathcal{H}}
\newcommand{\hh}{\textbf{h}}
\newcommand{\I}{\mathbb{I}}
\newcommand{\J}{\mathbb{J}}
\newcommand{\cK}{\mathcal{K}}
\newcommand{\aL}{\mathcal{L}}
\newcommand{\M}{\mathcal{M}}
\newcommand{\m}{\mathfrak{m}}
\newcommand{\n}{\mathfrak{n}}
\newcommand{\OK}{\ensuremath{\mathcal{O}}}
\newcommand{\onto}{\twoheadrightarrow}
\newcommand{\p}{\mathfrak{p}}
\newcommand{\Pp}{\mathfrak{P}}
\newcommand{\Q}{\mathbb{Q}}
\newcommand{\Qq}{\mathcal{Q}}
\newcommand{\q}{\mathfrak{q}}
\newcommand{\R}{\mathbb{R}}
\newcommand{\Ss}{\mathbb{S}}
\newcommand{\Sl}{\mathfrak{sl}}
\newcommand{\uu}{\mathfrak{u}}
\newcommand{\vv}{\mathfrak{v}}
\newcommand{\X}{\mathfrak{X}}
\newcommand{\Z}{\mathbb{Z}}
\DeclareMathOperator{\ab}{ab}
\DeclareMathOperator{\Ad}{Ad}
\DeclareMathOperator{\Aut}{Aut}
\DeclareMathOperator{\Frob}{Frob}
\DeclareMathOperator{\Gal}{Gal}
\DeclareMathOperator{\GL}{GL}
\DeclareMathOperator{\GSp}{GSp}
\DeclareMathOperator{\Hom}{Hom}
\DeclareMathOperator{\im}{Im}
\DeclareMathOperator{\Ind}{Ind}
\DeclareMathOperator{\Ob}{Ob}
\DeclareMathOperator{\ord}{ord}
\DeclareMathOperator{\PSL}{PSL}
\DeclareMathOperator{\SL}{SL}
\DeclareMathOperator{\Spec}{Spec}
\DeclareMathOperator{\tr}{tr}
\DeclareMathOperator{\univ}{univ}
\begin{document}

\maketitle

\begin{abstract} Fix\let\thefootnote\relax\footnote{UCLA Mathematics Department, Box 951555, Los Angeles, CA 90095-1555, USA, jaclynlang@math.ucla.edu \\
              \indent \,\,\, Tel.: (303) 587-4174\\
              \indent \,\,\, Fax: (310) 206-6673} a prime $p > 2$.  Let $\rho : \Gal(\overline{\Q}/\Q) \to \GL_2(\I)$ be the Galois representation coming from a non-CM irreducible component $\I$ of Hida's $p$-ordinary Hecke algebra.  Assume the residual representation $\brho$ is absolutely irreducible.  Under a minor technical condition we identify a subring $\I_0$ of $\I$ containing $\Z_p[[T]]$ such that the image of $\rho$ is large with respect to $\I_0$.  That is, $\im \rho$ contains $\ker(\SL_2(\I_0) \to \SL_2(\I_0/\Aa))$ for some non-zero $\I_0$-ideal $\Aa$.  This paper builds on recent work of Hida who showed that the image of such a Galois representation is large with respect to $\Z_p[[T]]$.  Our result is an $\I$-adic analogue of the description of the image of the Galois representation attached to a non-CM classical modular form obtained by Ribet and Momose in the 1980s.
\end{abstract}

\section{Introduction}
A Hida family $F$ that is an eigenform and coefficients in a domain $\I$ has an associated Galois representation $\rho_F : \Gal(\overline{\Q}/\Q) \to \GL_2(Q(\I))$, where $Q(\I)$ is the field of fractions of $\I$.  A fundamental problem is to understand the image of such a representation.  One expects the image to be ``large" in an appropriate sense, so long as $F$ does not have any extra symmetries; that is, as long as $F$ does not have CM.  (In the CM case there is a non-trivial character $\eta$ such that $\rho_F \cong \rho_F \otimes \eta$.  This forces the image of $\rho_F$ to be ``small".)  This notion of ``largeness" can be defined relative to any subring $\I_0$ of $\I$, and one can then ask if $\im \rho_F$ is large with respect to $\I_0$.  Even when $F$ does not have CM it might happen that there is an automorphism $\sigma$ of $\I$ and a non-trivial character $\eta$ such that $\rho_F^\sigma \cong \rho_F \otimes \eta$.  Such automorphisms, called conjugate self-twists of $F$, can be thought of as a weak symmetries of $F$.  In this paper we explain how conjugate self-twists constrict the image of $\rho_F$.  In particular, let $\I_0$ be the subring of $\I$ fixed by all conjugate self-twists of $F$.  Our main result is that $\im \rho_F$ is ``large" with respect to $\I_0$.

The study of the image of the Galois representation attached to a modular form, and showing that it is large in the absence of CM, was first carried out by Serre \cite{Serre} and Swinnerton-Dyer \cite{SD} in the early 1970s.  They studied the Galois representation attached to a modular form of level one with integral coefficients.  In the 1980s, Ribet \cite{R80}, \cite{R85} and Momose \cite{Mo} generalized the work of Serre and Swinnerton-Dyer to cover all Galois representations coming from classical modular forms.  Ribet's work dealt with the weight two case, and Momose proved the general case.  The main theorem in this paper is an analogue of their results in the $\I$-adic setting.  In fact, their work is a key input for our proof.  

Shortly after Hida constructed the representations $\rho_F$, Mazur and Wiles \cite{MW} showed that if $\I = \Z_p[[T]]$ and the image of the residual representation $\brho_F$ contains $\SL_2(\F_p)$ then $\im \rho_F$ contains $\SL_2(\Z_p[[T]])$.  Under the assumptions that $\I$ is a power series ring in one variable and the image of the residual representation $\brho_F$ contains $\SL_2(\F_p)$, our main result was proved by Fischman \cite{F}.  Fischman's work is the only previous work that considers the effect of conjugate self-twists on $\im \rho_F$.  Hida has shown \cite{H} under some technical hypotheses that if $F$ does not have CM then $\im \rho_F$ is large with respect to the ring $\Z_p[[T]]$, even when $\I \supsetneq \Z_p[[T]]$.  The methods he developed play an important role in this paper.  The local behavior of $\rho_F$ was studied by Zhao \cite{Zhao}.  He showed that $\rho_F|_{D_p}$ is indecomposable, a result that we make use of in this paper.  Finally, Hida and Tilouine have some work showing that certain $\GSp_4$-representations associated to Siegel modular forms have large image \cite{HT}. 

Our result is the first to describe the effect of conjugate self-twists on the image of $\rho_F$ without any assumptions on $\I$ and without assuming that the image of $\brho_F$ contains $\SL_2(\F_p)$.  We do need an assumption on $\brho_F$, namely that $\brho_F$ is absolutely irreducible and another small technical condition, but this is much weaker than assuming $\im \brho_F \supseteq \SL_2(\F_p)$.

\textit{Acknowledgements.}  I am grateful to my advisor, Haruzo Hida, for suggesting this problem to me and for his endless patience and insights as I worked on it.  I would like to thank Ashay Burungale for many helpful and encouraging conversations about this project.  Richard Pink and Jacques Tilouine provided me with insights into the larger context of this problem.  Finally, I am grateful for the financial support provided by UCLA and National Science Foundation that allowed me to complete this work.

\section{Main theorems and structure of paper}
We begin by fixing notation that will be in place throughout the paper.  Let $p > 2$ be prime.  Fix algebraic closures $\overline{\Q}$ of $\Q$ and $\overline{\Q}_p$ of $\Q_p$ as well as an embedding $\iota_p : \overline{\Q} \to \overline{\Q}_p$.  Let $G_\Q = \Gal(\overline{\Q}/\Q)$ be the absolute Galois group of $\Q$.  Let $\Z^+$ denote the set of positive integers.  Fix $N_0 \in \Z^+$ prime to $p$; it will serve as our tame level.  Let $N = N_0p^r$ for some fixed $r \in \Z^+$.  Fix a Dirichlet character $\chi : (\Z/N\Z)^\times \to \overline{\Q}^\times$ which will serve as our Nebentypus.  Let $\chi_1$ be the product of $\chi|_{(\Z/N_0\Z)^\times}$ with the tame $p$-part of $\chi$.  

For a valuation ring $W$ over $\Z_p$, let $\Lambda_W = W[[T]]$.  Let $\Z_p[\chi]$ be the extension of $\Z_p$ generated by the values of $\chi$.  When $W = \Z_p[\chi]$ we write $\Lambda_\chi$ for $\Lambda_W$.  When $W = \Z_p$ then we let $\Lambda = \Lambda_{\Z_p}$.  For any valuation ring $W$ over $\Z_p$, an \textit{arithmetic prime} of $\Lambda_W$ is a prime ideal of the form
\[
P_{k, \varepsilon} := (1 + T - \varepsilon(1 + p)(1 + p)^k)
\]
for an integer $k \geq 2$ and character $\varepsilon : 1 + p\Z_p \to W^\times$ of $p$-power order.  We shall write $r(\varepsilon)$ for the non-negative integer such that $p^{r(\varepsilon)}$ is the order of $\varepsilon$.  If $R$ is a finite extension of $\Lambda_W$, then we say a prime of $R$ is \textit{arithmetic} if it lies over an arithmetic prime of $\Lambda_W$.  

For a Dirichlet character $\psi : (\Z/M\Z)^\times \to \overline{\Q}^\times$, let $S_k(\Gamma_0(M), \psi)$ be the space of classical cusp forms of weight $k$, level $\Gamma_0(M)$, and Nebentypus $\psi$.  Let $h_k(\Gamma_0(M), \psi)$ be the Hecke algebra of $S_k(\Gamma_0(M), \psi)$.  Let $\omega$ be the $p$-adic Teichmuller character.  We can describe Hida's big $p$-ordinary Hecke algebra $\hh^{\ord}(N, \chi; \Lambda_\chi)$ as follows \cite{H}.  It is the unique $\Lambda_\chi$-algebra that is 
\begin{enumerate}
\item free of finite rank over $\Lambda_\chi$, 
\item equipped with Hecke operators $T(n)$ for all $n \in \Z^+$, 
\item satisfies the following specialization property: for every arithmetic prime $P_{k, \varepsilon}$ of $\Lambda_\chi$ there is an isomorphism
\[
\hh^{\ord}(N, \chi; \Lambda_\chi)/P_{k, \varepsilon}\hh^{\ord}(N, \chi; \Lambda_\chi) \cong h_k(\Gamma_0(Np^{r(\varepsilon)}), \chi_1\varepsilon\omega^{-k})
\]
\end{enumerate}
that sends $T(n)$ to $T(n)$ for all $n \in \Z^+$.

For a commutative ring $R$, we use $Q(R)$ to denote the total ring of fractions of $R$.  Hida has shown \cite{H86b} that there is a Galois representation
\[
\rho_{N_0, \chi} : G_\Q \to \GL_2(Q(\hh^{\ord}(N_0, \chi; \Lambda_\chi)))
\]
that is unramified outside $N$ and satisfies $\tr \rho_{N_0, \chi}(\Frob_\ell) = T(\ell)$ for all primes $\ell$ not dividing $N$.  Let $\Spec \I$ be an irreducible component of $\Spec \hh^{\ord}(N_0, \chi; \Lambda_\chi)$.  Assume further that $\I$ is primitive in the sense of Section 3 of \cite{H86a}.  Let $\lambda_F : \hh^{\ord}(N_0, \chi; \Lambda_\chi) \to \I$ be the natural $\Lambda_\chi$-algebra homomorphism coming from the inclusion of spectra.  By viewing $Q(\hh^{\ord}(N_0, \chi; \Lambda_\chi)) = \hh^{\ord}(N_0, \chi; \Lambda_\chi) \otimes_{\Lambda_\chi} Q(\Lambda_\chi)$ and composing $\rho_{N_0, \chi}$ with $\lambda_F \otimes 1$ we obtain a Galois representation
\[
\rho_F : G_\Q \to \GL_2(Q(\I))
\]
that is unramified outside $N$ and satisfies 
\[
\tr \rho_F(\Frob_\ell) = \lambda_F(T(\ell))
\]
for all primes $\ell$ not dividing $N$.

Henceforth for any $n \in \Z^+$ we shall let $a(n, F)$ denote $\lambda_F(T(n))$.  Let $F$ be the formal power series in $q$ given by
\[
F = \sum_{n = 1}^\infty a(n, F)q^n.
\]
Let $\I' = \Lambda_\chi[\{a(\ell, F) : \ell \nmid N\}]$ which is an order in $Q(\I)$ since $F$ is primitive.  We shall consider this Hida family $F$ and the associated ring $\I'$ to be fixed throughout this paper.  For a local ring $R$ we will use $\m_R$ to denote the unique maximal ideal of $R$.  Let $\F := \I'/\m_{\I'}$ the residue field of $\I'$.  We exclusively use the letter $\Pp$ to denote a prime of $\I$, and $\Pp'$ shall always denote $\Pp \cap \I'$.  Conversely, we exclusively use $\Pp'$ to denote a prime of $\I'$ in which case we are implicitly fixing a prime $\Pp$ of $\I$ lying over $\Pp'$. 

If $\Pp$ is a height one prime of $\I$ then we write $f_\Pp$ for the $p$-adic modular form obtained by reducing the coefficients of $F$ modulo $\Pp$.  In particular, if $\Pp$ is an arithmetic prime lying over $P_{k, \varepsilon}$ then $f_\Pp \in S_k(\Gamma_0(Np^{r(\varepsilon)}), \varepsilon\chi_1\omega^{-k})$.

Recall that Hida \cite{H86b} has shown that there is a well defined residual representation $\overline{\rho}_F : G_\Q \to \GL_2(\I/\m_\I)$ of $\rho_F$.  Throughout this paper we impose the following assumption.
\begin{equation}\tag{abs}\label{absolutely irreducible}
\text{Assume that } \brho_F \text{ is absolutely irreducible.}
\end{equation}
By the Chebotarev density theorem, we see that $\tr \brho_F$ is valued in $\F$.  Under \eqref{absolutely irreducible} we may use pseudo representations to find a $\GL_2(\I')$-valued representation that is isomorphic to $\rho_F$ over $Q(\I)$.  Thus we may (and do) assume that $\rho_F$ takes values in $\GL_2(\I')$.

\begin{definition}
Let $g = \sum_{n = 1}^\infty a(n, g)q^n$ be either a classical Hecke eigenform or a Hida family of such forms.  Let $K$ be the field generated by $\{a(n, g) : n \in \Z^+\}$ over either $\Q$ in the classical case or $Q(\Lambda_\chi)$ in the $\Lambda_\chi$-adic case.  We say a pair $(\sigma, \eta_\sigma)$ is a  \underline{conjugate self-twist} of $g$ if $\eta_\sigma$ is a Dirichlet character, $\sigma$ is an automorphism of $K$, and 
\[
\sigma(a(\ell, g)) = \eta_\sigma(\ell)a(\ell, g)
\]  
for all but finitely many primes $\ell$.  If there is a non-trivial character $\eta$ such that $(1, \eta)$ is a conjugate self-twist of $g$, then we say that $g$ has \underline{complex multiplication} or \underline{CM}.  Otherwise, $g$ does not have CM.
\end{definition}
If a modular form does not have CM then a conjugate self-twist is uniquely determined by the automorphism.

We shall always assume that our fixed Hida family $F$ does not have CM.  Let  
\[
\Gamma = \{\sigma \in \Aut(Q(\I)) : \sigma \text{ is a conjugate self-twist of } F\}.
\]
Under assumption \eqref{absolutely irreducible} it follows from a lemma of Carayol and Serre (Proposition 2.13 \cite{MFG}) that if $\sigma \in \Gamma$ then $\rho_F^\sigma \cong \rho_F \otimes \eta_\sigma$ over $\I'$.  As $\rho_F$ is unramified outside $N$ we see that in fact $\sigma(a(\ell, F)) = \eta_\sigma(\ell)a(\ell, F)$ for all primes $\ell$ not dividing $N$.  Therefore $\sigma$ restricts to an automorphism of $\I'$.  Let $\I_0 = (\I')^\Gamma$.  Define
\[
H_0 := \bigcap_{\sigma \in \Gamma} \ker \eta_\sigma 
\]
and 
\[
H := H_0 \cap \ker(\det(\overline{\rho}_F)).
\]
These open normal subgroups of $G_\Q$ play an important role in our proof.

For a commutative ring $B$ and ideal $\bb$ of $B$, write
\[
\Gamma_B(\bb) := \ker(\SL_2(B) \to \SL_2(B/\bb)).
\]
We call $\Gamma_B(\bb)$ a \textit{congruence subgroup} of $\GL_2(B)$ if $\bb \neq 0$.  We can now define what we mean when we say a representation is ``large" with respect to a ring.
\begin{definition}
Let $G$ be a group, $A$ be a commutative ring, and $r: G \to \GL_2(A)$ be a representation.  For a subring $B$ of $A$, we say that $r$ is \underline{$B$-full} if there is some $\gamma \in \GL_2(A)$ such that $\gamma(\im \rho) \gamma^{-1}$ contains a congruence subgroup of $\GL_2(B)$.
\end{definition}

Let $D_p$ be the decomposition group at $p$ in $G_{\Q_p} := \Gal(\overline{\Q}_p/\Q_p)$ under the embedding $G_{\Q_p} \hookrightarrow G_\Q$ induced by $\iota_p$.  Recall that over $Q(\I)$ the local representation $\rho_F|_{D_p}$ is isomorphic to $\bigl(\begin{smallmatrix} 
\varepsilon & u\\
0 & \delta
\end{smallmatrix}\bigr)$ (Theorem 4.3.2 \cite{GME}).  Let $\bvarepsilon$ and $\bdelta$ denote the residual characters of $\varepsilon$ and $\delta$, respectively.  

\begin{definition}
For any open subgroup $G_0 \leq G_\Q$ we say that $\overline{\rho}_F$ is \underline{$G_0$-regular} if $\bvarepsilon|_{D_p \cap G_0} \neq \bdelta|_{D_p \cap G_0}$. 
\end{definition}

The main result of this paper is the following.

\begin{theorem}\label{main result}
Assume $p > 2$ and let $F$ be a primitive non-CM $p$-adic Hida family.  Assume $|\F| \neq 3$ and that the residual representation $\overline{\rho}_F$ is absolutely irreducible and $H_0$-regular.  Then $\rho_F$ is $\I_0$-full.
\end{theorem}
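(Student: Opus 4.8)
The plan is to translate the statement into Pink's theory of pro-$p$ subgroups of $\SL_2$ over a complete local ring, and then to identify with $\I_0$ the canonical subring that theory attaches to $\im\rho_F$. First come the standard reductions: replacing $\rho_F$ by a conjugate is harmless, so Hida's theorem provides a nonzero ideal $\Aa_0\subseteq\Lambda$ with $\Gamma_\Lambda(\Aa_0)\subseteq\im\rho_F$, whence $\im\rho_F$ is Zariski dense in $\SL_2$ (this is also where the non-CM hypothesis enters). Since $p>2$, on $\ker\brho_F$ the character $\det\rho_F$ takes values in the uniquely $2$-divisible pro-$p$ group $1+\m_{\I'}$, so it has a canonical square root, and twisting by it we obtain $\tilde\rho\colon\ker\brho_F\to\SL_2(\I')$ with the same image in $\mathrm{PGL}_2$ as $\rho_F(\ker\brho_F)$. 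Put $G:=\im\tilde\rho$, a Zariski-dense \emph{pro-$p$} closed subgroup of $\SL_2(\I')$; by a routine descent from $\mathrm{PGL}_2$ (licit as $p>2$) it suffices to find a $\GL_2(\I')$-conjugate of some $\Gamma_{\I_0}(\Aa)$ inside $G$. Two further ingredients are recorded now: Zhao's indecomposability of $\rho_F|_{D_p}\cong\bigl(\begin{smallmatrix}\varepsilon&u\\0&\delta\end{smallmatrix}\bigr)$ gives $u\neq0$, hence (after the usual manipulation) a nontrivial unipotent element of $G$; and $H_0$-regularity, $\bvarepsilon\neq\bdelta$ on $D_p\cap H_0$, gives a regular semisimple element of $\GL_2(\I')$ normalizing $G$.

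Applying Pink's machinery to the Zariski-dense pro-$p$ group $G\leq\SL_2(\I')$ produces a closed subring $B\subseteq\I'$ and a Lie $B$-subalgebra $\mathfrak{L}\subseteq\Sl_2(\I')$, squeezed against $G$ from both sides. The unipotent element from Zhao's theorem and the regular element from $H_0$-regularity are fed into the adjoint action to split $\mathfrak{L}$ into Cartan and root spaces, and thereby to establish that $\mathfrak{L}$ is \emph{full}: there is a nonzero ideal $\Aa\subseteq B$ with $\Aa\,\Sl_2(B)\subseteq\mathfrak{L}$. The hypothesis $|\F|\neq3$ (together with $p>2$) is exactly what excludes the degenerate case in which $G$ surjects onto the non-perfect group $\SL_2(\F_3)$, where fullness and the sandwich break down. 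In the full case the sandwich yields $\gamma\,\Gamma_B(\Aa')\,\gamma^{-1}\subseteq G$ for some $\gamma\in\GL_2(\I')$ and nonzero $\Aa'\subseteq B$. So the whole theorem reduces to the identity $B=\I_0$.

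The inclusion $B\subseteq\I_0$ is the easy half. Pink's $B$ depends only on the $\GL_2(\I')$-conjugacy class of $G$ — concretely, $B$ is generated over $\Z_p$ by the pairings $\tr(XY)$ for $X,Y\in\mathfrak{L}$, up to controlled rescaling. For $\sigma\in\Gamma$ the definition of $H$ forces $\eta_\sigma|_H=1$, so by the Carayol–Serre lemma already cited $\rho_F^\sigma\cong\rho_F$ over $\I'$ on $H$; restricting to $\ker\brho_F$ and tracking the twist, $\sigma(G)$ is $\GL_2(\I')$-conjugate to $G$, hence $\sigma(B)=B$. Therefore $B\subseteq(\I')^\Gamma=\I_0$.

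The reverse inclusion $\I_0\subseteq B$ is the crux, and I expect it to be the main obstacle: one must show that conjugate self-twists are the \emph{only} constraint on the image. The base case $\Lambda\subseteq B$ is Hida's theorem read through the previous two paragraphs. To climb from $\Lambda$ to $\I_0$ I would specialize and invoke the classical Ribet–Momose theorem: for all but finitely many arithmetic primes $\Pp$, the form $f_\Pp$ is a non-CM classical eigenform whose inner twists are exactly the reductions modulo $\Pp$ of the elements of $\Gamma$, and $\I_0/\Pp'$ agrees, up to a finite controllable defect, with the ring of integers of the fixed field of those inner twists. Ribet–Momose then forces $\im\rho_{f_\Pp}$ to contain a congruence subgroup of $\SL_2(\I_0/\Pp')$, so $\mathfrak{L}\bmod\Pp$ surjects onto a full Lie algebra over $\I_0/\Pp'$; reading this back through Pink's construction gives $B+\Pp'\I_0=\I_0$ for every such $\Pp$. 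Since $\I_0$ is module-finite over $\Lambda$ and this holds along a sufficiently rich family of arithmetic primes (the finitely many bad and defect primes being handled separately), $B=\I_0$ follows. The genuine difficulties sit here: controlling the CM locus and the bad primes of the family; proving that every inner twist of $f_\Pp$ lifts to a conjugate self-twist of $F$, together with the precise comparison $\I_0/\Pp'\leftrightarrow\mathcal{O}_{\text{fixed field}}$ (which can degenerate at ramified primes); and passing from the family of modulo-$\Pp$ equalities to the integral one. Granting $B=\I_0$, the sandwich of the second paragraph gives $\Gamma_{\I_0}(\Aa)\subseteq\gamma\,G\,\gamma^{-1}\subseteq\gamma\,(\im\rho_F)\,\gamma^{-1}$ after descent from $\mathrm{PGL}_2$, which is precisely the assertion that $\rho_F$ is $\I_0$-full.
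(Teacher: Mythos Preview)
Your outline assembles the right ingredients—Pink's Lie theory, Ribet--Momose at arithmetic specializations, and the lifting of conjugate self-twists—but the organization differs from the paper's and leaves a genuine gap at precisely the step you flag as hard.

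The paper does not work inside $\SL_2(\I')$ and extract a Pink subring $B$. Instead it first \emph{descends}: Theorem~\ref{making rho} shows that in a suitable basis $\rho_F|_{H_0}$ already lands in $\GL_2(\I_0)$, which is a nontrivial cohomological argument (vanishing of a class in $H^2(\Gamma,A^\times)$, Artin's theorem on characters, etc.), not a formality. One then applies Pink's theory to the auxiliary $\rho:H\to\SL_2(\I_0)$ and reduces $\I_0$-fullness to openness of the image in $\prod_{\Qq\mid P}\SL_2(\I_0/\Qq)$ for a \emph{single} arithmetic $P$ (Proposition~\ref{open image in product implies main result}). A Goursat argument (Proposition~\ref{open image in product}) then reduces to each factor, and this is where the lifting theorem (Theorem~\ref{deformation lifting}) does essential work: a failure of openness in $\SL_2(\I_0/\Qq_1)\times\SL_2(\I_0/\Qq_2)$ manufactures a twist between $f_{\Pp_1}$ and $f_{\Pp_2}$, which lifts to $\Gamma$ and contradicts $\Qq_1\neq\Qq_2$. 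Openness in each factor then comes from Ribet--Momose plus the identification $Q(\I_0/\Qq)=E_{\Pp,\p_\Pp}$ (Proposition~\ref{compatibility}), which again uses the lifting theorem.

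Your route tries to bypass both the descent and the Goursat step by showing $B=\I_0$ directly, but the inference ``$B+\Qq=\I_0$ for all arithmetic $\Qq$, hence $B=\I_0$'' is not valid. Surjectivity of $B\to\I_0/\Qq$ for every arithmetic $\Qq$ only says that no arithmetic prime lies in $\mathrm{Supp}_{\I_0}(\I_0/B)$; it does not prevent $Q(B)\subsetneq Q(\I_0)$. Concretely, if two distinct arithmetic primes $\Qq_1,\Qq_2$ of $\I_0$ over the same $P$ satisfied $\Qq_1\cap B=\Qq_2\cap B$, your Ribet--Momose input would still give $B\twoheadrightarrow\I_0/\Qq_i$ for each $i$ while $B$ sits strictly below $\I_0$. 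Ruling this out is exactly the content of Section~\ref{product image}, and it needs the lifting theorem in the same essential way—so the step you call ``passing from the family of modulo-$\Pp$ equalities to the integral one'' is not a technicality to be handled separately but the crux, and your sketch does not address it. A smaller point: you take Hida's $\Lambda$-fullness theorem as a black-box input, but that result carries its own hypotheses; the paper does not rely on it, using only the explicit normalizing element $\bigl(\begin{smallmatrix}1+T&0\\0&1\end{smallmatrix}\bigr)\in\im\rho_F$ to obtain $\Lambda$-module structure on the relevant Lie algebra.
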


The strategy of the proof is to exploit the results of Ribet \cite{R80}, \cite{R85} and Momose \cite{Mo}.    Since an arithmetic specialization of a non-CM Hida family cannot be CM, their work implies that if $\Pp'$ is an arithmetic prime of $\I'$ then there is a certain subring $\OK \subseteq \I'/\Pp'$ for which $\rho_F \bmod \Pp'$ is $\OK$-full.  To connect their ring $\OK$ with $\I_0$, in section \ref{proof of main thm} we show that $Q(\OK) = Q(\I_0/\Qq)$, where $\Qq = \I_0 \cap \Pp'$.  The proof that $Q(\OK) = Q(\I_0/\Qq)$ relies on establishing a relationship between conjugate self-twists of $F$ and conjugate self-twists of the arithmetic specializations of $F$.  As this may be of independent interest, we state the result here and give two different proofs in the paper.

\begin{theorem}\label{main lifting result}
Let $\Pp$ be an arithmetic prime of $\I$ and $\sigma$ be a conjugate self-twist of $f_\Pp$ that is also an automorphism of the local field $\Q_p(\{a(n, f_\Pp) : n \in \Z^+\})$.  Then $\sigma$ can be lifted to $\tilde{\sigma} \in \Gamma$ such that $\tilde{\sigma}(\Pp') = \Pp'$, where $\Pp' = \Pp \cap \I'$.
\end{theorem}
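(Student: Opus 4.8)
The plan is to show that the twist $F \otimes \eta_\sigma$ of our Hida family is itself a Galois conjugate of $F$, and then to read off $\tilde\sigma$ from that identification. Since $\overline{\rho}_{f_\Pp} = \overline{\rho}_F$ is absolutely irreducible (by \eqref{absolutely irreducible}), the lemma of Carayol and Serre (Proposition 2.13 of \cite{MFG}) applied to $f_\Pp$ upgrades the defining relation $\sigma(a(\ell,f_\Pp)) = \eta_\sigma(\ell)a(\ell,f_\Pp)$ to an isomorphism $\rho_{f_\Pp}^\sigma \cong \rho_{f_\Pp}\otimes\eta_\sigma$, and hence makes that relation hold for \emph{every} prime $\ell \nmid N$; equivalently $f_\Pp^\sigma = f_\Pp \otimes \eta_\sigma$. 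Using the hypothesis that $\sigma$ is an automorphism of the local field $\Q_p(\{a(n,f_\Pp)\})$, fix an extension $\tau \in \Aut(\overline{\Q}_p/\Q_p)$ of $\sigma$; then $f_\Pp \otimes \eta_\sigma = f_\Pp^\tau$ is an arithmetic specialization of the Galois-conjugate Hida family $F^\tau$, the family attached to the component $\tau(\I)$ of the correspondingly twisted big ordinary Hecke algebra.

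The crucial point is that $\eta_\sigma$ is unramified at $p$. To see this, restrict $\rho_{f_\Pp}^\sigma \cong \rho_{f_\Pp}\otimes\eta_\sigma$ to the inertia subgroup $I_p \leq D_p$ and pass to semisimplifications. Writing $\rho_{f_\Pp}|_{D_p} \sim \bigl(\begin{smallmatrix}\varepsilon_\Pp & * \\ 0 & \delta_\Pp\end{smallmatrix}\bigr)$ with $\delta_\Pp$ unramified (ordinarity of $f_\Pp$), the inertial semisimplification of $\rho_{f_\Pp}^\sigma$ is $\varepsilon_\Pp^\sigma|_{I_p} \oplus \mathbf{1}$, while that of $\rho_{f_\Pp}\otimes\eta_\sigma$ is $\varepsilon_\Pp\eta_\sigma|_{I_p} \oplus \eta_\sigma|_{I_p}$. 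Since $\varepsilon_\Pp|_{I_p}$ equals the nebentypus of $f_\Pp$ times $\chi_{\mathrm{cyc}}^{k-1}$ with $k \geq 2$, it has infinite image, and so does $\varepsilon_\Pp^\sigma|_{I_p}$; as $\eta_\sigma$ has finite order, the only way to match the two unordered pairs of inertial characters is $\eta_\sigma|_{I_p} = 1$. Consequently $\eta_\sigma$ has conductor dividing $N_0$, hence order prime to $p$, so its values — prime-to-$p$ roots of unity, already present in $\I'/\Pp'$ via the Carayol--Serre isomorphism — lift canonically into $\I'$; moreover $f_\Pp \otimes \eta_\sigma$ is again a $p$-ordinary $p$-stabilized newform.

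Now I invoke Hida's rigidity: a primitive component of the Hecke algebra is determined by the Galois-conjugacy class of any one of its arithmetic specializations, equivalently by the underlying ordinary $p$-stabilized newform. Thus $f_\Pp \otimes \eta_\sigma$ arises from a single primitive component; on the one hand this is $\tau(\I)$, since $f_\Pp\otimes\eta_\sigma = f_\Pp^\tau$, and on the other hand it is the component carrying $F \otimes \eta_\sigma$, which is a genuine primitive Hida family of tame level $N_0$ precisely because $\eta_\sigma$ is unramified at $p$ and, by Ribet and Momose, $g\otimes\eta_\sigma = g^\sigma$ for the newform $g$ underlying $f_\Pp$ (so the tame level does not change). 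Hence $F^\tau = F\otimes\eta_\sigma$ as formal $q$-expansions, i.e.\ $\tau(a(\ell,F)) = \eta_\sigma(\ell)a(\ell,F)$ for all $\ell\nmid N$. Since $\eta_\sigma$ takes values in $\I'$ and $\tau$ fixes $T$, the $\tau$-semilinear identification witnessing this equality restricts to a ring automorphism $\tilde\sigma$ of $\I'$ with $\tilde\sigma(a(\ell,F)) = \eta_\sigma(\ell)a(\ell,F)$ and $\tilde\sigma|_{\Z_p[[T]]} = \mathrm{id}$; so $\tilde\sigma \in \Gamma$ with $\eta_{\tilde\sigma} = \eta_\sigma$. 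Finally, because $\eta_\sigma^2$ is unramified at $p$, the relation $\sigma(\psi_{f_\Pp}) = \psi_{f_\Pp}\eta_\sigma^2$ forces $\sigma$ to fix the $p$-primary component of the nebentypus of $f_\Pp$, which is exactly the character $\varepsilon$ attached to the arithmetic prime $P_{k,\varepsilon}$ below $\Pp$. Hence $\tilde\sigma$ fixes the element $1+T - \varepsilon(1+p)(1+p)^k$ of $\I'$, so $\tilde\sigma(P_{k,\varepsilon}\I') = P_{k,\varepsilon}\I'$; as $\I$ is primitive, $\Pp'$ is the unique minimal prime over $P_{k,\varepsilon}\I'$, whence $\tilde\sigma(\Pp') = \Pp'$, and $\tilde\sigma$ induces $\sigma$ on $\I'/\Pp'$ by construction.

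The main obstacle is establishing that $\eta_\sigma$ is unramified at $p$; once that is in hand, the rest is an assembly of the Ribet--Momose classification of conjugate self-twists of classical newforms with the rigidity of primitive Hida families, plus bookkeeping with conductors and nebentypus characters. A second proof can be given via the universal deformation ring of $\overline{\rho}_F$: once $\eta_\sigma$ is unramified at $p$, both $\rho_F$ and $\rho_F\otimes\eta_\sigma$ are $p$-ordinary deformations of the same residual representation, and comparing the two classifying homomorphisms produces $\tilde\sigma$ directly.
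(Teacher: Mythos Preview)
Your approach is essentially the automorphic strategy of the paper's Appendix rather than the deformation-theoretic proof that the paper gives as its primary argument; you do not carry out the deformation argument you mention at the end, and that is precisely the paper's main proof, so it should not be dismissed as a remark.  One genuinely new and correct observation in your write-up is that $\eta_\sigma$ is automatically unramified at $p$: comparing the inertial semisimplifications of $\rho_{f_\Pp}^\sigma$ and $\rho_{f_\Pp}\otimes\eta_\sigma$, and using that $\varepsilon_\Pp|_{I_p}=\det\rho_{f_\Pp}|_{I_p}$ has infinite image while $\eta_\sigma$ has finite order, does force $\eta_\sigma|_{I_p}=1$.  The paper's Appendix imposes $p\nmid c(\eta_\sigma)$ as a hypothesis, so your argument removes that restriction.

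However, there are two real gaps.  First, the passage from ``$f_\Pp^\tau=f_\Pp\otimes\eta_\sigma$'' to ``$F^\tau=F\otimes\eta_\sigma$ and hence $\tau$ restricts to an automorphism $\tilde\sigma$ of $\I'$'' is not justified as written.  The object $F^\tau$ lives in $\hh^{\ord}(N_0,\chi^\tau;\Lambda_\chi)$ while $F\otimes\eta_\sigma$ lives in $\hh^{\ord}(N_0,\chi\eta_\sigma^2;\Lambda_\chi)$; these coincide only because $\chi^\sigma=\chi\eta_\sigma^2$, and even then you must explain how an automorphism of $\overline{\Q}_p$ gives an automorphism of the $\Lambda_\chi$-algebra $\I'$.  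The paper's Appendix handles exactly this by constructing the endomorphism $\theta_{\chi,\eta}(M)$ of a fixed Hecke algebra and invoking \'etaleness at arithmetic points to see that the induced map on $\Spec$ preserves the component $\Spec\I'$; your ``Hida rigidity'' is doing the same work, but the details are not interchangeable with the hand-wave you give, and the paper in fact restricts to $\eta_\sigma$ quadratic to make this go through cleanly.

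Second, and more seriously, the claim ``as $\I$ is primitive, $\Pp'$ is the unique minimal prime over $P_{k,\varepsilon}\I'$'' is false in general: primitivity means $F$ is a family of newforms, not that $\I'$ has a single prime above each arithmetic prime of $\Lambda$.  There can be several primes of $\I'$ over $P_{k,\varepsilon}$, all permuted by $\tilde\sigma$, and your argument does not single out $\Pp'$.  The paper's deformation-theoretic proof avoids this entirely by proving the stronger statement $\sigma\circ\Pp=\Pp\circ\tilde\sigma$ via the universal property; in the automorphic approach the correct fix is to note that the \'etaleness/rigidity argument already matches the specific arithmetic point $\Pp'$ on $\Spec\I'$ with its image, so $\tilde\sigma(\Pp')=\Pp'$ comes out of the construction rather than from a (false) uniqueness statement.
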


The first proof, in section \ref{deformations}, uses abstract deformation theory and applies in the generality we have stated above.  The second proof is relegated to the Appendix.  It uses automorphic methods and only allows us to lift $\sigma$ when $\eta_\sigma$ is quadratic and $p$ does not divide the conductor of $\eta_\sigma$.  The automorphic description has the advantage of being somewhat more concrete and intuitive than the deformation theoretic one.

The remainder of the paper consists of a series of reduction steps that allow us to deduce our theorem from the aforementioned results of Ribet and Momose.  Our methods make it convenient to modify $\rho_F$ to a related representation $\rho : H \to \SL_2(\I_0)$ and show that $\rho$ is $\I_0$-full.  We axiomatize the properties of $\rho$ at the beginning of section \ref{sufficiency open product image} and use $\rho$ in the next three sections to prove Theorem \ref{main result}.  Then in section \ref{same basis} we explain how to show the existence of $\rho$ with the desired properties. 

The task of showing that $\rho$ is $\I_0$-full is done in three steps.  In section \ref{sufficiency open product image} we consider the projection of $\im \rho$ to $\prod_{\Qq | P} \SL_2(\I_0/\Qq)$, where $P$ is an arithmetic prime of $\Lambda$ and $\Qq$ runs over all primes of $\I_0$ lying over $P$.  We show that if the image of $\im \rho$ in $\prod_{\Qq | P} \SL_2(\I_0/\Qq)$ is open, then $\rho$ is $\I_0$-full.  This uses Pink's theory of Lie algebras for $p$-profinite subgroups of $\SL_2$ over $p$-profinite semilocal rings \cite{P} and the related techniques developed by Hida \cite{H}.  

In section \ref{product image} we show that if the image of $\im \rho$ in $\SL_2(\I_0/\Qq)$ is $\I_0/\Qq$-full for all primes $\Qq$ of $\I_0$ lying over $P$, then the image of $\im \rho$ is indeed open in $\prod_{\Qq|P} \SL_2(\I_0/\Qq)$.  The argument is by contradiction and uses Goursat's Lemma.  It was inspired by an argument of Ribet \cite{R75}.  It is only in this section that we make use of the assumption that $|\F| \neq 3$.  

The final step showing that the image of $\im \rho$ in $\SL_2(\I_0/\Qq)$ is $\I_0/\Qq$-full for every $\Qq$ lying over $P$ is done in section \ref{proof of main thm}.  The key input is Theorem \ref{main lifting result} from section \ref{deformations} together with the work of Ribet and Momose on the image of the Galois representation associated to a non-CM classical modular form.  We give a brief exposition of their work and a precise statement of their result at the beginning of section \ref{proof of main thm}.  We reiterate the structure of the proof of Theorem \ref{main result} at the end of section \ref{proof of main thm}.  

\section{Lifting twists via deformation theory}\label{deformations}
Let $\Pp_1$ and $\Pp_2$ be (not necessarily distinct) arithmetic primes of $\I$, and let $\Pp_i' = \Pp_i \cap \I'$.  We shall often view $\Pp_i$ as a geometric point in $\Spec(\I)(\overline{\Q}_p)$.  Since $F$ is primitive, $f_{\Pp_i}$ is either a newform or the $p$-stabilization of a newform.  Therefore 
\begin{equation}\label{same quotient field}
Q(\I/\Pp_i) = \Q_p(\{a(n, f_{\Pp_i}) : n \in \Z^+\}) = \Q_p(\{a(\ell, f_{\Pp_i}) : \ell \nmid N\}) = Q(\I'/\Pp_i').
\end{equation}
Suppose there is an isomorphism $\sigma : \I/\Pp_1 \cong \I/\Pp_2$ and a Dirichlet character $\eta : G_\Q \to Q(\I/\Pp_2)^\times$ such that
\[
\sigma(a(\ell, f_{\Pp_1})) = \eta(\ell)a(\ell, f_{\Pp_2})
\]
for all primes $\ell$ not dividing $N$.  In this section we show that $\sigma$ can be lifted to a conjugate self-twist of $F$. 

\begin{theorem}\label{deformation lifting}
Assume that $\eta$ takes values in $\Z_p[\chi]$.  Then there is an automorphism $\tilde{\sigma} : \I' \to \I'$ such that 
\[
\tilde{\sigma}(a(\ell, F)) = \eta(\ell)a(\ell, F)
\]
for all but finitely many primes $\ell$ and $\sigma \circ \Pp_1 = \Pp_2 \circ \tilde{\sigma}$.  In particular, $\Pp_1'$ and $\Pp_2'$ necessarily lie over the same prime of $\I_0$.
\end{theorem}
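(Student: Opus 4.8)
The plan is to package the data $(\sigma, \eta)$ into a map of deformation problems and invoke the universal property of the Hecke algebra as a deformation ring. First I would note that because $\brho_F$ is absolutely irreducible, the residual representation $\brho_F$ (valued in $\GL_2(\F)$) admits a universal deformation ring $R$ for deformations of $\brho_F$ unramified outside $N$ that are $p$-ordinary in the sense relevant to Hida theory; by the work of Hida (and the $R = \T$ type results in this setting), the local ring of $\hh^{\ord}(N_0,\chi;\Lambda_\chi)$ through which $\rho_F$ factors is canonically a quotient of $R$, and in fact $\rho_F$ restricted to the component $\I$ realizes $\Spec \I$ inside $\Spec R$. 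The key point is that $\sigma$ and $\eta$ together produce a \emph{new} deformation of $\brho_F$: starting from the specialized representation $\rho_F \bmod \Pp_1'$, one twists by $\eta$ and conjugates by $\sigma$ to land in $\GL_2(\I'/\Pp_2')$, and the hypothesis $\sigma(a(\ell,f_{\Pp_1})) = \eta(\ell) a(\ell, f_{\Pp_2})$ says precisely that this deformation has the same traces of Frobenius as $\rho_F \bmod \Pp_2'$. Since $\eta$ takes values in $\Z_p[\chi] \subseteq \Lambda_\chi$, the twisted deformation is still a deformation of $\brho_F$ with the \emph{same} Nebentypus data, so it is classified by the same universal ring $R$, and by Carayol--Serre (the lemma already cited in the excerpt) the two deformations are in fact isomorphic as $\GL_2(\I'/\Pp_2')$-valued representations, not merely equal on traces.

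Next I would lift this from the level of a single arithmetic specialization to the whole family. The clean way is to run the deformation argument not just for the quotients $\I'/\Pp_i'$ but to observe that the conjugate self-twist structure is rigid: the representation $\rho_F \otimes \eta$ (over $\I'$, viewing $\eta$ as $\Z_p[\chi]$-valued, hence $\Lambda_\chi$-valued) is again a $p$-ordinary deformation of $\brho_F$ unramified outside $N$ with the correct determinant, hence corresponds to a $\Lambda_\chi$-algebra homomorphism from $\hh^{\ord}(N_0,\chi;\Lambda_\chi)$, i.e.\ to a Hida family. One then needs to identify which irreducible component this twisted family lands on; the compatibility with $\sigma$ at the arithmetic prime $\Pp_1$ (where we know $\sigma \circ \Pp_1 = \Pp_2 \circ (\text{something})$ on the nose, by the previous paragraph) forces the twisted family to lie on the image of $\Spec \I$ under a $\Lambda_\chi$-algebra isomorphism, because an irreducible component is determined by a dense set of its arithmetic points and we have a matching point. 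This produces the automorphism $\tilde\sigma$ of $\I'$ with $\tilde\sigma(a(\ell,F)) = \eta(\ell) a(\ell,F)$ for $\ell \nmid N$ (hence for all but finitely many $\ell$), and by construction it reduces to $\sigma$ modulo $\Pp_1'$, i.e.\ $\sigma \circ \Pp_1 = \Pp_2 \circ \tilde\sigma$. The final sentence then follows formally: $\tilde\sigma \in \Gamma$ by definition, so it fixes $\I_0 = (\I')^\Gamma$ pointwise, whence $\Pp_1' \cap \I_0 = (\Pp_2 \circ \tilde\sigma)^{-1}(0) \cap \I_0 = \Pp_2' \cap \I_0$ — wait, more carefully: $\tilde\sigma(\Pp_1') = \Pp_2'$ and $\tilde\sigma$ is the identity on $\I_0$, so $\Pp_1' \cap \I_0 = \tilde\sigma(\Pp_1' \cap \I_0) = \tilde\sigma(\Pp_1') \cap \I_0 = \Pp_2' \cap \I_0$, giving the claim.

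I expect the main obstacle to be the second paragraph: making precise the passage from the deformation-theoretic statement at a single arithmetic prime to an honest automorphism of the whole ring $\I'$. The subtlety is that a priori the twisted family $\rho_F \otimes \eta$ could land on a \emph{different} irreducible component of $\Spec \hh^{\ord}(N_0,\chi;\Lambda_\chi)$, and one must rule this out. The right tool is that two irreducible components sharing a common arithmetic specialization with matching Hecke eigenvalues must coincide (primitivity and the control theorems of Hida ensure arithmetic points are Zariski dense and that the eigenvalue systems $\{a(\ell, \cdot)\}$ separate components), combined with the fact established in the first paragraph that at $\Pp_1$ the twisted representation is \emph{isomorphic} to $\rho_F \bmod \Pp_2'$ — so the twisted family and the original family do share such a point. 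A secondary technical point worth stating carefully is why $\eta$ being $\Z_p[\chi]$-valued (rather than merely $Q(\I/\Pp_2)$-valued, as the hypothesis of the ambient discussion allows) is exactly what is needed to keep the twisted object within the same $\Lambda_\chi$-linear Hida-theoretic world; this is the role of the standing hypothesis of the theorem and should be flagged as the reason the deformation-theoretic proof, unlike the automorphic one in the Appendix, does not need $\eta$ quadratic but does need this integrality.
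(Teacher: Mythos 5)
Your proposal has a genuine gap at the step where you assert that $\rho_F \otimes \eta$ is ``still a deformation of $\brho_F$ with the same Nebentypus data'' because $\eta$ takes values in $\Z_p[\chi]$.  That is not true: $\rho_F \otimes \eta$ deforms $\bbeta \otimes \brho_F$, not $\brho_F$, and its determinant is $\eta^2 \det\rho_F$, so the Nebentypus becomes $\eta^2\chi = \chi^\sigma$, which is generally different from $\chi$.  The integrality of $\eta$ only guarantees that the twisted object lives over a ring of the right kind; it does nothing to identify its deformation problem with that of $\brho_F$.  The genuine technical content of the theorem is precisely the comparison of the three deformation rings $R_{\brho_F}$, $R_{\brho_F^{\bsigma}}$, and $R_{\bbeta\otimes\brho_F}$ using the residual isomorphism $\brho_F^{\bsigma} \cong \bbeta\otimes\brho_F$, together with the observation that $R_{\brho_F^{\bsigma}}$ is canonically $R_{\brho_F}^{\bsigma}$ (a base change along $\bsigma$).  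The paper builds the maps $\varphi$, $\psi$ between these rings, composes them to obtain $\Sigma : {}^\OK R_{\brho_F} \to {}^\OK R_{\brho_F}$, shows that the natural map $\alpha : {}^\OK R_{\brho_F} \to \I'$ is surjective and that $\Sigma$ preserves $\ker\alpha$ (by checking that it permutes preimages of arithmetic primes), and only then descends $\Sigma$ to the desired $\tilde\sigma$ on $\I'$.  Your sketch skips all of this and implicitly treats the three deformation problems as one.

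A secondary but real issue: you argue that ``an irreducible component is determined by a dense set of its arithmetic points and we have a matching point,'' but one matching point is not dense, and in general two distinct irreducible components of $\Spec\hh^{\ord}$ can share a point.  To conclude coincidence of components from one matching arithmetic point you need \'etaleness of the Hecke algebra over $\Lambda$ at that point (Proposition 3.78 of \cite{HMI}).  That fix is available, but it is the mechanism used in the paper's Appendix, not in Section~\ref{deformations}.  In fact your proposal as a whole is much closer in spirit to the automorphic lifting argument in the Appendix (twist the family, identify the component it lands on) than to the deformation-theoretic proof the paper gives for this theorem; and the Appendix argument, for exactly the reasons above, requires $\eta$ to be quadratic and $p\nmid c(\eta)$.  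The deformation-theoretic route avoids those restrictions, but it does so by the explicit $\varphi$--$\psi$--$\Sigma$ bookkeeping, not by any softer ``same deformation problem'' principle.
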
  

Let $W$ be the ring of Witt vectors of $\F$.  Let $\Q^N$ be the maximal subfield of $\overline{\Q}$ unramified outside $N$ and infinity, and let $G^N_\Q := \Gal(\Q^N/\Q)$.  Note that $\rho_F$ factors through $G^N_\Q$.  For the remainder of this section we shall consider $G^N_\Q$ to be the domain of $\rho_F$ and $\brho_F$.  

We use universal deformation rings in the proof of Theorem \ref{deformation lifting}.  For our purposes universal deformation rings of pseudo representations are sufficient.  However, since we are assuming that $\brho_F$ is absolutely irreducible, we use universal deformation rings of representations to avoid introducing extra notation for pseudo representations.  

We set up the necessary notation.  Let $\cC$ denote the category of complete local $p$-profinite $W$-algebras with residue field $\F$.  Let $\bpi : G_\Q^N \to \GL_n(\F)$ be an absolutely irreducible representation.  We say an object $R_{\bpi} \in \cC$ and representation $\bpi^{\univ} : G_\Q^N \to \GL_n(R_{\bpi})$ is a \textit{universal couple} for $\bpi$ if: for every $A \in \cC$ and representation $r : G_\Q^N \to \GL_n(A)$ such that $r \bmod \m_A \cong \bpi$, there exists a unique $W$-algebra homomorphism $\alpha(r) : R_{\bpi} \to A$ such that $r \cong \alpha(r) \circ \bpi^{\univ}$.  Mazur proved that a universal couple always exists (and is unique) when $\bpi$ is absolutely irreducible \cite{Ma}.

It is easy to see that $R_{\bpi}$ is canonically an $R_{\det \bpi}$-algebra.  Furthermore, if $G_p^{\ab}$ is the maximal $p$-profinite abelian quotient of $G^N_\Q$, then $R_{\det \bpi} = W[[G_p^{\ab}]]$ (Theorem 2.21 \cite{MFG}).  There is a finite group $\Delta$ such that $W[[G_p^{\ab}]]$ is noncanonically isomorphic to $\Lambda_W[\Delta]$, and we fix an isomorphism between them once and for all.  In particular, all of the universal deformation rings we consider have a $\Lambda_W$-algebra structure.

Since $\eta$ takes values in $\Z_p[\chi]$ which may not be contained in $W$, we need to extend scalars.  Let $\OK$ be the composite of $W$ and $\Z_p[\chi]$.  We recommend the reader assume $\OK = W$ on the first read.  For a commutative $W$-algebra $A$, let ${}^\OK\!A := \OK \otimes_W A$.  It will be important that we are tensoring on the left by $\OK$ as we will sometimes want to view ${}^\OK\!A$ as a right $W$-algebra.

We consider the universal couples $(R_{\brho_F}, \brho_F^{\univ})$, $(R_{\brho_F^{\bsigma}}, (\brho_F^{\bsigma})^{\univ})$, and $(R_{\bbeta \otimes \brho_F}, (\bbeta \otimes \brho_F)^{\univ})$.  There are canonical maps between all of these rings which will be described shortly.  The automorphism $\bsigma$ of $\F$ induces an automorphism $W(\bsigma)$ on $W$.  For any $W$-algebra $A$, let $A ^{\bsigma} := A \otimes_{W(\bsigma)} W$, where $W$ is considered as a $W$-algebra via $W(\bsigma)$.  Note that $A^{\bsigma}$ is a $W$-bimodule with different left and right actions.  Namely $w(a \otimes w') = aw \otimes w'$, which may be different from $(a \otimes w')w = a\otimes ww'$.  In particular, ${}^\OK\!A^{\bsigma} = \OK \otimes_W A \otimes_{W(\bsigma)} W$.  Let $\iota(\bsigma, A) : A \to A^{\bsigma}$ be the usual map given by $\iota(\bsigma, A)(a) = a \otimes 1$.  It is an isomorphism of rings with inverse given by $\iota(\bsigma^{-1}, A)$.  

We now define the relevant maps between the three deformation rings.  It turns out that $R_{\brho_F^{\bsigma}}$ is canonically isomorphic to $R_{\brho_F}^{\bsigma}$ as a $W$-algebra.  To see this, let $A \in \cC$ and $r : G_\Q^N \to \GL_2(A)$ be a deformation of $\brho_F^{\bsigma}$.  Then $\iota(\bsigma^{-1}, A) \circ r$ is a deformation of $\brho_F$.  By universality there is a unique $W$-algebra homomorphism $\alpha(\iota(\bsigma^{-1}, A) \circ r) : R_{\brho_F} \to A^{\bsigma^{-1}}$ such that $\iota(\bsigma^{-1}, A) \circ r \cong \alpha(\iota(\bsigma^{-1}, A) \circ r) \circ \brho_F^{\univ}$.  Tensoring $\alpha(\iota(\bsigma^{-1}, A) \circ r)$ with $W$ over $W(\bsigma)$ gives $\alpha(\iota(\bsigma^{-1}, A) \circ r) \otimes_{W(\bsigma)} 1 : R_{\brho_F}^{\bsigma} \to A$ such that $r \cong (\alpha(\iota(\bsigma^{-1}, A) \circ r) \otimes_{W(\bsigma)} 1) \circ \iota(\bsigma, R_{\brho_F}) \circ \brho_F^{\univ}$.  This shows that $R_{\brho_F}^{\bsigma}$ satisfies the universal property for $R_{\brho_F^{\bsigma}}$.  With notation as above, when $r = (\brho_F^{\bsigma})^{\univ}$ we set $\varphi = \alpha(\iota(\bsigma^{-1}, R_{\brho_F^{\bsigma}}) \circ (\brho_F^{\bsigma})^{\univ})$, so
\begin{equation}\label{univ prop of varphi}
(\brho_F^{\bsigma})^{\univ} \cong \varphi \circ \iota(\bsigma, R_{\brho_F}) \circ \brho_F^{\univ}.
\end{equation}

Let $i : R_{\bbeta \otimes \brho_F} \to {}^\OK\!{R_{\bbeta \otimes \brho_F}}$ be the map given by $x \mapsto 1 \otimes x$.  If $A$ is a $W$-algebra and $r : G_\Q^N \to \GL_2(A)$ is a deformation of $\brho_F$ then $\eta \otimes r : G_\Q^N \to \GL_2({}^\OK\!A)$ is a deformation of $\bbeta \otimes \brho_F$.  Then there is a unique $W$-algebra homomorphism $\alpha(\eta \otimes r) : R_{\bbeta \otimes \brho_F} \to {}^\OK\!A$ such that $\eta \otimes r \cong \alpha(\eta \otimes r) \circ (\bbeta \otimes \brho_F)^{\univ}$.  We can extend $\alpha(\eta \otimes r)$ to an $\OK$-algebra homomorphism $1 \otimes \alpha(\eta \otimes r) : {}^\OK\!{R_{\bbeta \otimes \brho_F}} \to {}^\OK\!A$ by sending $x \otimes y$ to $(x \otimes 1)\alpha(\eta \otimes r)(y)$.  In particular, $\eta \otimes r \cong (1 \otimes \alpha(\eta \otimes r)) \circ i \circ (\bbeta \otimes \brho_F)^{\univ}$.  When $r = \brho_F^{\univ}$, let $\psi$ denote $\alpha(\eta \otimes \brho_F^{\univ})$, so 
\[
\eta \otimes \brho_F^{\univ} \cong (1 \otimes \psi) \circ i \circ (\bbeta \otimes \brho_F)^{\univ}.
\] 
When $r = \rho_F$, let $\nu$ denote $\alpha(\eta \otimes \rho_F)$, so 
\[
\eta \otimes \rho_F \cong (1 \otimes \nu) \circ i \circ (\bbeta \otimes \brho_F)^{\univ}.
\]

Let $A$ be a $W$-algebra.  We would like to define a ring homomorphism $m(\bsigma, A) : A^{\bsigma} \to A$ such that $m(\bsigma, A) \circ \iota(\bsigma, A)$ is a lift of $\bsigma$.  When $A = \F$ we can do this by defining $m(\bsigma, \F)(x \otimes y) = \bsigma(x)y$.  Similarly, when $A = W$ we can define $m(\bsigma, W)(x \otimes y) = W(\bsigma)(x)y$.  If $A = W[T]$ or $W[[T]]$ then $A^{\bsigma} = W^{\bsigma}[T]$ or $W^{\bsigma}[[T]]$, and we can define $m(\bsigma, A)$ by simply applying $m(\bsigma, W)$ to the coefficients of the polynomials or power series.  However, for a general $W$-algebra $A$ it is not necessarily possible to define $m(\bsigma, A)$ or to lift $\bsigma$.  (If $A$ happens to be smooth over $W$ then it is always possible to lift $\bsigma$ to $A$.)

Fortunately, we do not need $m(\bsigma, A)$ to exist for all $W$-algebras; just for $\I'$.  Our strategy is to prove that, under the assumption that $\brho_F^{\bsigma} \cong \bbeta \otimes \brho_F$, the ring homomorphism $m(\bsigma, {}^\OK\!{R_{\brho_F}})$ exists.  We will then show that it can be descended to yield a lift of $\bsigma$ to $\I'$.

\begin{lemma}\label{universal lift of sigma bar}
If $\brho_F$ is absolutely irreducible and $\brho_F^{\bsigma} \cong \bbeta \otimes \brho_F$ then there is a ring homomorphism $m(\bsigma, {}^\OK\!{R_{\brho_F}}) : {}^\OK\!{R_{\brho_F}^{\bsigma}} \to {}^\OK\!{R_{\brho_F}}$ that is a lift of $m(\bsigma, \F)$.  In particular, $m(\bsigma, {}^\OK\!{R_{\brho_F}}) \circ \iota(\bsigma, {}^\OK\!{R_{\brho_F}})$ is a lift of $\bsigma$.
\end{lemma}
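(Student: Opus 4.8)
The plan is to construct $m(\bsigma, {}^\OK\!{R_{\brho_F}})$ by exhibiting a deformation of $\brho_F^{\bsigma}$ to ${}^\OK\!{R_{\brho_F}}$ and invoking the universal property of $R_{\brho_F^{\bsigma}} \cong R_{\brho_F}^{\bsigma}$. The candidate deformation is $\eta \otimes \brho_F^{\univ}$, which by construction takes values in $\GL_2({}^\OK\!{R_{\brho_F}})$; its residual representation is $\bbeta \otimes \brho_F$. By the hypothesis $\brho_F^{\bsigma} \cong \bbeta \otimes \brho_F$, this is also a deformation of $\brho_F^{\bsigma}$ (after conjugating by the fixed intertwiner). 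Absolute irreducibility of $\brho_F$ guarantees the universal couple $(R_{\brho_F^{\bsigma}}, (\brho_F^{\bsigma})^{\univ})$ exists and that such intertwiners are unique up to scalar, so the deformation functor is well-behaved. Applying universality to $\eta \otimes \brho_F^{\univ}$ viewed as a deformation of $\brho_F^{\bsigma}$ produces a $W$-algebra homomorphism $R_{\brho_F^{\bsigma}} \to {}^\OK\!{R_{\brho_F}}$, and since everything in sight is an $\OK$-algebra (or can be made one by tensoring), this extends to an $\OK$-algebra homomorphism ${}^\OK\!{R_{\brho_F^{\bsigma}}} \to {}^\OK\!{R_{\brho_F}}$. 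Precomposing with the canonical isomorphism ${}^\OK\!{R_{\brho_F}^{\bsigma}} \cong {}^\OK\!{R_{\brho_F^{\bsigma}}}$ (obtained by tensoring $\varphi^{-1}$, or rather using \eqref{univ prop of varphi}, with $\OK$) gives the desired map $m(\bsigma, {}^\OK\!{R_{\brho_F}}) : {}^\OK\!{R_{\brho_F}^{\bsigma}} \to {}^\OK\!{R_{\brho_F}}$.

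First I would write out explicitly the composite using the maps already named in the text: with $\psi = \alpha(\eta \otimes \brho_F^{\univ})$ the map $R_{\bbeta \otimes \brho_F} \to {}^\OK\!{R_{\brho_F}}$ and $\varphi$ the map implementing $R_{\brho_F^{\bsigma}} \cong R_{\brho_F}^{\bsigma}$, the composite $m(\bsigma, {}^\OK\!{R_{\brho_F}})$ is essentially $(1 \otimes \psi) \circ i \circ (\text{intertwiner-induced iso}) \circ (1 \otimes \varphi^{-1})$, where the middle isomorphism $R_{\brho_F^{\bsigma}} \cong R_{\bbeta \otimes \brho_F}$ comes from the assumed isomorphism of residual representations together with universality in both directions. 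Second, I would check the reduction-mod-$\m$ claim: by construction the composite sends $(\brho_F^{\bsigma})^{\univ}$ to (a conjugate of) $\eta \otimes \brho_F^{\univ}$, and reducing modulo the maximal ideals and tracking the Teichm\"uller/residual pieces shows the induced map on residue fields is exactly $x \otimes y \mapsto \bsigma(x) y$, i.e.\ $m(\bsigma, \F)$. Third, the final sentence — that $m(\bsigma, {}^\OK\!{R_{\brho_F}}) \circ \iota(\bsigma, {}^\OK\!{R_{\brho_F}})$ lifts $\bsigma$ — is then immediate, since $m(\bsigma, \F) \circ \iota(\bsigma, \F)$ is $\bsigma$ by the definition of $\iota$ and $m$ on $\F$, and reduction mod $\m$ commutes with the whole construction.

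The main obstacle, I expect, is bookkeeping: keeping straight the left versus right $W$-module structures on the various $A^{\bsigma}$ and ${}^\OK\!A$, and verifying that the isomorphism $R_{\brho_F^{\bsigma}} \cong R_{\bbeta \otimes \brho_F}$ induced by the assumed residual isomorphism is genuinely canonical (independent of the choice of intertwiner) — this uses that $\brho_F$ is absolutely irreducible so the intertwiner is unique up to a scalar in $\F^\times$, and conjugation by a scalar is trivial. There is a subtlety that the residual isomorphism $\brho_F^{\bsigma} \cong \bbeta \otimes \brho_F$ is a priori only defined over $\F$ (or a finite extension); one must check it can be realized over $\F$ itself, which follows because both sides have trace valued in $\F$ (the Chebotarev argument in the text, applied to $\bbeta$ valued in $\F^\times$) and $\brho_F$ is absolutely irreducible, so the pseudo-representations agree over $\F$ and hence the representations are conjugate over $\F$. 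Once these compatibilities are nailed down, no hard computation remains — the lemma is a formal consequence of Mazur's universality applied three times.
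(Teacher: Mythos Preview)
Your proposal is correct and takes essentially the same approach as the paper: define $m(\bsigma,{}^\OK\!R_{\brho_F})$ as $(1\otimes\psi)\circ(1\otimes\varphi)$ and verify the residual behavior, with $\psi$ reducing to the identity on $\F$ and $\varphi$ reducing to $m(\bsigma,\F)$. A few minor points: the paper's $\varphi$ already goes in the direction $R_{\brho_F}^{\bsigma}\to R_{\brho_F^{\bsigma}}$ (no inverse is needed), and the paper dispenses with your ``intertwiner-induced iso'' by simply declaring $R_{\brho_F^{\bsigma}}=R_{\bbeta\otimes\brho_F}$, since isomorphic absolutely irreducible residual representations have literally the same deformation functor---so your canonicity worry, while legitimate, dissolves once you note the functor depends only on the strict equivalence class.
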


\begin{proof}
Note that since $\brho_F^{\bsigma} \cong \bbeta \otimes \brho_F$, by definition we have $R_{\brho_F^{\bsigma}} = R_{\bbeta \otimes \brho_F}$.  Let $\varphi$ and $\psi$ be the $W$-algebra homomorphisms defined above, and define $m(\bsigma, {}^\OK\!{R_{\brho_F}}) = (1 \otimes \psi) \circ (1 \otimes \varphi)$.  We will show that $1 \otimes \varphi$ induces $m(\bsigma, \F)$ and $1 \otimes \psi$ induces the identity on $\F$.  Note that $\F$ is the residue field of $\OK$ since $\bchi$ takes values in $\F$.  Therefore residually all of the tensor products with $\OK$ disappear.  Hence it suffices to show that $\varphi$ induces $m(\bsigma, \F)$ and $\psi$ acts trivially on $\F$.

By definition $\F$ is generated by $\{\overline{a(\ell, F)} : \ell \nmid N\}$.  Therefore it suffices to check that $\psi$ acts trivially on $\overline{a(\ell, F)}$ for any prime $\ell$ not dividing $N$.  But $\psi \circ (\bbeta \otimes \brho_F)^{\univ} \cong \eta \otimes \brho_F^{\univ}$.  Evaluating at $\Frob_\ell$, taking traces, and reducing to the residue field shows that $\psi$ induces the identity on $\F$.

Let $\bvarphi : \F \otimes_{\bsigma} \F \to \F$ be the residual map induced by $\varphi$.  By reducing \eqref{univ prop of varphi} to the residue field we find that $\bsigma \circ \brho_F \cong \bvarphi \circ \iota(\bsigma, \F) \circ \brho_F$.  By universality we conclude that $\bsigma = \bvarphi \circ \iota(\bsigma, \F)$.  But $\bsigma = m(\bsigma, \F) \circ \iota(\bsigma, \F)$ and hence $\bvarphi = m(\bsigma, \F)$, as desired.    
\end{proof}

When we write ${}^\OK\!{A^{\bsigma}}$ we are viewing $A^{\bsigma}$ as a $W$-algebra via the \textit{left} action of $W$.  That is, $w(a \otimes w') = aw \otimes w'$.  Since $\OK \subset \I'$ we have a natural multiplication ring homomorphism $m : {}^\OK\!\I' \to \I'$ given by $m(b \otimes a) = ba$ for $b \in \OK, a \in \I'$.  This induces a multiplication ring homomorphism $m \otimes 1 : {}^\OK\!{\I'^{\bsigma}} \to \I'^{\bsigma}$.  Let $\alpha = m \circ (1 \otimes \alpha(\rho_F))$ and $\beta = m \circ (1 \otimes \nu)$.  We have the following diagram in which everything commutes when it makes sense.    

 \[\xymatrix @=45pt@R=20pt{
	{}^\OK\!{R_{\brho_F}}\ar@{->}[r]^{1 \otimes \iota(\bsigma, R_{\brho_F})}\ar@{->}[d]_{\alpha} & {}^\OK\!{R_{\brho_F}^{\bsigma}}\ar@{->}[r]^{1 \otimes \varphi}\ar@{->}[d]_{\alpha \otimes 1} & {}^\OK\!{R_{\brho_F^{\bsigma}}} = {}^\OK\!{R_{\bbeta \otimes \brho_F}}\ar@{->}[d]_{\beta}\ar@{->}[r]^{1 \otimes \psi} & {}^\OK\!{R_{\brho_F}}\ar@{->}[d]_{\alpha} \\
	\I'\ar@{->}[r]^{\iota(\bsigma, \I')} & \I'^{\bsigma} & \I'\ar@{=}[r] & \I' \\
	\Lambda_W\ar@{->}[r]_{\iota(\bsigma, \Lambda_W)} \ar@{-}[u] & \Lambda_W^{\bsigma}\ar@{->}[r]_{m(\bsigma, \Lambda_W)}\ar@{-}[u] & \Lambda_W\ar@{=}[r]\ar@{-}[u] & \Lambda_W\ar@{-}[u]
}
\]  

We claim that $\alpha$ is surjective.  To see this it suffices to show that ${}^\OK\!{R_{\brho_F}}$ is generated over $\Lambda_\OK$ by 
\[
S = \{\tr \brho_F^{\univ}(\Frob_\ell) : \ell \nmid N\}.
\]
By the Chebotarev density theorem the $\Lambda_\OK$-algebra $R_{\brho_F}'$ generated by $S$ is just the universal deformation ring for the pseudo representation $\tr \brho_F$.  As $\brho_F$ is absolutely irreducible $R_{\brho_F}' = {}^\OK\!{R_{\brho_F}}$.  Since $\alpha(\tr \brho_F^{\univ}(\Frob_\ell)) = a(\ell, F)$ for all primes $\ell$ not dividing $N$ it follows that $\alpha$ is surjective.

Define $\Sigma := (1 \otimes \psi) \circ (1 \otimes \varphi) \circ (1 \otimes \iota(\bsigma, R_{\brho_F})) = m(\bsigma, {}^\OK\!{R_{\brho_F}}) \circ \iota(\bsigma, {}^\OK\!{R_{\brho_F}})$.  Since $\alpha$ is surjective, if $\Sigma(\ker \alpha) = \ker \alpha$ then $\Sigma$ descends to an automorphism of $\I'$.  Recall that $\cap_{\Pp'} \Pp' = 0$, where the intersection is taken over all arithmetic primes of $\I'$ which lie over arithmetic primes of $\Lambda_W$.  Thus $\ker \alpha = \cap_{\Pp'} \alpha^{-1}(\Pp')$.  Note that as $W$ is unramified over $\Z_p$ it does not contain any non-trivial $p$-power roots of unity.  Therefore all arithmetic primes of $W$ are of the form $P_{k, 1}$ with $k \geq 2$.  Therefore it suffices to show that $\Sigma$ acts on the set $\{\alpha^{-1}(\Pp') : \Pp' | P_{k, 1} \text{ for some } k \geq 2\}$.

Let $\Pp'$ be an arithmetic prime of $\I'$ that lies over $P_{k, 1}$.  By the commutativity of the above diagram and the definition of $m(\bsigma, \Lambda_W)$, we see that $\Sigma(1 + T - (1 + p)^k) = 1 + T - (1 + p)^k$.  In particular, $\Sigma(P_{k, 1})$ is an arithmetic prime of $\Lambda_W$.  Therefore $\alpha \circ \Sigma(\alpha^{-1}(\Pp'))$ is an arithmetic prime of $\I'$ lying over $P_{k, 1}$.  It follows that there is an automorphism $\tilde{\sigma}$ of $\I'$ such that 
\begin{equation}\label{descent of universal lift of bar sigma}
\tilde{\sigma} \circ \alpha = \alpha \circ \Sigma.
\end{equation}
We now show that $\tilde{\sigma}$ has the properties in the statement of Theorem \ref{deformation lifting}.

\begin{proof}[Proof of Theorem \ref{deformation lifting}]
By applying \eqref{descent of universal lift of bar sigma} to $\tr \brho_F^{\univ}(\Frob_\ell)$ for any prime $\ell$ not dividing $N$ and using the definition of the maps making up $\Sigma$ we see that $\tilde{\sigma}(a(\ell, F)) = \eta(\ell)a(\ell, F)$.  

It remains to show that $\sigma \circ \Pp_1 = \Pp_2 \circ \tilde{\sigma}$.  Let $j : R_{\brho_F} \to {}^\OK\!{R_{\brho_F}}$ be the usual inclusion given by $x \mapsto 1 \otimes x$.  Note that $\rho_{f_{\Pp_1}} \cong \Pp_1 \circ \alpha \circ j \circ  \brho_F^{\univ}$ and thus $\rho_{f_{\Pp_1}}^\sigma \cong \sigma \circ \Pp_1 \circ \alpha \circ j \circ  \brho_F^{\univ}$.  On the other hand, since $\rho_{f_{\Pp_1}}^\sigma \cong \eta \otimes \rho_{f_{\Pp_2}} \cong \Pp_2 \circ \beta \circ i \circ (\bbeta \otimes \brho_F)^{\univ}$ it follows from \eqref{univ prop of varphi} and the commutativity of the big diagram that 
\[
\rho_{f_{\Pp_1}}^\sigma \cong \Pp_2 \circ \alpha \circ \Sigma \circ j \circ \brho_F^{\univ}.
\]
Using \eqref{descent of universal lift of bar sigma}, by universality we conclude that $\sigma \circ \Pp_1 \circ \alpha \circ j = \Pp_2 \circ \tilde{\sigma} \circ \alpha \circ j$.  We claim that $\alpha \circ j$ surjects onto $\I'$, from which it follows that $\sigma \circ \Pp_1 = \Pp_2 \circ \tilde{\sigma}$.  Indeed, $\im \alpha \circ j = \Lambda_W[\{a(\ell, F) : \ell \nmid N\}]$ since $R_{\brho_F}$ is generated by $\{\tr \brho_F^{\univ}(\Frob_\ell) : \ell \nmid N\}$ over $\Lambda_W$.  So we just need to show that the values of $\chi$ are generated by $\{a(\ell, F) : \ell \nmid N\}$ over $\Lambda_W$.  Define $\kappa : 1 + p\Z_p \to \Lambda^\times$ by $\kappa((1 + p)^s) = (1 + T)^s$ for $s \in \Z_p$.  Recall that for $\ell \nmid N$ we have $\det \rho_F(\Frob_\ell) = \chi(\ell)\kappa(\langle \ell \rangle)\ell^{-1}$.  As $\kappa(\langle \ell \rangle)\ell^{-1} \in \Lambda^\times$ it follows that the values of $\chi$ are in $\im \alpha \circ j$.  Thus $\im \alpha \circ j = \I'$, as desired. 
\end{proof}   

We finish this section by recalling a lemma of Momose that shows that $\eta$ automatically takes values in $\Z_p[\chi]$ if $\Pp_1 = \Pp_2$.  Thus Theorem \ref{deformation lifting} says that whenever a conjugate self-twist of a classical specialization $f_\Pp$ of $F$ induces an automorphism of $\Q_p(f_\Pp)$, that conjugate self-twist can be lifted to a conjugate self-twist of the whole family $F$.
\begin{lemma}[Lemma 1.5 \cite{Mo}]\label{twist characters and Nebentypus}
If $\sigma$ is a conjugate self-twist of $f \in S_k(\Gamma_0(N), \chi)$, then $\eta_\sigma$ is the product of a quadratic character with some power of $\chi$.  In particular, $\eta_\sigma$ takes values in $\Z[\chi]$.
\end{lemma}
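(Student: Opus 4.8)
\section*{Proof proposal}

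The plan is to extract the relation purely from the determinant of the twisting isomorphism $\rho_f^\sigma \cong \rho_f \otimes \eta_\sigma$, followed by a short cyclotomic computation. First I would record the standard fact that the Nebentypus takes values in the Hecke field $K := \Q(\{a(n, f) : n \in \Z^+\})$: for $\ell \nmid N$ the Hecke relation $a(\ell^2, f) = a(\ell, f)^2 - \chi(\ell)\ell^{k-1}$ shows that $\chi(\ell) \in K$, and since by Dirichlet's theorem on primes in arithmetic progressions the classes of the primes $\ell \nmid N$ exhaust $(\Z/N\Z)^\times$, every value of $\chi$ lies in $K$. Writing $m$ for the order of $\chi$, this says $\Q(\zeta_m) = \Q(\chi) \subseteq K$, so $\sigma$ restricts to an automorphism of $\Q(\zeta_m)$, hence acts on $m$-th roots of unity by $\zeta \mapsto \zeta^a$ for a unique $a \in (\Z/m\Z)^\times$; equivalently $\chi^\sigma = \chi^a$, where $\chi^\sigma(n) := \sigma(\chi(n))$.

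Next I would pass from the trace identity $\sigma(a(\ell, f)) = \eta_\sigma(\ell) a(\ell, f)$, valid for all but finitely many $\ell$, to an identity of determinants. Since $\rho_f$ is absolutely irreducible in characteristic zero, so are $\rho_f^\sigma$ and $\rho_f \otimes \eta_\sigma$; as their traces agree on a set of Frobenii of density one, the Carayol--Serre lemma (equivalently Chebotarev together with Brauer--Nesbitt) gives $\rho_f^\sigma \cong \rho_f \otimes \eta_\sigma$, and in particular $\det \rho_f^\sigma = \det(\rho_f \otimes \eta_\sigma) = (\det \rho_f)\cdot \eta_\sigma^2$. Evaluating at $\Frob_\ell$ for $\ell \nmid N$, using $\det \rho_f(\Frob_\ell) = \chi(\ell)\ell^{k-1}$ and the fact that $\sigma$ fixes $\ell^{k-1} \in \Q$, the left side is $\chi^a(\ell)\ell^{k-1}$ and the right side is $\eta_\sigma(\ell)^2\chi(\ell)\ell^{k-1}$. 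Cancelling $\ell^{k-1}$ and comparing the two Dirichlet characters, which now agree at almost all primes, yields the key identity $\eta_\sigma^2 = \chi^{a-1}$.

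It remains to exhibit a square root of $\chi^{a-1}$ inside the cyclic group $\langle \chi \rangle$. I would solve $2b \equiv a - 1 \pmod m$ for an integer $b$; this congruence is solvable precisely because $\gcd(2, m)$ divides $a - 1$: if $m$ is odd then $2$ is a unit modulo $m$, while if $m$ is even then $a$, being prime to $m$, is odd, so $a - 1$ is even. With such a $b$ we get $\eta_\sigma^2 = \chi^{2b} = (\chi^b)^2$, so $\psi := \eta_\sigma \chi^{-b}$ satisfies $\psi^2 = \mathbf{1}$; hence $\eta_\sigma = \psi \chi^b$ is a quadratic (possibly trivial) character times a power of $\chi$, which is the first assertion. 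Its values then lie in $\{\pm 1\}\cdot \mu_m \subseteq \Z[\zeta_m] = \Z[\chi]$, giving the ``in particular''. The only step that requires genuine care rather than formal manipulation is this last one: one must check that $\chi^{a-1}$ is a square \emph{inside} $\langle \chi \rangle$, not merely a square in the ambient group of all Dirichlet characters, and this is exactly the parity observation that $a$ is forced to be odd when the order of $\chi$ is even.
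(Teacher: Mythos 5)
Your proof is correct and follows essentially the same route as the paper's: take determinants of $\rho_f^\sigma \cong \rho_f \otimes \eta_\sigma$ to obtain $\eta_\sigma^2 = \chi^{a-1}$ where $\chi^\sigma = \chi^a$, then observe that $a-1$ is divisible by $\gcd(2,m)$ (because $a$ must be odd when $m = \mathrm{ord}(\chi)$ is even) so that $\chi^{a-1}$ is a square inside $\langle\chi\rangle$. You spell out two preliminary points the paper leaves implicit --- that $\chi$ is valued in the Hecke field so $\sigma$ indeed acts on it, and the Carayol--Serre step promoting the trace identity to an isomorphism --- but the mathematical content is the same.
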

The proof of Lemma \ref{twist characters and Nebentypus} is not difficult and goes through without change in the $\I$-adic setting.  For completeness, we give the proof in that setting.

\begin{lemma}\label{powers of chi}
If $\sigma$ is a conjugate self-twist of $F$ then $\eta_\sigma$ is the product of a quadratic character with some power of $\chi$.  In particular, $\eta_\sigma$ has values in $\Z[\chi]$.
\end{lemma}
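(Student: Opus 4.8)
The plan is to run the proof of Lemma \ref{twist characters and Nebentypus} (Momose's Lemma 1.5) in the $\Lambda_\chi$-adic setting; the only new ingredient is the shape of $\det\rho_F$. By the Carayol--Serre lemma recalled above, $\rho_F^{\sigma}\cong\rho_F\otimes\eta_\sigma$, so taking determinants yields the identity $(\det\rho_F)^{\sigma}=(\det\rho_F)\,\eta_\sigma^{2}$ of characters of $G_\Q$. Recall from the proof of Theorem \ref{deformation lifting} that $\det\rho_F(\Frob_\ell)=\chi(\ell)\,\kappa(\langle\ell\rangle)\,\ell^{-1}$ for every prime $\ell\nmid N$, where $\kappa((1+p)^{s})=(1+T)^{s}$ and $\langle\ell\rangle$ denotes the projection of $\ell$ to $1+p\Z_p$. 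Now $\sigma$ is the identity on $\Z_p$ (being a continuous automorphism of the complete local ring $\I'$ that fixes $\Z$) and fixes $\Lambda=\Z_p[[T]]$ pointwise, so it fixes $\kappa(\langle\ell\rangle)\ell^{-1}$. Evaluating the determinant identity at $\Frob_\ell$ therefore collapses to
\[
\eta_\sigma(\ell)^{2}=\frac{\sigma(\chi(\ell))}{\chi(\ell)}\qquad(\ell\nmid N).
\]

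Next I would record how $\sigma$ moves roots of unity. Let $n$ be the order of $\chi$, so the values of $\chi$ generate the group $\mu_n\subseteq\Z_p[\chi]\subseteq\I'$. Since $\sigma$ fixes $\Q$ it restricts to an automorphism of $\Q(\mu_n)\subseteq Q(\I)$, necessarily of the form $\zeta\mapsto\zeta^{a}$ for some integer $a$ with $\gcd(a,n)=1$. Substituting gives $\eta_\sigma(\ell)^{2}=\chi(\ell)^{a-1}$ for all $\ell\nmid N$, and by equidistribution of Frobenius classes (Chebotarev, or just Dirichlet's theorem on primes in arithmetic progressions) this upgrades to the equality of Dirichlet characters $\eta_\sigma^{2}=\chi^{a-1}$.

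To finish I would adjust the parity of $a$. If $n$ is even then $a$, being prime to $n$, is odd; if $n$ is odd we may replace $a$ by $a-n$, which changes neither $\chi^{a-1}$ nor the condition $\gcd(a,n)=1$. So we may assume $a-1$ is even, say $a-1=2m$. Then $(\eta_\sigma\chi^{-m})^{2}=\chi^{2m}\chi^{-2m}=1$, so $\psi:=\eta_\sigma\chi^{-m}$ is a quadratic character and $\eta_\sigma=\psi\chi^{m}$ exhibits $\eta_\sigma$ as the product of a quadratic character with a power of $\chi$. Since $\psi$ has values in $\{\pm1\}$ and $\chi^{m}$ has values in $\mu_n\subseteq\Z[\zeta_n]=\Z[\chi]$, we conclude that $\eta_\sigma$ takes values in $\Z[\chi]$.

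Everything after the determinant identity is the same bookkeeping as in Momose's argument, so the computation is routine. The one point meriting attention — and the only genuine difference from the classical case — is the justification that $\sigma$ fixes the ``$\Lambda$-adic and unit'' part $\kappa(\langle\ell\rangle)\ell^{-1}$ of $\det\rho_F$, i.e.\ that a conjugate self-twist acts $\Z_p[[T]]$-linearly on $\I'$; granting this (it is built into the way $\Gamma$ is set up), the remainder of the proof is Momose's verbatim.
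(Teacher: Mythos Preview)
Your proof is correct and follows essentially the same route as the paper's: take determinants of $\rho_F^{\sigma}\cong\eta_\sigma\otimes\rho_F$ to obtain $\eta_\sigma^{2}=\chi^{\sigma}\chi^{-1}$, write $\chi^{\sigma}=\chi^{a}$, and run a parity argument on $a-1$. The only cosmetic difference is in that parity step---the paper handles the odd-order case by noting that every power of $\chi$ is already a square, whereas you shift $a$ by $n$; and you are more explicit than the paper in flagging that one must know $\sigma$ fixes $\kappa(\langle\ell\rangle)\ell^{-1}\in\Lambda$ (the paper uses this silently in the line ``Substituting\ldots yields $\eta_\sigma^{2}=\chi^{\sigma}\chi^{-1}$'').
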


\begin{proof}
As $\brho_F$ is absolutely irreducible, $\rho_F^\sigma \cong \eta_\sigma \otimes \rho_F$.  Thus $\sigma(\det \rho_F) = \eta_\sigma^2 \det \rho_F$.  Recall that for all primes $\ell$ not dividing $N$ we have
\[
\det \rho_F(\ell) = \chi(\ell)\kappa(\langle \ell \rangle) \ell^{-1},
\]
where $\kappa : 1 + p\Z_p \to \Lambda^\times$ is as in the proof of Theorem \ref{deformation lifting}.  Substituting this expression for $\det \rho_F$ into $\sigma(\det \rho_F) = \eta_\sigma^2\det \rho_F$ yields $\eta_\sigma^2 = \chi^{\sigma}\chi^{-1}$.  

Recall that $\chi^{\sigma} = \chi^\alpha$ for some integer $\alpha > 0$.  To prove the result it suffices to show that there is some $i \in \Z$ such that $\eta_\sigma^2 = \chi^{2i}$.  If $\chi$ has odd order then there is a positive integer $j$ for which $\chi = \chi^{2j}$.  Thus $\eta_\sigma^2 = \chi^{\sigma - 1} = \chi^{2j(\alpha - 1)}$.  If $\chi$ has even order then $\chi^{\sigma}$ also has even order since $\sigma$ is an automorphism.  Thus $\alpha$ must be odd.  Then $\alpha - 1$ is even and $\eta_\sigma^2 = \chi^\sigma\chi^{-1} = \chi^{\alpha - 1}$, as desired.
\end{proof}  

\section{Sufficiency of open image in product}\label{sufficiency open product image}
Recall that $H_0 = \cap_{\sigma \in \Gamma} \ker(\eta_\sigma)$ and $H = H_0 \cap \ker(\det \brho_F)$.  For a variety of reasons, our methods work best for representations valued in $\SL_2(\I_0)$ rather than $\GL_2(\I')$.  Therefore, for the next three sections we assume the following theorem, the proof of which is given in section \ref{same basis}.

\begin{restatable}{theorem}{makingrho}\label{making rho}
Assume that $\brho_F$ is absolutely irreducible and $H_0$-regular.  If $V = \I'^2$ is the module on which $G_\Q$ acts via $\rho_F$, then there is a basis for $V$ such that all of the following happen simultaneously:
\begin{enumerate}
\item $\rho_F$ is valued in $GL_2(\I')$;
\item $\rho_F|_{D_p}$ is upper triangular;
\item $\rho_F|_{H_0}$ is valued in $\GL_2(\I_0)$;
\item There is a matrix $\textbf{j} = \bigl(\begin{smallmatrix} 
\zeta & 0\\
0 & \zeta'
\end{smallmatrix}\bigr)$, where $\zeta$ and $\zeta'$ are roots of unity, such that $\textbf{j}$ normalizes the image of $\rho_F$ and $\zeta \not\equiv \zeta' \bmod p$.
\end{enumerate}
\end{restatable}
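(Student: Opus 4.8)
The plan is to begin from the $\GL_2(\I')$-valued model of $\rho_F$ already supplied by the pseudo-representation argument (this is item (1)), and then refine the basis in three stages, each preserving what was obtained before. \textbf{Item (2).} By Theorem 4.3.2 of \cite{GME}, $\rho_F|_{D_p}$ stabilizes a line over $Q(\I)$ on which $D_p$ acts through $\varepsilon$; it is the unique $D_p$-stable line, since by Zhao's indecomposability result \cite{Zhao} it cannot be split off and by $H_0$-regularity $\bvarepsilon \neq \bdelta$. As is standard in Hida theory, the ordinary filtration is integral: the intersection of this line with $\I'^2$ is a free rank-one $\I'$-direct summand $L$, and choosing a basis adapted to $0 \subset L \subset \I'^2$ — a change of basis in $\GL_2(\I')$ — makes $\rho_F|_{D_p}$ upper triangular, with diagonal characters $\varepsilon$ and $\delta$.

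\textbf{Item (3).} Using $H_0$-regularity, fix $d_0 \in D_p \cap H_0$ with $\bvarepsilon(d_0) \neq \bdelta(d_0)$. Then $\rho_F(d_0)$ is upper triangular with eigenvalues $\varepsilon(d_0), \delta(d_0)$ whose difference is a unit of $\I'$, so conjugating by a unipotent upper-triangular matrix over $\I'$ makes $\rho_F(d_0)$ diagonal while keeping $\rho_F|_{D_p}$ upper triangular. For $\sigma \in \Gamma$, the Carayol--Serre lemma (Proposition 2.13 of \cite{MFG}) gives $t_\sigma \in \GL_2(\I')$ with $\rho_F^\sigma = t_\sigma(\rho_F \otimes \eta_\sigma)t_\sigma^{-1}$; since the twist carries the unique $D_p$-stable line to the unique $D_p$-stable line, one has $\varepsilon^\sigma = \eta_\sigma\varepsilon$ and $\delta^\sigma = \eta_\sigma\delta$, and evaluating the intertwining relation at $d_0$ shows that $t_\sigma$ commutes with the diagonal matrix $\rho_F(d_0)$; hence $t_\sigma = \mathrm{diag}(\alpha_\sigma, \beta_\sigma)$ with $\alpha_\sigma, \beta_\sigma \in (\I')^\times$. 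Because $\eta_\sigma|_{H_0} = 1$, for $h \in H_0$ we get $\sigma(\rho_F(h)) = t_\sigma \rho_F(h) t_\sigma^{-1}$; comparing entries, the diagonal entries of $\rho_F(h)$ already lie in $\I_0$, while the off-diagonal entries are multiplied by $\alpha_\sigma/\beta_\sigma$ under $\sigma$. The cocycle $\sigma \mapsto \alpha_\sigma/\beta_\sigma$ is a coboundary over $Q(\I)$ by Hilbert 90; using that $\I'$ is local and finite over $\I_0$, one chooses the trivializing element $s$ so that conjugating $\rho_F$ by $\mathrm{diag}(s, s^{-1})$ removes the semi-invariance — placing $\rho_F|_{H_0}$ in $\GL_2(\I_0)$ — and still keeps $\rho_F$ valued in $\GL_2(\I')$. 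Being diagonal, this conjugation preserves item (2) and the diagonality of $\rho_F(d_0)$.

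\textbf{Item (4).} In the basis now fixed, $\rho_F(D_p \cap H_0)$ is a closed subgroup of the upper-triangular matrices in $\GL_2(\I_0)$ whose reduction contains $\brho_F(d_0)$, an element of order prime to $p$ (its eigenvalues $\bvarepsilon(d_0) \neq \bdelta(d_0)$ lie in $\F^\times$). The kernel of reduction is pro-$p$, so by Schur--Zassenhaus the cyclic group $\langle \brho_F(d_0)\rangle$ lifts to a finite subgroup of $\rho_F(D_p \cap H_0)$; a generator $\mathbf{j}'$ is an upper-triangular element of finite order whose diagonal entries $\zeta, \zeta'$ are roots of unity reducing to $\bvarepsilon(d_0) \neq \bdelta(d_0)$, so $\zeta \not\equiv \zeta' \bmod p$. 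Since $\zeta - \zeta' \in \I_0^\times$, a unipotent upper-triangular conjugation over $\I_0$ turns $\mathbf{j}'$ into $\mathbf{j} = \mathrm{diag}(\zeta, \zeta')$ without disturbing items (1)--(3), and $\mathbf{j}$ normalizes $\im \rho_F$ since it lies in it. The step I expect to be the main obstacle is item (3): carrying out the descent to $\I_0$ concretely while simultaneously preserving integrality (item (1)) and the $D_p$-triangular normalization forces one to track every conjugating matrix inside the correct subgroup and to control a cocycle with values in $(\I')^\times$, where Hilbert 90 does not apply directly; this is where the hypotheses that $\brho_F$ is absolutely irreducible and $H_0$-regular are really used.
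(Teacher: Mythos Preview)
Your treatment of items (1), (2), and (4) is fine and matches the paper's: Theorem~4.3.2 of \cite{GME} gives the integral upper-triangular form, and your Schur--Zassenhaus lift of $\langle\brho_F(d_0)\rangle$ followed by a unipotent conjugation over $\I_0$ is equivalent to the paper's limit $\lim_n \rho_F(h)^{q^n}$ (Lemma~\ref{existence of j}).

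For item (3), however, your approach diverges from the paper's and contains a genuine gap. You correctly show that each $t_\sigma$ can be taken diagonal, so the diagonal entries of $\rho_F(h)$ for $h\in H_0$ lie in $\I_0$ and the off-diagonal entries are semi-invariants for the $1$-cocycle $\gamma_\sigma=\alpha_\sigma/\beta_\sigma\in(\I')^\times$. But Hilbert~90 only trivializes $\gamma$ over the \emph{field} $Q(\I')$: it gives $s\in Q(\I')^\times$ with $s^\sigma/s=\gamma_\sigma$, and there is no reason for $s$ to lie in $(\I')^\times$. Conjugating by $\mathrm{diag}(s,1)$ then destroys item (1). Your phrase ``using that $\I'$ is local and finite over $\I_0$, one chooses the trivializing element $s$\ldots'' hides exactly the hard step: what you need is that the class of $\gamma$ vanishes in $H^1(\Gamma,(\I')^\times)$, and this does not follow from Hilbert~90. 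Concretely, one can write $\gamma_\sigma=b(h_0)^\sigma/b(h_0)$ for any $h_0\in H_0$ with nonzero upper-right entry $b(h_0)$, but $b(h_0)$ is typically not a unit of $\I'$.

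The paper handles item (3) quite differently (Section~\ref{same basis}). It first proves that $\Gamma$ is a finite abelian $2$-group (Proposition~\ref{Gamma is a 2 group}); since $1+\m_{\I'}$ is pro-$p$ with $p$ odd, this gives $H^j(\Gamma,1+\m_{\I'})=0$ and reduces all unit-cohomology questions to the residue field $\F$. It then does not try to make $t_\sigma$ scalar. Instead it shows that the $2$-cocycle $c(\sigma,\tau)=t_{\sigma\tau}^{-1}t_\sigma^\tau t_\tau$ vanishes in $H^2(\Gamma,\F^\times)$ via a case analysis of the centralizer of $\brho_F|_{H_0}$ (Proposition~\ref{residual cases}, Lemma~\ref{c vanishes in H2}), adjusts the $t_\sigma$ so that $\sigma\mapsto t_\sigma$ is a genuine nonabelian $1$-cocycle, and uses this to define a semilinear $\Gamma$-action $v^{[\sigma]}=t_\sigma^{-1}v^\sigma$ on $V$ compatible with the ordinary filtration. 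The descent is then carried out by showing $V^\Gamma$ is free of rank $2$ over $\I_0$ (Theorem~\ref{restricting to H}), an argument that uses Artin's linear independence of characters on the residue field rather than Hilbert~90. If you want to salvage your $H^1$ approach, you must import the $2$-group result and then argue at the level of $\F^\times$ that your specific cocycle $\bar\gamma_\sigma$ is a coboundary; this is not automatic and is essentially what the paper's residual case analysis provides.
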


Let $H' = \ker(\det \brho_F)$.  For any $h \in H'$ we have $\det \rho_F(h) \in 1 + \m_{\I'}$.  Since $p \neq 2$ and $\I'$ is $p$-adically complete, we have
\[
\sqrt{\det \rho_F(h)} = \sum_{n = 0}^\infty {{1/2}\choose{n}}(\det \rho_F(h) - 1)^n \in \I'^\times.
\]
Sine $\rho_F$ is a $2$-dimensional representation $\rho_F|_{H'} \otimes \sqrt{\det \rho_F|_{H'}}^{-1}$ takes values in $\SL_2(\I')$.  Restricting further it follows from Theorem \ref{making rho} that 
\[
\rho := \rho_F|_H \otimes \sqrt{\det \rho_F|_H}^{-1}
\]
takes values in $\SL_2(\I_0)$.  Note that the image of $\rho$ is still normalized by the matrix $\textbf{j}$ of Theorem \ref{making rho} since we only modified $\rho_F$ by scalars, which commute with $\textbf{j}$.  In the following proposition we see that $\rho_F$ is $\I_0$-full if and only if $\rho$ is $\I_0$-full.  In the next three sections we prove that $\rho$ is $\I_0$-full.

\begin{proposition}\label{transferring congruence subgroup}
Assume $|\F| \neq 3$.  The representation $\rho_F$ is $\I_0$-full if and only if $\rho$ is $\I_0$-full.
\end{proposition}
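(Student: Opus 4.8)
The plan is to show that the two representations differ only by a ``small'' modification, so that having a congruence subgroup in the image is equivalent for both. Recall $\rho = \rho_F|_H \otimes \sqrt{\det \rho_F|_H}^{-1}$, where $H = H_0 \cap \ker(\det\brho_F)$ is an open normal subgroup of $G_\Q$ of finite index. The essential point is that passing from $\rho_F$ to $\rho$ involves two operations: restricting to the finite-index subgroup $H$, and twisting by the $\I_0^\times$-valued character $\sqrt{\det\rho_F}^{-1}$ (which, since $p\neq 2$ and $\I'$ is $p$-adically complete, takes values in $1 + \m_{\I_0}$ on $H$). I would first address the twist: if $r : G \to \GL_2(A)$ is any representation and $\chi : G \to A^\times$ is a character with values in $1 + \m_A$, then $\gamma(\im r)\gamma^{-1} \supseteq \Gamma_B(\bb)$ if and only if $\gamma(\im(r\otimes\chi))\gamma^{-1} \supseteq \Gamma_B(\bb')$ for a possibly different nonzero $B$-ideal $\bb'$. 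The reason is that on $\SL_2$ the twist by a scalar is trivial, so $r$ and $r\otimes\chi$ have the same image intersected with $\SL_2$; more precisely $\im(r\otimes\chi) \cdot (\text{scalars}) = \im r \cdot (\text{scalars})$, and a congruence subgroup $\Gamma_B(\bb)$ consists of elements of $\SL_2(B)$, so it is unaffected by scalar adjustments. Thus $\rho_F|_H$ is $\I_0$-full if and only if $\rho$ is $\I_0$-full.

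Second, I would handle the restriction to $H$. One direction is immediate: if $\rho_F|_H$ contains (a conjugate of) a congruence subgroup $\Gamma_{\I_0}(\bb)$ then so does $\rho_F$, since $\im(\rho_F|_H) \subseteq \im \rho_F$; hence $\rho$ $\I_0$-full implies $\rho_F$ $\I_0$-full. For the converse, suppose $\gamma(\im\rho_F)\gamma^{-1} \supseteq \Gamma_{\I_0}(\Aa)$ for some nonzero ideal $\Aa$. Then $\gamma(\im(\rho_F|_H))\gamma^{-1} \supseteq \Gamma_{\I_0}(\Aa) \cap \gamma(\rho_F(H))\gamma^{-1}$, and since $H$ has finite index in $G_\Q$ the subgroup $\gamma\rho_F(H)\gamma^{-1}$ has finite index in $\gamma(\im\rho_F)\gamma^{-1}$, hence also finite index in $\Gamma_{\I_0}(\Aa)$. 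So I am reduced to the purely group-theoretic claim: a finite-index subgroup of a congruence subgroup $\Gamma_{\I_0}(\Aa)$ of $\SL_2(\I_0)$ contains a congruence subgroup $\Gamma_{\I_0}(\Aa')$ for some nonzero ideal $\Aa'$. This is where the hypothesis $|\F| \neq 3$ enters, and it is the main obstacle: one wants to show that $\Gamma_{\I_0}(\Aa)$ has no proper finite-index subgroups that fail to contain a smaller full congruence subgroup, i.e.\ a congruence subgroup property statement for $\SL_2$ over the pro-$p$ semilocal ring-completions of $\I_0$. The strategy here is to pass to the $p$-adic completion and use the fact that $\Gamma_{\I_0}(\Aa)$ is a finitely generated pro-$p$ group (for $\Aa \subseteq \m_{\I_0}$, after possibly shrinking $\Aa$), together with Pink's Lie algebra theory for $p$-profinite subgroups of $\SL_2$ referenced in the paper: a finite-index closed subgroup $U$ of $\Gamma_{\I_0}(\Aa)$ has an associated Lie algebra $L(U)$ which, being of finite index, is an $\I_0$-Lie subalgebra containing $\Aa'\cdot\Sl_2(\I_0)$ for some nonzero ideal $\Aa'$; exponentiating back recovers $\Gamma_{\I_0}(\Aa')\subseteq U$. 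The condition $|\F|\neq 3$ is exactly what is needed for the exponential/logarithm correspondence between congruence subgroups and their Lie algebras to behave well (the prime $3$ and residue field of size $3$ are the standard exceptional cases for $\SL_2$).

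Assembling the two reductions: $\rho_F$ is $\I_0$-full $\iff$ $\rho_F|_H$ is $\I_0$-full (by the finite-index argument and the congruence subgroup property input) $\iff$ $\rho$ is $\I_0$-full (by the scalar-twist argument). I expect the scalar-twist step to be essentially formal, the ``easy'' direction of the finite-index step to be a one-line observation, and essentially all of the real content to lie in the claim that finite-index subgroups of congruence subgroups of $\SL_2(\I_0)$ again contain congruence subgroups — which I would prove by invoking Pink's theory and the Lie-algebra techniques of Hida that the paper has already announced it will use in section~\ref{sufficiency open product image}, taking care that the only arithmetic obstruction is $|\F| = 3$.
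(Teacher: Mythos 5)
Your proposal takes a genuinely different route from the paper, and there is a real gap in the step you consider easy, while the step you consider hard is not actually where the content lies.

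The gap is in your scalar-twist step. You assert that with $r = \rho_F|_H$ and $\chi = \sqrt{\det\rho_F|_H}^{-1}$, the representations $r$ and $r\otimes\chi$ ``have the same image intersected with $\SL_2$,'' and you deduce the equivalence of $\I_0$-fullness for $\rho_F|_H$ and $\rho$. But this is false. Indeed $\im(r\otimes\chi)\cap\SL_2 = \im\rho$ since $\rho$ is valued in $\SL_2(\I_0)$, whereas $\im r\cap\SL_2 = \rho_F\bigl(\ker(\det\rho_F|_H)\bigr) = \rho\bigl(\ker(\det\rho_F|_H)\bigr)$, which is a proper subgroup of $\im\rho$, and is not even of finite index in general: $\det\rho_F$ has infinite image on $H$ (it involves the cyclotomic character $\kappa$), so $\ker(\det\rho_F|_H)$ has infinite index in $H$. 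Your ``more precise'' restatement $\im(r\otimes\chi)\cdot(\text{scalars}) = \im r\cdot(\text{scalars})$ is correct, but the conclusion about intersections with $\SL_2$ does not follow from it, and the fullness equivalence is exactly the nontrivial assertion of the proposition. The direction $\rho$ $\I_0$-full $\Rightarrow$ $\rho_F|_H$ $\I_0$-full, which you regard as formal, is precisely where the paper puts all the work; it cannot be obtained by intersecting with $\SL_2$, and it is not a finite-index phenomenon.

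The paper's proof is much shorter and uses a different tool: Tazhetdinov's classification \cite{T}, which says (for $|\F|\neq 3$) that a subgroup of $\SL_2(\I_0)$ contains a congruence subgroup if and only if it is subnormal. Assuming $\rho$ is $\I_0$-full, $\im\rho$ is subnormal; the group $G := \im\rho_F|_H\cap\SL_2(\I_0) = \rho\bigl(\ker(\det\rho_F|_H)\bigr)$ is normal in $\im\rho = \rho(H)$ because $\ker(\det\rho_F|_H)$ is normal in $H$; hence $G$ is subnormal in $\SL_2(\I_0)$ and therefore contains a congruence subgroup, and since $G\subseteq\im\rho_F$ this gives $\I_0$-fullness of $\rho_F$. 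This completely sidesteps the infinite-index issue that breaks your argument, and no appeal to Pink's Lie-algebra machinery is needed here (the paper reserves Pink's theory for Section~\ref{sufficiency open product image}). The hypothesis $|\F|\neq 3$ enters only through Tazhetdinov. I would suggest looking for a subnormality argument rather than a finite-index or exponential/logarithm argument: the fact that ``normal in a subnormal subgroup is subnormal'' is the one-line group-theoretic observation that makes the proposition work.
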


\begin{proof}
By Corollary 1 in \cite{T} we see that, so long as $|\F| \neq 3$, a subgroup $G$ of $\SL_2(\I_0)$ contains a congruence subgroup for $\I_0$ if and only if $G$ is a subnormal subgroup of $\SL_2(\I_0)$.  Note that by definition of $\rho$ we have $\im \rho_F|_H \cap \SL_2(\I_0) \subseteq \im \rho$.  Thus if $\rho_F$ is $\I_0$-full it follows immediately that $\rho$ is $\I_0$-full.  It is the converse implication that is interesting.

Assume $\rho$ is $\I_0$-full, so by Corollary 1 in \cite{T} we see that $\im \rho$ is a subnormal subgroup of $\SL_2(\I_0)$.  Let $G = \im \rho_F|_H \cap \SL_2(\I_0)$.  To see that $\rho_F$ is $\I_0$-full it suffices to show that $G$ is a subnormal subgroup of $\SL_2(\I_0)$.  Since $\im \rho$ is subnormal and $G \subseteq \im \rho$ it suffices to show that $G$ is normal in $\im \rho$.  This follows easily from the definition of $\rho$.  
\end{proof}

The purpose of the current section is to make the following reduction step in the proof of Theorem \ref{main result}.

\begin{proposition}\label{open image in product implies main result}
Assume there is an arithmetic prime $P$ of $\Lambda$ such that the image of $\im \rho$ in $\prod_{\Qq | P} \SL_2(\I_0/\Qq)$ is open in the product topology.  Then $\rho$ (and hence $\rho_F$) is $\I_0$-full.
\end{proposition}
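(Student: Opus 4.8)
The plan is to use Pink's theory of Lie algebras attached to $p$-profinite subgroups of $\SL_2$ over $p$-profinite semilocal rings, together with the machinery Hida developed in \cite{H}, to bootstrap openness in the product $\prod_{\Qq \mid P} \SL_2(\I_0/\Qq)$ into the existence of a genuine congruence subgroup of $\SL_2(\I_0)$ inside (a conjugate of) $\im \rho$. First I would pass to a suitable $p$-profinite open subgroup: replacing $H$ by the preimage under $\rho$ of the principal congruence subgroup $\Gamma_{\I_0}(\m_{\I_0})$ (or an appropriate power), we obtain a $p$-profinite group $U = \im \rho \cap \Gamma_{\I_0}(\m_{\I_0})$ to which Pink's formalism applies; this is harmless because having a congruence subgroup is insensitive to passing to such finite-index subgroups. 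To $U$ Pink associates a Lie algebra $\mathfrak{u} \subseteq \mathfrak{sl}_2(\I_0)$, closed under the bracket and stable under conjugation by $U$, and the key feature is that $U$ contains $\Gamma_{\I_0}(\Aa)$ for a non-zero ideal $\Aa$ precisely when $\mathfrak{u}$ contains $\Aa \cdot \mathfrak{sl}_2(\I_0)$ for some non-zero ideal $\Aa$; so the goal becomes showing $\mathfrak{u}$ is ``full'' in this sense.

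Next I would exploit the hypothesis. Openness of the image of $\im\rho$ in $\prod_{\Qq\mid P}\SL_2(\I_0/\Qq)$ means that the reduction of $\mathfrak{u}$ modulo $P\I_0$ spans $\mathfrak{sl}_2(\I_0/P\I_0) \cong \prod_{\Qq\mid P}\mathfrak{sl}_2(\I_0/\Qq)$ (after extending scalars appropriately and using that $P\I_0$ is contained in finitely many height-one primes $\Qq$). In other words $\mathfrak{u} + P\I_0\cdot\mathfrak{sl}_2(\I_0) = \mathfrak{sl}_2(\I_0)$. Since $\I_0$ is a complete local (in fact $\Lambda$-finite) ring and $P$ generates the maximal ideal of $\Lambda$ up to the relation defining the arithmetic prime — more precisely $P\I_0$ lies in the Jacobson radical of $\I_0$ after the local considerations, or one reduces to that case — a topological Nakayama argument for the closed $\I_0$-submodule $\mathfrak{u}$ of the compact module $\mathfrak{sl}_2(\I_0)$ would upgrade the mod-$P$ surjectivity to $\mathfrak{u} = \mathfrak{sl}_2(\I_0)$ outright, or at least to $\mathfrak{u} \supseteq \Aa\cdot\mathfrak{sl}_2(\I_0)$ for a non-zero $\Aa$. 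The technical care here is that $\mathfrak{u}$ is only an $\I_0$-Lie-subalgebra if one checks the module structure — this is exactly where the matrix $\mathbf{j}$ of Theorem \ref{making rho} and the conjugation-stability supplied by Pink's theory, combined with the scalars arising from the self-twist structure, are used to see that $\mathfrak{u}$ absorbs multiplication by enough of $\I_0$ (Hida's argument in \cite{H}).

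Finally, translating back: $\mathfrak{u} \supseteq \Aa\cdot\mathfrak{sl}_2(\I_0)$ gives, via Pink's dictionary (exponentiation, valid since $p>2$ so $\exp$ and $\log$ converge on the relevant congruence subgroups), that $U \supseteq \Gamma_{\I_0}(\Aa')$ for some non-zero ideal $\Aa'$, hence $\im \rho$ contains a congruence subgroup of $\SL_2(\I_0)$; so $\rho$ is $\I_0$-full. Proposition \ref{transferring congruence subgroup} then yields that $\rho_F$ is $\I_0$-full as well. I expect the main obstacle to be the second step: rigorously establishing that Pink's Lie algebra $\mathfrak{u}$ is an honest $\I_0$-submodule (not merely a $\Z_p$-submodule) of $\mathfrak{sl}_2(\I_0)$, so that Nakayama applies — this requires carefully invoking the normalizing element $\mathbf{j}$ with $\zeta \not\equiv \zeta' \bmod p$ to generate the needed scalar multiplications inside the bracket-closure of $\mathfrak{u}$, and checking that the semilocal ring $\I_0/P\I_0$ behaves well enough (reducedness, or at least that its nilradical is controlled) for the product decomposition $\prod_{\Qq\mid P}\mathfrak{sl}_2(\I_0/\Qq)$ to faithfully detect fullness.
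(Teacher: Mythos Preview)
Your broad strategy---pass to a $p$-profinite subgroup, invoke Pink's Lie algebra, and use a Nakayama-type argument with the arithmetic prime $P$---matches the paper. But several points in your execution diverge from what actually works, and one of them is a genuine obstruction.

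First, the module structure on the Lie algebra. You hope to show that Pink's $\mathfrak{u}$ is an $\I_0$-submodule of $\Sl_2(\I_0)$, and you invoke the diagonal matrix $\mathbf{j}$ for this. That is not what happens. The matrix $\mathbf{j}$ is used only to decompose $L$ into eigenspaces for $\Ad(\mathbf{j})$ (diagonal, upper-nilpotent, lower-nilpotent). The $\Lambda$-module structure---and it is only a $\Lambda$-structure, never an $\I_0$-structure---comes from a \emph{different} normalizer, namely $\mathbf{J} = \bigl(\begin{smallmatrix} 1+T & 0 \\ 0 & 1 \end{smallmatrix}\bigr) \in \im\rho_F$: conjugation by $\mathbf{J}$ multiplies upper-nilpotent elements by $1+T$, and this is what makes the sub-Lie-algebra $\aL$ generated by the nilpotent parts into a $\Lambda$-module (Lemma \ref{Lambda module}). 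Since one only has a $\Lambda$-module, your Nakayama step must be rephrased: the conclusion is not $\mathfrak{u} = \Sl_2(\I_0)$ but rather that $\Sl_2(\I_0)/\aL$ is $\Lambda$-torsion, whence some nonzero $\Lambda$-ideal $\Aa$ satisfies $\Aa\,\Sl_2(\I_0) \subseteq \aL$, and then $(\Aa\I_0)^2\Sl_2(\I_0) \subseteq L$. Relatedly, your claim that openness gives $\mathfrak{u} + P\I_0\cdot\Sl_2(\I_0) = \Sl_2(\I_0)$ is too strong: openness only yields $\overline{L} \supseteq \oplus_{\Qq}\overline{\Aa}_\Qq^2\,\Sl_2(\I_0/\Qq)$ for some nonzero ideals $\overline{\Aa}_\Qq$, so the quotient is merely $\Lambda/P$-torsion, not zero.

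Second, and more seriously, the eigenspace argument above only works cleanly when the eigenvalue $\alpha = \zeta\zeta'^{-1}$ of $\mathbf{j}$ satisfies $\alpha \neq -1$, because then the upper- and lower-nilpotent parts are honest eigenspaces of $\Ad(\mathbf{j})$ and one can isolate them. When $\alpha = -1$ the antidiagonal matrices form a single eigenspace, and you cannot directly extract the upper- and lower-nilpotent ideals this way. The paper handles this case by a separate argument (Lemmas \ref{u is a lattice}, \ref{ut is a lattice}, Proposition \ref{lattice to ideal}): one shows that the sets $\vv, \vv^t \subset \I_0$ of upper- and lower-nilpotent entries are $\Lambda$-lattices in $Q(\I_0)$, and that any $\Lambda$-lattice contains a nonzero $\I_0$-ideal. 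The proof that $\vv$ is nonzero requires the indecomposability of $\rho_F|_{D_p}$ (Zhao's theorem), which your outline does not mention and which is essential input. Without addressing the $\alpha = -1$ case your argument is incomplete.
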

  
In the proof we use a result of Pink \cite{P} that classifies $p$-profinite subgroups of $\SL_2(A)$ for a complete semilocal $p$-profinite ring $A$.  (Our assumption that $p > 2$ is necessary for Pink's theory.)  We give a brief exposition of the relevant parts of his work for the sake of establishing notation.   Define 
\begin{align*}
\Theta : \SL_2(A) &\to \Sl_2(A)\\
\textbf{x} &\mapsto \textbf{x} - \frac{1}{2}\tr(\textbf{x}),
\end{align*}
where we consider $\frac{1}{2}\tr(\textbf{x})$ as a scalar matrix.  Let $\cG$ be a $p$-profinite subgroup of $\SL_2(A)$.  Define $L_1(\cG)$ to be the closed subgroup of $\Sl_2(A)$ that is topologically generated by $\im \Theta$.  Let $L_1 \cdot L_1$ be the closed (additive) subgroup of $M_2(A)$ topologically generated by $\{\textbf{xy} : \textbf{x}, \textbf{y} \in \cG\}$.  Let $C$ denote $\tr(L_1 \cdot L_1)$.  Sometimes we will view $C \subset M_2(A)$ as a set of scalar matrices.  For $n \geq 2$ define $L_n(\cG)$ to be the closed (additive) subgroup of $\Sl_2(A)$ generated by 
\[
[L_1(\cG), L_{n - 1}(\cG)] := \{\textbf{xy} - \textbf{yx} : \textbf{x} \in L_1(\cG), \textbf{y} \in L_{n - 1}(\cG)\}.
\]  
\begin{definition}
The \underline{Pink-Lie algebra} of a $p$-profinite group $\cG$ is $L_2(\cG)$.  Whenever we write $L(\cG)$ without a subscript we shall always mean $L_2(\cG)$.  
\end{definition}
As an example one can compute that for an ideal $\Aa$ of $A$, the $p$-profinite subgroup $\cG = \Gamma_A(\Aa)$ has Pink-Lie algebra $L_2(\cG) = \Aa^2\Sl_2(A)$.  This example plays an important role in what follows.  

For $n \geq 1$, define
\begin{align*}
\M_n(\cG) &= C \oplus L_n(\cG) \subset M_2(A)\\
\h_n(\cG) &= \{\textbf{x} \in \SL_2(A) : \Theta(\textbf{x}) \in L_n(\cG) \text{ and } \tr(\textbf{x}) - 2 \in C\}.
\end{align*}
Pink proves that $\M_n(\cG)$ is a closed $\Z_p$-Lie algebra of $M_2(A)$ and $\h_n = \SL_2(A) \cap (1 + \M_n)$ for all $n \geq 1$.  Furthermore, write 
\[
\cG_1 = \cG, \cG_{n + 1} = (\cG, \cG_n),
\]
where $(\cG, \cG_n)$ is the closed subgroup of $\cG$ topologically generated by the commutators $\{gg_ng^{-1}g_n^{-1} : g \in \cG, g_n \in \cG_n\}$.
Pink proves the following theorem.

\begin{theorem}[Pink \cite{P}]
With notation as above, $\cG$ is a closed normal subgroup of $\h_1(\cG)$.  Furthermore, $\h_n(\cG) = (\cG, \cG_n)$ for $n \geq 2$. 
\end{theorem}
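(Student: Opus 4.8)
The statement is Pink's structure theorem for $p$-profinite subgroups of $\SL_2$, so the honest plan is to follow \cite{P}; here is how I would organize the argument. The conceptual point is that over a $p$-profinite semilocal ring the $p$-adic logarithm may be neither defined nor injective, and the map $\Theta(\mathbf{x}) = \mathbf{x} - \tfrac12\tr(\mathbf{x})$, together with the ``error set'' $C = \tr(L_1\cdot L_1)$, is engineered to play its role. The crux is a short list of \emph{commutator identities} expressing $\Theta(gh)$, $\Theta(g^{-1})$, $\Theta([g,h])$, and $\tr(gh)-2$ as the naive polynomial expressions in $\Theta(g),\Theta(h)$ up to corrections lying in $C$ (for the scalar part) or in the next graded piece of the filtration $\M_n = C\oplus L_n(\cG)$ (for the trace-free part). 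Granting, as stated just before the theorem, that each $\M_n$ is a closed $\Z_p$-Lie subalgebra of $M_2(A)$ with $[\M_m,\M_n]\subseteq\M_{m+n}$, that $C$ is an ideal, and that $\h_n(\cG) = \SL_2(A)\cap(1+\M_n)$, everything else is a bootstrap along this filtration.

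First I would verify $\cG\subseteq\h_1(\cG)$. For $g\in\cG$, $\Theta(g)\in L_1(\cG)$ holds by the definition of $L_1$, so the only thing to check is $\tr(g)-2\in C$. Since $\det g = 1$, Cayley--Hamilton gives $g + g^{-1} = (\tr g)I$, whence $(g-1)(g^{-1}-1) = (2-\tr g)I$; writing $g - 1 = \Theta(g) + (\tfrac12\tr g - 1)I$ and multiplying out exhibits $(g-1)(g^{-1}-1)$ as a sum of an element of $L_1\cdot L_1$ and a scalar in $C\cdot A$, so its trace lies in $C$, and since $p>2$ and $g$ is $p$-profinite the scalar $\tr g + 2$ is a unit, which we may divide out. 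Next, the commutator identities translate $[\M_m,\M_n]\subseteq\M_{m+n}$ into the group statement $(\h_m(\cG),\h_n(\cG))\subseteq\h_{m+n}(\cG)$; in particular $(\cG,\cG_n)\subseteq\h_n(\cG)$ and $(\h_1(\cG),\h_1(\cG))\subseteq\h_2(\cG)$, which are the ``easy'' halves of the theorem and also — once the opposite inclusion is available — reduce normality of $\cG$ in $\h_1(\cG)$ to one line: for $g\in\cG$ and $h\in\h_1(\cG)$ one has $h^{-1}gh = g\cdot[g,h]$ with $[g,h]\in(\h_1(\cG),\h_1(\cG))\subseteq\h_2(\cG)\subseteq\cG$, the last inclusion being the hard ($n=2$) case of the theorem.

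The heart of the proof is therefore the reverse inclusion $\h_n(\cG)\subseteq(\cG,\cG_n)$ for $n\ge2$, which I would establish by successive approximation. Given $x\in\h_n(\cG)$, the graded piece $\h_n(\cG)/\h_{n+1}(\cG)$ is canonically $L_n(\cG)/L_{n+1}(\cG)$ (the $C$-summand cancels), and $L_n(\cG)$ is topologically generated by the $n$-fold iterated brackets of $\Theta$-values of elements of $\cG$; by the commutator identities each such bracket equals, modulo $\M_{n+1}$, the value of $\Theta$ on the corresponding iterated group commutator, which lies in $(\cG,\cG_n)$. Hence there is $y\in(\cG,\cG_n)$ with $xy^{-1}\in\h_{n+1}(\cG)$; iterating, and using that the filtration $\{\h_m(\cG)\}$ is separated (the iterated brackets sink into $\bigcap_m\m_A^m M_2(A) = 0$, and $\SL_2(A)\cap(1+C)$ is trivial since $C\subseteq\m_A$), the partial products converge to $x$. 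It then remains to know the limit lies in $(\cG,\cG_n)$ — that is, that $(\cG,\cG_n)$ is closed — the one additional technical lemma needed here; it follows, as in Pink, from compactness of $\cG$ and the same filtration estimates. With $\h_2(\cG)\subseteq\cG$ in hand the normality statement follows as above, completing the proof.

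The step I expect to be the real obstacle is the family of commutator identities: stating precisely which higher-order corrections to $\Theta(gh)$, $\Theta([g,h])$, and $\tr(gh)-2$ appear, and proving that they land in $C$ (scalar part) or in exactly $\M_{m+n}$ or $\M_{m+n+1}$ (trace-free part), with no hypothesis on $A$ beyond being $p$-profinite semilocal with $2$ invertible — so $A$ may be non-reduced, non-Noetherian, and far from a domain. In the $2\times2$ case the identities $\mathbf{M}^2 = -\det(\mathbf{M})I$ for $\tr\mathbf{M}=0$ and $g^2 = (\tr g)g - I$ for $g\in\SL_2(A)$ make the relevant expansions finite, so these identities are attainable by direct computation; but isolating the sharp filtration statements — the ones that make the graded surjectivity above \emph{exact} rather than merely approximate — is where essentially all the content lies. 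The remaining pieces (the identification of the graded pieces, the separatedness of $\{\h_n(\cG)\}$, and the closedness of $(\cG,\cG_n)$) are then routine once the identities are in place.
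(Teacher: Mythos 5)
The paper does not prove this theorem: it is quoted verbatim from Pink \cite{P}, introduced with the sentence ``Pink proves the following theorem,'' and used as a black box, so there is no in-paper argument for your sketch to be checked against. Your outline is a reasonable high-level account of how Pink's proof is organized — the $\Theta$-calculus with the correction set $C$, the filtration $\M_n = C \oplus L_n(\cG)$ and its commutator identities, successive approximation along the graded pieces $\h_n(\cG)/\h_{n+1}(\cG) \cong L_n(\cG)/L_{n+1}(\cG)$, closedness of $(\cG,\cG_n)$ as the lemma permitting the limit, and the one-line deduction of normality of $\cG$ in $\h_1(\cG)$ from $\h_2(\cG)\subseteq\cG$ — and you have correctly located the real content in the precise filtration statements for the error terms in $\Theta(gh)$, $\Theta([g,h])$, and $\tr(gh)-2$, which you flag rather than carry out. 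Be aware, though, that several facts you ``grant'' (that $C$ is an ideal, that $\M_n$ is a $\Z_p$-Lie algebra with $[\M_m,\M_n]\subseteq\M_{m+n}$, that $\h_n = \SL_2(A)\cap(1+\M_n)$) are themselves nontrivial lemmas in Pink's development and not formalities; leaning on them makes this a sketch, not a proof. For the purposes of the paper under review, the citation of \cite{P} is all that is required.
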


There are two important functoriality properties of the correspondence $\cG \mapsto L(\cG)$ that we will use.  First, since $\Theta$ is constant on conjugacy classes of $\cG$ it follows that $L_n(\cG)$ is stable under the adjoint action of the normalizer $N_{\SL_2(A)}(\cG)$ of $\cG$ in $\SL_2(A)$.  That is, for $\textbf{g} \in N_{\SL_2(A)}(\cG), \textbf{x} \in L_n(\cG)$ we have $\textbf{gxg}^{-1} \in L_n(\cG)$.  If $\Aa$ is an ideal of $A$ such that $A/\Aa$ is $p$-profinite, then we write $\overline{\cG}_\Aa$ for the $p$-profinite group $\cG\cdot \Gamma_A(\Aa)/\Gamma_A(\Aa) \subseteq \SL_2(A/\Aa)$.  The second functoriality property is that the canonical linear map $L(\cG) \to L(\overline{\cG}_\Aa)$ induced by $\textbf{x} \mapsto \textbf{x} \bmod \Aa$ is surjective.    

Let $\m_0$ be the maximal ideal of $\I_0$, and let $\G$ denote the $p$-profinite group $\im \rho \cap \Gamma_{\I_0}(\m_0)$.  The proof of Proposition \ref{open image in product implies main result} consists of showing that if $\overline{\G}_{P\I_0}$ is open in $\prod_{\Qq | P} \SL_2(\I_0/\Qq)$ then $\G$ contains $\Gamma_{\I_0} (\Aa_0)$ for some nonzero $\I_0$-ideal $\Aa_0$.  Let $L = L(\G)$ be the Pink-Lie algebra of $\G$.  Since $\overline{\G}_{P\I_0}$ is open, for every prime $\Qq$ of $\I_0$ lying over $P$ there is a nonzero $\I_0/\Qq$-ideal $\overline{\Aa}_{\Qq}$ such that 
\[
\overline{\G}_{P\I_0} \supseteq \prod_{\Qq | P} \Gamma_{\I_0/\Qq}(\overline{\Aa}_{\Qq}).
\]
Thus $L(\overline{\G}_{P\I_0}) \supseteq \oplus_{\Qq | P} \overline{\Aa}_{\Qq}^2\Sl_2(\I_0/\Qq)$.

Recall from Theorem \ref{making rho} that we have roots of unity $\zeta$ and $\zeta'$ such that $\zeta \not\equiv \zeta' \bmod p$ and the matrix $\textbf{j} := \bigl(\begin{smallmatrix}
\zeta & 0\\
0 & \zeta' \end{smallmatrix}\bigr)$ normalizes $\G$.  Let $\alpha = \zeta\zeta'^{-1}$.  A straightforward calculation shows that the eigenvalues of $\Ad(\textbf{j})$ acting on $\Sl_2(\I_0)$ are $\alpha, 1, \alpha^{-1}$.  Note that since $\zeta \neq \zeta'$ either all of $\alpha, 1, \alpha^{-1}$ are distinct or else $\alpha = -1$.  For $\lambda \in \{\alpha, 1, \alpha^{-1}\}$ let $L[\lambda]$ be the $\lambda$-eigenspace of $\Ad(\textbf{j})$ acting on $L$.  One computes that $L[1]$ is the set of diagonal matrices in $L$.  If $\alpha = -1$ then $L[-1]$ is the set of antidiagonal matrices in $L$.  If $\alpha \neq -1$ then $L[\alpha]$ is the set of upper nilpotent matrices in $L$, and $L[\alpha^{-1}]$ is the set of lower nilpotent matrices in $L$.  Regardless of the value of $\alpha$, let $\uu$ denote the set of upper nilpotent matrices in $L$ and $\uu^t$ denote the set of lower nilpotent matrices in $L$.  Let $\aL$ be the $\Z_p$-Lie algebra generated by $\uu$ and $\uu^t$ in $\Sl_2(\I_0)$.  

\begin{lemma}\label{Lambda module}
With notation as above, $\aL$ is a $\Lambda$-submodule of $\Sl_2(\I_0)$.
\end{lemma}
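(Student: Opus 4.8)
The goal is to show that $\aL$, the $\Z_p$-Lie algebra generated by $\uu$ and $\uu^t$ inside $\Sl_2(\I_0)$, is stable under multiplication by $\Lambda = \Z_p[[T]]$. Since $\aL$ is already a $\Z_p$-module, it suffices to produce a single element $t \in \Lambda$ with $t - T$ (or $T$ itself) acting on $\aL$, i.e. to show that $T \cdot \uu \subseteq \aL$ and $T \cdot \uu^t \subseteq \aL$; then, because $\aL$ is a Lie algebra and multiplication by elements of $\I_0$ (in particular by powers of $T$) respects the bracket on scalars, one gets $\Lambda \cdot \uu \subseteq \aL$ and $\Lambda \cdot \uu^t \subseteq \aL$, and finally $\Lambda \cdot \aL \subseteq \aL$ by distributing over Lie words. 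So the plan reduces to: \textbf{(i)} understand where $T$-multiples come from, and \textbf{(ii)} feed them into the bracket.

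The source of $T$ is the diagonal torus in $\im \rho$. Recall that $\rho = \rho_F|_H \otimes (\det\rho_F|_H)^{-1/2}$ and, by Theorem \ref{making rho}(2), $\rho_F|_{D_p}$ is upper triangular, say $\rho_F|_{D_p} \cong \bigl(\begin{smallmatrix}\varepsilon & u\\ 0 & \delta\end{smallmatrix}\bigr)$. The character $\kappa : 1 + p\Z_p \to \Lambda^\times$ with $\kappa((1+p)^s) = (1+T)^s$ appears in $\varepsilon$ (equivalently in $\det\rho_F$, via $\det\rho_F(\Frob_\ell) = \chi(\ell)\kappa(\langle\ell\rangle)\ell^{-1}$), so after the twist by $\sqrt{\det}^{-1}$ the diagonal part of $\rho|_{D_p \cap H}$ is $\bigl(\begin{smallmatrix}\mu & 0\\ 0 & \mu^{-1}\end{smallmatrix}\bigr)$ for a character $\mu$ whose image topologically generates (a finite-index subgroup of) $1 + T\Z_p[[T]] \subseteq \Lambda^\times$ — this is exactly the mechanism by which Hida and Mazur--Wiles produce $\Lambda$ inside the image. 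Concretely I would pick $h_0 \in D_p \cap H$ with $\rho(h_0) = \bigl(\begin{smallmatrix}\mu_0 & *\\ 0 & \mu_0^{-1}\end{smallmatrix}\bigr)$ and $\mu_0 \in 1 + T\Z_p[[T]]$ a topological generator modulo units (after possibly raising to a $p$-power to kill the torsion coming from $\omega$ and the tame part — this is where $H$, which sits in $\ker\det\brho_F$, is used). Then $\Ad(\rho(h_0))$ acts on $\uu$ by $\textbf{x} \mapsto \mu_0^2 \textbf{x}$ (upper nilpotents) and on $\uu^t$ by $\mu_0^{-2}$; here I need that $\uu$ and $\uu^t$ are each stable under $\Ad(\rho(h_0))$, which holds because $L = L(\G)$ is $\Ad$-stable under the normalizer of $\G$ (the functoriality property quoted in the text) and $h_0$ normalizes $\G$ — and being stable, the nilpotent parts are separately stable since $\rho(h_0)$ is diagonal mod the unipotent radical, or more cleanly because $\uu = L[\alpha]$ (resp. $\uu^t = L[\alpha^{-1}]$) are already $\Ad(\textbf{j})$-eigenspaces and $\Ad(\rho(h_0))$ preserves the upper/lower-triangular decomposition coming from $\rho_F|_{D_p}$ being upper triangular.

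Now the Lie algebra $\aL$, being closed under $\Z_p$-linear combinations and containing $\uu$, also contains $\mu_0^{2n}\uu$ for all $n \geq 0$ after noting $\Ad(\rho(h_0))^n(\uu) = \mu_0^{2n}\uu \subseteq \aL$ (because $\aL$ is the Lie algebra generated by $\uu, \uu^t$ and these $\Ad$ operators preserve $\aL$ — indeed $\Ad(\rho(h_0))$ maps the generating set into $\aL$, hence preserves $\aL$). Since $\{\mu_0^{2n} : n \geq 0\}$ topologically generates a finite-index $\Z_p$-submodule of $\Lambda$ — precisely, $\mu_0^2 - 1$ generates an ideal of $\Lambda$ whose radical is $(T)$, so the $\Z_p$-span of $\{\mu_0^{2n}\}$ contains $T^k\Lambda$ for some $k$ — we get $T^k\Lambda \cdot \uu \subseteq \aL$, and symmetrically $T^k\Lambda \cdot \uu^t \subseteq \aL$. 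To upgrade $T^k\Lambda$ to all of $\Lambda$ I would bracket: $[\uu, \uu^t] \subseteq \aL$ lands in the diagonal subalgebra, and iterated brackets $[[\uu,\uu^t],\uu] \subseteq \uu$ etc. together with the identities $[\bigl(\begin{smallmatrix}0&a\\0&0\end{smallmatrix}\bigr),\bigl(\begin{smallmatrix}0&0\\b&0\end{smallmatrix}\bigr)] = \bigl(\begin{smallmatrix}ab&0\\0&-ab\end{smallmatrix}\bigr)$ and $[\bigl(\begin{smallmatrix}c&0\\0&-c\end{smallmatrix}\bigr),\bigl(\begin{smallmatrix}0&a\\0&0\end{smallmatrix}\bigr)] = \bigl(\begin{smallmatrix}0&2ca\\0&0\end{smallmatrix}\bigr)$ show $\aL \cap \uu$ is closed under multiplication by the $\I_0$-span of products of entries, which already contains $1$ (take a unit entry somewhere in $\uu$, which exists because $L$ surjects onto $L(\overline\G_{P\I_0}) \supseteq \oplus \overline\Aa_\Qq^2\Sl_2$, giving nonzero and hence after scaling unit-containing ideals). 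Combining, $\aL \cap \uu$ is a $\Lambda$-submodule (and similarly $\aL \cap \uu^t$ and $\aL \cap (\text{diagonal})$), and $\aL$ is their sum, so $\aL$ is a $\Lambda$-submodule of $\Sl_2(\I_0)$.

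\textbf{Main obstacle.} The delicate point is the very first step: exhibiting $h_0 \in D_p \cap H$ whose diagonal character $\mu_0$ generates (up to finite index) $1 + T\Z_p[[T]]$, and checking that $\Ad(\rho(h_0))$ genuinely preserves $\uu$ and $\uu^t$ rather than mixing them. The preservation is clean if $\rho(h_0)$ is exactly diagonal, but $\rho_F|_{D_p}$ is only upper triangular, so $\rho(h_0)$ has a unipotent part $\bigl(\begin{smallmatrix}1&*\\0&1\end{smallmatrix}\bigr)$; I expect to handle this either by conjugating within the upper Borel to arrange $h_0$ diagonal (possible since one only needs a single such element and the relevant character), or by arguing with eigenspaces: $\uu = L[\alpha]$ is the $\Ad(\textbf{j})$-eigenspace, $\Ad(\rho(h_0))$ commutes with $\Ad(\textbf{j})$ up to the unipotent part, and the unipotent part sends upper nilpotents to upper nilpotents. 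The other subtlety — passing from "$\Z_p$-span of $\{\mu_0^{2n}\}$" to "contains $T^k\Lambda$" — is a standard fact about $\Lambda$ being a complete local ring and $\mu_0^2 - 1 \in (T)\setminus(T^2)$ (after normalizing $h_0$), so multiplication by $\mu_0^2 - 1$ on $\Lambda$ has cokernel of finite length; this is where "minor technical condition" and the regularity hypothesis ultimately enter, ensuring $\mu_0^2$ is nontrivial, i.e. $h_0$ can be chosen so that $\bvarepsilon \neq \bdelta$ on it.
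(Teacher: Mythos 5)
Your overall strategy is the right one — use conjugation by a diagonal element of the local image to produce multiplication by $T$ — but the execution introduces a gap that the paper avoids by a simpler choice of element. The paper does not hunt for an $h_0$ whose diagonal character merely topologically generates $1+T\Z_p[[T]]$ up to finite index; it instead invokes the exact matrix $\textbf{J} = \bigl(\begin{smallmatrix}1+T & 0\\ 0 & 1\end{smallmatrix}\bigr) \in \im\rho_F$. Conjugation by $\textbf{J}$ then gives $\textbf{JxJ}^{-1} = (1+T)\textbf{x}$ for $\textbf{x}\in\uu$, so $T\textbf{x} = (1+T)\textbf{x} - \textbf{x} \in \uu$ exactly (and similarly for $\uu^t$); the Leibniz-type identity $T[\textbf{x},\textbf{y}] = [T\textbf{x},\textbf{y}]$ then propagates $T$-stability to all iterated brackets. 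There is no ``finite index'' residue to remove, and no need for your normalization of $h_0$, the $\lim\rho(h_0)^{p^n}$ maneuver, or the passage from the $\Z_p$-span of $\{\mu_0^{2n}\}$ to $T^k\Lambda$.

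The genuine gap in your proposal is the final ``upgrade $T^k\Lambda$ to all of $\Lambda$'' step. You argue that $\aL\cap\uu$ is closed under multiplication by the $\I_0$-span of products of entries, and that this span ``already contains $1$'' because $\uu$ surjects onto $\oplus_{\Qq|P}\overline\Aa_\Qq^2$, ``giving nonzero and hence after scaling unit-containing ideals.'' This does not work: $\overline\Aa_\Qq^2$ is a nonzero ideal of the discrete valuation ring $\I_0/\Qq$, which contains no units unless it is the whole ring, and there is no ``scaling'' available that turns a proper ideal into one containing $1$. Moreover, even knowing the reduction of $\vv$ modulo $P$ contains a large ideal tells you nothing about units in $\vv$ itself. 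So the claim that products of entries of $\uu$ and $\uu^t$ generate $1$ over $\I_0$ is unsupported, and without it the bracket argument cannot fill in the missing cofinite piece of $\Lambda$. Your instinct that this was ``the delicate point'' was correct; the resolution is that with $\textbf{J}$ in hand the delicate point simply does not arise.
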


\begin{proof}
Since $\aL$ is a $\Z_p$-Lie algebra and $\Lambda = \Z_p[[T]]$, it suffices to show that $\textbf{x} \in \aL$ implies $T\textbf{x} \in \aL$.  Recall that $\textbf{J} := \bigl(\begin{smallmatrix}
1 + T & 0\\
0 & 1
\end{smallmatrix}\bigr) \in \im \rho_F$.  Since ${\rho_F}|_H$ and $\rho$ differ only by a scalar, their images have the same normalizer.  Thus $\G$ (and hence $L$) is normalized by $\textbf{J}$.  If $\textbf{x} \in \uu$ then a simple computation shows that $\textbf{JxJ}^{-1} = (1 + T)\textbf{x}$.  As $L$ is an abelian group it follows that $T\textbf{x} = (1 + T)\textbf{x} - \textbf{x} \in \uu$.  Similarly, for $\textbf{y} \in \uu^t$ we have $T\textbf{y} \in \uu^t$.  It follows that $T[\textbf{x}, \textbf{y}] = [T\textbf{x}, \textbf{y}] \in \aL$.  Any element in $\aL$ can be written as a sum of elements in $\uu, \uu^t,$ and $[\uu, \uu^t]$.  Therefore $\aL$ is a $\Lambda$-submodule of $\Sl_2(\I_0)$.
\end{proof}

The proof of Proposition \ref{open image in product implies main result} depends on whether or not $\alpha = -1$; it is easier when $\alpha \neq -1$.  

\begin{proof}[Proof of Proposition \ref{open image in product implies main result} when $\alpha \neq -1$]
We will show that the finitely generated $\Lambda$-module 
\[
X := \Sl_2(\I_0)/\aL
\]
is a torsion $\Lambda$-module.  From this it follows that there is a nonzero $\Lambda$-ideal $\Aa$ such that $\Aa\Sl_2(\I_0) \subseteq \aL$.  Thus
\[
(\Aa\I_0)^2\Sl_2(\I_0) \subseteq \aL \subseteq L
\]
since $\I_0\Sl_2(\I_0) = \Sl_2(\I_0)$.  But $(\Aa\I_0)^2\Sl_2(\I_0)$ is the Pink-Lie algebra of $\Gamma_{\I_0}(\Aa\I_0)$ and so $\Gamma_{\I_0}(\Aa\I_0) \subseteq \G_2 \subseteq \G$, as desired.

To show that $X$ is a finitely generated $\Lambda$-module, recall that the arithmetic prime $P$ in the statement of Proposition \ref{open image in product implies main result} is a height one prime of $\Lambda$.  By Nakayama's Lemma it suffices to show that $X/PX$ is $\Lambda/P$-torsion.  The natural epimorphism $\Sl_2(\I_0)/P\Sl_2(\I_0) \onto X/PX$ has kernel $\aL \cdot P\Sl_2(\I_0)/P\Sl_2(\I_0)$, so 
\[
X/PX \cong \Sl_2(\I_0/P\I_0)/(\aL \cdot P\Sl_2(\I_0)/P\Sl_2(\I_0)).
\]

We use the following notation:
\begin{align*} 
\overline{L} = L(\overline{\G}_{P\I_0}) :& \text{ the Pink-Lie algebra of } \overline{\G}_{P\I_0}\\   
\overline{L}[\lambda] :& \text{ the } \lambda\text{-eigenspace of } \Ad(\textbf{j}) \text{ on } \overline{L}, \text{ for } \lambda \in \{\alpha, 1, \alpha^{-1}\}\\
\overline{\aL} :& \text{ the } \Z_p\text{-algebra generated by } \overline{L}[\alpha] \text{ and } \overline{L}[\alpha^{-1}]
\end{align*}
The functoriality of Pink's construction implies that the canonical surjection $\I_0 \onto \I_0/P\I_0$ induces surjections
\[
L[\lambda] \onto \overline{L}[\lambda] 
\]
for all $\lambda \in \{\alpha, 1, \alpha^{-1}\}$.  Therefore the canonical linear map $\aL \to \overline{\aL}$ is also a surjection.  That is, $\aL \cdot P\Sl_2(\I_0)/P\Sl_2(\I_0) = \overline{\aL}$ and so $X/PX \cong \Sl_2(\I_0/P\I_0)/\overline{\aL}$.  Since $\overline{\G}_{P\I_0} \supseteq \prod_{\Qq | P} \Gamma_{\I_0/\Qq}(\overline{\Aa}_{\Qq})$, it follows that
\begin{align*}
\overline{L}[\alpha] &\supseteq \left\{\begin{pmatrix}
0 & x\\
0 & 0
\end{pmatrix} | x \in \oplus_{\Qq | P} \overline{\Aa}_{\Qq}^2 \right\}\\
\overline{L}[\alpha^{-1}] &\supseteq \left\{\begin{pmatrix}
0 & 0\\
x & 0
\end{pmatrix} | x \in \oplus_{\Qq | P} \overline{\Aa}_{\Qq}^2 \right\}.
\end{align*}  
Since $\alpha \neq -1$ we have $\uu = \overline{L}[\alpha]$ and $\uu^t = \overline{L}[\alpha^{-1}]$.  Therefore
\[
\overline{\aL} \supseteq \oplus_{\Qq | P} \overline{\Aa}_{\Qq}^4\Sl_2(\I_0/\Qq).
\]
Since each $\overline{\Aa}_{\Qq}$ is a nonzero $\I_0/\Qq$-ideal, it follows that $\oplus_{\Qq | P} \Sl_2(\I_0/\Qq)/\overline{\Aa}_{\Qq}^4\Sl_2(\I_0/\Qq)$ is $\Lambda/P$-torsion.  Finally, the inclusions
\[
\oplus_{\Qq | P} \overline{\Aa}^4_{\Qq}\Sl_2(\I_0/\Qq) \subseteq \overline{\aL} \subseteq \Sl_2(\I_0/P\I_0) \subseteq \oplus_{\Qq | P} \Sl_2(\I_0/\Qq\I)
\]
show that $\Sl_2(\I_0/P\I_0)/\overline{\aL} \cong X/PX$ is $\Lambda/P$-torsion.  
\end{proof}

Let
\[
\vv = \left\{v \in \I_0 : \begin{pmatrix}
0 & v\\
0 & 0
\end{pmatrix} \in \uu\right\} \text{ and } \vv^t = \left\{v \in \I_0 : \begin{pmatrix}
0 & 0\\
v & 0
\end{pmatrix} \in \uu^t\right\}.
\]

\begin{definition}
A \underline{$\Lambda$-lattice} in $Q(\I_0)$ is a finitely generated $\Lambda$-submodule $M$ of $Q(\I_0)$ such that the $Q(\Lambda)$-span of $M$ is equal to $Q(\I_0)$.  If in addition $M$ is a subring of $\I_0$ then we say $M$ is a \underline{$\Lambda$-order}.
\end{definition}

\begin{proof}[Proof of Proposition \ref{open image in product implies main result} when $\alpha = -1$]
We show in Lemmas \ref{u is a lattice} and \ref{ut is a lattice} that $\vv$ and $\vv^t$ are $\Lambda$-lattices in $Q(\I_0)$.  To do this we use the fact that the local Galois representation $\rho_F|_{D_p}$ is indecomposable \cite{Zhao}.  

We then show in Proposition \ref{lattice to ideal} that any $\Lambda$-lattice in $Q(\I_0)$ contains a nonzero $\I_0$-ideal.  Let $\bb$ and $\bb^t$ be nonzero $\I_0$-ideals such that $\bb \subseteq \vv$ and $\bb^t \subseteq \vv^t$.  Let $\Aa_0 = \bb\bb^t$.  Then from the definitions of $\vv, \vv^t$, and $\aL$, we find that 
\[
\aL \supseteq \Aa_0^2\Sl_2(\I_0).
\] 
By Pink's theory it follows that $\G \supseteq \Gamma_{\I_0}(\Aa_0)$.
\end{proof}

Finally, we prove the three key facts used in the proof of Proposition \ref{open image in product implies main result} when $\alpha = -1$.

\begin{lemma}\label{u is a lattice}
With notation as above, $\vv$ is a $\Lambda$-lattice in $Q(\I_0)$.  
\end{lemma}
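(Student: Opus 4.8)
The plan is to show that $\vv$ is finitely generated over $\Lambda$ and that its $Q(\Lambda)$-span is all of $Q(\I_0)$; together these give that $\vv$ is a $\Lambda$-lattice. Finite generation should follow from the fact that $\vv$ sits inside $\I_0$, which is finite over $\Lambda$, together with the observation from Lemma \ref{Lambda module} that $\uu$ (and hence $\vv$) is a $\Lambda$-module: a $\Lambda$-submodule of a Noetherian $\Lambda$-module is finitely generated. The content is therefore in showing that $\vv \neq 0$ and, more precisely, that $\vv$ spans $Q(\I_0)$ over $Q(\Lambda)$. Equivalently, since $\vv$ is a $\Lambda$-module and $\I_0$ is a domain finite over $\Lambda$, it is enough to produce a single nonzero element of $\vv$, because a nonzero ideal-like $\Lambda$-submodule of a domain finite over $\Lambda$ automatically has full rank — but to be safe I would argue the spanning statement directly from the structure of $\uu$.

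First I would recall that by Theorem \ref{making rho} we have chosen a basis in which $\rho_F|_{D_p}$ is upper triangular, say $\rho_F|_{D_p} \cong \bigl(\begin{smallmatrix}\varepsilon & u\\ 0 & \delta\end{smallmatrix}\bigr)$, and that this local representation is indecomposable by Zhao's theorem \cite{Zhao}. Indecomposability means the upper-right entry $u$ cannot be removed by conjugation by an upper-triangular matrix, i.e.\ the cocycle $g \mapsto u(g)/\delta(g)$ (a cocycle valued in $Q(\I)(\varepsilon\delta^{-1})$) is not a coboundary; in particular $u$ is not identically zero. Restricting to $D_p \cap H$ and twisting by $\sqrt{\det}^{-1}$ to pass from $\rho_F$ to $\rho$ only rescales the upper-right entries by units in $\I'^\times$, so the upper nilpotent part of $\im \rho|_{D_p\cap H}$ is still nonzero. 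The subtlety is that $\uu$ was defined as the upper nilpotent part of the \emph{Pink-Lie algebra} $L = L(\G)$ of $\G = \im\rho \cap \Gamma_{\I_0}(\m_0)$, not of the full image; here I would use that $\Theta$ applied to a unipotent-type element $\bigl(\begin{smallmatrix}1 & x\\ 0 & 1\end{smallmatrix}\bigr)$ (or rather its image after the $\sqrt{\det}$-twist, which lies in $\SL_2$) lands in $\Sl_2(\I_0)$ with nonzero upper-right entry, and that passing to $p$-profinite part and to commutators in Pink's construction $L_2 = [L_1, L_1]$ preserves a nonzero scalar multiple of such an entry. Concretely: the $\textbf{J}$-conjugation trick from Lemma \ref{Lambda module} shows $\vv$ is a $\Lambda$-module, and combining a nonzero upper-nilpotent element coming from $u$ with the lower-nilpotent elements already present (from $\overline{\G}_{P\I_0}$ being open, hence nontrivial in each factor) via the Lie bracket $[\uu,\uu^t] \subseteq L[1]$ and then bracketing back produces, after chasing through $L_1, L_2$, a full-rank family of upper-nilpotent elements.

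I expect the main obstacle to be precisely this last point: translating the nonvanishing of the \emph{analytic/cohomological} invariant $u$ (indecomposability of $\rho_F|_{D_p}$) into the nonvanishing, and then the full $Q(\Lambda)$-rank, of the \emph{Lie-theoretic} object $\vv \subseteq L(\G)$, keeping careful track of which group ($\im\rho_F|_H$ versus $\im\rho$ versus $\G = \im\rho \cap \Gamma_{\I_0}(\m_0)$) and which stage of Pink's filtration one is working in, and of the scalar/unit factors introduced by the $\sqrt{\det}$-twist and by passing to the $p$-profinite part. Once a single nonzero $v_0 \in \vv$ is secured, full rank follows because $\vv$ is a nonzero $\Lambda$-submodule of the fraction field of the domain $\I_0$: multiplying $v_0$ by the finitely many $\Lambda$-module generators of $\I_0$ (using that $\vv$ is closed under multiplication by $\I_0$, which comes from conjugating by $\textbf{J}$ and, more generally, by the image of $\rho_F$ which generates $\I_0$ over $\Lambda$ via traces of Frobenii) gives elements of $\vv$ spanning $Q(\I_0)$ over $Q(\Lambda)$. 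I would close by noting that finite generation plus full rank is exactly the definition of a $\Lambda$-lattice in $Q(\I_0)$, completing the proof; the companion statement for $\vv^t$ (Lemma \ref{ut is a lattice}) is symmetric.
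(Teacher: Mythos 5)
Your proposal correctly identifies the two ingredients the paper also uses — finite generation follows from Noetherianity, and Zhao's indecomposability theorem is what supplies the initial nonzero upper-nilpotent element — but the step that actually secures full $Q(\Lambda)$-rank is wrong in two ways, and the fix is precisely the step you skip.

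First, you propose to bracket $\uu$ against $\uu^t$ to land in $L[1]$ and then "bracket back." This is circular: at this stage of the argument you do not know that $\uu^t$ contains anything nonzero. In the case $\alpha = -1$ (which is the only case in which Lemmas \ref{u is a lattice} and \ref{ut is a lattice} are invoked), the eigenspace $L[-1]$ is the set of \emph{antidiagonal} matrices, not the direct sum $\uu \oplus \uu^t$; an element of $L$ that reduces mod $P\I_0$ to a lower nilpotent may well have a nonzero upper entry. Separating $L[-1]$ into $\uu$ and $\uu^t$ is exactly the content of the subsequent Lemma \ref{ut is a lattice}, which in turn uses the present lemma, so you cannot feed $\uu^t$ into the proof here. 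Second, your fallback — that $\vv$ is stable under multiplication by all of $\I_0$ because one can conjugate by elements of $\im\rho_F$ — does not hold: conjugation by a non-diagonal $\textbf{g}$ does not preserve the set of upper-nilpotent matrices, so $\Ad(\textbf{g})$ takes $\vv$ out of $\vv$. The only conjugation you have at your disposal that stays upper-nilpotent is by the diagonal $\textbf{J}$, which gives exactly the $\Lambda$-module structure of Lemma \ref{Lambda module} and no more. (Indeed, if $\vv$ were an $\I_0$-module the paper would not need Proposition \ref{lattice to ideal}.)

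The paper's route is to sidestep $\uu^t$ entirely. Since $L[1]$ surjects onto $\overline{L}[1]$ and $\overline{L}[1]$ contains the diagonal matrices with entries in $\oplus_{\Qq|P}\overline{\Aa}_\Qq^2$ (a $\Lambda/P$-lattice in $Q(\I_0/P\I_0)$), Nakayama's Lemma gives a $\Lambda$-lattice $\Aa \subseteq Q(\I_0)$ inside the set of diagonal entries of $L[1]$. Zhao's theorem produces one nonzero nilpotent $\bigl(\begin{smallmatrix}0 & v\\0 & 0\end{smallmatrix}\bigr) \in L$, and bracketing it against $\bigl(\begin{smallmatrix}a & 0\\0 & -a\end{smallmatrix}\bigr)$ for $a \in \Aa$ yields $\bigl(\begin{smallmatrix}0 & 2av\\0 & 0\end{smallmatrix}\bigr) \in L$, so $\Aa v \subseteq \vv$. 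Since $\Aa$ is a lattice and $v \neq 0$, this alone gives $Q(\Lambda)\vv = Q(\I_0)$. The essential idea you are missing is that the needed supply of diagonal elements comes from $L[1]$ surjecting onto $\overline{L}[1]$ together with Nakayama — not from $[\uu,\uu^t]$, and not from any $\I_0$-module structure on $\vv$.
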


\begin{proof}
Let $\overline{L} = L(\overline{\G}_{P\I_0})$.  Recall that $L[1]$ surjects onto $\overline{L}[1]$.  Now $\overline{L}[1]$ contains 
\[
\left\{\begin{pmatrix} 
a & 0\\
0 & -a
\end{pmatrix} : a \in \oplus_{\Qq | P} \overline{\Aa}_{\Qq}^2 \right\},
\]
and $\oplus_{\Qq | P} \overline{\Aa}_{\Qq}^2$ is a $\Lambda/P$-lattice in $Q(\I_0/P\I_0)$.  It follows from Nakayama's Lemma that the set of entries in the matrices of $L[1]$ contains a $\Lambda$-lattice $\Aa$ for $Q(\I_0)$.

By a theorem of Zhao \cite{Zhao} we know that $\rho_F|_{D_p}$ is indecomposable.  Hence there is a matrix in the image of $\rho$ whose upper right entry is nonzero.  This produces a nonzero nilpotent matrix in $L_1$.  Taking the Lie bracket of this matrix with a nonzero element of $L[1]$ produces a nonzero nilpotent matrix in $L$ which we will call $\bigl(\begin{smallmatrix} 
0 & v\\
0 & 0
\end{smallmatrix}\bigr)$.  Note that for any $a \in \Aa$ we have
\[
\begin{pmatrix}
0 & 2av\\
0 & 0
\end{pmatrix} = \left[\begin{pmatrix}
a & 0\\
0 & -a 
\end{pmatrix}, \begin{pmatrix} 
0 & v\\
0 & 0
\end{pmatrix} \right] \in L.
\]  
Thus the lattice $\Aa v$ is contained in $\vv$, so $Q(\Lambda)\vv = Q(\I_0)$.  The fact that $\vv$ is finitely generated follows from the fact that $\Lambda$ is noetherian and $\vv$ is contained in the finitely generated $\Lambda$-module $\I_0$.
\end{proof}

\begin{lemma}\label{ut is a lattice}
With notation as above, $\vv^t$ is a $\Lambda$-lattice in $Q(\I_0)$.  
\end{lemma}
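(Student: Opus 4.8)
The plan is to run the proof of Lemma \ref{u is a lattice} again with the roles of the upper-right and lower-left entries interchanged. That proof deduces everything from two ingredients: first, that the set of entries occurring in the diagonal matrices of $L[1]$ contains a $\Lambda$-lattice $\Aa$ in $Q(\I_0)$ (obtained by reducing modulo $P$ and applying Nakayama's Lemma, using that $\overline{L}[1]$ already contains the diagonal matrices with entries in the $\Lambda/P$-lattice $\oplus_{\Qq|P}\overline{\Aa}_\Qq^2$); and second, that $L$ contains a nonzero nilpotent matrix of the appropriate shape. The first ingredient is symmetric and needs no change, so the whole lemma reduces to producing a nonzero \emph{lower} nilpotent matrix $\bigl(\begin{smallmatrix}0&0\\v&0\end{smallmatrix}\bigr)\in L$. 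Given such a $v$, bracketing with $\bigl(\begin{smallmatrix}a&0\\0&-a\end{smallmatrix}\bigr)$ for $a\in\Aa$ yields $\bigl(\begin{smallmatrix}0&0\\-2av&0\end{smallmatrix}\bigr)\in L$, so $\Aa v\subseteq\vv^t$ (recall $2\in\I_0^\times$) and hence $Q(\Lambda)\vv^t=Q(\I_0)$; finite generation of $\vv^t$ is automatic since it is a $\Lambda$-submodule of $\I_0$, which is finitely generated over the noetherian ring $\Lambda$.

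In Lemma \ref{u is a lattice} the nonzero nilpotent was produced from Zhao's indecomposability theorem, but $\rho_F|_{D_p}$ is upper triangular in our basis, so that input only furnishes matrices with nonzero upper-right entry; for $\vv^t$ I would instead argue globally, and the first point is that $L$ is not contained in the upper-triangular subalgebra of $\Sl_2(\I_0)$. If it were, then tracing through Pink's description (using that $L[1]$ contains a nonzero diagonal matrix, $[L_1,L_1]=L$, and $\Theta(g)$ is upper triangular if and only if $g$ is) one sees that $L_1$, and then $\G$ itself, must be upper triangular, so $\G$ fixes the line $\I_0 e_1$; since $\G\triangleleft\im\rho$, the finite $\im\rho$-orbit of this line consists of $\G$-fixed lines, and it cannot contain two distinct lines, for then $\G$ would be simultaneously diagonalizable and $L=L(\G)$ would be an abelian algebra of diagonalizable matrices, contradicting Lemma \ref{u is a lattice} (which gives $\vv\neq0$, hence a nonzero nilpotent in $L$). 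So the line is $\im\rho$-stable, $\rho_F|_H$ is reducible, and because $H\triangleleft G_\Q$ and $\rho_F$ is absolutely irreducible, $\rho_F$ is then induced from a character of an index-two subgroup, whence $\rho_F\cong\rho_F\otimes\epsilon$ for a nontrivial quadratic Dirichlet character $\epsilon$ — contradicting our standing assumption that $F$ has no CM. Consequently $L$ contains a matrix $\bigl(\begin{smallmatrix}0&p\\q&0\end{smallmatrix}\bigr)$ with $q\neq0$ (its $\Ad(\textbf{j})$-component in the $(-1)$-eigenspace already has this form).

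Finally I would clear the entry $p$. Since $\textbf{J}=\bigl(\begin{smallmatrix}1+T&0\\0&1\end{smallmatrix}\bigr)\in\im\rho_F$ normalizes $L$, conjugating by powers of $\textbf{J}$ and taking $\Z_p$-linear combinations shows $\bigl(\begin{smallmatrix}0&g(1+T)p\\g((1+T)^{-1})q&0\end{smallmatrix}\bigr)\in L$ for every $g\in\Z_p[X,X^{-1}]$. By Lemma \ref{u is a lattice} and Proposition \ref{lattice to ideal}, $\vv$ contains a nonzero ideal $\bb$ of $\I_0$, and since $\I_0$ is integral over $\Lambda=\Z_p[[T]]$, the ideal $\{r\in\I_0:rp\in\bb\}$ contains a nonzero element $h$ of $\Z_p[T]$ (via Weierstrass preparation applied to its contraction to $\Lambda$), so $h(T)p\in\bb\subseteq\vv$. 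Taking $g(X)=h(X-1)$ and subtracting $\bigl(\begin{smallmatrix}0&h(T)p\\0&0\end{smallmatrix}\bigr)\in\uu\subseteq L$ leaves $\bigl(\begin{smallmatrix}0&0\\h((1+T)^{-1}-1)q&0\end{smallmatrix}\bigr)\in L$, which is nonzero because $(1+T)^{-1}-1$ is transcendental over $\Q_p$ in $Q(\Lambda)$ and $q\neq0$; this is the desired element of $\vv^t$. I expect the main obstacles to be the descent of reducibility from $\im\rho$ to $\rho_F|_H$ in the second paragraph and the entry-clearing step in the third; everything else is inherited essentially verbatim from Lemma \ref{u is a lattice}.
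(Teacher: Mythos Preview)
Your argument is correct, and the entry-clearing manoeuvre in your third paragraph is exactly the $\textbf{J}$-conjugation trick the paper uses (writing $\alpha=f(T)$ and taking $\beta=f((1+T)^{-1}-1)$). The real divergence is in how you produce an antidiagonal matrix in $L$ with nonzero lower-left entry and how you then pass to a full lattice. The paper bypasses your second paragraph entirely: since the openness hypothesis already gives $\bigl(\begin{smallmatrix}0&0\\\overline{c}&0\end{smallmatrix}\bigr)\in\overline{L}[-1]$ for every $\overline{c}\in\oplus_{\Qq|P}\overline{\Aa}_\Qq^2$, the surjection $L[-1]\twoheadrightarrow\overline{L}[-1]$ immediately yields $\bigl(\begin{smallmatrix}0&b\\c&0\end{smallmatrix}\bigr)\in L$ with $b\in P\I_0$ and $c\equiv\overline{c}\bmod P\I_0$. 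After clearing, one gets $\beta c\in\vv^t$ with $c$ reducing to an arbitrary element of a $\Lambda/P$-lattice, and Nakayama finishes directly. Your route --- a global CM contradiction to show $L$ is not upper triangular, then amplification of a single nonzero $v\in\vv^t$ by bracketing with $L[1]$ --- also works, but the reducibility argument carries loose ends you should tighten: passing from $L$ upper triangular to $L_1$ upper triangular needs $L_2\subseteq L_1$ from Pink's theory together with the bracket computation you hint at, and the Clifford step must separately dispose of the case where $\rho_F|_H$ is scalar (ruled out here because it would force $\G=\{1\}$, contradicting $L\neq 0$). The paper's approach trades your global input for the openness hypothesis already in hand, which is shorter and avoids these detours.
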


\begin{proof}
Let $\overline{c} \in \oplus_{\Qq | P} \overline{\Aa}_{\Qq}^2$.  Since $L[-1]$ surjects to $\overline{L}[-1]$ there is some $\bigl(\begin{smallmatrix} 
0 & b\\
c & 0
\end{smallmatrix}\bigr) \in L$ such that $b \in P\I_0$ and $c \bmod P\I_0 = \overline{c}$.  Since $\vv$ is a $\Lambda$-lattice in $Q(\I_0)$ by Lemma \ref{u is a lattice}, it follows that there is some nonzero $\alpha \in \Lambda$ such that $\alpha b \in \vv$.  

We claim that there is some nonzero $\beta \in \Lambda$ for which $\bigl(\begin{smallmatrix} 
0 & \alpha b\\
\beta c & 0
\end{smallmatrix}\bigr) \in L$.  Assuming the existence of $\beta$, since $\alpha b \in \vv$ it follows that $\beta c \in \vv^t$.  That is, $c \in Q(\Lambda)\vv^t$.  Since $\overline{c}$ runs over $\oplus_{\Qq | P} \overline{\Aa}_{\Qq}^2$, it follows from Nakayama's Lemma that $\vv^t$ is a $\Lambda$-lattice in $Q(\I_0)$.      

To see that $\beta$ exists, recall that $L$ is normalized by the matrix $\textbf{J} = \bigl(\begin{smallmatrix} 
1 + T & 0\\
0 & 1
\end{smallmatrix}\bigr)$.  Thus
\[
\begin{pmatrix}
0 & b\\
c & 0 
\end{pmatrix} + \begin{pmatrix} 
0 & Tb\\
((1 + T)^{-1} - 1)c & 0
\end{pmatrix} = \begin{pmatrix} 
1 + T & 0\\
0 & 1
\end{pmatrix}\begin{pmatrix} 
0 & b\\
c & 0
\end{pmatrix}\begin{pmatrix} 
(1 + T)^{-1} & 0\\
0 & 1
\end{pmatrix} \in L.
\]
Write $\alpha = f(T)$ as a power series in $T$.  Since $(1 + T)^{-1} - 1$ is divisible by $T$, we can evaluate $f$ at $(1 + T)^{-1} - 1$ to get another element of $\Z_p[[T]]$.  Taking $\beta = f((1 + T)^{-1} - 1)$, the above calculation shows that 
\[
\begin{pmatrix}
0 & \alpha b\\
\beta c & 0 
\end{pmatrix} \in L,
\]
as desired.
\end{proof}

\begin{proposition}\label{lattice to ideal}
Every $\Lambda$-lattice in $Q(\I_0)$ contains a nonzero $\I_0$-ideal.
\end{proposition}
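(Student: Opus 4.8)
The plan is to produce, from an arbitrary $\Lambda$-lattice $M \subseteq Q(\I_0)$, a nonzero element $d \in \I_0$ with $dM \subseteq \I_0$, and then to exhibit a nonzero $\I_0$-ideal inside $dM$. For the first step, pick a $\Lambda$-basis $m_1, \dots, m_r$ of $M$; since the $Q(\Lambda)$-span of $M$ is all of $Q(\I_0)$, each generator of $\I_0$ as a $\Lambda$-module lies in $Q(\Lambda) \cdot M$, so after clearing denominators there is a single nonzero $d \in \Lambda$ with $d\,\I_0 \subseteq M$, hence (enlarging $d$ or replacing $M$ by $dM$, which is again a $\Lambda$-lattice contained in $\I_0$) we may as well assume $M \subseteq \I_0$ from the outset. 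So the real content is: a $\Lambda$-lattice $M$ with $M \subseteq \I_0$ contains a nonzero $\I_0$-ideal.

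For that, the idea is to chase the \emph{conductor} of $M$, namely $\mathfrak{c} := \{x \in \I_0 : xM \subseteq M\}$, which is visibly an $\I_0$-ideal (it is the largest $\I_0$-ideal that still stabilizes $M$) and which is contained in $M$ as soon as $1 \in M$ — so one should first arrange $1 \in M$, which is harmless since scaling $M$ by a nonzero element of $\Lambda$ changes neither the hypothesis nor the conclusion, and $M \cap \Lambda \ne 0$ because the $Q(\Lambda)$-span of $M$ is $Q(\I_0) \supseteq Q(\Lambda)$. It then remains to show $\mathfrak{c} \ne 0$. Here I would use that $\I_0$ is module-finite over $\Lambda$ (it sits inside $\I'$, which is finite over $\Lambda_\chi$, hence $\I_0$ is a finitely generated $\Lambda$-module) and that $M$ is a finitely generated $\Lambda$-module whose fraction field is $Q(\I_0)$: writing $\I_0 = \sum_i \Lambda x_i$ with the $x_i \in Q(M) = Q(\I_0)$, each $x_i = a_i / b$ for some $a_i \in M$ and a common nonzero $b \in \Lambda$, so $b\,\I_0 \subseteq \Lambda\text{-span of } M = M$; multiplying, $b^2 \I_0 \cdot M \subseteq b \I_0 \subseteq M$, so $b^2 \I_0 \subseteq \mathfrak{c}$, and $\mathfrak{c} \ne 0$. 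Then $\mathfrak{c} \subseteq \mathfrak{c}\cdot 1 \subseteq \mathfrak{c} M \subseteq M$ is the desired nonzero $\I_0$-ideal.

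The main obstacle — really the only place one has to be careful — is bookkeeping around the reductions: one must make sure that scaling $M$ by an element of $Q(\Lambda)$ to arrange $M \subseteq \I_0$ and then $1 \in M$ preserves the property of being a $\Lambda$-lattice (finite generation and full $Q(\Lambda)$-span), and that the final ideal one extracts is nonzero, i.e. that the various clearing-denominators steps really can be done with a single nonzero element of $\Lambda$ rather than requiring $\Lambda$ to be a field. Both points go through because $\Lambda$ is a Noetherian domain and all the modules in sight are finitely generated over it; no deeper input about $\I_0$ is needed beyond its finiteness over $\Lambda$. I would present the argument directly in terms of the conductor $\mathfrak{c}$ after the normalizations $M \subseteq \I_0$ and $1 \in M$, since that makes the $\I_0$-ideal structure transparent and keeps the proof to a few lines.
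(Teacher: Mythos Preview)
Your set $\mathfrak{c} := \{x \in \I_0 : xM \subseteq M\}$ is \emph{not} an $\I_0$-ideal: if $x \in \mathfrak{c}$ and $a \in \I_0$, then $axM \subseteq aM$, and there is no reason for $aM \subseteq M$ since $M$ is only a $\Lambda$-module. Your $\mathfrak{c}$ is a subring of $\I_0$ (closed under products because $xyM \subseteq xM \subseteq M$), and indeed this is exactly the ring the paper calls $R$. So your final sentence, asserting that $\mathfrak{c} \subseteq M$ is the desired $\I_0$-ideal, does not stand as written. The normalization $1 \in M$ is also in tension with $M \subseteq \I_0$: scaling by $\lambda^{-1}$ to put $1$ into $M$ may push $M$ outside $\I_0$.

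That said, your argument already contains the fix, and it is shorter than what you wrote. After arranging $M \subseteq \I_0$, your clearing-denominators step gives a nonzero $b \in \Lambda$ with $b\I_0 \subseteq M$; but $b\I_0$ is itself a nonzero $\I_0$-ideal, so you are done right there. The detour through $\mathfrak{c}$ and the normalization $1 \in M$ are unnecessary. For comparison, the paper handles the ring $R$ (your $\mathfrak{c}$) correctly by passing to the conductor $\cc = \{x \in \I_0 : x\I_0 \subseteq R\}$, which \emph{is} an $\I_0$-ideal, and then shows $\cc \cdot (M\I_0) \subseteq M$ after reducing to $M \subseteq R$. Your repaired argument is more direct: it bypasses the order $R$ and its conductor entirely by observing that once $M$ is a full $\Lambda$-lattice inside $\I_0$, a single $\Lambda$-denominator $b$ already forces $b\I_0 \subseteq M$.
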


\begin{proof}
Let $M$ be a $\Lambda$-lattice in $Q(\I_0)$.  Define 
\[
R = \{x \in \I_0 : xM \subseteq M\}.
\] 
Then $R$ is a subring of $\I_0$ that is also a $\Lambda$-lattice for $Q(\I_0)$.  Thus $R$ is a $\Lambda$-order in $\I_0$, and $M$ is a $R$-module.  Therefore
\[
\cc := \{x \in \I_0 : x\I_0 \subseteq R\}
\]
is a nonzero $\I_0$-ideal.  Note that $Q(R) = Q(\I_0) = Q(\Lambda)M$.  Since $M$ is a finitely generated $\Lambda$-module there is some nonzero $r \in \I_0'$ such that $rM \subseteq R$.  As $rM$ is still a $\Lambda$-lattice for $Q(\I_0)$, by replacing $M$ with $rM$ we may assume that $M$ is a $R$-ideal.

Now consider $\Aa = \cc \cdot (M\I_0)$, where $M\I_0$ is the ideal generated by $M$ in $\I_0$.  Note that $\Aa$ is a nonzero $\I_0$-ideal since both $\cc$ and $M\I_0$ are nonzero $\I_0$-ideals.  To see that $\Aa \subseteq M$, let $x \in \I_0$ and $c \in \cc$.  Then $xc \in R$ by definition of $\cc$.  If $a \in M$ then $xca \in M$ since $M$ is a $R$-ideal.  Thus $xca \in M$, so $\Aa \subseteq M$.
\end{proof}

\begin{remark}
Note that the only property of $\I_0$ that is used in the proof of Proposition \ref{lattice to ideal} is that $\I_0$ is a $\Lambda$-order in $Q(\I_0)$.  Thus, once we have shown that $\rho$ (or $\rho_F$) is $\I_0$-full, it follows that the representation is $R$-full for \textit{any} $\Lambda$-order in $Q(\I_0)$.  In particular, if $\tilde{\I}_0$ is the maximal $\Lambda$-order in $Q(\I_0)$ then $\rho_F$ is $\tilde{\I}_0$-full.
\end{remark}

\section{Open image in product}\label{product image}
The purpose of this section is to prove the following reduction step in the proof of Theorem \ref{main result}.  

\begin{proposition}\label{open image in product}
Assume that $|\F| \neq 3$.  Fix an arithmetic prime $P$ of $\Lambda$.  Assume that for every prime $\Qq$ of $\I_0$ lying over $P$, the image of $\im \rho$ in $\SL_2(\I_0/\Qq)$ is open.  Then the image of $\im \rho$ in $\prod_{\Qq | P} \SL_2(\I_0/\Qq)$ is open in the product topology.
\end{proposition}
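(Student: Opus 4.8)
The plan is to argue by contradiction using Goursat's Lemma, following the idea of Ribet \cite{R75}. Suppose the image $G$ of $\im \rho$ in $\prod_{\Qq | P} \SL_2(\I_0/\Qq)$ is not open. Each projection $G \to \SL_2(\I_0/\Qq)$ has open image by hypothesis, hence contains a congruence subgroup $\Gamma_{\I_0/\Qq}(\bar{\Aa}_\Qq)$ for some nonzero ideal $\bar{\Aa}_\Qq$ (using Corollary 1 of \cite{T}, valid since $|\F| \neq 3$). So each factor group is ``big''. If the product image were not open, then replacing $G$ by a suitable finite-index subgroup and passing to congruence quotients, Goursat's Lemma would force a nontrivial ``correlation'' between two of the factors: there would exist distinct primes $\Qq_1, \Qq_2$ over $P$ and a common quotient through which the projections to $\SL_2(\I_0/\Qq_i)$ both factor, with that common quotient being infinite (or at least large enough to obstruct openness). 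The heart of the matter is then to rule out such an isomorphism between (open subgroups of, congruence quotients of) $\SL_2(\I_0/\Qq_1)$ and $\SL_2(\I_0/\Qq_2)$ unless the correlation is essentially trivial.

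Concretely, here is the order I would carry things out. First I would reduce to working with the $p$-profinite parts: let $\G = \im \rho \cap \Gamma_{\I_0}(\m_0)$ as in the previous section, and observe that it suffices to show $\bar{\G}_{P\I_0}$ is open in $\prod_{\Qq|P}\SL_2(\I_0/\Qq)$, since $\im\rho$ surjects onto the (finite) quotient by the $p$-profinite part modulo mild bookkeeping. Second, for each pair of primes $\Qq_1 \neq \Qq_2$ over $P$, I would study the image $G_{12}$ of $\bar{\G}$ in $\SL_2(\I_0/\Qq_1) \times \SL_2(\I_0/\Qq_2)$; by Goursat, $G_{12}$ corresponds to an isomorphism between quotients $G_1/N_1 \cong G_2/N_2$ where $G_i$ is the $i$-th projection and $N_i$ is normal in $G_i$. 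Third, I would use the fact that $G_i$ contains a congruence subgroup $\Gamma_{\I_0/\Qq_i}(\bar{\Aa}_{\Qq_i})$, so $G_i/N_i$ is a quotient of $\SL_2(\I_0/\Qq_i)$-type group by something containing a congruence subgroup or not. The key point is that if $N_i$ does \emph{not} contain a congruence subgroup of $G_i$, then $G_i/N_i$ would be an infinite group that is simultaneously a quotient of an open subgroup of $\SL_2$ over a quotient of $\I_0$ at $\Qq_1$ \emph{and} at $\Qq_2$; one shows this is impossible because $\I_0/\Qq_1$ and $\I_0/\Qq_2$ are finite extensions of $\Z_p$ with, in general, non-isomorphic associated Lie algebras / residue characteristics matching but ramification or residue degree differing — more robustly, one invokes that any such infinite common quotient would be abelian-by-finite (since $\SL_2$ of a local ring has no large common simple quotients across distinct primes) while open subgroups of $\SL_2$ have open commutator subgroups, forcing $N_i$ to contain a congruence subgroup after all. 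Hence $G_{12} \supseteq \Gamma_{\I_0/\Qq_1}(\Aa_1') \times \Gamma_{\I_0/\Qq_2}(\Aa_2')$ for nonzero ideals, i.e. the pairwise images are open. Fourth, I would upgrade pairwise openness to openness in the full product by an induction on the number of primes over $P$, again via Goursat applied to the map into (product of first $k$ factors) $\times$ ($(k{+}1)$-st factor), using that the product of the first $k$ factors is already a product of $\SL_2$'s over distinct local rings so the same no-common-quotient principle applies.

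The main obstacle is the third step: showing that a nontrivial Goursat correlation between $\SL_2(\I_0/\Qq_1)$-type and $\SL_2(\I_0/\Qq_2)$-type groups cannot persist to an infinite common quotient. I expect the clean way to see this is: the common quotient $Q := G_1/N_1 \cong G_2/N_2$ is a $p$-profinite group (being a quotient of $\bar\G$, which is $p$-profinite), and via Pink's theory (as recalled in Section \ref{sufficiency open product image}) it carries a $\Z_p$-Lie algebra that is simultaneously a quotient of $L(\bar{\G}_{\Qq_1 I_0})$ and of $L(\bar{\G}_{\Qq_2 I_0})$; if $Q$ is infinite this Lie algebra is nonzero and is a faithful module over both $\I_0/\Qq_1$ and $\I_0/\Qq_2$ in a compatible way, forcing a ring map relating these two distinct finite local $\Z_p$-algebras, which — since $\Qq_1 \neq \Qq_2$ both lie over the same height-one prime $P$ of $\Lambda$ but define different points of $\Spec \I_0$ — cannot be compatible with the $\Lambda$-module structure unless the Lie algebra is zero. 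This $\Lambda$-module compatibility (both actions factor through the common $\Lambda/P$ but cut out different $\I_0/\Qq_i$) is precisely what kills the correlation, and pinning it down rigorously using Pink's functoriality is where the real work lies; this is also, as the introduction notes, the one place the hypothesis $|\F| \neq 3$ is genuinely needed, since it is what lets us pass freely between ``contains a congruence subgroup'' and ``subnormal'' when identifying the $N_i$.
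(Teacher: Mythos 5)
Your outer framework matches the paper: reduce to pairwise openness, apply Goursat, classify the kernels $N_i$ via Tazhetdinov's subnormal-subgroup theorem (this is indeed exactly where $|\F|\neq 3$ enters, and the paper does the same). But the heart of your argument — the ``third step'' — is not correct, and it misses the entire arithmetic content of the proposition.

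You claim that when $N_i$ is small, the resulting isomorphism $G_1/N_1\cong G_2/N_2$ (equivalently, by what you would prove, $G_1\cong G_2$) is impossible because $\SL_2(\I_0/\Qq_1)$ and $\SL_2(\I_0/\Qq_2)$ can have ``no large common quotients across distinct primes,'' and later because a compatible ring map between $\I_0/\Qq_1$ and $\I_0/\Qq_2$ ``cannot be compatible with the $\Lambda$-module structure.'' Both claims are false as group-theoretic or ring-theoretic statements: two distinct primes $\Qq_1\neq\Qq_2$ of $\I_0$ over $P$ can perfectly well have isomorphic completions $\I_0/\Qq_1\cong\I_0/\Qq_2$ as $\Lambda/P$-algebras, and then $\SL_2(\I_0/\Qq_1)\cong\SL_2(\I_0/\Qq_2)$ outright. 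There is no purely local, Pink-Lie-algebra-flavored obstruction to a nontrivial Goursat correlation; rather, the existence of such a correlation is entirely possible a priori, and the paper's argument is to show that if it exists it produces arithmetic information contradicting $\Qq_1\neq\Qq_2$.

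Concretely, the paper's proof proceeds through several steps you have no analogue of. Once $\alpha: G_1\cong G_2$ is an honest isomorphism of open $p$-profinite subgroups of $\SL_2$ over two local rings, Merzljakov's classification forces $\alpha(\textbf{x})=\eta(\textbf{x})\textbf{y}^{-1}\sigma(\textbf{x})\textbf{y}$ for a ring isomorphism $\sigma:\I_0/\Qq_1\cong\I_0/\Qq_2$, a character $\eta$, and a matrix $\textbf{y}$. Comparing traces of Frobenii then yields a twisting relation $\sigma(a(\ell,f_{\Pp_1}))=\varphi(\ell)a(\ell,f_{\Pp_2})$ for $\Frob_\ell\in H$. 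This is then upgraded, via obstruction theory, from a relation over $H$ to a genuine conjugate self-twist over $G_\Q$, and finally Theorem~\ref{deformation lifting} lifts $\tilde\sigma$ to an element of $\Gamma$ carrying $\Pp_1'$ to $\Pp_2'$. Since $\Gamma$ fixes $\I_0$ pointwise, this forces $\Qq_1=\Qq_2$, the desired contradiction. In short: the statement is not that the $\SL_2$'s are ``incomparable,'' it is that any comparison is necessarily a conjugate self-twist, and $\I_0$ was built precisely to absorb those. Your proposal replaces this chain (Merzljakov, trace comparison, obstruction theory, deformation-theoretic lifting) with a non-existence claim that does not hold, so the argument has a genuine gap at its central step.
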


Thus if we can show that there is some arithmetic prime $P$ of $\Lambda$ satisfying the hypothesis of Proposition \ref{open image in product}, then combining the above result with Proposition \ref{open image in product implies main result} yields Theorem \ref{main result}.

Fix an arithmetic prime $P$ of $\Lambda$ satisfying the hypothesis of Proposition \ref{open image in product}.  Note that $\Z_p$ does not contain any $p$-power roots of unity since $p > 2$.  Therefore $P = P_{k, 1}$ for some $k \geq 2$.  Recall that $\G = \im \rho \cap \Gamma_{\I_0}(\m_0)$, and write $\overline{\G}$ for the image of $\G$ in $\prod_{\Qq | P}\SL_2(\I_0/\Qq)$.  We begin our proof of Proposition \ref{open image in product} with the following lemma of Ribet which allows us to reduce to considering products of only two copies of $\SL_2$.

\begin{lemma}[Lemma 3.4, \cite{R75}]\label{reduce to two factors}
Let $S_1, \ldots, S_t (t > 1)$ be profinite groups.  Assume for each $i$ that the following condition is satisfied: for each open subgroup $U$ of $S_i$, the closure of the commutator subgroup of $U$ is open in $S_i$.  Let $\cG$ be a closed subgroup of $S = S_1 \times \cdots \times S_t$ that maps to an open subgroup of each group $S_i \times S_j (i \neq j)$.  Then $\cG$ is open in $S$. 
\end{lemma}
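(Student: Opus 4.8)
The plan is to prove the lemma by induction on $t$, isolating the case $t = 3$ as the essential one. For $t = 2$ there is nothing to prove, since the hypothesis says precisely that $\cG$ maps to an open subgroup of $S_1 \times S_2 = S$.

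The heart of the argument is the case $t = 3$. First I would reduce to the situation where $\cG$ surjects onto each factor $S_i$: replacing $S_i$ by the projection $\pi_i(\cG)$ (which is open in $S_i$, being a projection of the open image of $\cG$ in $S_i \times S_j$) does not affect the hypotheses, and $\cG$ is open in $S_1 \times S_2 \times S_3$ iff it is open in $\pi_1(\cG) \times \pi_2(\cG) \times \pi_3(\cG)$. Granting surjectivity, a short index computation shows that $\cG$ is open in $S$ iff $K := \cG \cap (\{1\} \times \{1\} \times S_3)$ is open in $S_3$: indeed $\pi_{12}(\cG)$ is open in $S_1 \times S_2$ and $[S_1 \times S_2 \times S_3 : \cG] = [S_1 \times S_2 : \pi_{12}(\cG)] \cdot [S_3 : K]$, using that a closed subgroup of a profinite group is open iff it has finite index. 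To show $K$ is open, set $M_1 = \cG \cap (S_1 \times \{1\} \times S_3)$ and $M_2 = \cG \cap (\{1\} \times S_2 \times S_3)$. A direct computation of commutators shows that if $g \in M_1$ and $h \in M_2$ then $[g,h] \in \{1\}\times\{1\}\times S_3$, so $\overline{[M_1, M_2]} \subseteq K$. On the other hand $\pi_3(M_1) = \{c \in S_3 : (1,c) \in \pi_{23}(\cG)\}$ and $\pi_3(M_2) = \{c \in S_3 : (1,c) \in \pi_{13}(\cG)\}$ are both open in $S_3$, because $\pi_{23}(\cG)$ and $\pi_{13}(\cG)$ are open in the respective products. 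Hence $V := \pi_3(M_1) \cap \pi_3(M_2)$ is an open subgroup of $S_3$, and by realizing elements of $V$ as third coordinates of elements of $M_1$ and of $M_2$ one sees that $K \supseteq \overline{[M_1,M_2]} \supseteq \overline{[V,V]}$. The commutator hypothesis applied to the open subgroup $V \leq S_3$ now gives that $\overline{[V,V]}$ is open in $S_3$, so $K$ is open, completing the case $t = 3$.

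For $t \geq 4$ I would run the induction by grouping the factors into three blocks $\Sigma_1 = S_1$, $\Sigma_2 = S_2 \times \cdots \times S_{t-1}$, and $\Sigma_3 = S_t$, and applying the $t = 3$ case to $\cG \leq \Sigma_1 \times \Sigma_2 \times \Sigma_3 = S$. This requires knowing that $\cG$ maps onto an open subgroup of each $\Sigma_i \times \Sigma_j$: for $\Sigma_1 \times \Sigma_3 = S_1 \times S_t$ this is a hypothesis, and for $\Sigma_1 \times \Sigma_2 = S_1 \times \cdots \times S_{t-1}$ and $\Sigma_2 \times \Sigma_3 = S_2 \times \cdots \times S_t$ it follows from the inductive hypothesis applied to these $(t-1)$-fold products: the image of $\cG$ in $S_1 \times \cdots \times S_{t-1}$ is a closed subgroup whose image in each $S_i \times S_j$ with $i,j \leq t-1$ coincides with that of $\cG$ and is therefore open, so the inductive hypothesis applies, and similarly for $S_2 \times \cdots \times S_t$. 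One checks that the argument in the $t = 3$ case only invokes the commutator hypothesis on the \emph{third} block, and $\Sigma_3 = S_t$ satisfies it by assumption, so no extra hypothesis on the product block $\Sigma_2$ is needed. (Alternatively, a finite product of groups each satisfying the commutator hypothesis again satisfies it, since the open normal subgroups of a product that are themselves products are cofinal and $[-,-]$ respects products.)

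I expect the main obstacle to be the $t = 3$ case, and within it the point that $\overline{[M_1,M_2]}$ is ``large'', i.e. contains $\overline{[V,V]}$ for the open subgroup $V = \pi_3(M_1) \cap \pi_3(M_2)$ of $S_3$; this is exactly where the non-abelian commutator hypothesis on $S_3$ is used, and it is what rules out degenerate configurations (such as $\cG$ being a graph over an abelian quotient) in which $\cG$ would fail to be open. The bookkeeping reductions — passing to the projections $\pi_i(\cG)$ and translating openness of $\cG$ into finite index of $K$ — are routine but must be done carefully to keep the induction clean.
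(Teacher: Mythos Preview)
The paper does not give its own proof of this lemma; it is quoted verbatim from Ribet \cite{R75} and immediately applied. Your argument is correct and is essentially Ribet's: induction on $t$, with the real content in the three-factor case, where one shows that $K = \cG \cap (\{1\}\times\{1\}\times S_3)$ contains $\overline{[V,V]}$ for an open subgroup $V \leq S_3$ built from the pairwise-openness hypotheses, and then invokes the commutator condition on $S_3$ to conclude that $K$ is open.
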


Apply this lemma to our situation with $\{S_1, \ldots, S_t\} = \{\SL_2(\I_0/\Qq) : \Qq | P\}$ and $\cG = \overline{\G}$.  The lemma implies that it is enough to prove that for all primes $\Qq_1  \neq \Qq_2$ of $\I_0$ lying over $P$, the image $G$ of $\overline{\G}$ under the projection to $\SL_2(\I_0/\Qq_1) \times \SL_2(\I_0/\Qq_2)$ is open.  We shall now consider what happens when this is not the case.  Indeed, the reader should be warned that the rest of this section is a proof by contradiction.

\begin{proposition}\label{creating new twist}
Let $P$ be an arithmetic prime of $\Lambda$ satisfying the hypotheses of Proposition \ref{open image in product}, and assume $|\F| \neq 3$.  Let $\Qq_1$ and $\Qq_2$ be distint primes of $\I_0$ lying over $P$.  Let $\Pp_i$ be a prime of $\I$ lying over $\Qq_i$.  If $G$ is not open in $\SL_2(\I_0/\Qq_1) \times \SL_2(\I_0/\Qq_2)$ then there is an isomorphism $\sigma : \I_0/\Qq_1 \cong \I_0/\Qq_2$ and a character $\varphi : H \to Q(\I_0/\Qq_2)^\times$ such that
\[
\sigma(a(\ell, f_{\Pp_1})) = \varphi(\ell)a(\ell, f_{\Pp_2})
\]
for all primes $\ell$ for which $\Frob_\ell \in H$.
\end{proposition}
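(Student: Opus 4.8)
The plan is to use Goursat's Lemma on the closed subgroup $G \subseteq \SL_2(\I_0/\Qq_1) \times \SL_2(\I_0/\Qq_2)$, exploiting the hypothesis that the image of $\im\rho$ in each factor $\SL_2(\I_0/\Qq_i)$ is open. First I would replace $G$ by its intersection with $\prod_i \Gamma_{\I_0/\Qq_i}(\m_i)$ (equivalently work with $\overline{\G}$) so that we are dealing with $p$-profinite groups, and observe that by the openness hypothesis together with Proposition \ref{transferring congruence subgroup} (and Corollary 1 of \cite{T}, using $|\F| \neq 3$) the image $G_i$ of $G$ in each factor contains a congruence subgroup $\Gamma_{\I_0/\Qq_i}(\bb_i)$ for some nonzero ideal $\bb_i$; in particular $G_i$ is a subnormal, hence in the relevant sense "large", subgroup of $\SL_2(\I_0/\Qq_i)$. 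If $G$ is \emph{not} open in the product, then by Goursat's Lemma the two projections identify a common nontrivial quotient: there is a normal subgroup $N_i \trianglelefteq G_i$ for $i = 1, 2$ and an isomorphism $G_1/N_1 \cong G_2/N_2$ through which $G$ is the fiber product, and non-openness forces this common quotient to be infinite (if it were finite, $G$ would still be open, contradiction).

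Next I would analyze what an infinite common quotient of two such large subgroups of $\SL_2$ over the rings $\I_0/\Qq_i$ can be. Since $G_i$ contains $\Gamma_{\I_0/\Qq_i}(\bb_i)$, any infinite quotient of $G_i$ must "see" the ring $\I_0/\Qq_i$ — concretely, the abelianization and the structure of $\Gamma_{\I_0/\Qq_i}(\bb_i)$ force the infinite common quotient to be (up to finite groups) a quotient of $\SL_2$ of one of the two residue rings, and matching these forces an isomorphism of the rings themselves, or at least of their fraction fields: $\sigma : \I_0/\Qq_1 \cong \I_0/\Qq_2$. This is the Ribet-style argument alluded to after Lemma \ref{reduce to two factors}: a fiber product that is not the whole product must come from a genuine identification of the two factors via a ring isomorphism. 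Having produced $\sigma$, the fact that $G$ is the $\sigma$-twisted graph (modulo the finite pieces) means that for every $h$ with $\Frob_\ell \in H$, the matrices $\rho(h) \bmod \Qq_1$ and $\rho(h) \bmod \Qq_2$ are related by $\sigma$ up to a scalar; translating back through the definition $\rho = \rho_F|_H \otimes \sqrt{\det\rho_F|_H}^{-1}$ and using that $f_{\Pp_i}$ is the specialization of $F$ at $\Pp_i$, comparing traces at $\Frob_\ell$ yields $\sigma(a(\ell, f_{\Pp_1})) = \varphi(\ell) a(\ell, f_{\Pp_2})$ for a character $\varphi : H \to Q(\I_0/\Qq_2)^\times$ (the scalar ambiguity, which is a square root of a ratio of determinants, is where $\varphi$ comes from and why it need not be trivial). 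I would take traces of the relation $\rho(h_1) = \sigma(\rho(h_2))\cdot(\text{scalar})$ applied to $\Frob_\ell$ and use that trace determines the $\SL_2$-representation up to the scalar twist, since $\brho_F$ is absolutely irreducible.

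The main obstacle I expect is the middle step: ruling out the possibility that the common Goursat quotient is infinite \emph{without} coming from a ring isomorphism — i.e., pinning down precisely which infinite quotients $\SL_2(\I_0/\Qq_i)$ (or its large subgroups) can have and showing they are governed by automorphisms of the ring. One must be careful that $\I_0/\Qq_i$ need not be a field, so $\SL_2$ of it can have more quotients than the simple-group heuristic suggests; handling the nilpotent/unipotent radical of $\I_0/\Qq_i$ and the finite-group discrepancies (the $N_i$ and the kernels of the congruence subgroup maps) cleanly is the delicate point, and it is precisely here that the hypothesis $|\F| \neq 3$ and the results on congruence subgroups of $\SL_2$ over these rings are needed. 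Once the ring isomorphism $\sigma$ is extracted, the passage to the twist relation on Hecke eigenvalues is a routine trace computation using absolute irreducibility of $\brho_F$.
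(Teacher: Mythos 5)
Your outline is correct at both ends: Goursat's Lemma to extract an isomorphism between quotients of the two projections, and the trace computation at the end to translate a group-level relation into the Hecke-eigenvalue twist. But there is a genuine gap in the middle, and it is exactly the step you flag as the ``main obstacle'' without resolving it: passing from a group-theoretic isomorphism $G_1/N_1 \cong G_2/N_2$ to a ring isomorphism $\sigma : \I_0/\Qq_1 \cong \I_0/\Qq_2$. Saying the quotient must ``see the ring'' via abelianizations is not an argument, and trying to deduce the ring isomorphism from the structure of congruence subgroups alone is precisely the hard content. The paper handles this in two concrete steps you are missing. First, it does not work with an infinite ``common quotient'' at all: Tazhetdinov's classification of subnormal subgroups of $\SL_2$ over local rings (\cite{T}, using $|\F| \neq 3$) shows a non-open normal subgroup $N_i$ of $G_i$ must lie in $\{\pm 1\}$, and since $G_i$ is cut out from $\Gamma_{\I_0}(\m_0)$-reductions the element $-1$ does not occur, so in fact $N_1 = N_2 = \{1\}$ and $\alpha : G_1 \cong G_2$ is an honest isomorphism of open subgroups of $\SL_2$. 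Second, it invokes Merzljakov's theorem \cite{Me} on isomorphisms of open subgroups of $\SL_2$ over local rings, which gives the explicit normal form $\alpha(\textbf{x}) = \eta(\textbf{x})\,\textbf{y}^{-1}\sigma(\textbf{x})\textbf{y}$ with $\sigma$ a ring isomorphism, $\eta$ a character, and $\textbf{y}$ a conjugating matrix. This is the ingredient your proposal needs and does not supply; without it the proof does not close. Once you have Merzljakov's normal form, your trace argument goes through essentially as you describe (the character $\varphi$ is assembled from $\eta$ and the square-root-of-determinant correction, exactly as you anticipate), so the remainder of the proposal is sound.

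One smaller point: you say non-openness ``forces this common quotient to be infinite.'' The paper's argument is sharper and runs the other way: if $N_1$ is not open then $N_1 \subseteq \{\pm 1\}$ is \emph{finite}, and then $N_2$ must also be non-open (hence $\subseteq \{\pm 1\}$) because otherwise $\alpha$ would identify the infinite group $G_1/N_1$ with the finite group $G_2/N_2$. Framing it as ``the $N_i$ are tiny'' rather than ``the quotient is infinite'' is what lets you conclude $\alpha$ is an isomorphism of the $G_i$ themselves, which is what Merzljakov's theorem requires.
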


\begin{proof}
Our strategy is to mimic the proof of Theorem 3.5 in \cite{R75}.  Let $G_i$ be the projection of $G$ to $\SL_2(\I_0/\Qq_i)$, so $G \subseteq G_1 \times G_2$.  By hypothesis $G_i$ is open in $\SL_2(\I_0/\Qq_i)$.  Let $\pi_i : G \to G_i$ be the projection maps and set $N_1 = \ker \pi_2$ and $N_2 = \ker \pi_1$.  Though a slight abuse of notation, we view $N_i$ as a subset of $G_i$.  Goursat's Lemma implies that the image of $G$ in $G_1/N_1 \times G_2/N_2$ is the graph of an isomorphism
\[
\alpha : G_1/N_1 \cong G_2/N_2.
\]

Since $G$ is not open in $G_1 \times G_2$ by hypothesis, either $N_1$ is not open in $G_1$ or $N_2$ is not open in $G_2$.  (Otherwise $N_1 \times N_2$ is open and hence $G$ is open.)  Without loss of generality we may assume that $N_1$ is not open in $G_1$.  From the classification of subnormal subgroups of $\SL_2(\I_0/\Qq_1)$ in \cite{T} it follows that $N_1 \subseteq \{\pm 1\}$ since $N_1$ is not open.  If $N_2$ is open in $\SL_2(\I_0/\Qq_2)$ then $\alpha$ gives an isomorphism from either $G_1$ or $\PSL_2(\I_0/\Qq_1)$ to the finite group $G_2/N_2$.  Clearly this is impossible, so $N_2$ is not open in $\SL_2(\I_0/\Qq_2)$.  Again by \cite{T} we have $N_2 \subseteq \{\pm 1\}$.  Recall that $G_i$ comes from $\G = \im \rho \cap \Gamma_{\I_0}(\m_0)$ by reduction.  In particular, $-1 \not\in G_i$ since all elements of $\G$ reduce to the identity in $\SL_2(\F)$.  Thus we must have $N_i = \{1\}$.  Hence $\alpha$ gives an isomorphism $G_1 \cong G_2$.  We note that the Theorem in \cite{T} requires $|\F| \neq 3$.  Our invocation of \cite{T} - here and in the proof of Proposition \ref{transferring congruence subgroup} - is the only reason we assume $|\F| \neq 3$.   

The isomorphism theory of open subgroups of $\SL_2$ over a local ring was studied by Merzljakov in \cite{Me}.  (There is a unique theorem in his paper, and that is the result to which we refer.  His theorem applies to more general groups and rings, but it is relevant in particular to our situation.)  Although his result is stated only for automorphisms of open subgroups, his proof goes through without change for isomorphisms.  His result implies that $\alpha$ must be of the form
\begin{equation}\label{explicit alpha}
\alpha(\textbf{x}) = \eta(\textbf{x})\textbf{y}^{-1}\sigma(\textbf{x})\textbf{y},
\end{equation}
where $\eta \in \Hom(G_1, Q(\I_0/\Qq_2)^\times)$, $\textbf{y} \in \GL_2(Q(\I_0/\Qq_2))$ and $\sigma : \I_0/\Qq_1 \cong \I_0/\Qq_2$ is a ring isomorphism.  By $\sigma(\textbf{x})$ we mean that we apply $\sigma$ to each entry of the matrix $\textbf{x}$.    

For any $\textbf{g} \in G$ we can write $\textbf{g} = (\textbf{x}, \textbf{y})$ with $\textbf{x} \in G_1, \textbf{y} \in G_2$.  Since $G$ is the graph of $\alpha$ we have $\alpha(\textbf{x}) = \textbf{y}$.  By definition of $G$ there is some $h \in H$ such that $\textbf{x} = \Pp_1(\rho(h))$ and $\textbf{y} = \Pp_2(\rho(h))$.  Recall that for almost all primes $\ell$ for which $\Frob_\ell \in H$ we have $\tr(\rho(\Frob_\ell)) = \sqrt{\det \rho_F(\Frob_\ell)}^{-1}a(\ell, F)$.  Furthermore $\det \rho_F(\Frob_\ell) \bmod P = \chi(\ell)\ell^{k - 1}$ since $P = P_{k, 1}$.  Using these facts together with equation \eqref{explicit alpha} we see that for almost any $\Frob_\ell \in H$ we have
\[
\sigma(a(\ell, f_{\Pp_1})) = \varphi(\ell)a(\ell, f_{\Pp_2}),
\]
where
\[
\varphi(\ell) := \eta^{-1}(\Pp_1(\rho(\Frob_\ell)))\frac{\sigma(\sqrt{\chi(\ell)\ell^{k - 1}})}{\sqrt{\chi(\ell)\ell^{k - 1}}},
\]
as claimed.
\end{proof}

To finish the proof of Proposition \ref{open image in product} we need to remove the condition that $\Frob_\ell \in H$ from the conclusion of Proposition \ref{creating new twist}.  That is, we would like to show that there is an isomorphism $\tilde{\sigma} : \I'/\Pp'_1 \cong \I'/\Pp'_2$ extending $\sigma$ and a character $\tilde{\varphi} : G_\Q \to Q(\I'/\Pp'_2)^\times$ extending $\varphi$ such that
\[
\tilde{\sigma}(a(\ell, f_{\Pp_1})) = \tilde{\varphi}(\ell)a(\ell, f_{\Pp_2})
\]
for almost all primes $\ell$.  If we can do this, then applying Theorem \ref{deformation lifting} allows us to lift $\tilde{\sigma}$ to an element of $\Gamma$ that sends $\Pp'_1$ to $\Pp'_2$.  (We also need to verify that $\tilde{\varphi}$ takes values in $\Z_p[\chi]$ in order to apply Theorem \ref{deformation lifting}.)  But this is a contradiction since $\Pp'_1$ and $\Pp'_2$ lie over different primes of $\I_0$.  Hence it follows from Proposition \ref{creating new twist} that $G$ must be open in $\SL_2(\I_0/\Qq_1) \times \SL_2(\I_0/\Qq_2)$ and Lemma \ref{reduce to two factors} implies Proposition \ref{open image in product}. 

We show the existence of $\tilde{\sigma}$ and $\tilde{\varphi}$ using obstruction theory as developed in section 4.3.5 of \cite{MFG}.  For the sake of notation, we briefly recall the theory here; for the proofs we refer the reader to \cite{MFG}.  Let $K$ be a finite extension of $\Q_p$, $n \in \Z^+$, and $r : H \to \GL_n(K)$ be an absolutely irreducible representation.  For all $g \in G_\Q$ define a twisted representation on $H$ by $r^g(h) := r(ghg^{-1})$.  Assume the following condition:
\begin{equation}\label{hypothesis}
r \cong r^g \text{ over } K \text{ for all } g \in G_\Q.
\end{equation}
Under hypothesis \eqref{hypothesis} it can be shown that there is a function $c : G_\Q \to \GL_n(K)$ with the following properties:
\begin{enumerate}
\item $r = c(g)^{-1}r^gc(g)$ for all $g \in G_\Q$;
\item $c(hg) = r(h)c(g)$ for all $h \in H, g \in G_\Q$;
\item $c(1) = 1$.
\end{enumerate}
As $r$ is absolutely irreducible, it follows that $b(g, g') := c(g)c(g')c(gg')^{-1}$ is a $2$-cocycle with values in $K^\times$.  In fact $b$ factors through $\Delta := G_\Q/H$ and hence represents a class in $H^2(\Delta, K^\times)$.  We call this class $\Ob(r)$.  It is independent of the function $c$ satisfying the above three properties.  The class $\Ob(r)$ measures the obstruction to lifting $r$ to a representation of $G_\Q$.  We say a continuous representation $\tilde{r} : G_\Q \to \GL_n(K)$ is an \textit{extension} of $r$ to $G_\Q$ if $\tilde{r}|_H = r$.

\begin{proposition}\label{obstruction theory}
\begin{enumerate}
\item There is an extension $\tilde{r}$ of $r$ to $G_\Q$ if and only if $\Ob(r) = 0 \in H^2(\Delta, K^\times)$.
\item If $\Ob(r) = 0$ and $\tilde{r}$ is an extension of $r$ to $G_\Q$, then all other extensions of $r$ to $G_\Q$ are of the form $\tilde{r} \otimes \psi$ for some character $\psi : \Delta \to K^\times$.
\end{enumerate}
\end{proposition}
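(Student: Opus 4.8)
The plan is to deduce both parts from Schur's lemma, using that $r$ --- and hence each twist $r^g$, which is isomorphic to $r$ by \eqref{hypothesis} --- is absolutely irreducible, together with a direct manipulation of the cocycle $b$.

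For the ``if'' direction of part 1, I would suppose $\Ob(r) = 0$, choose (after normalizing so that it is trivial at $1$) a function $\beta \colon \Delta \to K^\times$ with $b(g,g') = \beta(g)\beta(g')\beta(gg')^{-1}$, and set $\tilde r(g) := \beta(g)^{-1}c(g)$, where $\beta$ is viewed as a function on $G_\Q$ via $G_\Q \onto \Delta$. A one-line computation from the definition of $b$ shows $\tilde r$ is multiplicative, and property (2) of $c$ (taking $g = 1$ and using property (3)) gives $c(h) = r(h)$ for $h \in H$; since $\beta$ is trivial on $H$ this yields $\tilde r|_H = r$, so $\tilde r$ is an extension.

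For the ``only if'' direction of part 1, and for part 2, the mechanism is the same. If $\tilde r$ extends $r$, then $\tilde r|_H = r$ gives $r^g = \tilde r(g)\, r\, \tilde r(g)^{-1}$; comparing with property (1) of $c$ shows that $\tilde r(g)c(g)^{-1}$ commutes with $r^g(H)$, so by absolute irreducibility of $r^g$ and Schur's lemma $\tilde r(g) = \mu(g)c(g)$ for some function $\mu \colon G_\Q \to K^\times$. Substituting into the multiplicativity of $\tilde r$ exhibits $b$ as a coboundary (of $\mu^{-1}$, up to the usual sign), and property (2) of $c$ forces $\mu$ to be $H$-invariant, hence to descend to $\Delta$; therefore $\Ob(r) = 0$. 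For part 2, I would run the identical Schur's lemma argument with a second extension $\tilde r'$ in place of $c$, obtaining $\tilde r'(g) = \psi(g)\tilde r(g)$; multiplicativity of $\tilde r$ and $\tilde r'$ forces $\psi$ to be a homomorphism, and $\tilde r'|_H = r = \tilde r|_H$ forces $\psi|_H$ trivial, so $\psi$ is a character of $\Delta$ and $\tilde r' = \tilde r \otimes \psi$.

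Each step is short; the only real care is bookkeeping. I expect the main nuisance to be keeping the multiplicative cohomology conventions consistent --- which of $b$, $\delta\mu$, $\delta\beta$ carries which inverse --- and checking at every stage that the auxiliary functions ($\beta$, $\mu$, $\psi$) are trivial on $H$ and so genuinely live on $\Delta$, together with the routine continuity remarks (these functions inherit continuity from $c$ and $\tilde r$). None of this is deep, which is presumably why the argument is deferred to \cite{MFG}.
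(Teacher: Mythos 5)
Your proof is correct, and since the paper itself defers the proof to \cite{MFG}, the ``paper's own proof'' is just the standard obstruction-theoretic argument, which is exactly what you reproduce: Schur's lemma (via absolute irreducibility of $r$, hence of each $r^g \cong r$) to write any extension as $\mu \cdot c$, multiplicativity to identify $b$ with the coboundary of $\mu^{-1}$, and property (2) of $c$ together with $\tilde r|_H = r$ to force $\mu$ to descend to $\Delta$. The reverse direction and part (2) follow by the same computations, as you observe. No gaps.
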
      

For ease of notation we shall write $K_i = Q(\I/\Pp_i)$ and $E_i = Q(\I_0/\Qq_i)$.  Write $\rho_i : G_\Q \to \GL_2(K_i)$ for $\rho_{f_{\Pp_i}}$.  By Theorem \ref{making rho} we see that $\rho_i|_H$ takes values in $\GL_2(E_i)$.  Proposition \ref{creating new twist} gives an isomorphism $\sigma : E_1 \cong E_2$ and a character $\varphi : H \to E_2^\times$ such that
\[
\tr (\rho_1|_H^\sigma) = \tr(\rho_2|_H \otimes \varphi).
\]

In order to use obstruction theory to show the existence of $\tilde{\sigma}$ and $\tilde{\varphi}$ we must show that all of the representations in question satisfy hypothesis \eqref{hypothesis}.

\begin{lemma}\label{satisfying obstruction theory hypothesis}
Let $L_i$ be a finite extension of $K_i$.  View $\rho_1$ as a representation over $L_1$ and $\rho_2|_H, \rho_1|_H^\sigma, \rho_2|_H \otimes \varphi$, and $\varphi$ as representations over $L_2$.  Then $\rho_i|_H, \rho_1|_H^\sigma, \rho_2|_H \otimes \varphi$, and $\varphi$ all satisfy hypothesis \eqref{hypothesis}.  Furthermore we have $\Ob(\rho_i|_H) = 0, \Ob(\rho_1|_H^\sigma) = \Ob(\rho_2|_H \otimes \varphi)$, and
\[
\Ob(\rho_2|_H \otimes \varphi) = \Ob(\rho_2|_H)\Ob(\varphi) \in H^2(\Delta, (L_2)^\times).
\]
\end{lemma}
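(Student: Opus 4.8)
The plan is to verify each assertion in turn, reducing everything to formal manipulations of the cocycles underlying the obstruction classes once absolute irreducibility is in place. First I would record that all the two-dimensional representations occurring are absolutely irreducible, so that Proposition \ref{obstruction theory} applies to each of them. By the standing hypothesis of Proposition \ref{open image in product} the image of $\rho$ in $\SL_2(\I_0/\Qq_i)$ is open, and $\rho_i|_H$ is, up to a scalar twist, the reduction of $\rho$ modulo $\Qq_i$; hence $\rho_i|_H$ has open projective image and is absolutely irreducible over any finite extension of $K_i$, in particular over $L_i$. Absolute irreducibility is preserved by applying the field isomorphism $\sigma$ and by twisting by a character, so $\rho_1|_H^\sigma$ and $\rho_2|_H \otimes \varphi$ are absolutely irreducible over $L_2$. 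Moreover, Proposition \ref{creating new twist} gives $\tr(\rho_1|_H^\sigma) = \tr(\rho_2|_H \otimes \varphi)$ at every $\Frob_\ell \in H$, hence by Chebotarev on all of $H$; by absolute irreducibility this upgrades to an honest isomorphism $\rho_1|_H^\sigma \cong \rho_2|_H \otimes \varphi$ over $L_2$, which I will use repeatedly.

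Next I would treat $\rho_i|_H$. Because $\rho_i$ is defined on all of $G_\Q$ and $H$ is normal in $G_\Q$, we have $(\rho_i|_H)^g = \Ad(\rho_i(g)) \circ \rho_i|_H$, which is isomorphic to $\rho_i|_H$ over $K_i$, hence over $L_i$; thus $\rho_i|_H$ satisfies \eqref{hypothesis}. Since $\rho_i$ is an extension of $\rho_i|_H$ to $G_\Q$, Proposition \ref{obstruction theory}(1) gives $\Ob(\rho_i|_H) = 0$. For $\rho_1|_H^\sigma$ one checks \eqref{hypothesis} via the trace computation $\tr\big((\rho_1|_H^\sigma)^g\big)(h) = \sigma\big(\tr \rho_1(ghg^{-1})\big) = \sigma\big(\tr \rho_1(h)\big) = \tr(\rho_1|_H^\sigma)(h)$ together with absolute irreducibility; and since $\rho_1|_H^\sigma \cong \rho_2|_H \otimes \varphi$, the same holds for $\rho_2|_H \otimes \varphi$. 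Conjugating a function $c$ satisfying the three defining properties by the intertwiner realizing $\rho_1|_H^\sigma \cong \rho_2|_H \otimes \varphi$ shows that the resulting $2$-cocycle is unchanged, so $\Ob(\rho_1|_H^\sigma) = \Ob(\rho_2|_H \otimes \varphi)$ (obstruction classes depend only on the isomorphism class).

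For the character $\varphi$, I would apply $(-)^g$ to the isomorphism $\rho_2|_H \otimes \varphi \cong \rho_1|_H^\sigma$ and use that both $\rho_2|_H$ and $\rho_1|_H^\sigma$ satisfy \eqref{hypothesis}; this gives $\rho_2|_H \otimes \varphi^g \cong \rho_2|_H \otimes \varphi$, i.e. $\rho_2|_H \cong \rho_2|_H \otimes (\varphi^g \varphi^{-1})$. Comparing determinants of these two two-dimensional representations forces $(\varphi^g \varphi^{-1})^2 = 1$, so $\varphi^g\varphi^{-1}$ is a quadratic character of $H$; but a two-dimensional representation with open projective image admits no non-trivial quadratic self-twist — such a twist would force reducibility on the index-two subgroup cut out by the character and place the projective image in the normalizer of a torus, contradicting openness — so $\varphi^g = \varphi$ for all $g \in G_\Q$. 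Hence $\varphi$ satisfies \eqref{hypothesis} and $\Ob(\varphi) \in H^2(\Delta, L_2^\times)$ is defined. Finally, for the displayed multiplicativity: if $c_1 : G_\Q \to \GL_2(L_2)$ and $c_2 : G_\Q \to L_2^\times$ satisfy the three defining properties for $\rho_2|_H$ and $\varphi$ respectively, then $c := c_1 c_2$ satisfies them for $\rho_2|_H \otimes \varphi$, and $c(g)c(g')c(gg')^{-1} = \big(c_1(g)c_1(g')c_1(gg')^{-1}\big)\big(c_2(g)c_2(g')c_2(gg')^{-1}\big)$, so $\Ob(\rho_2|_H \otimes \varphi) = \Ob(\rho_2|_H)\Ob(\varphi)$ in $H^2(\Delta, L_2^\times)$. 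The only genuinely non-formal point is the self-twist step for $\varphi$: it is exactly there that the open-image hypothesis of Proposition \ref{open image in product} is invoked — to rule out a non-trivial quadratic period — the rest being bookkeeping with the cocycles of Proposition \ref{obstruction theory} and the isomorphism $\rho_1|_H^\sigma \cong \rho_2|_H \otimes \varphi$.
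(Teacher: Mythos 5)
Your proof is correct and covers the same ground as the paper, but the key step — showing that $\varphi$ satisfies hypothesis \eqref{hypothesis} — is handled by a genuinely different argument. The paper keeps everything at the level of traces: it derives $\varphi^g \tr \rho_2|_H = \varphi \tr \rho_2|_H$, observes that $\varphi$ and $\varphi^g$ factor through the finite quotient $H \onto (\Z/m\Z)^\times$ (where $m$ is the conductor of $\varphi$), and then uses the openness of $\im \rho_2$ (Ribet--Momose) to show that $\tr\rho_2$ is not identically zero on any coset of the kernel of that quotient, so $\varphi^g = \varphi$. You instead upgrade the trace identity to an isomorphism $\rho_2|_H \otimes \varphi^g \cong \rho_2|_H \otimes \varphi$, conclude $\rho_2|_H \cong \rho_2|_H \otimes (\varphi^g\varphi^{-1})$, take determinants to see $\varphi^g\varphi^{-1}$ is quadratic, and then rule out a nontrivial quadratic self-twist by the Clifford-theory argument that it would force $\rho_2|_H$ to be dihedral (induced from an index-two subgroup, with projective image in the normalizer of a torus), contradicting open projective image. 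Both routes ultimately rest on the Ribet--Momose open-image theorem; yours is a little more conceptual in that it identifies the structural obstruction (no dihedral self-twist) rather than a pointwise non-vanishing of traces, while the paper's is perhaps more elementary. The remaining items in your write-up — the trace argument for $\rho_i|_H$ and $\rho_1|_H^\sigma$, vanishing of $\Ob(\rho_i|_H)$ via the extension $\rho_i$, equality $\Ob(\rho_1|_H^\sigma) = \Ob(\rho_2|_H\otimes\varphi)$ from the isomorphism, and multiplicativity $\Ob(\rho_2|_H\otimes\varphi) = \Ob(\rho_2|_H)\Ob(\varphi)$ by multiplying the cochains $c_1, c_2$ — all match the paper's proof.

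One small stylistic remark: you invoke the hypothesis of Proposition \ref{open image in product} to get open image of $\rho_i|_H$, whereas the paper quotes Ribet--Momose directly; these amount to the same thing here (the hypothesis is itself established via Ribet--Momose in Corollary \ref{Ribet/Momose input}), but the paper's phrasing is cleaner because it avoids appearing to use a hypothesis of the very proposition that this lemma is a step toward.
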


\begin{proof}
Recall that a continuous representation of a compact group over a field of characteristic $0$ is determined up to isomorphism by its trace.  Therefore to verify \eqref{hypothesis} it suffices to show that if $r$ is any of the representations listed in the statement of the lemma, then
\[
\tr r = \tr r^g
\]
for all $g \in G_\Q$.  This is obvious when $r$ is $\rho_1|_H$ or $\rho_2|_H$ since both extend to representations of $G_\Q$ and hence 
\[
\tr \rho_i^g(h) = \tr\rho_i(g)\rho_i(h)\rho_i(g)^{-1} = \tr \rho_i(h).
\]
Since $\rho_i$ is an extension of $\rho_i|_H$ and $L_i \supseteq K_i$ we have $\Ob(\rho_i) = 0$.

When $r = \rho_1|_H^\sigma$, let $\tau : K \hookrightarrow \overline{\Q}_p$ be an extension of $\sigma$.  Then $\rho_1^\tau$ is an extension of $\rho_1|_H^\sigma$ and hence we can use the same argument as above to conclude that $\tr \rho_1|_H^\sigma = \tr (\rho_1|_H^\sigma)^g$.  (Note that for this particular purpose, we do not care about the field in which $\tau$ takes values.)  

When $r = \rho_2|_H \otimes \varphi$, recall that $\tr \rho_1|_H^\sigma = \varphi\tr \rho_2|_H$.  Since both $\rho_1|_H^\sigma$ and $\rho_2|_H$ satisfy hypothesis \eqref{hypothesis} so does $\rho_2|_H \otimes \varphi$.  Furthermore, $\tr \rho_1|_H^\sigma = \tr(\rho_2|_H \otimes \varphi)$ implies that $\rho_1|_H^\sigma \cong \rho_2|_H \otimes \varphi$ and hence $\Ob(\rho_1|_H^\sigma) = \Ob(\rho_2|_H \otimes \varphi)$.  

Since $(\rho_1|_H^\sigma)^g \cong \rho_2|_H^g \otimes \varphi^g$ for any $g \in G_\Q$ and since both $\rho_i|_H$ satisfy \eqref{hypothesis} we see that
\begin{equation}\label{epsilon and rho2 together}
\varphi^g \tr \rho_2|_H = \varphi \tr \rho_2|_H.
\end{equation}
Thus if we know $\tr \rho_2|_H$ is nonzero sufficiently often then we can deduce that $\varphi$ satisfies \eqref{hypothesis} .  More precisely, let $m \in \Z^+$ be the conductor for $\varphi$, so $\varphi : (\Z/m\Z)^\times \to \overline{\Q}^\times$.  Then we have a surjection $H \onto \Gal(\Q(\zeta_m)/\Q) \cong (\Z/m\Z)^\times$ with kernel $\kappa$.  Choose a set $S$ of coset representatives of $\kappa$ in $H$, so $H = \sqcup_{s \in S} s\kappa$.  If we can show that $\tr \rho_2(s\kappa) \neq \{0\}$ for all $s \in S$, then it follows from equation \eqref{epsilon and rho2 together} that $\varphi^g = \varphi$ for all $g \in G_\Q$.  Recall that $\rho_2$ is a Galois representation attached to a classical modular form, and so by Ribet \cite{R80}, \cite{R85} and Momose's \cite{Mo} result we know that its image is open.  (See Theorem \ref{Momose} for a precise statement of their result.)  Then the restriction of $\rho_2$ to any open subset of $G_\Q$ also has open image and hence $\tr \rho_2$ is not identically zero.  Each $s\kappa$ is open in $G_\Q$, so $\varphi^g = \varphi$.

Finally, note that if $c : G_\Q \to \GL_2(L_2)$ is a function satisfying conditions 1-3 above for $r = \rho_2|_H$ and $\eta : G_\Q \to L_2^\times$ is a function satisfying conditions 1-3 above for $\varphi$, then $\eta c$ is a function satisfying conditions 1-3 for $\rho_2|_H \otimes \varphi$.  From this it follows that $\Ob(\rho_2|_H \otimes \varphi) = \Ob(\rho_2|_H)\Ob(\varphi)$. 
\end{proof}

With $L_i$ as in the previous lemma, suppose there is an extension $\tilde{\sigma} : L_1 \cong L_2$ of $\sigma$ and an extension $\tilde{\varphi} : G \to L_2^\times$ of $\varphi$.  We now show that this gives us the desired relation among traces.

\begin{lemma}\label{lifts give trace}
If there exists extensions $\tilde{\sigma}$ of $\sigma$ and $\tilde{\varphi}$ of $\varphi$, then there exists a character $\eta : G_\Q \to L_2^\times$ that is also a lift of $\varphi$ such that $\rho_1^{\tilde{\sigma}} \cong \rho_2 \otimes \eta$.
\end{lemma}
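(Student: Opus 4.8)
The plan is to run the obstruction-theory machinery of Proposition~\ref{obstruction theory} with the representation
\[
r := \rho_2|_H \otimes \varphi \colon H \to \GL_2(L_2).
\]
First I would observe that $\rho_1^{\tilde\sigma}$, defined by applying $\tilde\sigma$ to the entries of $\rho_1$, is a representation $G_\Q \to \GL_2(L_2)$ whose restriction to $H$ is $\rho_1|_H^\sigma$, since $\tilde\sigma$ extends $\sigma$. By (the proof of) Lemma~\ref{satisfying obstruction theory hypothesis} we have $\rho_1|_H^\sigma \cong \rho_2|_H \otimes \varphi = r$ over $L_2$, so after conjugating $\rho_1^{\tilde\sigma}$ by a suitable element of $\GL_2(L_2)$ — which does not alter its isomorphism class — we may assume $\rho_1^{\tilde\sigma}|_H = r$ on the nose. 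Likewise $\rho_2 \otimes \tilde\varphi \colon G_\Q \to \GL_2(L_2)$ restricts to $r$ on $H$ because $\tilde\varphi$ extends $\varphi$. In particular $r$ admits an extension to $G_\Q$, so $\Ob(r) = 0$ by Proposition~\ref{obstruction theory}(1).

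Next, $r$ is absolutely irreducible — it is a character twist of $\rho_2|_H$, which has open image by Ribet and Momose's theorem and is therefore absolutely irreducible, exactly as used in Lemma~\ref{satisfying obstruction theory hypothesis} — so Proposition~\ref{obstruction theory}(2) applies: any two extensions of $r$ to $G_\Q$ differ by a character of $\Delta = G_\Q/H$. Applying this to the two extensions $\rho_1^{\tilde\sigma}$ and $\rho_2 \otimes \tilde\varphi$ yields a character $\psi \colon \Delta \to L_2^\times$ with
\[
\rho_1^{\tilde\sigma} \cong (\rho_2 \otimes \tilde\varphi) \otimes \psi.
\]
Inflating $\psi$ along $G_\Q \onto \Delta$ and setting $\eta := \tilde\varphi\psi$, a character $G_\Q \to L_2^\times$, this says precisely $\rho_1^{\tilde\sigma} \cong \rho_2 \otimes \eta$.

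Finally I would check that $\eta$ is a lift of $\varphi$: since $\psi$ factors through $\Delta$, its restriction to $H$ is trivial, so $\eta|_H = \tilde\varphi|_H = \varphi$, as required. The argument is brief once the obstruction formalism is available; the only point needing care is ensuring that the restrictions to $H$ of the two candidate extensions literally coincide — not merely up to isomorphism — before invoking Proposition~\ref{obstruction theory}(2), which is what the conjugation step in the first paragraph arranges. I do not anticipate any genuine obstacle beyond this bookkeeping.
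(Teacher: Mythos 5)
Your proof is correct and follows essentially the same route as the paper: both reduce the claim to Proposition \ref{obstruction theory}(2) applied to the two extensions $\rho_1^{\tilde\sigma}$ and $\rho_2 \otimes \tilde\varphi$ of (a representation isomorphic to) $\rho_1|_H^\sigma \cong \rho_2|_H \otimes \varphi$. You streamline the step verifying that $\rho_2 \otimes \tilde\varphi$ is such an extension — the paper routes through a Frobenius-reciprocity computation with $\Ind_H^{G_\Q}(\rho_1|_H^\sigma)$, whereas you observe directly that $(\rho_2 \otimes \tilde\varphi)|_H = \rho_2|_H \otimes \varphi$ — and your explicit conjugation to make the restrictions agree on the nose (rather than merely up to isomorphism) is a slightly more careful reading of what Proposition \ref{obstruction theory}(2) literally requires.
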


\begin{proof}
Note that since $F$ does not have CM, $\rho_1|_H$ and $\rho_2|_H$ are absolutely irreducible by results of Ribet \cite{R77}.  For any absolutely irreducible representation $\pi : G_\Q \to \GL_2(L_2)$ Frobenius reciprocity gives
\begin{equation}\label{Frob rec}
\langle \pi, \Ind_H^{G_\Q}(\rho_1|_H^\sigma) \rangle_{G_\Q} = \langle \pi|_H, \rho_1|_H^\sigma \rangle_H = \langle \pi|_H, \rho_2|_H \otimes \varphi \rangle_H. 
\end{equation}
Thus if $\pi$ is a $2$-dimensional irreducible constituent of $\Ind(\rho_1|_H^\sigma)$ then $\rho_1|_H^\sigma$ is a constituent of $\pi|_H$.  As both are $2$-dimensional, it follows that $\rho_1|_H^\sigma \cong \pi|_H$ and thus $\pi$ is an extension of $\rho_1|_H^\sigma$.  Since $\tilde{\sigma}$ exists by hypothesis, we know that $\rho_1^{\tilde{\sigma}}$ is also an extension of $\rho_1|_H^\sigma$.  

Since $\tilde{\varphi}$ exists by hypothesis, we can take $\pi = \rho_2 \otimes \tilde{\varphi}$.  Then \eqref{Frob rec} implies that $\pi$ is an irreducible constituent of $\Ind_H^G (\rho_1|_H^\sigma)$.  By Proposition \ref{obstruction theory} there is a character $\psi : \Delta \to L_2^\times$ such that $\rho_2 \otimes \tilde{\varphi} \cong \rho_1^{\tilde{\sigma}} \otimes \psi$.  That is,
\[
\rho_1^{\tilde{\sigma}} \cong \rho_2 \otimes (\tilde{\varphi} \psi^{-1}).
\] 
Setting $\eta = \tilde{\varphi}\psi^{-1}$ gives the desired conclusion.
\end{proof}

Finally, we turn to showing the existence of $\tilde{\sigma}$ and $\tilde{\varphi}$.  With notation as in Lemma \ref{satisfying obstruction theory hypothesis}, suppose there exists $\tilde{\sigma}^{-1} : L_2 \cong L_1$ that lifts $\sigma^{-1}$.  Then $\tilde{\sigma}^{-1}$ induces an isomorphism $H^2(\Delta, L_2^\times) \cong H^2(\Delta, L_1^\times)$ that sends $\Ob(\rho_1|_H^\sigma)$ to $\Ob(\rho_1|_H)$.  It follows from Lemma \ref{satisfying obstruction theory hypothesis} that $\Ob(\rho_1|_H^\sigma) = 1$ and hence $\Ob(\rho_2|_H \otimes \varphi) = 1$.  But $1 = \Ob(\rho_2|_H \otimes \varphi) = \Ob(\rho_2|_H)\Ob(\varphi) = \Ob(\varphi)$, and thus we can extend $\varphi$ to $\tilde{\varphi} : G \to L_2^\times$.  
 
The above argument requires that we find $L_i \supseteq K_i$ such that $L_1$ is isomorphic to $L_2$ via a lift of $\sigma$.  We can achieve this as follows.  Let $\tau : K_1 \hookrightarrow \overline{\Q}_p$ be an extension of $\sigma$.  Let $L_2 = K_2\tau(K_1)$.  Let $\tilde{\sigma}^{-1} : L_2 \hookrightarrow \overline{\Q}_p$ be an extension of $\tau^{-1}$ and set $L_1 = \tilde{\sigma}^{-1}(L_2)$.  This construction satisfies the desired properties.  Applying Lemma \ref{lifts give trace} we see that there is a character $\eta : \Delta \to L_2^\times$ such that 
\begin{equation}\label{almost}
\tr \rho_1^{\tilde{\sigma}} = \tr \rho_2 \otimes \eta.
\end{equation}
This is almost what we want.  Note that by \eqref{almost} it follows that $\tilde{\sigma}$ restricts to an isomorphism from $(\I'/\Pp'_1)[\eta]$ to $(\I'/\Pp'_2)[\eta]$.  The only problem is that $\tilde{\sigma}$ may not send $\I'/\Pp'_1$ to $\I'/\Pp'_2$ and $\eta$ may have values in $L_2$ that are not in $(\I'/\Pp'_2)^\times$.  We shall show that this cannot be the case.

Recall that $\chi$ is the Nebentypus of $F$ and $\Pp_1$ and $\Pp_2$ lie over the arithmetic prime $P_{k, 1}$ of $\Lambda$.  Thus for almost all primes $\ell$ we have $\det \rho_i(\Frob_\ell) = \chi(\ell)\ell^{k - 1}$.  Applying this to equation \eqref{almost} we find that
\[
\chi^{\tilde{\sigma}}(\ell)\ell^{k - 1} = \eta^2(\ell)\chi(\ell)\ell^{k - 1}.
\]
Recall that $\chi(\ell)$ is a root of unity and hence $\chi^{\tilde{\sigma}}(\ell)$ is just a power of $\chi(\ell)$.  Thus $\eta^2(\ell) \in \Z_p[\chi] \subseteq \I'/\Pp'_i$ and hence $[(\I'/\Pp'_i)[\eta] : \I'/\Pp'_i] \leq 2$.  Thus we may assume that $L_2 = K_2[\eta]$, which is at most a quadratic extension of $K_2$.  

Note that since $\eta^2$ takes values in $\I'/\Pp'_i$ we can obtain $(\I'/\Pp'_i)[\eta]$ from $\I'/\Pp'_i$ by adjoining a $2$-power root of unity.  (Write $\eta$ as the product of a $2$-power order character and an odd order character and note that any odd order root of unity is automatically a square in any ring in which it is an element.)

\begin{lemma}
We have $(\I'/\Pp'_i)[\eta] = \I'/\Pp'_i$ for $i = 1, 2$.  Therefore $\tilde{\sigma} : \I'/\Pp'_1 \cong \I'/\Pp'_2$ and $\eta$ takes values in $\Z_p[\chi]$.
\end{lemma}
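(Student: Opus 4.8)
The plan is to reduce the whole lemma to the single assertion that $\eta$ takes values in $\Z_p[\chi]$. Granting this, $(\I'/\Pp'_i)[\eta]=\I'/\Pp'_i$ for $i=1,2$ because $\Z_p[\chi]\subseteq\I'/\Pp'_i$; and then, since we already observed via \eqref{almost} that $\tilde\sigma$ restricts to an isomorphism $(\I'/\Pp'_1)[\eta]\cong(\I'/\Pp'_2)[\eta]$, we immediately get $\tilde\sigma\colon\I'/\Pp'_1\cong\I'/\Pp'_2$. So everything comes down to pinning down $\eta$.

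To do that I would rerun the argument of Lemma \ref{powers of chi}, with the isomorphism $\tilde\sigma$ playing the role of the self-twist there. Since $F$ is non-CM, $\rho_1$ is irreducible, so \eqref{almost} upgrades to $\rho_1^{\tilde\sigma}\cong\rho_2\otimes\eta$ and hence $\det\rho_1^{\tilde\sigma}=\eta^2\det\rho_2$. For almost all $\ell$ we have $\det\rho_i(\Frob_\ell)=\chi(\ell)\ell^{k-1}$; since $\tilde\sigma$ fixes $\ell^{k-1}\in\Q$ and carries the root of unity $\chi(\ell)$ to a root of unity of the same multiplicative order, we get $\chi^{\tilde\sigma}=\chi^a$ for some $a$ coprime to the order $d$ of $\chi$, and the determinant identity becomes $\eta^2=\chi^{\tilde\sigma}\chi^{-1}=\chi^{a-1}$ as Dirichlet characters. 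Now the parity dichotomy of Lemma \ref{powers of chi}: if $d$ is odd then $\chi=\chi^{2j}$ for some $j$ and $\eta^2=\chi^{2j(a-1)}$, while if $d$ is even then $a$ coprime to $d$ forces $a$ odd, so $a-1$ is even and $\eta^2=\chi^{2(a-1)/2}$. In either case $\eta^2=\chi^{2i}$ for some $i\in\Z$, so $\psi:=\eta\chi^{-i}$ is a quadratic character and $\eta=\psi\chi^i$ takes values in $\{\pm1\}\cdot\mu_d\subseteq\Z_p[\chi]$, as required.

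I do not expect a real obstacle here. The delicate step — descending the trace identity \eqref{almost} to the clean relation $\eta^2=\chi^{\tilde\sigma}\chi^{-1}$ and checking that $\chi^{\tilde\sigma}$ is an honest power of $\chi$ — has essentially already been carried out in the discussion preceding the lemma, and the only genuinely new ingredient is the parity argument, which transports verbatim from Lemma \ref{powers of chi} because the property it exploits (order of roots of unity is preserved) holds equally for the field isomorphism $\tilde\sigma$, not just for an automorphism. In particular this argument shows that the $2$-power root of unity by which $(\I'/\Pp'_i)[\eta]$ could a priori differ from $\I'/\Pp'_i$ is already contained in $\I'/\Pp'_i$.
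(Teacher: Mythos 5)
Your argument is correct, and it takes a genuinely different route from the paper's. The paper proves $(\I'/\Pp'_i)[\eta]=\I'/\Pp'_i$ first, by a ring-theoretic case analysis: since $(\I'/\Pp'_i)[\eta]$ is obtained by adjoining a prime-to-$p$ ($2$-power) root of unity it is unramified over $\I'/\Pp'_i$, and the paper rules out a degree-two extension by comparing residue fields. In the asymmetric case it notes that an isomorphism $(\I'/\Pp'_1)[\eta]\cong\I'/\Pp'_2$ would equate residue fields of different sizes; in the symmetric case it passes to the induced automorphism $\hat\sigma$ of $\E$, writes it as $\Frob^s$, deduces $\bbeta^2=\bchi^{p^s-1}$, and then uses that $p^s-1$ is even (since $p>2$) to force $\bbeta\in\F_p[\bchi]\subseteq\F$. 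Only at the end does it conclude that $\eta$ lands in $\Z_p[\chi]$. You instead attack $\eta$ directly: since $\tilde\sigma$ is a ring isomorphism between $p$-adic fields both containing $\mu_d$ (where $d=\mathrm{ord}(\chi)$), it restricts to an automorphism $\zeta\mapsto\zeta^a$ of $\mu_d$ with $a$ coprime to $d$, so $\chi^{\tilde\sigma}=\chi^a$ and the determinant identity gives $\eta^2=\chi^{a-1}$; the same parity dichotomy as in Lemma \ref{powers of chi} (odd $d$: $\chi$ is a square; even $d$: $a$ odd, so $a-1$ even) then shows $\eta^2=\chi^{2i}$, whence $\eta\chi^{-i}$ is quadratic, hence $\pm1$-valued, and $\eta$ takes values in $\{\pm1\}\cdot\mu_d\subseteq\Z_p[\chi]$. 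The two remaining conclusions then fall out immediately. Your version is more elementary and arguably cleaner — it avoids the unramifiedness discussion and the residue-field bookkeeping entirely, at the small cost of redoing the odd/even case split that the paper's residual argument handles in one stroke via $2\mid p^s-1$. Both proofs hinge on the same underlying fact: $p$ odd forces the relevant exponent to be even.
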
  

\begin{proof}
Suppose first that $\I'/\Pp'_2 = (\I'/\Pp'_2)[\eta]$ but $[(\I'/\Pp'_1)[\eta] : \I'/\Pp'_2] = 2$.  Then we have that $\tilde{\sigma} : (\I'/\Pp'_1)[\eta] \cong \I'/\Pp'_2$.  Note that $(\I'/\Pp'_1)[\eta]$ is unramified over $\I'/\Pp'_1$ since it is obtained by adjoining a prime-to-$p$ root of unity (namely a $2$-power root of unity).  Thus the residue field of $(\I'/\Pp'_1)[\eta]$ must be a quadratic extension of the residue field $\F$ of $\I'/\Pp'_1$.  But $\F$ is also the residue field of $\I'/\Pp'_2$ and since $(\I'/\Pp'_1)[\eta] \cong \I'/\Pp'_2$ they must have the same residue field, a contradiction.  Therefore we must have $(\I'/\Pp'_1)[\eta] = \I'/\Pp'_1$.

It remains to deal with the case when $[(\I'/\Pp'_1)[\eta] : \I'/\Pp'_1] = [(\I'/\Pp'_2)[\eta] : \I'/\Pp'_2] = 2$.  As noted above, these extensions must be unramified and hence the residue field of $(\I'/\Pp'_i)[\eta]$ must be the unique quadratic extension $\E = \F[\bbeta]$ of $\F$.  Note that $\tilde{\sigma}$ induces an automorphism $\hat{\sigma}$ of $\E$ that necessarily restricts to an automorphism of $\F$.  From $\chi^{\tilde{\sigma}} = \eta^2\chi$ we find that
\[
\bchi^{\hat{\sigma}} = \bbeta^2\bchi.
\]  
On the other hand $\hat{\sigma}$ is an automorphism of $\F$ and hence is equal to some power of Frobenius.  So we see that for some $s \in \Z$ we have $\bbeta^2 = \bchi^{p^s - 1}$.  Since $p$ is odd, $p^s - 1$ is even and hence $\bbeta^2$ takes values in $\F_p[\bchi^2]$.  Thus $\bbeta$ takes values in $\F_p[\bchi] \subseteq \F$, a contradiction to the assumption that $[\F[\bbeta] : \F] = 2$.  

Since $\eta^2$ takes values in $\Z_p[\chi]$ and $\F_p[\eta] \subseteq \F_p[\chi]$, it follows that in fact $\eta$ must take values in $\Z_p[\chi]$.  Hence we may take $L_i = K_i$ and $\tilde{\sigma} : \I'/\Pp'_1 \cong \I'/\Pp'_2$.
\end{proof}

Finally, we summarize how the results in this section fit together to prove Proposition \ref{open image in product}.

\begin{proof}[Proof of Proposition \ref{open image in product}]
By Lemma \ref{reduce to two factors} it suffices to show that, for any two primes $\Qq_1 \neq \Qq_2$ of $\I_0$ lying over $P_{k, 1}$, the image of $\im \rho$ in $\SL_2(\I_0/\Qq_1) \times \SL_2(\I_0/\Qq_2)$ is open.  Proposition \ref{creating new twist} says that if that is not the case, then there is an isomorphism $\sigma : \I_0/\Qq_1 \cong \I_0/\Qq_2$ and a character $\varphi : H \to Q(\I_0/\Qq_2)^\times$ such that $\tr \rho_{f_{\Pp_1}}|_H^\sigma = \tr \rho_{f_{\Pp_2}}|_H \otimes \varphi$.  The obstruction theory arguments allow us to lift $\sigma$ and $\varphi$ to $\tilde{\sigma} : \I'/\Pp'_1 \cong \I'/\Pp'_2$ and $\tilde{\varphi} : G \to Q(\I/\Pp_2)^\times$ such that $\tr \rho_{f_{\Pp_1}}^{\tilde{\sigma}} = \tr \rho_{f_{\Pp_2}} \otimes \tilde{\varphi}$.  Theorem \ref{deformation lifting} allows us to lift $\tilde{\sigma}$ to an element of $\Gamma$ that sends $\Pp'_1$ to $\Pp'_2$.  But $\Pp'_1$ and $\Pp'_2$ lie over different primes of $\I_0$ and $\Gamma$ fixes $\I_0$, so we reach a contradiction.  Therefore the image of $\im \rho$ in the product $\SL_2(\I_0/\Qq_1) \times \SL_2(\I_0/\Qq_2)$ is open. 
\end{proof}

\section{Proof of Main Theorem}\label{proof of main thm}
In this section we use the compatibility between the conjugate self-twists of $F$ and those of its classical specializations established in section \ref{deformations} to relate $\I_0/\Qq$ to the ring appearing in the work of Ribet \cite{R80}, \cite{R85} and Momose \cite{Mo}.  This allows us to use their results to finish the proof of Theorem \ref{main result}.  

We begin by recalling the work of Ribet and Momose.  We follow Ribet's exposition in \cite{R85} closely.  Let $f = \sum_{n = 1}^\infty a(n, f)q^n$ be a classical eigenform of weight $k$.  Let $K = \Q(\{a(n, f) : n \in \Z^+\})$ with ring of integers $\OK$.  Denote by $\Gamma_f$ the group of conjugate self-twists of $f$.  Let $E = K^{\Gamma_f}$ and $H_f = \cap_{\sigma \in \Gamma_f} \ker \eta_\sigma$.  For any character $\psi$, let $G(\psi)$ denote the Gauss sum of the primitive character of $\psi$.  For $\sigma, \tau \in \Gamma_f$ Ribet defined
\[
c(\sigma, \tau) := \frac{G(\eta_\sigma^{-1})G(\eta_\tau^{-1})}{G(\eta_{\sigma\tau}^{-1})}.
\]
One shows that $c$ is a $2$-cocycle on $\Gamma_f$ with values in $K^\times$.  

Let $\X$ be the central simple $E$-algebra associated to $c$.  Then $K$ is the maximal commutative semisimple subalgebra of  $\X$.  It can be shown that $\X$ has order two in the Brauer group of $E$, and hence there is a $4$-dimensional $E$-algebra $D$ that represents the same element as $\X$ in the Brauer group of $E$.  Namely, if $\X$ has order one then $D = M_2(E)$ and otherwise $D$ is a quaternion division algebra over $E$.  

For a prime $p$, recall that we have a Galois representation
\[
\rho_{f, p} : G_\Q \to \GL_2(\OK_K \otimes_\Z \Z_p)
\]
associated to $f$.  The following theorem is due to Ribet in the case when $f$ has weight $2$ \cite{R80}.  

\begin{theorem}[Momose \cite{Mo}]\label{Momose}
We may view $\rho_{f, p}|_{H_f}$ as a representation valued in $(D \otimes_\Q \Q_p)^\times$.  Furthermore, letting $\n$ denote the reduced norm map on $D$, the image of $\rho_{f, p}|_{H_f}$ is open in
\[
\{x \in (D \otimes_\Q \Q_p)^\times : \n x \in \Q_p^\times\}.
\]
\end{theorem}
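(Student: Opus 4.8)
The plan is to first manufacture the algebra $D$ out of the conjugate self-twists by a Galois-descent argument, and then to obtain openness by identifying the Lie algebra of the image; the openness step is where the real work lies. Fix a prime $\lambda$ of $K$ above $p$, bearing in mind that $\OK\otimes_\Z\Z_p=\prod_{\lambda\mid p}\OK_\lambda$ is semilocal and that $\Gamma_f$ permutes the $\lambda$. Since $f$ is non-CM, $\rho_{f,p}$ is absolutely irreducible, so for each $\sigma\in\Gamma_f$ the relation $\sigma(a(\ell,f))=\eta_\sigma(\ell)a(\ell,f)$ together with Chebotarev and Brauer--Nesbitt yields $\rho_{f,p}^\sigma\cong\rho_{f,p}\otimes\eta_\sigma$, realized by an intertwiner $u_\sigma\in\GL_2(\overline{\Q}_p)$ that is unique up to a scalar. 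Comparing $u_\sigma\,\sigma(u_\tau)$ with $u_{\sigma\tau}$ gives, a priori, a $2$-cocycle on $\Gamma_f$ valued in $\overline{\Q}_p^\times$. The crucial point — this is Ribet's Gauss-sum computation — is that the Gauss sums $G(\eta_\sigma^{-1})$ transform under $\Gamma_f$, and multiply under products of characters, in exactly the way needed so that after rescaling $u_\sigma$ by $G(\eta_\sigma^{-1})$ this cocycle becomes the prescribed cocycle $c(\sigma,\tau)\in K^\times$. I expect the bookkeeping here — the precise normalization, and the compatibility across the several primes above $p$ — to be the fussiest part of this step.

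Next I would restrict everything to $H_f=\bigcap_\sigma\ker\eta_\sigma$, where every $\eta_\sigma$ becomes trivial. The normalized intertwiners $u_\sigma$ then define a $c$-twisted semilinear action of $\Gamma_f$ on $M_2(K\otimes_\Q\Q_p)$ that commutes with conjugation by $\rho_{f,p}(H_f)$; equivalently, $\rho_{f,p}|_{H_f}$ takes values in the units of the $\Gamma_f$-invariant subalgebra $D'$. A dimension count gives $\dim_{E\otimes\Q_p}D'=\dim_E M_2(K)/[K:E]=4\lvert\Gamma_f\rvert/\lvert\Gamma_f\rvert=4$, and crossed-product theory identifies the Brauer class of $D'$ (componentwise over the primes of $E$ above $p$) with that of $\X\otimes_\Q\Q_p$, hence with that of $D\otimes_\Q\Q_p$; as a $4$-dimensional central simple algebra is determined by its Brauer class, $D'\cong D\otimes_\Q\Q_p$. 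This gives the first assertion. Moreover, under the inclusion $D'\subset M_2(K\otimes\Q_p)$ the reduced norm of $D'$ is just $\det\rho_{f,p}$, and using the relation between the twist characters $\eta_\sigma$ and the Nebentypus (compare Lemma~\ref{powers of chi}) one checks that $\det\rho_{f,p}$ restricted to $H_f$ lands in $\Q_p^\times$, which yields the containment $\im(\rho_{f,p}|_{H_f})\subseteq\{x\in(D\otimes_\Q\Q_p)^\times:\n x\in\Q_p^\times\}$.

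It remains to prove openness, which is the heart of the matter. Let $\mathcal G$ be the closure of $\im(\rho_{f,p}|_{H_f})$, a compact $p$-adic Lie group, and $\mathfrak g=\Lie\mathcal G$, a Lie subalgebra of the reduced-norm-zero part $\mathfrak d^0$ of $D\otimes_\Q\Q_p$, which is an $E\otimes\Q_p$-form of $\Sl_2$. The goal is $\mathfrak g=\mathfrak d^0$; then $\mathcal G$, being closed with full Lie algebra, is open in $\{x:\n x\in\Q_p^\times\}$. First, $\mathcal G$ is not virtually solvable: otherwise $\rho_{f,p}$ restricted to a finite-index subgroup would be reducible over $\overline{\Q}_p$ or induced from a character, forcing $f$ to have CM. Hence $\mathfrak g\neq 0$; since $\mathfrak d^0$ is semisimple and $\mathfrak g$ is stable under the adjoint action of $\mathcal G$ and of the normalizing intertwiners $u_\sigma$, $\mathfrak g\otimes_{\Q_p}\overline{\Q}_p$ is a nonzero partial sum of the $\Sl_2(\overline{\Q}_p)$-factors of $\mathfrak d^0\otimes\overline{\Q}_p$. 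Second, I would use the local behaviour at $p$: since $f$ has Hodge--Tate weights $\{0,k-1\}$ with $k\geq 2$, Sen's theory forces a non-central (indeed regular semisimple) element into $\mathfrak g\otimes\overline{\Q}_p$, ruling out the possibility that $\mathfrak g$ is a proper nonzero $E\otimes\Q_p$-subform of $\Sl_2$ cut out over a smaller field; combined with $\Gamma_f$-stability of $\mathfrak g$ this forces $\mathfrak g=\mathfrak d^0$. (In the weight-two case one may instead invoke Ribet's earlier open-image theorems for $\rho_{f,p}$ and its twists via the abelian variety $A_f$.) The main obstacle is precisely this last step: the algebra and descent above are essentially formal, but excluding the intermediate proper forms of $\Sl_2$ requires genuine arithmetic input about the local structure of $\rho_{f,p}$ at $p$ and its ramification at the primes dividing the level, and this is the crux of the Ribet--Momose argument.
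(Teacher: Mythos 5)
The paper does not prove this theorem; it imports it as an external input, citing Momose \cite{Mo} (with Ribet \cite{R80} for the weight-two case), and immediately moves on to use it in Corollary~\ref{Ribet/Momose input}. So there is no paper-internal proof to compare yours against.

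That said, your sketch is a reasonable high-level reconstruction of the Ribet--Momose argument, and the structure you describe (normalize the intertwiners by Gauss sums to land the $2$-cocycle in $K^\times$, descend to get values in $(D\otimes\Q_p)^\times$ on $H_f$, then argue openness via the Lie algebra) is faithful to the original. A few things worth tightening if you intended this as a self-contained proof. First, absolute irreducibility of $\rho_{f,p}$ is not what non-CM buys you; $\rho_{f,p}$ is absolutely irreducible for any newform, while non-CM is what guarantees that the restriction to any open subgroup remains absolutely irreducible (Ribet \cite{R77}), which is the form of irreducibility actually needed in the openness step. Second, the openness argument as you state it elides the real division of labor in the original: one first shows that for \emph{each} prime $\lambda$ of $E$ above $p$ the $\lambda$-component of the image is open in the $\lambda$-component of the norm-one group (this uses the Sen operator / Hodge--Tate weights plus non-solvability exactly as you say), and then one proves \emph{independence} of the components for distinct $\lambda,\lambda'$ by a Goursat-type argument, using the conjugate self-twists to rule out isomorphisms between the projections. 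Your phrase ``excluding the intermediate proper forms of $\Sl_2$'' collapses these two distinct steps, and the independence step in particular is not a formal consequence of $\Gamma_f$-stability; it is a separate piece of arithmetic (indeed, the present paper reproves an $\I$-adic analogue of exactly this independence step in its Section~\ref{product image}). As a sketch of the cited literature, though, this is accurate and appropriately flags where the real work is.
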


In particular, when $D \otimes_\Q \Q_p$ is a matrix algebra, the above theorem tell us that $\im \rho_{f, p}|_{H_f}$ is open in
\[
\{\textbf{x} \in \GL_2(\OK_E \otimes_\Z \Z_p) : \det \textbf{x} \in (\Z_p^\times)^{k - 1}\}.
\]
Let $\p$ be a prime of $\OK_E$ lying over $p$, and let $\rho_{f, \p}$ be the representation obtained by projecting $\rho_{f, p}$ to the $\OK_{E_\p}$-component.  Under the assumption that $D \otimes_\Q \Q_p$ is a matrix algebra Theorem \ref{Momose} implies that $\rho_{f, \p}$ is $\OK_{E_\p}$-full.  Finally, Brown and Ghate proved that if $f$ is ordinary at $p$, then $D \otimes_\Q \Q_p$ is a matrix algebra (Theorem 3.3.1 \cite{BG}).  

Thus, the Galois representation associated to each classical specialization of our $\I$-adic form $F$ is $\OK_{E_\p}$-full with respect to the appropriate ring $\OK_{E_\p}$.  We must show that $E_\p$ is equal to $Q(\I_0/\Qq)$, where $\Qq$ corresponds to $\p$ in a way we will make precise below.   

Recall that we have a fixed embedding $\iota_p : \overline{\Q} \hookrightarrow \overline{\Q}_p$.  Let $\Pp \in \Spec(\I)(\overline{\Q}_p)$ be an arithmetic prime of $\I$, and let $\Qq$ be the prime of $\I_0$ lying under $\Pp$.  As usual, let $\Pp' = \Pp \cap \I'$.  Let $D(\Pp' | \Qq) \subseteq \Gamma$ be the decomposition group of $\Pp'$ over $\Qq$.  Let
\[
K_\Pp = \Q(\{\iota_p^{-1}(a(n, f_\Pp)) : n \in \Z^+\}) \subset \overline{\Q},
\]
and let $\Gamma_\Pp$ be the group of all conjugate self-twists of the classical modular form $f_\Pp$.  
Set $E_\Pp = K_\Pp^{\Gamma_\Pp}$.  Let $\q_\Pp$ be the prime of $K_\Pp$ corresponding to the embedding $\iota_p|_{K_\Pp}$, and set $\p_\Pp = \q_\Pp \cap E_\Pp$.  Let $D(\q_\Pp | \p_\Pp) \subseteq \Gamma_\Pp$ be the decomposition group of $\q_\Pp$ over $\p_\Pp$.  Thus we have that the completion $K_{\Pp, \q_\Pp}$ of $K_\Pp$ at $\q_\Pp$ is equal to $Q(\I/\Pp)$ and $\Gal(K_{\Pp, \q_\Pp}/E_{\Pp, \p_\Pp}) = D(\q_\Pp | \p_\Pp)$.  Thus we may view $D(\q_\Pp | \p_\Pp)$ as the set of all automorphisms of $K_{\Pp, \q_\Pp}$ that are conjugate self-twists of $f_\Pp$.

With this in mind, we see that there is a natural group homomorphism 
\[
\Phi : D(\Pp'|\Qq) \to D(\q_\Pp|\p_\Pp)
\]
since any element of $D(\Pp'|\Qq)$ stabilizes $\Pp'$ and hence induces an automorphism of $Q(\I'/\Pp') = Q(\I/\Pp) = K_{\Pp, \q_\Pp}$.  The induced automorphism will necessarily be a conjugate self-twist of $f_\Pp$ since we started with a conjugate self-twist of $F$.  Thus we get an element of $D(\q_\Pp|\p_\Pp)$.  The main compatibility result is that $\Phi$ is an isomorphism.

\begin{proposition}\label{compatibility}
The natural group homomorphism $\Phi$ is an isomorphism.  Hence $Q(\I_0/\Qq) = E_{\Pp, \p_\Pp}$.
\end{proposition}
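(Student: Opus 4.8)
The plan is to establish that $\Phi$ is bijective and then to deduce the equality $Q(\I_0/\Qq) = E_{\Pp,\p_\Pp}$ as a formal consequence. Surjectivity of $\Phi$ is essentially a reformulation of Theorem~\ref{main lifting result}. An element $\bar\sigma \in D(\q_\Pp|\p_\Pp)$ is, by the description preceding the proposition, a conjugate self-twist of $f_\Pp$ which is an automorphism of the local field $K_{\Pp,\q_\Pp} = \Q_p(\{a(n,f_\Pp) : n \in \Z^+\})$. Theorem~\ref{main lifting result} (equivalently Theorem~\ref{deformation lifting} applied with $\Pp_1 = \Pp_2 = \Pp$) produces $\tilde\sigma \in \Gamma$ with $\tilde\sigma(\Pp') = \Pp'$ and $\bar\sigma \circ \Pp = \Pp \circ \tilde\sigma$ on $\I'$. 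Because every element of $\Gamma$ fixes $\I_0$ pointwise, $\tilde\sigma$ stabilises $\Qq = \Pp' \cap \I_0$, so $\tilde\sigma \in D(\Pp'|\Qq)$; and the relation $\bar\sigma \circ \Pp = \Pp \circ \tilde\sigma$ says exactly that the automorphism of $\I'/\Pp'$ induced by $\tilde\sigma$ --- namely $\Phi(\tilde\sigma)$ --- coincides with $\bar\sigma$.

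For injectivity, suppose $\tilde\sigma \in D(\Pp'|\Qq)$ acts trivially on $\I'/\Pp'$. By Lemma~\ref{powers of chi} the character $\eta_{\tilde\sigma}$ takes values in $\Z_p[\chi] \subseteq \I'$, so the isomorphism $\rho_F^{\tilde\sigma} \cong \rho_F \otimes \eta_{\tilde\sigma}$ --- which holds over $\I'$ by the lemma of Carayol and Serre --- may be reduced modulo $\Pp'$. Using the hypothesis that $\tilde\sigma$ acts trivially on $\I'/\Pp'$, the reduction reads $\rho_F \bmod \Pp' \cong (\rho_F \bmod \Pp') \otimes \eta_{\tilde\sigma}$, so comparing traces at Frobenius elements gives $a(\ell,f_\Pp) = \eta_{\tilde\sigma}(\ell)a(\ell,f_\Pp)$ for all $\ell \nmid N$; that is, $(\mathrm{id},\eta_{\tilde\sigma})$ is a conjugate self-twist of $f_\Pp$. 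Since $f_\Pp$ is an arithmetic specialisation of the non-CM family $F$, it has no CM, and therefore $\eta_{\tilde\sigma} = 1$. Consequently $\tilde\sigma$ fixes $a(\ell,F)$ for all $\ell \nmid N$; comparing determinants and using that elements of $\Gamma$ act trivially on $\Lambda$ (as in the proof of Lemma~\ref{powers of chi}), $\tilde\sigma$ also fixes every value of $\chi$, hence fixes $\I' = \Lambda_\chi[\{a(\ell,F) : \ell \nmid N\}]$. Thus $\ker\Phi$ is trivial.

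It remains to deduce that $Q(\I_0/\Qq) = E_{\Pp,\p_\Pp}$. As $\I'$ is finite over $\Lambda_\chi \subseteq \I_0$, it is integral over $\I_0 = (\I')^\Gamma$ with $\Gamma$ finite, so the classical formalism of decomposition and inertia subgroups for a finite group acting on an integral extension applies to $\Pp'$ lying over $\Qq$: the residue field extension $Q(\I'/\Pp')/Q(\I_0/\Qq)$ is normal, and $D(\Pp'|\Qq)$ maps onto its group of automorphisms with kernel the inertia subgroup $\{\tilde\sigma \in D(\Pp'|\Qq) : \tilde\sigma \text{ acts trivially on } \I'/\Pp'\}$, which is $\ker\Phi = \{1\}$ by the preceding paragraph. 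Since the characteristic is zero, $Q(\I'/\Pp')/Q(\I_0/\Qq)$ is then Galois with group $D(\Pp'|\Qq)$, whence $[Q(\I'/\Pp') : Q(\I_0/\Qq)] = |D(\Pp'|\Qq)|$. On the other hand $Q(\I'/\Pp') = K_{\Pp,\q_\Pp}$ and $[K_{\Pp,\q_\Pp} : E_{\Pp,\p_\Pp}] = |D(\q_\Pp|\p_\Pp)|$, and $|D(\Pp'|\Qq)| = |D(\q_\Pp|\p_\Pp)|$ because $\Phi$ is bijective. As $Q(\I_0/\Qq) \subseteq E_{\Pp,\p_\Pp}$ --- every element of $\Gamma$ fixes $\I_0$ --- comparing these degrees forces $Q(\I_0/\Qq) = E_{\Pp,\p_\Pp}$.

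The one genuinely deep ingredient here is Theorem~\ref{main lifting result}, which has already been proved by deformation theory in Section~\ref{deformations}; granting it, the proposition is formal. The only place calling for care is the last paragraph: one must check that the decomposition/inertia formalism is available even though $\I_0$ is not a Dedekind domain and $\Spec\I' \to \Spec\I_0$ is not finite \'etale. This is supplied by the general theory of finite group quotients of integral extensions (transitivity on the fibre, normality of the residue extension, surjectivity of the decomposition group onto residue automorphisms), so I anticipate no real obstacle.
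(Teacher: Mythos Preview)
Your argument is correct and follows essentially the same route as the paper: surjectivity via Theorem~\ref{deformation lifting} (with $\Pp_1=\Pp_2=\Pp$), injectivity via the non-CM property of $f_\Pp$, and the field identification via the standard decomposition-group formalism for a finite group acting on an integral ring extension (the paper cites Bourbaki, Theorem~V.2.2.2, for $Q(\I'/\Pp')^{D(\Pp'|\Qq)} = Q(\I_0/\Qq)$). Your treatment is in fact slightly more explicit than the paper's in two places: you spell out why $\eta_{\tilde\sigma}=1$ forces $\tilde\sigma=1$ on all of $\I'$ (the paper just asserts ``hence $\sigma=1$''), and you unwind the Bourbaki citation as a degree comparison rather than invoking it as a black box---but the content is the same.
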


\begin{proof}
The fact that $\Phi$ is injective is easy.  Namely, if $\sigma \in D(\Pp'|\Qq)$ acts trivially on $K_{\Pp, \q_\Pp}$ then for almost all $\ell$ we have
\[
a(\ell, f_\Pp) = a(\ell, f_\Pp)^\sigma = \eta_\sigma(\ell)a(\ell, f_\Pp).
\]
Since $F$ (and hence its arithmetic specialization $f_\Pp$) does not have CM it follows that $\eta_\sigma = 1$.  Hence $\sigma = 1$ and $\Phi$ is injective.

To see that $\Phi$ is surjective, let $\sigma \in D(\q_\Pp|\p_\Pp)$.  By Theorem \ref{deformation lifting} we see that there is $\tilde{\sigma} \in \Aut \I'$ that is a conjugate self-twist of $F$ and $\sigma \circ \Pp = \Pp \circ \tilde{\sigma}$.  That is, $\tilde{\sigma} \in D(\Pp'|\Qq)$ and $\Phi(\tilde{\sigma}) = \sigma$.  We have
\[
E_{\Pp, \p_\Pp} = K_{\Pp, \q_\Pp}^{D(\q_\Pp | \p_\Pp)} = Q(\I'/\Pp')^{D(\Pp | \Qq)}.
\]
A general fact from commutative algebra (Theorem V.2.2.2 \cite{B}) tells us that $Q(\I'/\Pp')^{D(\Pp | \Qq)} = Q(\I_0/\Qq)$, as desired.
\end{proof}

\begin{corollary}\label{Ribet/Momose input}
Let $\Qq$ be a prime of $\I_0$ lying over an arithmetic prime of $\Lambda$.  There is a nonzero $\I_0/\Qq$-ideal $\overline{\Aa}_{\Qq}$ such that 
\[
\Gamma_{\I_0/\Qq}(\overline{\Aa}_{\Qq}) \subseteq \im (\rho_F \bmod \Qq\I') \subseteq \prod_{\Pp' | \Qq} \GL_2(\I'/\Pp'),
\]
where the inclusion of $\Gamma_{\I_0/\Qq}(\overline{\Aa}_{\Qq})$ in the product is via the diagonal embedding $\GL_2(\I_0/\Qq) \hookrightarrow \prod_{\Pp' | \Qq}\GL_2(\I'/\Pp')$.  Hence the image of $\im \rho$ in $\SL_2(\I_0/\Qq)$ is open.
\end{corollary}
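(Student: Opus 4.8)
The plan is to fix one arithmetic prime $\Pp$ of $\I$ above $\Qq$, feed the classical eigenform $f_\Pp$ into the theorem of Ribet and Momose, and transport the resulting fullness statement back to $\rho_F$ using Proposition \ref{compatibility} (which identifies the relevant coefficient field) and Theorem \ref{making rho} (which puts $\rho_F$ in a good basis over $\I_0$). So set $\Pp' = \Pp \cap \I'$. Since $F$ is non-CM, its arithmetic specialization $f_\Pp$ is a non-CM classical eigenform of some weight $k \geq 2$ that is $p$-ordinary, so the theorem of Brown and Ghate (Theorem 3.3.1 of \cite{BG}) shows the algebra $D$ attached to $f_\Pp$ splits at $p$. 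Hence Theorem \ref{Momose} applies, and combined with Proposition \ref{compatibility} (which gives $E_{\Pp,\p_\Pp} = Q(\I_0/\Qq)$) it says: after conjugating by some $\gamma \in \GL_2(\overline{\Q}_p)$, the image of $\rho_F \bmod \Pp'$ restricted to $H_{f_\Pp}$ contains a congruence subgroup $\Gamma_{\OK_{E_{\Pp,\p_\Pp}}}(\bb)$ of $\GL_2(\OK_{E_{\Pp,\p_\Pp}})$, where $\OK_{E_{\Pp,\p_\Pp}}$ is the ring of integers of the local field $E_{\Pp,\p_\Pp} = Q(\I_0/\Qq)$.

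Next I would compare the orders topologically. Because $\Qq$ lies over an arithmetic prime $P_{k,1}$ of $\Lambda$, the domain $\I_0/\Qq$ is finite over $\Lambda/P_{k,1} \cong \Z_p$, hence a $\Z_p$-order in $Q(\I_0/\Qq) = E_{\Pp,\p_\Pp}$; being commensurable with the maximal order $\OK_{E_{\Pp,\p_\Pp}}$ it is open in $E_{\Pp,\p_\Pp}$, so $\SL_2(\I_0/\Qq)$ is a compact open subgroup of $\SL_2(E_{\Pp,\p_\Pp})$. Now restrict $\rho_F \bmod \Pp'$ to the open subgroup $H \cap H_{f_\Pp}$ of $G_\Q$. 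Since $H \cap H_{f_\Pp} \subseteq H \subseteq H_0$, Theorem \ref{making rho} forces this restriction to take values in $\GL_2(\I_0/\Qq)$. Since $H \cap H_{f_\Pp}$ has finite index in $H_{f_\Pp}$, its image conjugated by $\gamma$ is a finite-index subgroup of a group containing $\Gamma_{\OK_{E_{\Pp,\p_\Pp}}}(\bb)$, hence contains a finite-index closed --- so open --- subgroup of that congruence subgroup, hence $\Gamma_{\OK_{E_{\Pp,\p_\Pp}}}(\bb')$ for some nonzero ideal $\bb'$. Undoing the conjugation by $\gamma$ gives a subgroup $U \subseteq \im(\rho_F \bmod \Pp'|_{H\cap H_{f_\Pp}}) \subseteq \GL_2(\I_0/\Qq)$ that is $\GL_2(\overline{\Q}_p)$-conjugate to $\Gamma_{\OK_{E_{\Pp,\p_\Pp}}}(\bb')$. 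Being a compact open subgroup of $\SL_2(E_{\Pp,\p_\Pp})$ contained in $\SL_2(\I_0/\Qq)$, $U$ has finite index in the latter, so $U \supseteq \Gamma_{\I_0/\Qq}(\overline{\Aa}_\Qq)$ for some nonzero $\I_0/\Qq$-ideal $\overline{\Aa}_\Qq$.

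To conclude I would observe that $\im(\rho_F \bmod \Pp'|_{H\cap H_{f_\Pp}}) \subseteq \im(\rho_F|_H \bmod \Pp')$, and since $\rho_F|_H$ is $\GL_2(\I_0)$-valued the latter is the image of $\rho_F|_H \bmod \Qq$ in $\GL_2(\I_0/\Qq)$; the same fact identifies $\im(\rho_F|_H \bmod \Qq\I')$ inside $\prod_{\Pp'|\Qq}\GL_2(\I'/\Pp')$ with the diagonal image of a subgroup of $\GL_2(\I_0/\Qq)$. This yields the inclusion $\Gamma_{\I_0/\Qq}(\overline{\Aa}_\Qq) \subseteq \im(\rho_F \bmod \Qq\I')$ claimed in the corollary. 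For the final clause, I would argue as in Proposition \ref{transferring congruence subgroup}: since $\rho = \rho_F|_H \otimes \sqrt{\det\rho_F|_H}^{-1}$ differs from $\rho_F|_H$ only by a scalar character valued in $1 + \m_0$, which reduces modulo $\Qq$ into a group with no nontrivial $2$-torsion ($p$ is odd), every element of $\Gamma_{\I_0/\Qq}(\overline{\Aa}_\Qq) \subseteq \im(\rho_F|_H \bmod \Qq)$ is already in $\im(\rho \bmod \Qq)$; hence $\im\rho$ contains a congruence subgroup of $\SL_2(\I_0/\Qq)$, and in particular its image in $\SL_2(\I_0/\Qq)$ is open.

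The main obstacle, I expect, is the bookkeeping in the second paragraph: Momose's theorem produces a congruence subgroup of $\GL_2$ of the \emph{maximal} order $\OK_{E_{\Pp,\p_\Pp}}$, and only after an unspecified conjugation over $\overline{\Q}_p$, whereas $\rho_F$ is naturally defined over the possibly non-maximal order $\I_0/\Qq$ in the basis of Theorem \ref{making rho}. Reconciling the two requires the commensurability of these two lattices together with the standard fact that a finite-index closed subgroup of a congruence subgroup is open and hence again contains a congruence subgroup, and one must separately keep track of the harmless loss of precision in the ideal caused by restricting from $H_{f_\Pp}$ to $H \cap H_{f_\Pp}$.
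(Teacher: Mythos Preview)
Your proof is correct and follows the paper's approach: apply Momose's theorem to a classical specialization, use Proposition~\ref{compatibility} to identify the coefficient field with $Q(\I_0/\Qq)$, and use Theorem~\ref{making rho} to descend to $\I_0$. The paper shortcuts your topological commensurability argument by simply observing that, since $\I_0/\Qq \subseteq \OK_{E_{\Pp,\p_\Pp}}$ with the same fraction field, the intersection $\overline{\Aa}_\Pp \cap (\I_0/\Qq)$ is a nonzero $\I_0/\Qq$-ideal and $\Gamma_{\I_0/\Qq}(\overline{\Aa}_\Pp \cap \I_0/\Qq) \subseteq \Gamma_{\OK_{E_{\Pp,\p_\Pp}}}(\overline{\Aa}_\Pp)$; it also does not explicitly track the conjugation $\gamma$, since openness of the image is basis-independent and directly yields a congruence subgroup in the given model.
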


\begin{proof}
For a prime $\Pp$ of $\I$, write $\OK_\Pp$ for the ring of integers of $E_{\Pp, \p_\Pp}$.  By Theorem \ref{Momose} and the remarks following it, for each prime $\Pp$ of $\I$ lying over $\Qq$ we have $\im \rho_{f_\Pp}$ contains $\Gamma_{\OK_\Pp}(\overline{\Aa}_\Pp)$ for some nonzero $ \OK_\Pp$-ideal $\overline{\Aa}_\Pp$.  While $\I_0/\Qq$ need not be integrally closed, by Proposition \ref{compatibility} we see that $\overline{\Aa}_\Pp \cap (\I_0/\Qq)$ is a nonzero $\I_0/\Qq$-ideal.   

Thus we have
\[
\Gamma_{\I_0/\Qq}(\overline{\Aa}_\Pp \cap \I_0/\Qq) \subseteq \Gamma_{\OK_\Pp}(\overline{\Aa}_\Pp) \subseteq \im \rho_{f_\Pp} = \im \rho_F \bmod \Pp \subseteq \GL_2(\I'/\Pp').
\]
Let $\overline{\Aa}_{\Qq} = \cap_{\Pp | \Qq} \overline{\Aa}_\Pp \cap \I_0/\Qq$.  This is a finite intersection of nonzero $\I_0/\Qq$-ideals and hence is nonzero.  The first statement follows from the above inclusions.  

For the statement about $\rho$, recall that $\rho_F|_{H_0}$ is valued in $\GL_2(\I_0)$ and hence $\im \rho_F|_{H_0} \bmod \Qq$ lies in the diagonally embedded copy of $\GL_2(\I_0/\Qq)$ in $\prod_{\Pp' | \Qq} \GL_2(\I'/\Pp')$.  Since $H$ is open in $G_\Q$ by replacing $\overline{\Aa}_{\Qq}$ with a smaller $\I_0/\Qq$-ideal if necessary, we may assume that $\Gamma_{\I_0/\Qq}(\overline{\Aa}_{\Qq})$ is contained in the image of $\rho_F|_H$ in $\GL_2(\I_0/\Qq)$.  Since $\rho$ and $\rho_F$ are equal on elements of determinant $1$ and $\Gamma_{\I_0/\Qq}(\overline{\Aa}_{\Qq}) \subseteq \SL_2(\I_0/\Qq)$, it follows that $\Gamma_{\I_0/\Qq}(\overline{\Aa}_{\Qq})$ is contained in the image of $\im \rho$ in $\SL_2(\I_0/\Qq)$.  That is, the image of $\im \rho$ in $\SL_2(\I_0/\Qq)$ is open. 
\end{proof}

\begin{proof}[Summary of Proof of Theorem \ref{main result}]
Theorem \ref{making rho}, which will be proved in the next section, allows us to create a representation $\rho : H \to \SL_2(\I_0)$ with the property that if $\rho$ is $\I_0$-full then so is $\rho_F$.  This is important for the use of Pink's theory in section \ref{sufficiency open product image} as well as for the techniques of section \ref{product image}.  Proposition \ref{open image in product implies main result} shows that it is sufficient to prove that the image of $\im \rho$ in $\prod_{\Qq | P} \SL_2(\I_0/\Qq)$ is open for some arithmetic prime $P$ of $\Lambda$.  Proposition \ref{open image in product} further reduces the problem to showing that the image of $\rho$ modulo $\Qq$ is open in $\SL_2(\I_0/\Qq)$ for all primes $\Qq$ of $\I_0$ lying over a fixed arithmetic prime $P$ of $\Lambda$.  

This reduces the problem to studying the image of a Galois representation attached to one of the classical specializations of $F$ (twisted by the inverse square root of the determinant).  Hence we can apply the work of Ribet and Momose, but only after we show that $Q(\I_0/\Qq)$ is the same field that occurs in their work.  This is done in Proposition \ref{compatibility}, though the main input is Theorem \ref{deformation lifting}.   
\end{proof}

\section{Obtaining an $\SL_2(\I_0)$-valued representation}\label{same basis}
In this section we prove Theorem \ref{making rho}.  

\makingrho*
It is well know that so long as $\brho_F$ is absolutely irreducible we may assume that $\rho_F$ has values in $\GL_2(\I')$ and the local representation $\rho_F|_{D_p}$ is upper triangular (Theorem 4.3.2 \cite{GME}).  To show that $\rho_F|_{H_0}$ has values in $\GL_2(\I_0)$ we begin by investigating the structure of $\Gamma$.

\begin{proposition}\label{Gamma is a 2 group}
The group $\Gamma$ is a finite abelian $2$-group.
\end{proposition}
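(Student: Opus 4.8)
The plan is to prove each property in turn: finiteness, abelianness, and that it is a $2$-group. For \textbf{finiteness}, I would observe that an element $\sigma \in \Gamma$ is determined by its character $\eta_\sigma$ (since $F$ does not have CM, distinct elements of $\Gamma$ have distinct characters), so it suffices to bound the possible $\eta_\sigma$. By Lemma \ref{powers of chi} each $\eta_\sigma$ is the product of a quadratic character with a power of $\chi$; moreover, comparing traces at a fixed unramified prime $\ell$ with $a(\ell, F) \neq 0$ shows that the conductor of $\eta_\sigma$ is controlled — concretely, since $\sigma(a(\ell,F)) = \eta_\sigma(\ell) a(\ell, F)$ and $\I'$ is a finite extension of $\Lambda_\chi$ (hence $Q(\I')/Q(\Lambda_\chi)$ is finite), $\sigma$ ranges over a subgroup of the finite group $\Aut(Q(\I')/Q(\Lambda_\chi))$. (One needs that every conjugate self-twist $\sigma$ fixes $Q(\Lambda_\chi)$; this follows because $\Lambda_\chi$ is generated over $\Lambda$ by the values of $\chi$, $\sigma$ permutes these roots of unity, and in fact one checks $\sigma$ fixes them using the determinant relation $\det\rho_F(\Frob_\ell) = \chi(\ell)\kappa(\langle\ell\rangle)\ell^{-1}$ together with $\sigma(\kappa(\langle\ell\rangle)\ell^{-1}) = \kappa(\langle\ell\rangle)\ell^{-1}$ since $\sigma$ fixes $\Lambda$, which forces $\chi^\sigma = \eta_\sigma^2 \chi$ and hence $\chi^\sigma$ has the same order as $\chi$.) So $\Gamma$ is a subgroup of a finite group, hence finite.

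For \textbf{abelianness}, I would use the cocycle/composition law for the twist characters. If $\sigma, \tau \in \Gamma$ then applying $\sigma\tau$ and $\tau\sigma$ to $a(\ell,F)$ gives $\eta_{\sigma\tau}(\ell) = \eta_\sigma(\ell)^{?}\cdot\dots$ — more precisely, $(\sigma\tau)(a(\ell,F)) = \sigma(\eta_\tau(\ell) a(\ell,F)) = \eta_\tau(\ell)^\sigma \eta_\sigma(\ell) a(\ell, F)$, so $\eta_{\sigma\tau} = \eta_\sigma \cdot \eta_\tau^\sigma$ where $\eta_\tau^\sigma$ means applying $\sigma$ to the values. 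But each $\eta_\tau$ is a product of a quadratic character and a power of $\chi$, and $\sigma$ acts on the values of $\chi$ by $\chi \mapsto \chi^\alpha$ for some $\alpha$; combined with the constraint that $\eta_{\sigma\tau}^2 = \chi^{\sigma\tau}\chi^{-1} = \chi^{\tau\sigma}\chi^{-1} = \eta_{\tau\sigma}^2$ from Lemma \ref{powers of chi}'s proof, one gets $\eta_{\sigma\tau} = \pm\eta_{\tau\sigma}$, and a closer look at the quadratic parts (which are fixed by any automorphism since they take values in $\{\pm 1\}$) forces $\eta_{\sigma\tau} = \eta_{\tau\sigma}$. Since an element of $\Gamma$ is determined by its character, $\sigma\tau = \tau\sigma$.

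For the \textbf{$2$-group} claim — which I expect to be the main obstacle — I would argue that $\eta_\sigma^2$ is a power of $\chi$ (from $\eta_\sigma^2 = \chi^\sigma\chi^{-1} = \chi^{\alpha-1}$ in the proof of Lemma \ref{powers of chi}), so the order of $\sigma$ is tightly linked to how $\sigma$ permutes the values of $\chi$. Suppose $\sigma$ has order $n$; then iterating $\chi^\sigma = \chi^\alpha$ gives $\chi^{\sigma^n} = \chi^{\alpha^n} = \chi$, so $\alpha^n \equiv 1 \pmod{\mathrm{ord}(\chi)}$. Meanwhile $\eta_{\sigma^n} = 1$ (as $\sigma^n = \mathrm{id}$, using no-CM), and the composition law gives $1 = \eta_{\sigma^n} = \eta_\sigma \cdot \eta_\sigma^\sigma \cdots \eta_\sigma^{\sigma^{n-1}}$. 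Squaring and using $\eta_\sigma^2 = \chi^{\alpha-1}$ yields $\chi^{(\alpha-1)(1 + \alpha + \cdots + \alpha^{n-1})} = \chi^{\alpha^n - 1} = 1$, which is automatic and gives nothing new; the real input is that the quadratic part $\epsilon_\sigma := \eta_\sigma \chi^{-i_\sigma}$ of $\eta_\sigma$ satisfies $\epsilon_\sigma^2 = 1$, so $\sigma^2$ has twist character $\eta_{\sigma^2} = \eta_\sigma \eta_\sigma^\sigma = \epsilon_\sigma \epsilon_\sigma^\sigma \chi^{i_\sigma + \alpha i_\sigma} = \chi^{(1+\alpha)i_\sigma}$ (since $\epsilon_\sigma^\sigma = \epsilon_\sigma$), a pure power of $\chi$. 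Then one shows that an element of $\Gamma$ whose twist character is a power of $\chi$ must itself be the identity: if $\eta_\tau = \chi^j$ then $\tau$ acts on $a(\ell,F)$ the same way as the "diamond twist" coming from the Nebentypus, and using that $\tau$ fixes $\Lambda$ and $\eta_\tau^2 = \chi^{\alpha_\tau - 1}$ one deduces $j$-compatibility forcing $\tau^2 = \mathrm{id}$ and then (by a direct examination, or by noting such a $\tau$ would have to be an "inner twist by the Nebentypus" which is trivial for a primitive non-CM family) $\tau = \mathrm{id}$. Hence $\sigma^2 = \mathrm{id}$ for all $\sigma \in \Gamma$, so $\Gamma$ is an elementary abelian $2$-group; combined with finiteness, $\Gamma$ is a finite abelian $2$-group. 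The delicate point I will need to pin down carefully is exactly why a conjugate self-twist whose character is a power of $\chi$ must be trivial — this is where primitivity of $F$ and the precise relation between $\chi$, $\chi_1$, and the diamond operators enters, and it is the step I expect to require the most care.
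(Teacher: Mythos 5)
Your plan is structurally quite different from the paper's, and unfortunately the step you yourself flagged as the ``delicate point'' is in fact false, which sinks the $2$-group part of the argument.

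The specific problem is the claim that a conjugate self-twist whose twist character is a power of $\chi$ must be the identity. This is not true. For a newform $f$ with non-trivial Nebentypus $\chi$, complex conjugation $c$ on the coefficient field satisfies $c(a(n,f)) = \chi^{-1}(n)\,a(n,f)$ for $(n,N)=1$; so $c$ is a conjugate self-twist of $f$ with twist character $\chi^{-1}$, a power of $\chi$, and $c$ is non-trivial as soon as $f$ has non-real coefficients. The analogous automorphism lives in $\Gamma$ for the $\I$-adic family. So the asserted lemma has counterexamples, and there is no way to patch your argument by ``inner twist by the Nebentypus is trivial'' --- that is precisely the non-trivial example. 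A related symptom: you end up proving that $\Gamma$ is \emph{elementary} abelian ($\sigma^2=1$ for all $\sigma$), which is strictly stronger than the statement and not true in general. The paper's proof only produces a $2$-group because the cyclotomic piece $\Gal(Q(\I_0)[\mu_{2^s}]/Q(\I_0)) \leq (\Z/2^s\Z)^\times$ can have elements of order $4$, $8$, etc.; your computation $\eta_{\sigma^2} = \chi^{(1+\alpha)i_\sigma}$ in fact already shows $\sigma^2$ can be a non-trivial self-twist. There is also an unresolved sign ambiguity in your abelianness step ($\eta_{\sigma\tau} = \pm\eta_{\tau\sigma}$; the discrepancy sits in the $\chi$-power, not the quadratic part, so ``fixing the quadratic parts'' does not close it), and a smaller slip in the finiteness step: the determinant relation gives $\chi^\sigma = \eta_\sigma^2\chi$, so $\sigma$ need not fix $Q(\Lambda_\chi)$ (only $Q(\Lambda)$), though $\Gamma \subseteq \Aut(Q(\I')/Q(\Lambda))$ still gives finiteness.

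The paper's argument is a Kummer-theoretic one and avoids all of this. It sets $b_\ell = a(\ell,F)^2/\det\rho_F(\Frob_\ell)$, shows $b_\ell \in \I_0$ using $\det\rho_F^{\sigma-1} = \eta_\sigma^2$, and observes that $Q(\I') \subseteq \cK := Q(\I_0)[\sqrt{b_\ell}, \sqrt{\det\rho_F(\Frob_\ell)} : \ell \in S]$. Using $\sqrt{\kappa(\langle\ell\rangle)}, \sqrt{\langle\ell\rangle} \in \Lambda$, the extension $\cK/Q(\I_0)$ is generated by square roots of elements of $Q(\I_0)$ together with $2$-power roots of unity, hence $\Gal(\cK/Q(\I_0))$ sits inside a product of an elementary abelian $2$-group (Kummer theory) and a subgroup of $(\Z/2^s\Z)^\times$ (cyclotomic). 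Since $Q(\I')$ sits between $Q(\I_0)$ and $\cK$, the group $\Gamma$ is a subgroup/quotient of this abelian $2$-group, and finiteness, abelianness, and the $2$-group property all come out of this single embedding at once. If you want to salvage your more computational approach, the missing ingredient is exactly this: the relation $\eta_\sigma^2 = \chi^\sigma\chi^{-1}$ is a statement about \emph{squares}, so it naturally lands you in Kummer theory rather than in the (false) rigidity statement you were hoping for.
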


\begin{proof}
Let $S$ be the set of primes $\ell$ for which $a(\ell, F)^\sigma = \eta_\sigma(\ell)a(\ell, F)$ for all $\sigma \in \Gamma$, so $S$ excludes only finitely many primes.  For $\ell \in S$, let
\[
b_\ell := \frac{a(\ell, F)^2}{\det \rho_F(\Frob_\ell)}.
\]
It turns out that $b_\ell \in \I_0$.  To see this, note that since $\brho_F$ is absolutely irreducible, for any $\sigma \in \Gamma$ we have $\rho_F^\sigma \cong \eta_\sigma \otimes \rho_F$ over $\I'$.  Taking determinants we find that $\det \rho_F^{\sigma - 1} = \eta_\sigma^2$.  Thus we have
\[
(a(\ell, F)^\sigma)^2 = \eta_\sigma(\ell)^2a(\ell, F)^2 = \det \rho_F(\Frob_\ell)^{\sigma - 1}a(\ell, F)^2,
\]
from which it follows that $b_\ell^\sigma = b_\ell$.  Solving for $a(\ell, F)$ in the definition of $b_\ell$ we find that 
\[
Q(\I') = Q(\I_0)[\sqrt{b_\ell\det \rho_F(\Frob_\ell)} : \ell \in S].
\]

Recall that for $\ell \in S$ we have $\det \rho_F(\Frob_\ell) = \chi(\ell)\kappa(\langle \ell \rangle)\ell^{-1}$, where $\kappa(\langle \ell \rangle) \in 1 + \m_\Lambda$.  (Currently all that matters is that $\kappa$ is valued in $1 + \m_\Lambda$.  For a precise definition of $\kappa$, see the proof of Theorem \ref{deformation lifting}.)  In particular, $\sqrt{\kappa(\langle \ell \rangle)} \in \Lambda$.  Similarly, we can write $\ell = \langle \ell \rangle \omega(\ell)$ with $\langle \ell \rangle \in 1 + p\Z_p$ and $\omega(\ell) \in \mu_{p - 1}$.  So $\sqrt{\langle \ell \rangle} \in \Lambda$ as well.

Let 
\[
\cK = Q(\I_0)[\sqrt{b_\ell}, \sqrt{\det \rho_F(\Frob_\ell)} : \ell \in S],
\]
which is an abelian extension of $Q(\I_0)$ since it is obtained by adjoining square roots.  The above argument shows that in fact $\cK$ is obtained from $Q(\I_0)[\sqrt{b_\ell} : \ell \in S]$ by adjoining finitely many roots of unity, namely the square roots of the values of $\chi$ and the square roots of $\mu_{p - 1}$.  As odd order roots of unity are automatically squares, we can write $\cK = \Q(\I_0)[\sqrt{b_\ell} : \ell \in S][\mu_{2^s}]$ for some $s \in \Z^+$.  Thus we have
\[
\Gal(\cK/Q(\I_0)) \cong \Gal(Q(\I_0)[\sqrt{b_\ell} : \ell \in S]/Q(\I_0)) \times \Gal(Q(\I_0)[\mu_{2^s}]/Q(\I_0)).
\]
By Kummer theory the first group is an elementary abelian $2$-group.  The second group is isomorphic to $(\Z/2^s\Z)^\times$ and hence is a $2$-group.  As $\Gamma$ is a quotient of $\Gal(\cK/Q(\I_0))$ it follows that $\Gamma$ is a finite abelian $2$-group, as claimed.
\end{proof}

Since $\brho_F$ is absolutely irreducible, it follows from Clifford's Theorem that $\brho_F|_{H_0}$ is semisimple (Theorem 6.5 and Corollary 6.6 \cite{I}).  Furthermore, $\tr \rho_F|_{H_0}$ is $\Gamma$-invariant and the Schur index of $\brho_F$ is one (as is always the case for representations over finite fields).  It follows that there is a representation $\pi : H_0 \to \GL_2(\F^\Gamma)$ that is isomorphic to $\brho_F|_{H_0}$ over $\F$.  This structure gives us three cases, detailed in Proposition \ref{residual cases} below.  We often have to treat the cases separately in what follows.  

Let $D$ be a non-square in $\F^\Gamma$, and let $\E = \F^\Gamma[\sqrt{D}]$ be the unique quadratic extension of $\F^\Gamma$.  Note that since $\Gamma$ is a $2$-group, either $\F = \F^\Gamma$ or $\F^\Gamma \subsetneq \E \subseteq \F$.

\begin{lemma}\label{splitting field for irred pi}
Let $K$ be a field and $\mathcal{S} \subset \GL_n(K)$ a set of nonconstant semisimple operators that can be simultaneously diagonalized over $\overline{K}$.  If $\textbf{y} \in \GL_n(\overline{K})$ such that $\textbf{y}\mathcal{S}\textbf{y}^{-1} \subset \GL_n(K)$, then there is a matrix $\textbf{z} \in \GL_n(K)$ such that $\textbf{z}\mathcal{S}\textbf{z}^{-1} = \textbf{y}\mathcal{S}\textbf{y}^{-1}$.  In particular, if $\pi$ is irreducible over $\F^\Gamma$ but not absolutely irreducible, then $\E$ is the splitting field for $\pi$.  
\end{lemma}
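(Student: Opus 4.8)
The plan is to establish the displayed general statement and then read off the ``in particular'' clause. Let $B\subseteq M_n(K)$ be the $K$-subalgebra generated by $\mathcal{S}$ and let $A\subseteq M_n(K)$ be the $K$-subalgebra generated by $\mathbf{y}\mathcal{S}\mathbf{y}^{-1}$. Conjugation by $\mathbf{y}$ is a ring automorphism of $M_n(\overline{K})$ fixing the scalar matrices, and it carries $\mathcal{S}\subseteq M_n(K)$ into $M_n(K)$; hence it carries $B$ into $M_n(K)$, and its restriction is a $K$-algebra isomorphism $\phi\colon B\xrightarrow{\sim} A$. Because the elements of $\mathcal{S}$ can be simultaneously diagonalized over $\overline{K}$, the image of $B\otimes_K\overline{K}$ in $M_n(\overline{K})$ is conjugate to a subalgebra of the diagonal matrices; in particular it is reduced, hence isomorphic to $\overline{K}^{\dim_K B}$, so $B$ (and likewise $A$) is a finite \'etale $K$-algebra, i.e.\ a finite product $\prod_i K_i$ of finite separable field extensions of $K$.

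The next step is a descent of modules. Regard $K^n$ as a $B$-module $W_0$ via the inclusion $B\hookrightarrow M_n(K)$, and as a second $B$-module $W$ via $b\cdot v:=\phi(b)v$. Viewing $\mathbf{y}$ as an automorphism of $\overline{K}^n$, the identity $\mathbf{y}(bv)=(\mathbf{y}b\mathbf{y}^{-1})\mathbf{y}(v)=\phi(b)\mathbf{y}(v)$ shows that $\mathbf{y}$ is an isomorphism $W_0\otimes_K\overline{K}\xrightarrow{\sim} W\otimes_K\overline{K}$ of modules over $B\otimes_K\overline{K}$. Now finitely generated modules over a finite \'etale $K$-algebra are classified up to isomorphism by the dimensions over $K$ of the images of the primitive idempotents of $B$ acting on the module; since these are ranks of idempotent matrices they are unchanged upon extending scalars to $\overline{K}$ (equivalently, one invokes the Noether--Deuring theorem). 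Hence $W_0\cong W$ already as $B$-modules over $K$; let $\mathbf{z}\in\GL_n(K)$ be the matrix of such an isomorphism $W_0\to W$. Then $\mathbf{z}b=\phi(b)\mathbf{z}=\mathbf{y}b\mathbf{y}^{-1}\mathbf{z}$ for all $b\in B$, so $\mathbf{z}b\mathbf{z}^{-1}=\mathbf{y}b\mathbf{y}^{-1}$, and specializing to $b\in\mathcal{S}$ gives $\mathbf{z}\mathcal{S}\mathbf{z}^{-1}=\mathbf{y}\mathcal{S}\mathbf{y}^{-1}$.

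For the final assertion, recall that $\pi$ is semisimple by Clifford's theorem. If $\pi$ is irreducible over $\F^\Gamma$ but not absolutely irreducible, then $\pi\otimes_{\F^\Gamma}\overline{\F^\Gamma}$ is a direct sum of two characters; these are distinct, since a repeated character would be a Galois-stable eigenvalue of $\F^\Gamma$-rational matrices, hence $\F^\Gamma$-valued, forcing $\pi$ to be reducible over $\F^\Gamma$. Thus $\mathcal{S}:=\pi(H_0)\subseteq\GL_2(\F^\Gamma)$ is a set of non-scalar, simultaneously diagonalizable operators whose eigenvalue characters take values in $\E$, since the eigenvalues of a $2\times2$ matrix over $\F^\Gamma$ lie in a quadratic extension and $\E$ is the only one. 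Choosing $\mathbf{y}\in\GL_2(\overline{\F^\Gamma})$ with $\mathbf{y}\mathcal{S}\mathbf{y}^{-1}$ diagonal, we get $\mathbf{y}\mathcal{S}\mathbf{y}^{-1}\subseteq\GL_2(\E)$, so the first part applied with $K=\E$ produces $\mathbf{z}\in\GL_2(\E)$ with $\mathbf{z}\mathcal{S}\mathbf{z}^{-1}$ diagonal. Hence $\pi$ becomes a sum of characters already over $\E$; as it is irreducible over $\F^\Gamma$, its splitting field is a proper extension of $\F^\Gamma$ and therefore equals $\E$.

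I expect the crux to be the module descent in the second paragraph, i.e.\ passing from an isomorphism over $\overline{K}$ to one over $K$: for a general finite-dimensional $K$-algebra this is exactly the Noether--Deuring theorem, but here $B$ is \'etale, so it reduces to the elementary classification of modules over a finite product of fields together with the invariance of ranks of idempotents under field extension. A minor but essential point is that $\phi$ is a genuine $K$-algebra homomorphism — it fixes the scalars — which is what forces the resulting $\mathbf{z}$ to be $K$-rational rather than merely defined over $\overline{K}$.
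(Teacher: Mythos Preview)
Your proof is correct, but it takes a genuinely different route from the paper's. The paper argues via Galois descent and Hilbert's Theorem~90: for each $\sigma\in G_K$ one has $\mathbf{y}^{-1}\mathbf{y}^\sigma$ in the centralizer of $\mathcal{S}$, and the resulting $1$-cocycle $\sigma\mapsto\mathbf{y}^{-1}\mathbf{y}^\sigma$ with values in that centralizer is shown to be a coboundary $\alpha^{-1}\alpha^\sigma$; then $\mathbf{z}:=\mathbf{y}\alpha^{-1}\in\GL_n(K)$ works since $\alpha$ commutes with $\mathcal{S}$. Your approach instead passes through the commutative \'etale $K$-algebra $B$ generated by $\mathcal{S}$ and invokes Noether--Deuring (or, equivalently, the elementary classification of modules over a product of fields) to descend the $B$-module isomorphism given by $\mathbf{y}$ from $\overline{K}$ to $K$.

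The two arguments are cousins---Noether--Deuring for semisimple algebras and Hilbert~90 for $\GL_n$ are essentially equivalent---but your formulation has a mild advantage: it never needs to identify the centralizer of $\mathcal{S}$. The paper asserts this centralizer is a torus isomorphic to $(\overline{K}^\times)^{\oplus n}$, which is only literally true when the simultaneous eigenspaces are all one-dimensional; in general the centralizer is a product of $\GL_{m_i}$'s (possibly permuted by $G_K$), and one must appeal to the non-abelian Hilbert~90 to conclude. Your module-theoretic route sidesteps this entirely. The deduction of the ``in particular'' clause is the same in both proofs.
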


\begin{proof}
Let $\sigma \in G_K := \Gal(\overline{K}/K)$.  Then for any $\textbf{x} \in \mathcal{S}$ we have $\textbf{y}^\sigma \textbf{x} \textbf{y}^{-\sigma} = (\textbf{yxy}^{-1})^\sigma = \textbf{yxy}^{-1}$, so $\textbf{y}^{-1}\textbf{y}^\sigma$ centralizes $\textbf{x}$.  As elements in $\mathcal{S}$ are simultaneously diagonalizable, they have the same centralizer in $\GL_n(\overline{K})$.  Since elements of $\mathcal{S}$ are semisimple, their centralizer is a torus and hence isomorphic to $(\overline{K})^{\oplus n}$.  It's not hard to show that $a : G_K \to (\overline{K}^\times)^{\oplus n}$ given by $\sigma \mapsto \textbf{y}^{-1}\textbf{y}^\sigma$ is a $1$-cocycle.  (Here we view $(\overline{K}^\times)^{\oplus n}$ as a $G_K$-module by letting elements of $G_K$ act component-wise.)  By Hilbert's Theorem 90 we have $H^1(G_K, (\overline{K}^\times)^{\oplus n}) = H^1(G_K, \overline{K}^\times)^{\oplus n} = 0$.  Hence $a$ is a coboundary.  That is, there is some $\alpha \in (\overline{K}^\times)^{\oplus n}$ such that
\[
a_\sigma = \textbf{y}^{-1}\textbf{y}^\sigma = \alpha^{-1}\alpha^\sigma
\]
for all $\sigma \in G_K$.  Thus $(\textbf{y}\alpha^{-1})^\sigma = \textbf{y}\alpha^{-1}$ for all $\sigma \in G_K$, so $\textbf{z} := \textbf{y}\alpha^{-1} \in \GL_n(K)$.  But $\alpha$ commutes with $\mathcal{S}$ and so $\textbf{z}\mathcal{S}\textbf{z}^{-1} = \textbf{y}\mathcal{S}\textbf{y}^{-1}$, as claimed.

To deduce the claim about $\pi$, let $\mathcal{S} = \im \pi$.  If $\pi$ is not absolutely irreducible then there is a matrix $\textbf{y} \in \GL_2(\overline{\F})$ that simultaneously diagonalizes $\mathcal{S}$.  Note that every matrix in $\im \pi$ has eigenvalues in $\E$.  Indeed every matrix has a quadratic characteristic polynomial and $\E$ is the unique quadratic extension of $\F^\Gamma$.  Thus, taking $K = \E$ we see that $\textbf{y}\mathcal{S}\textbf{y}^{-1} \subset \GL_2(K)$.  The first statement of the lemma tells us that $\im \pi$ is diagonalizable over $\E$.  Since $\pi$ is irreducible over $\F^\Gamma$ and $[\E : \F^\Gamma] = 2$, it follows that $\E$ is the smallest extension of $\F^\Gamma$ over which $\im \pi$ is diagonalizable.  
\end{proof}

Let $Z$ be the centralizer of $\im \pi$ in $M_2(\F)$.

\begin{proposition}\label{residual cases}
Assume $\brho_F$ is $H_0$-regular.  Exactly one of the following must happen:
\begin{enumerate}
\item Both $\brho_F|_{H_0}$ and $\pi$ are absolutely irreducible.  In this case $Z$ consists of scalar matrices over $\F$.
\item Neither $\brho_F|_{H_0}$ nor $\pi$ are absolutely irreducible, but $\pi$ is irreducible over $\F^\Gamma$ and $\brho_F|_{H_0}$ is irreducible over $\F$.  In this case $\F = \F^\Gamma$ and we may assume
\[
Z = \left\{\begin{pmatrix} 
\alpha & \beta D\\
\beta & \alpha
\end{pmatrix} : \alpha, \beta \in \F \right\} \cong \E.
\]
\item The representation $\brho_F$ is reducible over $\F$.  In this case we may assume that $Z$ consists of diagonal matrices over $\F$.
\end{enumerate}
\end{proposition}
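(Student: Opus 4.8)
The plan is to analyze $\brho_F|_{H_0}$ via its semisimplification $\pi$, using that $\brho_F$ is absolutely irreducible (so $\brho_F|_{H_0}$ is semisimple by Clifford) and that $\brho_F$ is $H_0$-regular. First I would dispose of the reducible case: if $\brho_F|_{H_0}$ is reducible over $\F$, then since it is semisimple it is a sum of two characters $\bchi_1 \oplus \bchi_2$. These two characters must be distinct, for otherwise $\brho_F|_{H_0}$ would be scalar, and then $\brho_F$ would be induced from a character or reducible — but $H_0$-regularity forces $\bvarepsilon|_{D_p \cap H_0} \neq \bdelta|_{D_p \cap H_0}$, and these are the two characters appearing on the diagonal of $\brho_F|_{D_p}$, so in particular $\brho_F|_{H_0}$ cannot be scalar. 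With $\bchi_1 \neq \bchi_2$, after choosing the basis that diagonalizes $\brho_F|_{H_0}$, the centralizer $Z$ of $\im\pi$ in $M_2(\F)$ is exactly the diagonal matrices; this is case (3). Note that this choice of basis need not be the one from Theorem 4.3.2 of \cite{GME}, which is why the proposition is phrased with ``we may assume''.

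Next I would assume $\brho_F|_{H_0}$ is irreducible over $\F$ and split on whether it is absolutely irreducible. If it is absolutely irreducible, then by Schur's lemma over $\overline{\F}$ the centralizer of $\im(\brho_F|_{H_0})$ in $M_2(\overline{\F})$ is scalar, hence $Z$ is scalar over $\F$; since $\pi$ and $\brho_F|_{H_0}$ have the same image up to the ambient field, $\pi$ is absolutely irreducible too. This is case (1). If $\brho_F|_{H_0}$ is irreducible over $\F$ but not absolutely irreducible, then $\pi$ — being a model of $\brho_F|_{H_0}$ over $\F^\Gamma$, whose existence was established just before the statement — is also irreducible over $\F^\Gamma$ but not absolutely irreducible. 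By Lemma \ref{splitting field for irred pi}, the splitting field of $\pi$ is the quadratic extension $\E = \F^\Gamma[\sqrt{D}]$, so $\im\pi$ is diagonalizable over $\E$ with eigenvalues conjugate over $\F^\Gamma$; this forces $\E \subseteq \F$, and since $\brho_F|_{H_0}$ is already irreducible \emph{over} $\F$ we must have $\F$ contained in $\E$ as well, whence $\F = \E = \F^\Gamma[\sqrt D]$. Wait — I should be careful here: $\brho_F|_{H_0}$ irreducible over $\F$ and $\E$ being the splitting field of $\pi$ together force $\F \subseteq \E$, hence $\F = \E$, so $[\F : \F^\Gamma] = 2$. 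Hmm, but the proposition asserts $\F = \F^\Gamma$ in case (2); I would need to re-examine whether the correct conclusion is $\F = \E$ rather than $\F = \F^\Gamma$, or whether an additional argument (e.g. using that $\pi$ does not become reducible over $\F$, which would be the case exactly when $\E \not\subseteq \F$) pins down $\F = \F^\Gamma$. Granting the statement as written, the computation of $Z$ is then the standard one: realize $\E$ inside $M_2(\F^\Gamma)$ via the regular representation with $\sqrt D \mapsto \left(\begin{smallmatrix} 0 & D \\ 1 & 0\end{smallmatrix}\right)$, conjugate $\pi$ so its image lands in $\E^\times \subset \GL_2(\F^\Gamma)$ (possible by Lemma \ref{splitting field for irred pi} applied over $K = \F^\Gamma$, using that $\im\pi$ is semisimple and the model is defined over $\F^\Gamma$), and then $Z = \E$ as the self-centralizing image; since $\E$ is commutative of dimension $2$ over $\F^\Gamma$ and equals its own centralizer in $M_2(\F^\Gamma)$, this gives the displayed form of $Z$, with the isomorphism $Z \cong \E$ sending $\left(\begin{smallmatrix}\alpha & \beta D \\ \beta & \alpha\end{smallmatrix}\right)$ to $\alpha + \beta\sqrt D$.

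Finally I would check mutual exclusivity: cases (1), (2), (3) are distinguished by the pair (``$\brho_F|_{H_0}$ absolutely irreducible'' / ``irreducible over $\F$ but not absolutely'' / ``reducible over $\F$''), which is a genuine trichotomy, and each forces the stated shape of $Z$, so exactly one holds. The main obstacle I anticipate is the non-absolutely-irreducible case (2): pinning down precisely which field ($\F$ versus $\F^\Gamma$) the relevant objects live over, and in particular reconciling the claim $\F = \F^\Gamma$ with the splitting-field argument — the resolution presumably hinges on the fact that in case (2) $\brho_F|_{H_0}$ is required to be irreducible \emph{over} $\F$, which is only compatible with $\pi$ having splitting field $\E$ when $\E \not\subseteq \F$, i.e. when $\F = \F^\Gamma$ and $\brho_F|_{H_0} \cong \pi \otimes_{\F^\Gamma}\F$ stays irreducible because $\F$ misses $\sqrt D$. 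Verifying the $H_0$-regularity hypothesis is used exactly where it is needed, namely to exclude the degenerate possibility that $\brho_F|_{H_0}$ is scalar (which would otherwise sit ambiguously between cases), so I would make that exclusion explicit at the start.
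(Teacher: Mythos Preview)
Your overall architecture matches the paper's: trichotomize on the irreducibility status of $\brho_F|_{H_0}$, use Clifford for semisimplicity, Schur for the centralizer, and $H_0$-regularity to rule out the scalar possibility in the reducible case. Cases (1) and (3) are handled correctly and essentially as in the paper.

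In case (2) your unease is justified, and the missing ingredient is Proposition~\ref{Gamma is a 2 group}. Since $\Gamma$ is a $2$-group, $[\F:\F^\Gamma]$ is a power of $2$; hence either $\F = \F^\Gamma$ or else $\F^\Gamma \subsetneq \E \subseteq \F$. In the latter case $\pi$ (and therefore $\brho_F|_{H_0}\cong\pi\otimes_{\F^\Gamma}\F$) would already split over $\F$, contradicting the case-(2) hypothesis. So $\F = \F^\Gamma$. Your final paragraph reaches this conclusion, but the ``i.e.'' in ``$\E\not\subseteq\F$, i.e.\ when $\F=\F^\Gamma$'' is unjustified without the $2$-group fact; a priori one could have $\F\neq\F^\Gamma$ of odd degree and still $\E\not\subseteq\F$.

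For the shape of $Z$ in case (2) the paper takes a slightly different route: rather than conjugating all of $\im\pi$ into the regular embedding of $\E^\times$, it argues from Schur that $Z$ is a division algebra over $\F$ of $\F$-dimension $2$, hence $Z\cong\E$, then picks a single $\textbf{x}\in Z$ with $\textbf{x}^2=D$ and applies Lemma~\ref{splitting field for irred pi} with $\mathcal{S}=\{\textbf{x}\}$ and $K=\F$ to conjugate $\textbf{x}$ to $\bigl(\begin{smallmatrix}0&D\\1&0\end{smallmatrix}\bigr)$ over $\F$. Your alternative (conjugate $\pi$ into $\E^\times\subset\GL_2(\F^\Gamma)$) is sound in principle, but Lemma~\ref{splitting field for irred pi} as stated does not directly furnish that conjugation---it produces a conjugator over the field already containing the diagonalized image---so that step would need its own short argument.
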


\begin{proof}
It is clear that exactly one of the three cases must happen.  

To see that $\F = \F^\Gamma$ in case 2, recall that $\E$ is the unique quadratic extension of $\F^\Gamma$.  Since $\Gamma$ is a $2$-group by Lemma \ref{Gamma is a 2 group}, if $\F \neq \F^\Gamma$ then we must have $\F^\Gamma \subsetneq \E \subseteq \F$.  In case 2 this is impossible since $\E$ is the splitting field of $\pi$ so $\E \subseteq \F$ implies that $\pi$ is reducible over $\F$.  But $\brho_F|_{H_0}$ is isomorphic to $\pi$ over $\F$ and in case 2 we have assumed that $\brho_F|_{H_0}$ is irreducible over $\F$.  Hence we must have $\F = \F^\Gamma$.

Note that when $\pi$ is irreducible over $\F$, it follows from Schur's Lemma that $Z$ is a division algebra over $\F$.  As $Z \subseteq M_2(\F)$ we have $1 \leq \dim_\F Z \leq \dim_\F M_2(\F) = 4$.   Furthermore, 
\[
\dim_Z \pi \cdot \dim_\F Z = \dim_\F \pi = 2,
\]
so $\dim_\F Z \leq 2$.  Having $\dim_\F Z = 1$ is equivalent to $\pi$ being absolutely irreducible, which gives case 1.  

In case 2 we have that $Z$ is a quadratic division algebra over $\F$, which is necessarily isomorphic to $\E$.  Hence there is some matrix $\textbf{x} \in Z$ such that $\textbf{x}^2$ is the scalar matrix with $D$ on the diagonal.  Note that both $\textbf{x}$ and $\bigl(\begin{smallmatrix} 
0 & D\\
1 & 0
\end{smallmatrix}\bigr)$ are conjugate to the scalar matrix $\sqrt{D}$ over $\E$.  Hence $\textbf{x}$ is conjugate to $\bigl(\begin{smallmatrix} 
0 & D\\
1 & 0
\end{smallmatrix}\bigr)$ over $\overline{\F}$.  Applying Lemma \ref{splitting field for irred pi} with $S = \{\textbf{x}\}$ and $K = \F$, we see that $\textbf{x}$ is conjugate to $\bigl(\begin{smallmatrix} 
0 & D\\
1 & 0
\end{smallmatrix}\bigr)$ over $\F$, say by a matrix $\textbf{z}$.  Thus by replacing $\pi$ with $\textbf{z}\pi \textbf{z}^{-1}$ we may assume that 
\[
Z = \F\begin{pmatrix} 
1 & 0\\
0 & 1
\end{pmatrix} \oplus \F \begin{pmatrix} 
0 & D\\
1 & 0
\end{pmatrix} = \left\{\begin{pmatrix} 
\alpha & \beta D\\
\beta & \alpha
\end{pmatrix} : \alpha, \beta \in \F \right\}.
\]

Note that $\textbf{y} \in Z$ if and only if $\textbf{x}^{-1}\textbf{yx}$ is in the centralizer of $\brho_F|_{H_0}$ in $\GL_2(\F)$.  If $\brho_F|_{H_0}$ is reducible then its centralizer is either the diagonal matrices of $M_2(\F)$ (in the case when $\brho_F|_{H_0}$ is the sum of two distinct characters) or all of $M_2(\F)$.  Since $\brho_F$ is $H_0$-regular by hypothesis, if $\brho_F|_{H_0}$ is reducible then it must be the sum of two distinct characters.  By conjugating $\brho_F|_{H_0}$ so that it is diagonal we may assume that $Z$ consists of diagonal matrices, so $Z \cong \F \oplus \F$.
\end{proof}

Recall that since $\brho_F$ is absolutely irreducible, for any $\sigma \in \Gamma$ we have $\rho_F^\sigma \cong \eta_\sigma \otimes \rho_F$.  That is, there is some $\textbf{t}_\sigma \in \GL_2(\I')$ such that
\[
\rho_F(g)^\sigma = \eta_\sigma(g)\textbf{t}_\sigma \rho_F(g) \textbf{t}_\sigma^{-1}
\]
for all $g \in G_\Q$.  Then for all $\sigma, \tau \in \Gamma, g \in G_\Q$ we have
\[
\eta_{\sigma\tau}(g)\textbf{t}_{\sigma\tau}\rho_F(g)\textbf{t}_{\sigma\tau}^{-1} = \rho(g)^{\sigma\tau} = \eta_\sigma^\tau(g)\eta_\tau(g)\textbf{t}_\sigma^\tau \textbf{t}_\tau\rho_F(g)\textbf{t}_\tau^{-1}\textbf{t}_\sigma^{-\tau}.
\]
Using the fact that $\eta_{\sigma\tau} = \eta_\sigma^\tau \eta_\tau$ we see that $c(\sigma, \tau) := \textbf{t}_{\sigma\tau}^{-1}\textbf{t}_\sigma^{\tau}\textbf{t}_\tau$ commutes with the image of $\rho_F$.  As $\rho_F$ is absolutely irreducible, $c(\sigma, \tau)$ must be a scalar.  Hence $c$ represents a $2$-cocycle of $\Gamma$ with values in $\I'^\times$.

We will need to treat case 2 from Proposition \ref{residual cases} a bit differently, so we establish notation that will unify the proofs that follow.  For a finite extension $M$ of $\Q_p$, let $\OK_M$ denote the ring of integers of $M$.  Let $K$ be the largest finite extension of $\Q_p$ for which $\OK_K[[T]]$ is contained in $\I'$.  So $K$ has residue field $\F$.  Let $L$ be the unique unramified quadratic extension of $K$.  Write $\J = \Lambda_{\OK_L}[\{a(\ell, F) : \ell \nmid N\}]$.  Note that the residue field of $\J$ is the unique quadratic extension of $\F$.  Let
\[
A = \begin{cases}
\I' & \text{not in case 2}\\
\J & \text{in case 2}.
\end{cases}
\]
Let $\kappa$ be the residue field of $A$, so $\kappa = \E$ in case 2 and $\kappa = \F$ otherwise.  

Since $L$ is obtained from $K$ by adjoining some prime-to-$p$ root of unity, in case 2 it follows that $Q(A)$ is Galois over $Q(\I_0)$ with Galois group isomorphic to $\Gamma \times \Z/2\Z$.  In particular, we have an action of $\Gamma$ on $A$ in all cases.  Let $B = A^\Gamma$.  In case 2, $B$ is a quadratic extension of $A^\Gamma$ and $B \cap \I' = \I_0$.  Otherwise $B = \I_0$.  We may consider the $2$-cocycle $c$ in $H^2(\Gamma, A^\times)$.  

\begin{lemma}\label{c vanishes in H2}
With notation as above, $[c] = 0 \in H^2(\Gamma, A^\times)$.  Thus there is a function $\zeta : \Gamma \to A^\times$ such that $c(\sigma, \tau) = \zeta(\sigma\tau)^{-1}\zeta(\sigma)\zeta(\tau)$ for all $\sigma, \tau \in \Gamma$.
\end{lemma}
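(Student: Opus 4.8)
The plan is to show that the intertwiners $\textbf{t}_\sigma$ realizing $\rho_F^\sigma \cong \eta_\sigma \otimes \rho_F$ may be chosen \emph{simultaneously upper triangular} in the basis furnished by Theorem \ref{making rho}; once this is done, $c$ is visibly a coboundary. Everything can be carried out over $\I'$ itself, since $\I' \subseteq A$ compatibly with the $\Gamma$-actions, so $[c]=0$ in $H^2(\Gamma,\I'^\times)$ will imply $[c]=0$ in $H^2(\Gamma,A^\times)$.

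First I would normalize the $\textbf{t}_\sigma$ at $p$. Restricting the isomorphism to $D_p \cap H_0$ and using $H_0 \subseteq \ker\eta_\sigma$ gives $\rho_F^\sigma(g) = \textbf{t}_\sigma\rho_F(g)\textbf{t}_\sigma^{-1}$ for all $g \in D_p\cap H_0$. Writing $\rho_F|_{D_p} = \bigl(\begin{smallmatrix}\varepsilon & u\\ 0 & \delta\end{smallmatrix}\bigr)$ by Theorem \ref{making rho}(2), the conjugate $\rho_F^\sigma|_{D_p}$ is again upper triangular with diagonal $(\varepsilon^\sigma,\delta^\sigma)$, and comparing characteristic polynomials on $D_p\cap H_0$ gives the multiset equality $\{\varepsilon,\delta\} = \{\varepsilon^\sigma,\delta^\sigma\}$ there. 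Now $\delta$ (hence $\delta^\sigma$) is unramified, whereas $\varepsilon|_{I_p} = \det\rho_F|_{I_p}$ is of infinite order, so $\varepsilon$ and $\varepsilon^\sigma$ are both ramified on the finite-index subgroup $I_p\cap H_0$. Matching the ramified members of the two multisets forces $\varepsilon^\sigma = \varepsilon$ and $\delta^\sigma = \delta$ as characters of $D_p\cap H_0$.

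Next I would use $H_0$-regularity to pick $g_0 \in D_p\cap H_0$ with $\bvarepsilon(g_0)\ne\bdelta(g_0)$, so that $\varepsilon(g_0)-\delta(g_0)\in\I'^\times$. Both $\rho_F(g_0)$ and $\rho_F^\sigma(g_0)$ are upper triangular with diagonal $(\varepsilon(g_0),\delta(g_0))$ (using the previous paragraph), and the unit gap between the diagonal entries makes the $\varepsilon(g_0)$-eigenspace in $\I'^2$ equal to the free rank-one summand $\I' e_1$ — genuinely over $\I'$, not merely over $Q(\I')$ — in each case. Since $\textbf{t}_\sigma$ conjugates $\rho_F(g_0)$ to $\rho_F^\sigma(g_0)$, it carries this eigenspace onto itself, so $\textbf{t}_\sigma(\I' e_1) = \I' e_1$ and $\textbf{t}_\sigma = \bigl(\begin{smallmatrix}a_\sigma & b_\sigma\\ 0 & d_\sigma\end{smallmatrix}\bigr)$ with $a_\sigma,d_\sigma\in\I'^\times$. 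Reading off the $(1,1)$-entry of the scalar $c(\sigma,\tau)=\textbf{t}_{\sigma\tau}^{-1}\textbf{t}_\sigma^\tau\textbf{t}_\tau$ then gives $c(\sigma,\tau) = a_{\sigma\tau}^{-1}a_\sigma^\tau a_\tau$, so $c$ is the coboundary of $\zeta(\sigma):=a_\sigma$ and $[c]=0\in H^2(\Gamma,A^\times)$.

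The main obstacle is precisely securing this simultaneous upper-triangularity, and it is exactly here that $H_0$-regularity enters, in two ways: it produces $g_0$ with a \emph{unit} eigenvalue gap, which is what makes the eigenline splitting valid over the ring $\I'$ (and the $\varepsilon(g_0)$-eigenline unique there rather than just over the fraction field); and, through the ramification comparison, it rules out the ``swap'' possibility $\varepsilon^\sigma = \delta$ on $D_p\cap H_0$, which would make $\textbf{t}_\sigma$ anti-diagonal and wreck the coboundary computation. Morally this is the $\I$-adic shadow of the input of Brown--Ghate \cite{BG} that the relevant Brauer obstruction vanishes $p$-adically for $p$-ordinary forms; here it is forced directly by the ordinary, $H_0$-regular shape of $\rho_F|_{D_p}$.
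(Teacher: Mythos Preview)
Your proof is correct and takes a genuinely different route from the paper's. The paper argues cohomologically: it uses the short exact sequence $1\to 1+\m_A\to A^\times\to\kappa^\times\to 1$ together with the fact that $\Gamma$ is a $2$-group (Proposition~\ref{Gamma is a 2 group}) and $1+\m_A$ is pro-$p$ to reduce to showing $[\bar c]=0$ in $H^2(\Gamma,\kappa^\times)$; it then splits into the three cases of Proposition~\ref{residual cases}, using the $\F^\Gamma$-model $\pi$ of $\brho_F|_{H_0}$ to trivialize $\bar c$ case by case. You instead prove directly over $\I'$ that every $\textbf{t}_\sigma$ is upper triangular---via the eigenline of a single $H_0$-regular element $g_0\in D_p\cap H_0$, after ruling out the swap $\varepsilon^\sigma=\delta$ by ramification---so that the scalar $c(\sigma,\tau)$ is the $(1,1)$-entry $a_{\sigma\tau}^{-1}a_\sigma^\tau a_\tau$, visibly a coboundary. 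Your argument is shorter, avoids both the case analysis and the auxiliary extension to $\J$ in case~2, and does not invoke Zhao's indecomposability; the paper's route, on the other hand, isolates the structural reason (the $2$-group versus pro-$p$ orthogonality) why only the residual obstruction could matter. Note also that the paper establishes the upper-triangularity of $\textbf{t}_\sigma$ only later, in the proof of Lemma~\ref{new action of Gamma}, and by a different mechanism (indecomposability of $\rho_F|_{D_p}$ forcing $u/(\varepsilon-\delta)$ nonconstant); your regular-element argument would serve equally well there. One small point of care: when you cite Theorem~\ref{making rho}(2), be aware that this lemma sits inside the proof of that theorem, but there is no circularity since the upper-triangular shape of $\rho_F|_{D_p}$ over $\I'$ is secured at the outset of Section~\ref{same basis}, prior to and independently of the present lemma.
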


\begin{proof}
Consider the exact sequence $1 \to 1 + \m_A \to A^\times \to \kappa^\times \to 1$.  Note that for $j > 0$ we have $H^j(\Gamma, 1 + \m_A) = 0$ since $1 + \m_A$ is a $p$-profinite group for $p > 2$ and $\Gamma$ is a $2$-group by Lemma \ref{Gamma is a 2 group}.  Thus the long exact sequence in cohomology gives isomorphisms
\[
H^j(\Gamma, A^\times) \cong H^j(\Gamma, \kappa^\times)
\]
for all $j > 0$.  Hence it suffices to prove that $[\overline{c}] = 0 \in H^2(\Gamma, \kappa^\times)$.  

Let $\textbf{x} \in \GL_2(\F)$ such that $\pi = \textbf{x}\brho_F|_{H_0} \textbf{x}^{-1}$.  Let $\sigma \in \Gamma$ and $h \in H_0$.  Since $\rho_F^\sigma(h) = \eta_\sigma(h)\textbf{t}_\sigma\rho_F(h)\textbf{t}_\sigma^{-1}$ and $\eta_\sigma(h) = 1$ it follows that $\textbf{x}^\sigma \overline{\textbf{t}}_\sigma \textbf{x}^{-1} \in Z$.

We now split into the cases outlined in Proposition \ref{residual cases}.  Suppose we are in case 1, so $\pi$ is absolutely irreducible.  Then $\textbf{x}^\sigma\overline{\textbf{t}}_\sigma \textbf{x}^{-1}$ must be a scalar in $\F^\times$.  Call it $\bzeta(\sigma)$.  Thus $\overline{\textbf{t}}_\sigma = \bzeta(\sigma)\textbf{x}^{-\sigma}\textbf{x}$.  From this we compute that $\overline{c}(\sigma, \tau) = \bzeta(\sigma\tau)^{-1}\bzeta(\sigma)^\tau\bzeta(\tau)$.  Thus $[\overline{c}] = 0 \in H^2(\Gamma, \F^\times)$.

In case 2, using the description of $Z$ from Proposition \ref{residual cases} we see that $\textbf{x}^\sigma\overline{\textbf{t}}_\sigma \textbf{x}^{-1} = \bigl(\begin{smallmatrix} 
\alpha_\sigma & \beta_\sigma D\\
\beta_\sigma & \alpha_\sigma
\end{smallmatrix}\bigr)$ for some $\alpha_\sigma, \beta_\sigma \in \F$.
This becomes a scalar, say $\bzeta(\sigma) = \alpha_\sigma + \beta_\sigma\sqrt{D}$, over $\E = \kappa$.  Thus $\overline{\textbf{t}}_\sigma = \bzeta(\sigma)\textbf{x}^{-\sigma}\textbf{x}$.  From this we compute that $\overline{c}(\sigma, \tau) = \bzeta(\sigma\tau)^{-1}\bzeta(\sigma)^\tau\bzeta(\tau)$.  Thus $[\overline{c}] = 0 \in H^2(\Gamma, \kappa^\times)$.    

Finally, in case 3 we have that $\textbf{x}^\sigma\overline{\textbf{t}}_\sigma \textbf{x}^{-1}$ is a diagonal matrix.  The diagonal map $\F \hookrightarrow \F \oplus \F$ induces an injection $H^2(\Gamma, \F^\times) \hookrightarrow H^2(\Gamma, \F^\times \oplus \F^\times)$.  The fact that $\textbf{x}^\sigma\overline{\textbf{t}}_\sigma \textbf{x}^{-1}$ is a diagonal matrix allows us to calculate that the image of $[\overline{c}]$ in $H^2(\Gamma, \F^\times \oplus \F^\times)$ is $0$.  Since the map is an injection, it follows that $[\overline{c}] = 0 \in H^2(\Gamma, \F^\times)$, as desired.
\end{proof}

Replace $\textbf{t}_\sigma \in \GL_2(\I')$ by $\textbf{t}_\sigma\zeta(\sigma)^{-1} \in \GL_2(A)$.  Then we still have $\rho_F^\sigma = \eta_\sigma \textbf{t}_\sigma \rho_F \textbf{t}_\sigma^{-1}$, and now $\textbf{t}_{\sigma\tau} = \textbf{t}_\sigma^\tau \textbf{t}_\tau$.  That is, $\sigma \mapsto \textbf{t}_\sigma$ is a nonabelian $1$-cocycle with values in $\GL_2(A)$.  Since $F$ is primitive we have $Q(\I) = Q(\I')$.  Thus by Theorem 4.3.2 in \cite{GME} we see that $\rho_F|_{D_p}$ is isomorphic to an upper triangular representation over $Q(\I')$.  Under the assumptions that $\brho_F$ is absolutely irreducible and $H_0$-regular, the proof of Theorem 4.3.2 in \cite{GME} goes through with $\I'$ in place of $\I$.  That is, $\rho_F|_{D_p}$ is isomorphic to an upper triangular representation over $\I'$.  Let $V = \I'^2$ be the representation space for $\rho_F$ with basis chosen such that 
\[
\rho_F|_{D_p} = \begin{pmatrix} 
\varepsilon & u\\
0 & \delta
\end{pmatrix},
\]
and assume $\bvarepsilon \neq \bdelta$.  Let $V[\varepsilon] \subset V$ be the free direct summand of $V$ on which $D_p$ acts by $\varepsilon$ and $V[\delta]$ be the quotient of $V$ on which $D_p$ acts by $\delta$.  Let $V_A = V \otimes_{\I'} A$.  Similarly for $\lambda \in \{\varepsilon, \delta\}$ let  $V_A[\lambda] := V[\lambda] \otimes_{\I'} A$.  For $\textbf{v} \in V_A$, define
\begin{equation}\label{define action}
\textbf{v}^{[\sigma]} := \textbf{t}_\sigma^{-1}\textbf{v}^\sigma,
\end{equation}
where $\sigma$ acts on $\textbf{v}$ component-wise.  Note that in case 2 we are using the action of $\Gamma$ on $A$ described prior to Lemma \ref{c vanishes in H2}.  

\begin{lemma}\label{new action of Gamma}
For all $\sigma, \tau \in \Gamma$ we have $(\textbf{v}^{[\sigma]})^{[\tau]} = \textbf{v}^{[\sigma\tau]}$, so this defines an action of $\Gamma$ on $V_A$.  Furthermore, this action stabilizes $V_A[\varepsilon]$ and $V_A[\delta]$.    
\end{lemma}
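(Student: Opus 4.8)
The proof splits along the two assertions. The relation $(\textbf{v}^{[\sigma]})^{[\tau]}=\textbf{v}^{[\sigma\tau]}$ is a formal consequence of the cocycle identity $\textbf{t}_{\sigma\tau}=\textbf{t}_\sigma^{\tau}\textbf{t}_\tau$ recorded just before the lemma: unwinding \eqref{define action} and using that applying $\tau$ to the entries of a product of a matrix and a vector is the product of the entrywise images,
\[
(\textbf{v}^{[\sigma]})^{[\tau]}=\textbf{t}_\tau^{-1}\bigl(\textbf{t}_\sigma^{-1}\textbf{v}^{\sigma}\bigr)^{\tau}=\textbf{t}_\tau^{-1}(\textbf{t}_\sigma^{\tau})^{-1}\textbf{v}^{\sigma\tau}=(\textbf{t}_\sigma^{\tau}\textbf{t}_\tau)^{-1}\textbf{v}^{\sigma\tau}=\textbf{t}_{\sigma\tau}^{-1}\textbf{v}^{\sigma\tau}=\textbf{v}^{[\sigma\tau]}.
\]
Taking $\sigma=\tau=1$ in the cocycle identity forces $\textbf{t}_1=1$, so $\textbf{v}^{[1]}=\textbf{v}$; and each $\textbf{v}\mapsto\textbf{v}^{[\sigma]}$ is a bijection of $V_A$, being the composite of the coordinatewise (additive, $\sigma$-semilinear) action of $\sigma$ with left multiplication by $\textbf{t}_\sigma^{-1}\in\GL_2(A)$. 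Together with the fact that $\Gamma$ is abelian, this gives a genuine action of $\Gamma$ on $V_A$.

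For the stability statement, fix the basis $e_1,e_2$ witnessing $\rho_F|_{D_p}=\bigl(\begin{smallmatrix}\varepsilon & u\\ 0 & \delta\end{smallmatrix}\bigr)$, so $V_A[\varepsilon]=Ae_1$ and $V_A[\delta]=V_A/Ae_1$. The crux is to show $\textbf{t}_\sigma^{-1}$ is upper triangular, i.e. preserves the line $Ae_1$. Restricting $\rho_F^{\sigma}=\eta_\sigma\textbf{t}_\sigma\rho_F\textbf{t}_\sigma^{-1}$ to $D_p$ shows that $\textbf{t}_\sigma^{-1}$ intertwines $\rho_F^{\sigma}|_{D_p}$ with $(\eta_\sigma\rho_F)|_{D_p}$. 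Both of these are upper triangular, hence reducible, and both are indecomposable: by Zhao's theorem \cite{Zhao} the representation $\rho_F|_{D_p}$ is indecomposable over $Q(\I)=Q(\I')$, indecomposability of a reducible two-dimensional representation persists over the finite extension $Q(A)$ (the defining extension class lives in an $H^1(D_p,-)$ that commutes with this base change, and it stays nonzero since $\bvarepsilon\neq\bdelta$), and it is plainly preserved under twisting by a character and under applying $\sigma$ to the entries. An indecomposable reducible two-dimensional representation has a unique stable line, so both $\rho_F^{\sigma}|_{D_p}$ and $(\eta_\sigma\rho_F)|_{D_p}$ have unique stable $Q(A)$-line equal to $Q(A)e_1$; since $\textbf{t}_\sigma^{-1}$ carries the stable line of the source onto that of the target it fixes $Q(A)e_1$, and as $\textbf{t}_\sigma^{-1}$ has entries in $A$ this forces $\textbf{t}_\sigma^{-1}e_1\in Ae_1$. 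Consequently, for $\textbf{v}\in V_A[\varepsilon]=Ae_1$ we have $\textbf{v}^{\sigma}\in Ae_1$ and hence $\textbf{v}^{[\sigma]}=\textbf{t}_\sigma^{-1}\textbf{v}^{\sigma}\in Ae_1=V_A[\varepsilon]$; thus $\Gamma$ stabilizes $V_A[\varepsilon]$, and since $V_A[\delta]=V_A/V_A[\varepsilon]$ the action descends to $V_A[\delta]$.

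The one real obstacle is excluding the scenario in which $\textbf{t}_\sigma$ swaps the two Jordan--H\"older characters of $\rho_F|_{D_p}$, which would make it antidiagonal rather than upper triangular: the bijection of composition factors coming from $\rho_F^{\sigma}|_{D_p}\cong(\eta_\sigma\rho_F)|_{D_p}$ alone only gives the equality of multisets $\{\sigma\!\circ\!\varepsilon,\sigma\!\circ\!\delta\}=\{\eta_\sigma\varepsilon,\eta_\sigma\delta\}$, and it is precisely Zhao's indecomposability that rules the swap out. A subsidiary point needing care is the base change of indecomposability to $Q(A)$ in case~2, where $A=\J$ and $Q(A)$ is a genuine quadratic extension of $Q(\I')$; there I would check explicitly that the relevant continuous $H^1(D_p,-)$ commutes with this finite flat extension of coefficients. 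Finally, one uses---as already noted in the text---that replacing $\textbf{t}_\sigma$ by $\textbf{t}_\sigma\zeta(\sigma)^{-1}$ preserves the relation $\rho_F^{\sigma}=\eta_\sigma\textbf{t}_\sigma\rho_F\textbf{t}_\sigma^{-1}$ because the correction is scalar, so the normalized cocycle $\textbf{t}_\bullet$ may be used throughout.
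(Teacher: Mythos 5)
Your proof is correct and follows essentially the same path as the paper's: verify the cocycle relation for the twisted action, then use Zhao's indecomposability of $\rho_F|_{D_p}$ to force $\textbf{t}_\sigma$ to be upper triangular, which yields stability of $V_A[\varepsilon]$ (and hence of the quotient $V_A[\delta]$). The one presentational difference is in how indecomposability is deployed: you argue via uniqueness of the $D_p$-stable line, while the paper instead records that $u/(\varepsilon-\delta)$ is nonconstant and reads off from equation \eqref{epsilons and deltas} that the lower-left entry of $\textbf{t}_\sigma$ must vanish. These are equivalent reformulations of the same fact, but the paper's version has a minor advantage: since the ratio $u/(\varepsilon-\delta)$ lives in $Q(\I')$ and is visibly unchanged by extending scalars to $Q(A)$, the base-change-of-indecomposability point you flag as a loose end (the case $A=\J$) never arises. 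If you prefer the stable-line formulation, the base change is most quickly handled not by an $H^1$ comparison but by the direct observation that a diagonalizing matrix over $Q(A)$ would force $u/(\varepsilon-\delta)$ to equal a constant in $Q(A)$, which by evaluating at any $d$ with $\varepsilon(d)\neq\delta(d)$ already lies in $Q(\I')$, contradicting indecomposability over $Q(\I')$.
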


\begin{proof}
The forumula \eqref{define action} defines an action since $\sigma \mapsto \textbf{t}_\sigma$ is a nonabelian $1$-cocycle.  Let $\lambda$ be either $\delta$ or $\varepsilon$.  Let $\textbf{v} \in V_A[\lambda]$ and $\sigma \in \Gamma$.  We must show that $\textbf{v}^{[\sigma]} \in V_A[\lambda]$.  Let $d \in D_p$.  Using the fact that $\textbf{v} \in V_A[\lambda]$ and $\rho_F^\sigma = \eta_\sigma \textbf{t}_\sigma \rho_F \textbf{t}_\sigma^{-1}$ we find that
\[
\rho_F(d)\textbf{v}^{[\sigma]} = \eta_\sigma^{-1}(d)\lambda^\sigma(d)\textbf{v}^{[\sigma]}.
\]
Note that for all $d \in D_p$
\begin{equation}\label{epsilons and deltas}
\begin{pmatrix} 
\varepsilon^\sigma(d) & u^\sigma(d)\\
0 & \delta^\sigma(d)
\end{pmatrix} = \rho_F^\sigma(d) = \eta_\sigma(d)\textbf{t}_\sigma \rho_F(d) \textbf{t}_\sigma^{-1} = \eta_\sigma(d)\textbf{t}_\sigma \begin{pmatrix} 
\varepsilon(d) & u(d)\\
0 & \delta(d)
\end{pmatrix}\textbf{t}_\sigma^{-1}. 
\end{equation}
Using the fact that $\varepsilon \neq \delta$ and that $\rho_F|_{D_p}$ is indecomposable \cite{Zhao} we see that $u/(\varepsilon - \delta)$ cannot be a constant.  (If $u/(\varepsilon - \delta) = \alpha$ is a constant, then conjugating by $\bigl(\begin{smallmatrix} 
1 & \alpha\\
0 & 1
\end{smallmatrix}\bigr)$ makes $\rho_F|_{D_p}$ diagonal.)  Hence $\textbf{t}_\sigma$ must be upper triangular.  Therefore \eqref{epsilons and deltas} implies that $\lambda^\sigma(d) = \eta_\sigma(d)\lambda(d)$, and thus 
\[
\rho_F(d)\textbf{v}^{[\sigma]} = \eta_\sigma^{-1}(d)\lambda^\sigma(d)\textbf{v}^{[\sigma]} = \lambda(d)\textbf{v}^{[\sigma]}.
\]
\end{proof}

We are now ready to show that $\rho_F|_{H_0}$ takes values in $\GL_2(\I_0)$.

\begin{theorem}\label{restricting to H}
Let $\rho_F : G_\Q \to \GL_2(\I')$ such that $\rho_F|_{D_p}$ is upper triangular.  Assume that $\brho_F$ is absolutely irreducible and $H_0$-regular.  Then $\rho_F|_{H_0}$ takes values in $\GL_2(\I_0)$. 
\end{theorem}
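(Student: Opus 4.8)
The plan is to restrict the twist relation to $H_0$ and then descend along $\Gamma$. For $\sigma \in \Gamma$ we have $\rho_F^\sigma = \eta_\sigma \textbf{t}_\sigma \rho_F \textbf{t}_\sigma^{-1}$ with $\textbf{t}_\sigma \in \GL_2(A)$ the normalized cocycle of Lemma \ref{c vanishes in H2}, and since $\eta_\sigma$ is trivial on $H_0$ this gives $\rho_F(h)^\sigma = \textbf{t}_\sigma \rho_F(h) \textbf{t}_\sigma^{-1}$ for all $h \in H_0$. In terms of the semilinear $\Gamma$-action $\textbf{v} \mapsto \textbf{v}^{[\sigma]} = \textbf{t}_\sigma^{-1}\textbf{v}^\sigma$ of \eqref{define action} on $V_A$, this says precisely that each endomorphism $\rho_F(h)$ with $h \in H_0$ commutes with the $[\,\cdot\,]$-action. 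So I would pass to the $B$-module $V_A^{[\Gamma]}$ of $[\Gamma]$-invariants, on which $\rho_F(H_0)$ then acts $B$-linearly, choose a basis of it adapted to the $D_p$-stable line, and read off the theorem.

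For the descent: since $p > 2$ and $\Gamma$ is a finite $2$-group by Proposition \ref{Gamma is a 2 group}, $|\Gamma|$ is a unit in $A$, so $\frac{1}{|\Gamma|}\sum_{\sigma \in \Gamma}[\sigma]$ is an idempotent $B$-linear projection of $V_A$ onto $V_A^{[\Gamma]}$; hence $V_A^{[\Gamma]}$ is a $B$-direct summand of $V_A$, and by Galois descent for $A/B$ the natural map $V_A^{[\Gamma]} \otimes_B A \to V_A$ is an isomorphism. Since $A$ is module-finite over $\Lambda \subseteq B$, the module $V_A$ is finitely generated over $B$, so $V_A^{[\Gamma]}$ is finitely generated projective over $B$; and since $\I_0 = (\I')^\Gamma$ is local (because $\I'$ is) and $B = A^\Gamma$ is local (because $A$ is), it follows that $V_A^{[\Gamma]}$ is free over $B$ of rank $2$. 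By Lemma \ref{new action of Gamma} the $[\Gamma]$-action stabilizes the $A$-direct summand $V_A[\varepsilon]$, so $V_A[\varepsilon]^{[\Gamma]}$ is a free rank-$1$ $B$-direct summand of $V_A^{[\Gamma]}$; choosing a $B$-basis $\{\textbf{e}_1, \textbf{e}_2\}$ of $V_A^{[\Gamma]}$ with $\textbf{e}_1$ spanning $V_A[\varepsilon]^{[\Gamma]}$, this is also an $A$-basis of $V_A$ with $A\textbf{e}_1 = V_A[\varepsilon]$. In this basis $\rho_F|_{D_p}$ stays upper triangular, while $\rho_F|_{H_0}$, acting $B$-linearly on $V_A^{[\Gamma]}$, is valued in $\GL_2(B)$.

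The remaining point --- which I expect to be the main obstacle --- is to descend from $B$ down to $\I_0$ while keeping $\rho_F$ itself valued in $\GL_2(\I')$, and this is exactly what forces the three-case analysis of Proposition \ref{residual cases}. In cases 1 and 3 one has $A = \I'$ and $B = \I_0$, and the normalizing scalars $\zeta(\sigma)$ of Lemma \ref{c vanishes in H2} lie in $A^\times = (\I')^\times$, so $\textbf{t}_\sigma \in \GL_2(\I')$ and the lattice $V = \I'^2$ is itself $[\Gamma]$-stable; running the averaging argument with $V$ in place of $V_A$ produces the adapted basis inside $V$, hence it is an $\I'$-basis of $V$, $\rho_F$ remains $\GL_2(\I')$-valued, and $\rho_F|_{H_0}$ lands in $\GL_2(B) = \GL_2(\I_0)$, giving items (1)--(3) of Theorem \ref{making rho} simultaneously. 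In case 2, however, $A = \J \supsetneq \I'$ with only $B \cap \I' = \I_0$, so $V$ need not be $[\Gamma]$-stable and the descent above a priori only places $\rho_F$ in $\GL_2(\J)$; here one must additionally arrange the adapted basis of $V_A^{[\Gamma]}$ to be an $\I'$-basis of $V$, so that $\rho_F|_{H_0}$ lies in $\GL_2(\J) \cap \GL_2(\I') = \GL_2(B \cap \I') = \GL_2(\I_0)$ in one and the same basis. I would obtain this by comparing the original filtered $\I'$-basis of $V$ with the descended $\J$-basis: the two are related by an upper triangular element of $\GL_2(\J)$, and using that $\J$ is the unramified quadratic extension of $\I'$ together with the fact that the residual obstruction $\overline c$ of Lemma \ref{c vanishes in H2} dies over $\E$ but not over $\F$ --- precisely the reason case 2 is singled out --- one shows this element can be corrected to lie in $\GL_2(\I')$, which completes the proof.
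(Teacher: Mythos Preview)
Your overall strategy—pass to the semilinear $[\Gamma]$-action on $V_A$, take invariants, and read off the $\GL_2(B)$-structure—is exactly the paper's, but the step where you diverge is the one that actually carries the weight, and it does not go through as written.

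The problem is the appeal to ``Galois descent for $A/B$'' to conclude that $V_A^{[\Gamma]}\otimes_B A \to V_A$ is an isomorphism and hence that $V_A^{[\Gamma]}$ is projective over $B$. Descent of this kind requires $A$ to be faithfully flat over $B=A^\Gamma$, and there is no reason to expect this here: $\I_0\subset\I'$ are finite local extensions of $\Lambda$, but neither is assumed regular, and invariant rings under finite groups are typically not flat under the ambient ring even when $|\Gamma|$ is invertible. A concrete model of what can go wrong: take $A=k[[x,y]]$ with $\Gamma=\Z/2\Z$ acting by $(x,y)\mapsto(-x,-y)$, so $B=k[[x^2,xy,y^2]]$, and let $V=A$ with the twisted action $a^{[\sigma]}=-a^\sigma$. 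Then $V^{[\Gamma]}$ is the $(-1)$-eigenspace $xB+yB$, and the image of $V^{[\Gamma]}\otimes_B A\to A$ is the maximal ideal $(x,y)$, not all of $A$. So the averaging idempotent gives you a $B$-direct summand of $V_A$, but nothing more; $V_A$ need not be projective over $B$, and its summand $V_A^{[\Gamma]}$ need not be either.

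What the paper does instead is exactly what is needed to rescue your approach. Rather than invoke descent, it works with the rank-one pieces $V_A[\varepsilon]$ and $V_A[\delta]$ separately, uses $H^1(\Gamma,\,\cdot\,)=0$ to keep the invariant sequence exact, and then proves directly that $V_\kappa[\bar\lambda]^\Gamma$ is one-dimensional over $\kappa^\Gamma$ by an explicit calculation with Artin's theorem on linear independence of characters. Nakayama then gives that each $V_A[\lambda]^\Gamma$ is free of rank one over $B$. Note that this residue-field computation is precisely what shows there is a $[\Gamma]$-invariant vector in $V_A[\lambda]$ that is a \emph{unit}—equivalently, that the $1$-cocycle $\sigma\mapsto\textbf{t}_\sigma$ restricted to $V_A[\lambda]$ is a coboundary in $A^\times$ and not merely in $Q(A)^\times$—which is exactly the missing input your descent argument would need. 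So the step you treat as formal is the substantive content of the proof.

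Finally, your case~2 paragraph is too vague to be a proof (``one shows this element can be corrected''). The paper handles case~2 more simply: once the $B$-basis is in hand, one observes that $\rho_F$ was already $\GL_2(\I')$-valued and intersects, landing in $\GL_2(B\cap\I')=\GL_2(\I_0)$.
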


\begin{proof}
We have an exact sequence of $A[D_p]$-modules
\begin{equation}\label{exact seq}
0 \to V_A[\varepsilon] \to V_A \to V_A[\delta] \to 0
\end{equation}
that is stable under the new action of $\Gamma$ defined in Lemma \ref{new action of Gamma}.  Tensoring with $\kappa$ over $A$ we get an exact sequence of $\kappa$-vector spaces
\begin{equation}\label{residual exact sequence}
V_\kappa[\bvarepsilon] \to V_\kappa \to V_\kappa[\bdelta] \to 0. 
\end{equation}
Since $V_A[\varepsilon]$ is a direct summand of $V_A$, the first arrow is injective.  Since $V_A[\varepsilon]$ and $V_A$ are free $A$-modules, it follows that $\dim_\kappa V_\kappa[\bvarepsilon] = 1$ and $\dim_\kappa V_\kappa = 2$.  Counting dimensions in \eqref{residual exact sequence} now tells us that $\dim_\kappa V_\kappa[\bdelta] = 1$.  

Going back to the exact sequence \eqref{exact seq} we can take $\Gamma$-invariants since all of the modules are stable under the new action of $\Gamma$.  This gives an exact sequence of $B[D_p \cap H_0]$-modules
\[
0 \to V_A[\varepsilon]^\Gamma \to V_A^\Gamma \to V_A[\delta]^\Gamma \to H^1(\Gamma, V_A[\varepsilon]).
\]
Since $\Gamma$ is a $2$-group by Lemma \ref{Gamma is a 2 group} and $V_A[\varepsilon] \cong A$ is $p$-profinite, we find that $H^1(\Gamma, V_A[\varepsilon]) = 0$.  Tensoring with $\kappa^\Gamma$ over $B$ we get an exact sequence
\[
V_A[\varepsilon]^\Gamma \otimes_B \kappa^\Gamma \to V_A^\Gamma \otimes_B \kappa^\Gamma \to V_A[\delta]^\Gamma \otimes_B \kappa^\Gamma \to 0.
\]

If $\dim_{\kappa^\Gamma} V_A[\lambda]^\Gamma \otimes_B \kappa^\Gamma = 1$ for $\lambda \in \{\varepsilon, \delta\}$, then it follows from Nakayama's Lemma that $V_A[\lambda]^\Gamma$ is a free $B$-module of rank 1.  Hence $V_A^\Gamma$ is a free $B$-module of rank $2$.  In all the cases except case 2, this completes the proof.  In case 2 the above argument tells us that if we view $\rho_F$ as a $\GL_2(A)$-valued representation, then $\rho_F|_{H_0}$ takes values in $\GL_2(B)$.  We know that $\rho_F$ actually has values in $\GL_2(\I')$ and hence $\rho_F|_{H_0}$ has values in $\GL_2(B \cap \I') = \GL_2(\I_0)$.  

Thus we must show that for $\lambda \in \{\varepsilon, \delta\}$ we have $\dim_{\kappa^\Gamma} V_A[\lambda]^\Gamma \otimes_B \kappa^\Gamma = 1$.  Note that $V_A[\lambda]^\Gamma \otimes_B \kappa^\Gamma = V_\kappa[\blambda]^\Gamma$.  It is worth remarking that if $\Gamma$ acts trivially on $\kappa$, then
\[
\dim_{\kappa^\Gamma} V_\kappa[\blambda]^\Gamma = \dim_{\kappa} V_\kappa[\blambda] = 1.
\]
However, this may not be the case.

Write $\overline{\Gamma}$ for the quotient of $\Gamma$ that acts on $\kappa$.  That is, $\overline{\Gamma} = \Gal(\kappa/\kappa^\Gamma)$.  It is cyclic since it is the Galois group of an extension of finite fields.  Let $n = |\overline{\Gamma}|$ and $\sigma \in \overline{\Gamma}$ be a generator.  Since $\dim_\kappa V_\kappa[\blambda] = 1$ we can choose some nonzero $\textbf{v} \in V_\kappa[\blambda]$.  We would like to show that
\[
\sum_{k = 0}^{n - 1} \textbf{v}^{[\sigma^k]} \neq 0
\]
since the right hand side is $\overline{\Gamma}$-invariant.

Since $V_\kappa[\blambda]$ is $1$-dimensional, there is some $\alpha \in \kappa^\times$ such that $\textbf{v}^{[\sigma]} = \alpha \textbf{v}$.  Then we see that for $k \geq 1$
\[
\textbf{v}^{[\sigma^k]} = \left(\prod_{j = 0}^{k - 1}\alpha^{\sigma^j}\right)\textbf{v}.
\]
Thus 
\[
\sum_{k = 0}^{n - 1} \textbf{v}^{[\sigma^k]} = 1 + \sum_{k = 1}^{n - 1}\left(\prod_{j = 0}^{k - 1} \alpha^{\sigma^j} \right)\textbf{v} = \left(1 + \sum_{k = 1}^{n - 1} \alpha^{1 + \sigma + \cdots + \sigma^{k - 1}}\right)\textbf{v}.
\]
If $1 + \sum_{k = 1}^{n - 1}\left(\prod_{j = 0}^{k - 1} \alpha^{\sigma^j} \right) \neq 0$ then we're done.  Otherwise we can change $\textbf{v}$ to $a\textbf{v}$ for any $a \in \kappa^\times$.  It is easy to see that $(a\textbf{v})^{[\sigma]} = a^\sigma\alpha a^{-1}(a\textbf{v})$ and thus changing $\textbf{v}$ to $a\textbf{v}$ changes $\alpha$ to $a^\sigma a^{-1}\alpha$.  So we need to show that there is some $a \in \kappa^\times$ such that $1 + \sum_{k = 1}^{n - 1}\left(\prod_{j = 0}^{k - 1} (a^\sigma a^{-1}\alpha)^{\sigma^j} \right) \neq 0$.

We can rewrite
\[
1 + \sum_{k = 1}^{n - 1}\left(\prod_{j = 0}^{k - 1} (a^\sigma a^{-1}\alpha)^{\sigma^j} \right) = 1 + \sum_{k = 1}^{n - 1} \alpha^{1 + \sigma + \cdots + \sigma^{k - 1}}a^{-1}a^{\sigma^k}.
\]
Thus we are interested in the zeros of the function
\[
f(x) := x + \sum_{k = 1}^{n - 1} \alpha^{1 + \sigma + \cdots + \sigma^{k - 1}}x^{-1}x^{\sigma^k}
\]
on $\kappa$.  By Artin's Theorem on characters (Theorem VI.4.1 \cite{L}), $f$ is not identically zero on $\kappa$.  This shows that $\dim_{\kappa^\Gamma} V_\kappa[\blambda]^\Gamma \geq 1$.

To get equality, let $0 \neq \textbf{w} \in V_\kappa[\blambda]^\Gamma$.  Since $V_\kappa[\blambda]^\Gamma \subseteq V_\kappa[\blambda]$ and $\dim_\kappa V_\kappa[\blambda] = 1$, any element of $V_\kappa[\blambda]^\Gamma$ is a $\kappa$-multiple of $\textbf{w}$.  If $\beta \in \kappa \setminus \kappa^\Gamma$ then $\sigma$ does not fix $\beta$.  Thus
\[
(\beta \textbf{w})^{[\sigma]} = \beta^\sigma \textbf{w}^{[\sigma]} = \beta^{\sigma} \textbf{w} \neq \beta \textbf{w}.
\]
Hence $V_\kappa[\blambda]^\Gamma = \kappa^\Gamma \textbf{w}$ and $\dim_{\kappa^\Gamma} V_\kappa[\blambda]^\Gamma = 1$, as desired.
\end{proof}

Finally, we modify $\rho_F$ to obtain the normalizing matrix $\textbf{j}$ in the last part of Theorem \ref{making rho}.

\begin{lemma}\label{existence of j}
Suppose $\rho_F : G_\Q \to \GL_2(\I')$ such that $\rho_F|_{D_p}$ is upper triangular and $\rho_F|_{H_0}$ is valued in $\GL_2(\I_0)$.  Assume $\brho_F$ is absolutely irreducible and $H_0$-regular.  Then there is an upper triangular matrix $\textbf{x} \in \GL_2(\I_0)$ and roots of unity $\zeta$ and $\zeta'$ such that $\textbf{j} := \bigl(\begin{smallmatrix}
\zeta & 0\\
0 & \zeta'
\end{smallmatrix} \bigr)$ normalizes the image of $\textbf{x}\rho_F \textbf{x}^{-1}$ and $\zeta \not\equiv \zeta' \bmod p$. 
\end{lemma}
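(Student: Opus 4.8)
The plan is to produce $\textbf{j}$ as (a conjugate of) the image under $\rho_F$ of a torsion element of the inertia group at $p$; this is the natural tame analogue of Hida's matrix $\textbf{J}=\bigl(\begin{smallmatrix}1+T&0\\0&1\end{smallmatrix}\bigr)$, which comes from the wild part of $\varepsilon$. Recall that in the chosen basis $\rho_F|_{D_p}=\bigl(\begin{smallmatrix}\varepsilon&u\\0&\delta\end{smallmatrix}\bigr)$ with $\delta$ unramified, so $\delta|_{I_p}=1$ and $\varepsilon|_{I_p}$ agrees, up to the pro-$p$ (cyclotomic) part that reduces to $1$ modulo $\m_{\I'}$, with a finite-order tame character. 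Since $p>2$, the ring $\I'$ contains $\mu_{p-1}$ via Teichmüller, and any $g\in I_p$ of order prime to $p$ has $\varepsilon(g),\delta(g)\in\mu_{p-1}\subseteq\Z_p^\times\subseteq\I_0^\times$ with $\delta(g)=1$; moreover $\mu_{p-1}$ injects into $\F^\times$, so $\varepsilon(g)\ne 1$ in $\I_0$ iff $\bvarepsilon(g)\ne 1$ in $\F$.

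With this in mind, the steps are: (1) Use $H_0$-regularity, together with the fact that $\bdelta$ is unramified, to locate an element $g\in I_p\cap H_0$ of order prime to $p$ with $\bvarepsilon(g)\ne\bdelta(g)=1$. Set $\zeta:=\varepsilon(g)$ and $\zeta':=\delta(g)=1$; these are roots of unity, and $\zeta\not\equiv\zeta'\bmod p$ by the remark above, so $\zeta-\zeta'=\zeta-1\in\Z_p^\times$. (2) Because $g\in H_0$, Theorem \ref{restricting to H} gives $\rho_F(g)\in\GL_2(\I_0)$, hence $\rho_F(g)=\bigl(\begin{smallmatrix}\zeta&u(g)\\0&1\end{smallmatrix}\bigr)$ with $u(g)\in\I_0$. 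Then
\[
\textbf{x}:=\begin{pmatrix}1&u(g)/(\zeta-1)\\0&1\end{pmatrix}\in\GL_2(\I_0)
\]
is upper triangular and conjugates $\rho_F(g)$ to $\textbf{j}:=\bigl(\begin{smallmatrix}\zeta&0\\0&1\end{smallmatrix}\bigr)$. (3) Conjugating $\rho_F$ by $\textbf{x}$ preserves properties (1)–(3) of Theorem \ref{making rho}: upper-triangularity of $\rho_F|_{D_p}$ since $\textbf{x}$ is upper triangular, and $\GL_2(\I_0)$-valuedness on $H_0$ since $\textbf{x}\in\GL_2(\I_0)$. Finally $\textbf{j}=\textbf{x}\rho_F(g)\textbf{x}^{-1}$ lies in $\im(\textbf{x}\rho_F\textbf{x}^{-1})$, so it certainly normalizes that image, and (since $g\in G_\Q$ and $H\triangleleft G_\Q$) it also normalizes the image of $\textbf{x}\rho_F\textbf{x}^{-1}$ restricted to $H$, which is what is needed for the later sections.

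I expect step (1) to be the main obstacle. What $H_0$-regularity gives directly is merely some $g_0\in D_p\cap H_0$ with $\bvarepsilon(g_0)\ne\bdelta(g_0)$, but a priori $g_0$ projects nontrivially to $D_p/I_p\cong\widehat{\Z}$, in which case $\delta(g_0)=\delta(\Frob_p)^m=a(p,F)^m$ need not be a root of unity, so $\rho_F(g_0)$ has no root-of-unity eigenvalues. To force $g$ into the tame inertia subgroup one needs the residual ramification of $\varepsilon$ at $p$ to survive restriction to $I_p\cap H_0$, i.e. $\bvarepsilon|_{I_p\cap H_0}\ne 1$; the construction above then applies verbatim. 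When this fails, $H_0$-regularity is carried entirely by Frobenius, $\brho_F|_{D_p}$ is residually unramified with distinct Frobenius eigenvalues, and a separate argument is required — here I would try to exploit that $\delta$ is unramified together with a lift of a suitable power of Frobenius lying in $H$, passing to $\rho$ (which has trivial determinant on $H$) so that the diagonal entries become reciprocal roots of unity, or else invoke ramification of $\brho_F$ at a prime $\ell\mid N_0$. Isolating and dispatching this exceptional case is the delicate point of the proof; everything else is the routine computation sketched above.
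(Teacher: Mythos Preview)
Your approach has a genuine gap: the exceptional case you isolate at the end is not dispatched, and the suggestions you offer (Frobenius in $H$ after passing to $\rho$, or ramification at $\ell\mid N_0$) are not worked out and do not obviously succeed. The difficulty is real: $H_0$-regularity gives only an element of $D_p\cap H_0$, not of $I_p\cap H_0$, and there is no reason the tame ramification of $\bvarepsilon$ must survive restriction to $I_p\cap H_0$.

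The paper (following Hida) sidesteps this case distinction entirely with a single clean device. Take any $h\in D_p\cap H_0$ with $\bvarepsilon(h)\neq\bdelta(h)$; no further constraint on $h$ is needed. With $q=|\F|$, consider the limit
\[
\lim_{n\to\infty}\rho_F(h)^{q^n}
\]
in the compact group $\im\rho_F\subset\GL_2(\I_0)$. Writing $\varepsilon(h)=\zeta(1+m_1)$ and $\delta(h)=\zeta'(1+m_2)$ with $\zeta,\zeta'$ the Teichm\"uller lifts and $m_i\in\m_0$, the diagonal entries of $\rho_F(h)^{q^n}$ converge to $\zeta,\zeta'$ (since $\zeta^q=\zeta$ and $(1+m_i)^{q^n}\to 1$ in the pro-$p$ group $1+\m_0$), while the upper-right entry converges to some $u\in\I_0$. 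The limit $\bigl(\begin{smallmatrix}\zeta&u\\0&\zeta'\end{smallmatrix}\bigr)$ lies in $\im\rho_F$ and hence normalizes it, and $\zeta\not\equiv\zeta'\bmod\m_0$ by construction. One then conjugates by $\bigl(\begin{smallmatrix}1&u/(\zeta-\zeta')\\0&1\end{smallmatrix}\bigr)\in\GL_2(\I_0)$ exactly as in your step (2).

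The point is that the $q^n$-power limit \emph{extracts} a torsion element from an arbitrary $\rho_F(h)$; you do not need to locate a torsion element of $D_p\cap H_0$ to begin with. This is precisely the move that makes your case analysis unnecessary.
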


\begin{proof}
This argument is due to Hida (Lemma 4.3.20 \cite{GME}).  As $\brho_F$ is $H_0$-regular there is an $h \in H_0$ such that $\bvarepsilon(h) \neq \bdelta(h)$.  Let $\zeta$ and $\zeta'$ be the roots of unity in $\I_0$ satisfying $\zeta \equiv \varepsilon(h) \bmod \m_0$ and $\zeta \equiv \delta(h) \bmod \m_0$.  By our choice of $h$ we have $\zeta \not\equiv \zeta' \bmod p$.  

Let $q = |\F|$.  Then for some $u \in \I_0$
 \[
\lim_{n \to \infty} \rho_F(h)^{q^n} = \begin{pmatrix} 
\zeta & u\\
0 & \zeta'
\end{pmatrix}.
 \]
Conjugating $\rho_F$ by $\bigl(\begin{smallmatrix}
 1 & u/(\zeta - \zeta')\\
 0 & 1
 \end{smallmatrix} \bigr)$ preserves all three of the desired properties, and the image of the resulting representation is normalized by $\textbf{j} = \bigl(\begin{smallmatrix}
\zeta & 0\\ 
0 & \zeta'
\end{smallmatrix} \bigr)$.
\end{proof}


\section{Appendix: Automorphic lifting}
\subsection{Twists as endomorphisms of a Hecke algebra}
In this section we seek to reformulate the existence of conjugate self-twists in terms of commutative diagrams involving certain Hecke algebras.  We use Wiles's interpretation of Hida families \cite{W}.  Namely for a finite extension $\J$ of $\Lambda_\chi$, a formal power series $G = \sum_{i = 1}^\infty a(n, G)q^n$ is a \textit{$\J$-adic cusp form} of level $\Gamma_0(N)$ and character $\chi$ if for almost all arithmetic primes $\Pp$ of $\J$, the specialization of $G$ at $\Pp$ gives the $q$-expansion of an element $g_\Pp$ of $S_k(\Gamma_0(Np^{r(\varepsilon)}), \varepsilon\chi\omega^{-k})$, where $p^{r(\varepsilon)}$ is the order of $\varepsilon$.  (At various points in what follows we will use $\J = \I$ and $\J = \I'$.)  One defines the Hecke operators by their usual formulae on coefficients.  We say $G$ is \textit{ordinary} if it is an eigenform for the Hecke operators whose eigenvalue under $U(p)$ is in $\J^\times$.  Let $\Ss(N, \chi; \J)$ be the $\J$-submodule of $\J[[q]]$ spanned by all $\J$-adic cusp forms of level $\Gamma_0(N)$ and character $\chi$ that are also Hecke eigenforms.  Let $\Ss^{\ord}(N, \chi; \J)$ denote the $\I$-subspace of $\Ss(N, \chi; \J)$ spanned by all ordinary $\J$-adic cusp forms. 

For each Dirichlet character $\psi$, we shall write $c(\psi) \in \Z^+$ for the conductor of $\psi$.

Let $\psi : (\Z/L\Z)^\times \to \overline{\Q}^\times$ be a Dirichlet character.  Let $\eta$ be a primitive Dirichlet character with values in $\Z[\chi]$.  (Every twist character of $F$ has this property by Lemma \ref{powers of chi}.)  Denote by $M(\psi, \eta)$ the least common multiple of $L, c(\psi)^2$, and $c(\psi)c(\eta)$, and let $M$ be any positive integer multiple of $M(\psi, \eta)$.  By Proposition 3.64 \cite{S}, there is a linear map 
\begin{align*}
R_{\psi, \eta}(M) : S_k(\Gamma_0(M), \psi) &\to S_k(\Gamma_0(M), \eta^2\psi)\\
f = \sum_{n = 1}^\infty a(n, f) q^n &\mapsto \eta f = \sum_{n = 1}^\infty \eta(n)a(n, f)q^n.
\end{align*}

We would like to defined a map analogous to $R_{\psi, \eta}(M)$ in the $\J$-adic setting.  

\begin{lemma}\label{analogue to Shimura map}
Let $M$ be a positive integer multiple of $M(\chi, \eta)$.  There is a well defined $\J$-linear map
\begin{align*}
\R_{\chi, \eta}(M) : \Ss(M, \chi; \J) &\to \Ss(M, \eta^2\chi; \J)\\
G = \sum_{n = 1}^\infty a(n, G)q^n &\mapsto \eta G = \sum_{n = 1}^\infty \eta(n)a(n, G)q^n.
\end{align*}
If $p \nmid c(\eta)$ then $\R_{\chi, \eta}(M)$ sends $\Ss^{\ord}(M, \chi; \J)$ to $\Ss^{\ord}(M, \eta^2\chi; \J)$.
\end{lemma}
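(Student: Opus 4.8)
The plan is to deduce the statement from Shimura's classical twisting map (Proposition 3.64 of \cite{S}) by working one arithmetic specialization at a time. The operator $T_\eta \colon \J[[q]] \to \J[[q]]$, $\sum_n a(n)q^n \mapsto \sum_n \eta(n)a(n)q^n$, is $\J$-linear, and $\R_{\chi,\eta}(M)$ is to be its restriction to $\Ss(M,\chi;\J)$. Since $\Ss(M,\chi;\J)$ is spanned over $\J$ by $\J$-adic cusp forms that are Hecke eigenforms, it suffices to show: if $G \in \Ss(M,\chi;\J)$ is such an eigenform, then $\eta G$ is a $\J$-adic cusp form of level $\Gamma_0(M)$ and character $\eta^2\chi$, is a Hecke eigenform, and (when $p \nmid c(\eta)$) is ordinary whenever $G$ is.

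First I would handle the assertion that $\eta G$ is a $\J$-adic cusp form of level $\Gamma_0(M)$ and character $\eta^2\chi$. For all but finitely many arithmetic primes $\Pp$ of $\J$, lying over some $P_{k,\varepsilon}$, the specialization $g_\Pp$ of $G$ is a classical cusp form in $S_k(\Gamma_0(Mp^{r(\varepsilon)}), \varepsilon\chi\omega^{-k})$, and by definition the specialization of $\eta G$ at $\Pp$ is $\eta g_\Pp$. Writing $\psi_\Pp := \varepsilon\chi\omega^{-k}$ for the nebentypus of $g_\Pp$, the crucial point is that the level $Mp^{r(\varepsilon)}$ is a multiple of $M(\psi_\Pp, \eta)$; granting this, Proposition 3.64 of \cite{S} gives $\eta g_\Pp = R_{\psi_\Pp, \eta}(Mp^{r(\varepsilon)})(g_\Pp) \in S_k(\Gamma_0(Mp^{r(\varepsilon)}), \eta^2\varepsilon\chi\omega^{-k})$ for almost all $\Pp$, which is precisely the assertion that $\eta G$ is a $\J$-adic cusp form of level $\Gamma_0(M)$ and character $\eta^2\chi$. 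The Hecke eigenform property is then a direct computation: writing out the formulae for $T(\ell)$ and for the $U$-operators at primes dividing $Mp$ on $q$-expansions in character $\eta^2\chi$, and using the complete multiplicativity of $\eta$ as a function on $\Z^+$, one checks that $\eta G$ is an eigenform for every Hecke operator, with eigenvalue system the $\eta$-twist of that of $G$. Finally, for the ordinary statement, suppose $p \nmid c(\eta)$ and $U(p)G = aG$ with $a \in \J^\times$; since $\eta(np) = \eta(p)\eta(n)$ for all $n$ one computes $U(p)(\eta G) = \eta(p)a\,(\eta G)$, and $\eta(p)$ is a root of unity lying in $\Z[\chi] \subseteq \J$, hence a unit, so $\eta(p)a \in \J^\times$ and $\eta G$ is ordinary.

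The step I expect to be the main obstacle is the level bookkeeping underlying the second paragraph, namely the verification that $M(\psi_\Pp, \eta) = \lcm(Mp^{r(\varepsilon)}, c(\psi_\Pp)^2, c(\psi_\Pp)c(\eta))$ divides $Mp^{r(\varepsilon)}$ for almost all arithmetic primes $\Pp$. This unwinds into a prime-to-$p$ comparison and a $p$-adic comparison. The prime-to-$p$ side is controlled by the hypothesis that $M$ is a multiple of $M(\chi,\eta) = \lcm(N, c(\chi)^2, c(\chi)c(\eta))$ together with $N = N_0 p^r$ and $p \nmid N_0$. On the $p$-adic side one must argue that, although the naive Shimura bound $c(\psi_\Pp)^2$ grows with $r(\varepsilon)$, the ramification at $p$ actually introduced by the twist stays bounded in terms of the fixed conductor $c(\eta)$; here one uses that $F$ is primitive, so $g_\Pp$ is a newform or the $p$-stabilization of one, of tame level dividing $N_0$, and hence $\eta g_\Pp$ in fact lives at level $\Gamma_0(Mp^{r(\varepsilon)})$. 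It is exactly this analysis that dictates the definition of $M(\chi,\eta)$, and checking it carefully is where the real work of the lemma lies.
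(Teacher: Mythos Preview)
Your first two paragraphs are essentially the paper's proof: specialize $G$ at arithmetic primes $\Pp$ over $P_{k,\varepsilon}$, apply Shimura's classical twisting map $R_{\psi,\eta}$ with $\psi = \varepsilon\chi\omega^{-k}$ to land $\eta g_\Pp$ in $S_k(\Gamma_0(Mp^{r(\varepsilon)}),\eta^2\varepsilon\chi\omega^{-k})$, and for ordinarity observe that the $U(p)$-eigenvalue of $\eta G$ is $\eta(p)a(p,G)$, which is a unit exactly when $\eta(p)\neq 0$.

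Your third paragraph, however, over-complicates the level check and introduces an error. The paper dispatches this in one line: from the definitions one has $M(\psi,\eta) = M(\chi,\eta)\,p^{r(\varepsilon)}$, so writing $M = m\,M(\chi,\eta)$ gives $Mp^{r(\varepsilon)} = m\,M(\psi,\eta)$ and Shimura's map applies at that level. No growth analysis or newform theory is needed. In particular, your appeal to primitivity of $F$ (``$g_\Pp$ is a newform or the $p$-stabilization of one'') is not available here: the lemma is stated for an \emph{arbitrary} $G \in \Ss(M,\chi;\J)$, not for the fixed primitive family $F$, so you cannot assume anything about $g_\Pp$ beyond it being a cuspidal eigenform of the stated level and character. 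The level bookkeeping must go through uniformly in $G$, and it does, directly from the definition of $M(\psi,\eta)$.
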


\begin{proof}
Let $\Pp$ be an arithmetic prime of $\J$, and let $P_{k, \varepsilon}$ be the arithmetic prime of $\Lambda$ lying under $\Pp$.  If $G \in \Ss^{\ord}(M, \chi; \J)$ then
\[
g_\Pp \in S_k(\Gamma_0(Mp^{r(\varepsilon)}), \varepsilon\chi\omega^{-k}).
\]  
Let $\psi = \varepsilon\chi\omega^{-k}$.  One checks easily from the definitions that $M(\psi, \eta) = M(\chi, \eta)p^{r(\varepsilon)}$.  Let $m \in \Z^+$ such that $M = mM(\chi, \eta)$.  Then 
\[
\eta g_\Pp = R_{\psi, \eta}(mM(\psi, \eta))(g_\Pp) \in S_k(\Gamma_0(mM(\psi, \eta)), \eta^2\psi) = S_k(\Gamma_0(Mp^{r(\varepsilon)}), \eta^2\varepsilon\chi\omega^{-k}),
\]
so $\eta G \in \Ss(M, \eta^2\chi; \J)$.

For the statement about ordinarity, we may assume $G$ is a normalized eigenform, so $a(p, G)$ is the eigenvalue of $G$ under the $U(p)$ operator.  If $G$ is ordinary then $a(p, G) \in \J^\times$.  Hence $\eta(p)a(p, G) = a(p, \eta G) \in \J^\times$ if and only if $\eta(p) \neq 0$.
\end{proof}

For the rest of this section, fix a Dirichlet character $\eta$ with values in $\Z[\chi]$.  Let $M$ be a positive integer multiple of $M(\chi, \eta)$.  We wish to unify the classical and $\J$-adic cases in what follows.  Let $A$ be either a ring of integers $\OK$ in a number field containing $\Z[\chi]$ or an integral domain $\J$ that is finite flat over $\Lambda$ and contains $\Z[\chi]$.  We shall write $S(M, \chi; A)$ for either $S_k(\Gamma_0(M), \chi; \OK)$ when $A = \OK$ or $\Ss(M, \chi; \J)$ when $A = \J$.  Let 
\[
r_{\chi, \eta}(M) = \begin{cases}
R_{\chi, \eta}(M) & \text{ when }A = \OK\\
\R_{\chi, \eta}(M) & \text{ when } A = \J.
\end{cases}
\]
Denote by $\leftexp{M}{T(n)}$ the $n$-th Hecke operator on either $S(M, \chi; A)$ or $S(M, \eta^2\chi; A)$.  Note that we use this notation $\leftexp{M}T(n)$ even when $(n, M) > 1$.  The Hecke operators are compatible with $r_{\chi, \eta}(M)$ in the following sense.

\begin{lemma}\label{comm diagram for reta}
For all $n \in \Z^+$ we have 
\[
\leftexp{M}T(n) \circ r_{\chi, \eta}(M) = \eta(n)r_{\chi, \eta}(M) \circ \leftexp{M}T(n).
\]
In particular, both maps are zero when $(n, M) > 1$.
\end{lemma}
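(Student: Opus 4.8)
The plan is to prove the identity by comparing $q$-expansions, after reducing to the case of a prime $n=\ell$. A form in $S(M,\chi;A)$ or $S(M,\eta^{2}\chi;A)$ is determined by its $q$-expansion — in the classical case by the usual principle and in the $\J$-adic case because $\Ss(M,\chi;\J)$ is by definition a submodule of $\J[[q]]$ — so it is enough to check, for a Hecke eigenform $G=\sum_{m\ge 1}a(m,G)q^{m}$, that $\leftexp{M}{T(n)}\!\bigl(r_{\chi,\eta}(M)G\bigr)$ and $\eta(n)\,r_{\chi,\eta}(M)\!\bigl(\leftexp{M}{T(n)}G\bigr)$ have equal $q$-expansions. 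Recall that $r_{\chi,\eta}(M)$ multiplies the coefficient of $q^{m}$ by $\eta(m)$, and that Hecke operators act on $q$-expansions by the standard formulas; note also that $c(\eta)\mid M(\chi,\eta)\mid M$, so for $\ell\nmid M$ the value $\eta(\ell)$ is a root of unity, hence a unit in $A$ (as $A\supseteq\Z[\chi]$ contains the values of $\eta$).

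First I would reduce to $n=\ell$ prime. The operators $\leftexp{M}{T(n)}$ are multiplicative in $n$ for coprime arguments, and for $\ell\nmid M$ they satisfy $\leftexp{M}{T(\ell^{j+1})}=\leftexp{M}{T(\ell)}\leftexp{M}{T(\ell^{j})}-\psi(\ell)\ell^{k-1}\leftexp{M}{T(\ell^{j-1})}$, where $\psi$ is the nebentypus of the space on which the operator acts: $\chi$ on the source of $r_{\chi,\eta}(M)$ and $\eta^{2}\chi$ on its target. Since $\eta$ is completely multiplicative, $\eta(\ell^{j})=\eta(\ell)^{j}$ and $\eta(mn)=\eta(m)\eta(n)$; a short induction on $j$ — the two recursions differing exactly by the factor $\eta(\ell)^{2}$ in the $\psi(\ell)$-term, which gets absorbed into the $\eta(\ell)^{j+1}$ coming out front — shows that the identity for every prime $\ell$ propagates to all prime powers and then, by multiplicativity, to all $n$. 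For $\ell\mid M$ one instead uses $\leftexp{M}{T(\ell^{j})}=U(\ell)^{j}$ and the same argument with no recursion.

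For the prime case, when $\ell\nmid M$ the coefficient of $q^{m}$ in $\leftexp{M}{T(\ell)}\bigl(r_{\chi,\eta}(M)G\bigr)$ is $\eta(\ell m)a(\ell m,G)+(\eta^{2}\chi)(\ell)\ell^{k-1}\eta(m/\ell)a(m/\ell,G)$, with the second term $0$ unless $\ell\mid m$. Using $\eta(\ell m)=\eta(\ell)\eta(m)$ and, when $\ell\mid m$, $\eta(m/\ell)=\eta(\ell)^{-1}\eta(m)$ (legitimate since $\eta(\ell)\in A^{\times}$), this equals $\eta(\ell)\eta(m)\bigl(a(\ell m,G)+\chi(\ell)\ell^{k-1}a(m/\ell,G)\bigr)$, which is precisely $\eta(\ell)$ times the coefficient of $q^{m}$ in $r_{\chi,\eta}(M)\bigl(\leftexp{M}{T(\ell)}G\bigr)$. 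When $\ell\mid M$ we have $\leftexp{M}{T(\ell)}=U(\ell)$ and the coefficient of $q^{m}$ in $U(\ell)\bigl(r_{\chi,\eta}(M)G\bigr)$ is $\eta(\ell m)a(\ell m,G)=\eta(\ell)\cdot\bigl(\text{coefficient of }q^{m}\text{ in }r_{\chi,\eta}(M)(U(\ell)G)\bigr)$; if moreover $\ell\mid c(\eta)$ then $\eta(\ell)=0$ and both sides vanish. Together with the reduction of the previous paragraph this gives the identity for all $n$, and the ``in particular'' statement is then immediate from how $\leftexp{M}{T(n)}$ is defined when $(n,M)>1$ (cf.\ the remark before the lemma), which makes both composites the zero map.

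The only real care needed is the bookkeeping of the two different nebentypus characters $\chi$ and $\eta^{2}\chi$ inside the Hecke recursion during the reduction to primes, together with making the degenerate case $(n,M)>1$ precise; past that the result is a direct coefficient comparison using complete multiplicativity of $\eta$.
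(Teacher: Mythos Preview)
Your proof is correct and follows essentially the same approach as the paper: reduce to a prime $\ell$ and compare $q$-expansions using the explicit formula for the action of $\leftexp{M}{T(\ell)}$. The paper carries out the computation in the $\J$-adic setting (with the factor $\kappa(\langle\ell\rangle)\chi(\ell)\ell^{-1}$) and remarks that the classical case follows by specialization, whereas you write the classical formula (with $\chi(\ell)\ell^{k-1}$) and supply more detail on the reduction from general $n$ to primes via the Hecke recursion; these are cosmetic differences.

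One small correction: your justification of the ``in particular'' clause misreads the remark preceding the lemma. That remark is purely notational---it says the symbol $\leftexp{M}{T(n)}$ is used even when $(n,M)>1$, not that the operator vanishes. Indeed $U(\ell)$ is not the zero map, so the vanishing does not follow from the definition of $\leftexp{M}{T(n)}$; it comes (when it holds) from $\eta(\ell)=0$ for $\ell\mid c(\eta)$, which you already noted. The paper's own proof does not spell out this clause either.
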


\begin{proof}
The classical case follows from the $\J$-adic case by specialization, so we give the proof in the $\J$-adic case.  (Incidentally, the classical case can be proved by exactly the same argument.)

It suffices to prove the lemma when $n = \ell$ is prime.  Let $G \in \Ss(M, \chi; \J)$ and recall that by definition of $\leftexp{M}{T(\ell)}$ we have
\[
a(m, G|\leftexp{M}{T(\ell)}) = a(m\ell, G) + \kappa(\langle \ell \rangle)\chi(\ell)\ell^{-1}a(m/\ell, G),
\]
where $\kappa : 1 + p\Z_p \to \Lambda^\times$ was defined in the proof of Lemma \ref{powers of chi} and $a(m/\ell, G) = 0$ if $\ell \nmid m$.  Applying this formula to $\R_{\chi, \eta}(M)(G) \in \Ss(M, \eta^2\chi; \J)$ we calculate that for all $m \in \Z^+$
\[
a(m, \R_{\chi, \eta}(M)(G)|\leftexp{M}{T(\ell)}) = \eta(\ell)a(m, \R_{\chi, \eta}(M)(G | \leftexp{M}{T(\ell)})).
\]  
This implies that
\[
\leftexp{M}{T(\ell)} \circ \R_{\chi, \eta}(M)(G) = \eta(\ell)\R_{\chi, \eta}(M) \circ \leftexp{M}{T(\ell)}(G),
\]
as desired.
\end{proof}

For the rest of the appendix assume further that $\eta$ is a quadratic character, so $r_{\chi, \eta}(M)$ is an endomorphism of $S(M, \chi; A)$.  Let $h(M, \chi; A)$ be the Hecke algebra of $S(M, \chi; A)$.  Recall that there is a duality given by
\begin{align*}
h(M, \chi; A) &\to \Hom_A(S(M, \chi; A), A)\\
T &\mapsto \langle T, - \rangle,
\end{align*}
where $\langle \leftexp{M}{T(n)}, f \rangle := a(n, f)$ for any normalized Hecke eigenform $f \in S(M, \chi; A)$.  Let $\theta_{\chi, \eta}(M)$ be the $A$-algebra endomorphism of $h(M, \chi; A)$ induced by $r_{\chi, \eta}(M)$ via dualtiy.  By Lemma \ref{analogue to Shimura map} if $p \nmid c(\eta)$ then $\theta_{\chi, \eta}(M)$ restricts to an endomorphism of $\hh^{\ord}(M, \chi; \J)$.  

\begin{lemma}\label{explicit theta}
For all $n \in \Z^+$ we have
\[
\theta_{\chi, \eta}(M)(\leftexp{M}{T(n)}) = \eta(n)\leftexp{M}{T(n)}.
\]
\end{lemma}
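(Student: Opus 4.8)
The plan is to unwind the definition of $\theta_{\chi,\eta}(M)$ as the endomorphism of the Hecke algebra dual to $r_{\chi,\eta}(M)$, and then to verify the asserted identity by pairing both sides against an arbitrary cusp form. Concretely, the duality pairing $\langle-,-\rangle$ between $h(M,\chi;A)$ and $S(M,\chi;A)$ is $A$-bilinear, satisfies $\langle\leftexp{M}{T(n)},f\rangle = a(n,f)$ for all $n$ and all $f$ (this is the general identity $\langle T,f\rangle = a(1,f\mid T)$ specialized to $T = \leftexp{M}{T(n)}$), and is nondegenerate in the Hecke-algebra variable because $h(M,\chi;A)$ acts faithfully on $S(M,\chi;A)$. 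By the very construction of $\theta_{\chi,\eta}(M)$ from $r_{\chi,\eta}(M)$ via this duality, one has $\langle\theta_{\chi,\eta}(M)(T),f\rangle = \langle T, r_{\chi,\eta}(M)(f)\rangle$ for all $T$ and $f$.

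Granting this, the computation is immediate. Fixing $n\in\Z^+$ and an arbitrary $f\in S(M,\chi;A)$, and using that $r_{\chi,\eta}(M)(f) = \eta f = \sum_{m\ge 1}\eta(m)a(m,f)q^m$, I would write
\[
\langle\theta_{\chi,\eta}(M)(\leftexp{M}{T(n)}),f\rangle = \langle\leftexp{M}{T(n)},\eta f\rangle = a(n,\eta f) = \eta(n)a(n,f) = \langle\eta(n)\,\leftexp{M}{T(n)},f\rangle,
\]
and since this holds for every $f\in S(M,\chi;A)$, nondegeneracy of the pairing forces $\theta_{\chi,\eta}(M)(\leftexp{M}{T(n)}) = \eta(n)\,\leftexp{M}{T(n)}$ in $h(M,\chi;A)$; the corresponding statement inside $\hh^{\ord}(M,\chi;\J)$ when $p\nmid c(\eta)$ then follows because $\theta_{\chi,\eta}(M)$ restricts to that subalgebra. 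An essentially equivalent route is to start from Lemma \ref{comm diagram for reta}, which gives $\leftexp{M}{T(n)}\circ r_{\chi,\eta}(M) = \eta(n)\,r_{\chi,\eta}(M)\circ\leftexp{M}{T(n)}$ as operators on $S(M,\chi;A)$, and then transport this relation through the duality that turns $r_{\chi,\eta}(M)$ into $\theta_{\chi,\eta}(M)$.

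There is no genuine obstacle here; the lemma is a formal consequence of Lemma \ref{comm diagram for reta} together with the standard Hecke-module duality recalled just before its statement. The only points that require a little care are bookkeeping ones: confirming that the Fourier-coefficient formula $\langle\leftexp{M}{T(n)},f\rangle = a(n,f)$ holds for every $f$ and not merely for normalized eigenforms, and that the pairing is nondegenerate in the first variable so that checking the identity against all $f$ is enough to conclude equality inside $h(M,\chi;A)$.
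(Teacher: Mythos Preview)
Your proposal is correct and follows essentially the same approach as the paper: both unwind the duality defining $\theta_{\chi,\eta}(M)$, compute $\langle \leftexp{M}{T(n)}, r_{\chi,\eta}(M)(f)\rangle = \eta(n)a(n,f)$, and conclude via nondegeneracy. Your remark that $\langle \leftexp{M}{T(n)}, f\rangle = a(n,f)$ should be checked for all $f$ (not just normalized eigenforms) is a fair bookkeeping point, and your direct use of $a(n,\eta f) = \eta(n)a(n,f)$ is in fact slightly cleaner than the paper's appeal to the Hecke-operator recursion from Lemma~\ref{comm diagram for reta}.
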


\begin{proof}
By definition $\theta_{\chi, \eta}(M)$ is the map that makes the following diagram commute.
\[\xymatrix@=40pt@R=30pt{
	h(M, \chi; A)\ar@{<->}[r]\ar@{->}[d]_{\theta_{\chi, \eta}(M)} &\Hom_A(S(M, \chi; A), A)\ar@{->}[d]_{r_{\chi, \eta}(M)^*} \\
	h(M, \chi; A)\ar@{<->}[r] & \Hom_A(S(M, \chi; A), A)}
\]
Note that $\leftexp{M}{T(n)}$ corresponds to $\langle \leftexp{M}{T(n)}, - \rangle$ under duality, and $r_{\chi, \eta}(M)^*(\langle \leftexp{M}{T(n)}, - \rangle) = \langle \leftexp{M}{T(n)}, - \rangle \circ r_{\chi, \eta}(M)$.  Using the formula for the action of $\leftexp{M}{T(n)}$ on $q$-expansions as in Lemma \ref{comm diagram for reta} together with the definition of $r_{\chi, \eta}(M)$ yields
\[
\langle \leftexp{M}{T(n)}, - \rangle \circ r_{\chi, \eta}(M)(f) = \eta(n)\langle \leftexp{M}{T(n)}, f \rangle
\]
for all $f \in S(M, \chi)$.  Thus $\langle \leftexp{M}{T(n)}, - \rangle \circ r_{\chi, \eta}(M) = \eta(n)\langle \leftexp{M}{T(n)}, - \rangle$ which corresponds to $\eta(n)\leftexp{M}{T(n)}$ under duality.  Thus $\theta_{\chi, \eta}(M)(\leftexp{M}{T(n)}) = \eta(n)\leftexp{M}{T(n)}$, as claimed.
\end{proof}

\begin{lemma}\label{constructing fM'}
Let $f \in S(N, \chi; A)$ be an eigenform and $M$ a positive integer multiple of $N$.  There is an eigenform $f_M \in S(M, \chi; A)$ such that $f_M|\leftexp{M}{T(n)} = 0$ for all $n$ such that $(n, M/N) > 1$ and $f_M$ has the same eigenvalues as $f$ for all $\leftexp{M}{T(n)}$ with $(n, M/N) = 1$.
\end{lemma}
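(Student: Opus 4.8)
The plan is to take $f_M$ to be the $(M/N)$-depletion of $f$. Writing $d = M/N$, set
\[
f_M := \sum_{\gcd(n,d)=1} a(n,f)\,q^n,
\]
which one can also describe as $f_M = f\,|\,\prod_{\ell \mid d}(1 - V_\ell U_\ell)$, where $V_\ell$ and $U_\ell$ act on $q$-expansions by $V_\ell(\sum a_n q^n) = \sum a_n q^{\ell n}$ and $U_\ell(\sum a_n q^n) = \sum a_{\ell n}q^n$. Since $V_\ell U_\ell$ and $V_{\ell'}U_{\ell'}$ commute for distinct primes, one may carry out the depletion one prime at a time, so it suffices to analyze the effect of a single $\ell \mid d$.

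First I would check that $f_M$ really is a cusp form of level $M$, and not merely a power series whose $q$-expansion is supported on integers prime to $d$. For a prime $\ell \mid d$, the Hecke relation satisfied by $f$ at level $N$ gives $f\,|\,(1 - V_\ell U_\ell) = f - a(\ell,f)\,V_\ell f$ when $\ell \mid N$, and $f\,|\,(1 - V_\ell U_\ell) = f - a(\ell,f)\,V_\ell f + \chi(\ell)\ell^{k-1}\,V_\ell^2 f$ when $\ell \nmid N$ (with $\chi(\ell)\ell^{k-1}$ replaced by the $\J$-adic determinant factor $\kappa(\langle \ell\rangle)\chi(\ell)\ell^{-1}$ in that setting). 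In either case the result lies in the span of the degeneracy images from level $N$ inside level $N\ell$, respectively $N\ell^2$, hence in $S(M,\chi;A)$, provided $N\ell$, respectively $N\ell^2$, divides $M$; this is where the size of $M$ relative to $N$ is used. In the $\J$-adic case I would instead take the displayed $q$-expansion as the definition of $f_M$, observe that its arithmetic specialization at any prime $\Pp$ of $\J$ is exactly the $d$-depletion of the classical form attached to $f$ at $\Pp$, apply the classical statement just described, and conclude from the definition of a $\J$-adic cusp form that $f_M \in \Ss(M,\chi;\J)$.

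Next I would verify the Hecke conditions on $q$-expansions. If $\gcd(n,d) = 1$, each $V_\ell U_\ell$ with $\ell \mid d$ commutes with $\leftexp{M}{T(n)}$, and $f\,|\,\leftexp{M}{T(n)} = a(n,f)\,f$, this eigenvalue being unchanged on passing from level $N$ to level $M$ precisely because $\gcd(n,d) = 1$; hence $f_M\,|\,\leftexp{M}{T(n)} = a(n,f)\,f_M$. If $\ell \mid d$, then $\leftexp{M}{T(\ell)} = U_\ell$ since $\ell \mid M$, and $a(m, f_M\,|\,U_\ell) = a(m\ell, f_M) = 0$ for every $m$ because $\ell \mid \gcd(m\ell, d)$; thus $f_M\,|\,\leftexp{M}{T(\ell)} = 0$, and multiplicativity of $n \mapsto \leftexp{M}{T(n)}$ then yields $f_M\,|\,\leftexp{M}{T(n)} = 0$ whenever $\gcd(n,d) > 1$. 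Finally $f_M \neq 0$ since $a(1,f_M) = a(1,f) \neq 0$, and $f_M$ is an eigenform for every $\leftexp{M}{T(n)}$ by the above, so $f_M$ is the required element of $S(M,\chi;A)$.

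The step I expect to be the main obstacle is confirming that the depleted form is genuinely of level $M$: a priori the series $\sum_{\gcd(n,d)=1} a(n,f)q^n$ need not be modular of level $M$, and verifying that it is requires the oldform/degeneracy analysis above together with a mild hypothesis on the $\ell$-part of $M$ for each prime $\ell \mid d$ (for $\ell \mid d$ with $\ell \nmid N$ one needs $\ell^2 \mid M$), which is satisfied in the setting where the lemma is applied, $M$ being taken a suitable multiple of $M(\chi,\eta)$. Once membership in $S(M,\chi;A)$ is established, the eigenform property and the eigenvalue formulas are purely formal, and the $\J$-adic case reduces to the classical one by arithmetic specialization.
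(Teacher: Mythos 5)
Your construction of $f_M$ as the full $(M/N)$-depletion $f\mid\prod_{\ell\mid M/N}(1 - V_\ell U_\ell)$ is a genuinely different route from the paper's, which instead proceeds by induction on the prime factors (with multiplicity) of $M/N$, at each step passing from a level-$L$ eigenform $g$ to $g - a(\ell,g)\,g\mid[\ell]$ at level $L\ell$. For $\ell\mid L$ this coincides with your single-prime depletion $g\mid(1 - V_\ell U_\ell)$, since then $U_\ell g = a(\ell,g)g$. For $\ell\nmid L$, however, the two diverge: your formula picks up the extra term $\chi(\ell)\ell^{k-1}V_\ell^2 g$ (landing in level $L\ell^2$), and this is exactly what makes $U_\ell$ annihilate the result; by contrast $(g - a(\ell,g)V_\ell g)\mid U_\ell = -\chi(\ell)\ell^{k-1}V_\ell g \ne 0$, so the paper's single-prime step does not actually produce an eigenform killed by $\leftexp{L\ell}{T(\ell)}$ when $\ell\nmid L$. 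Your approach therefore handles the $\ell\nmid N$ case correctly, at the cost of the level hypothesis ($\ell^2\mid M$ for such $\ell$) that you rightly identify and which the paper's statement, assuming only $N\mid M$, tacitly omits. I would push back on your closing remark that this hypothesis is satisfied where the lemma is applied: in Proposition~\ref{Iadic twisting} one takes $M = c(\eta)N^2$ with $\eta$ quadratic, and if an odd prime $\ell$ exactly divides $c(\eta)$ and is coprime to $N$ then $\ell\parallel M$, so the condition is not automatic there and would need a separate accounting. Apart from that caveat, the remainder of your argument --- the commutation of $V_\ell U_\ell$ with $\leftexp{M}{T(n)}$ for $(n, M/N) = 1$, the computation of $U_\ell$ on the depleted form, and the reduction of the $\J$-adic case to the classical one by arithmetic specialization --- is correct.
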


\begin{proof}
Write $M/N = \ell_1\ldots\ell_t$ for not necessarily distinct primes $\ell_i$.  By induction on $t$ it suffices to show that we can construct an eigenform $f_{N\ell_1} \in S(N\ell_1, \chi)$ with $f_{N\ell_1}|\leftexp{N\ell_1}{T(\ell_1)} = 0$ and $f_{N\ell_1}$ having the same eigenvalues as $f$ for all primes $\ell \neq \ell_1$.

Let $\lambda_1$ be the eigenvalue of $f$ under $\leftexp{N}{T(\ell_1)}$, so 
\[
f | \leftexp{N}{T(\ell_1)} = \lambda_1f.
\]
If $\lambda_1 = 0$ then just viewing $f \in S(N\ell_1, \chi; A)$ has all the desired properties and we may take $f_{N\ell_1} = f$.  Otherwise, define $f_{N\ell_1} = f - \lambda_1f|[\ell_1]$ where $(f|[\ell_1])(z) := f(\ell_1z)$.  It is well known (and can be checked by a calculation with $q$-expansions) that $f|[\ell_1]|\leftexp{N\ell_1}{T(\ell_1)} = f|\leftexp{N}{T(\ell_1)}$.  This implies that $f_{N\ell_1} | \leftexp{N\ell_1}{T(\ell_1)} = 0$.  For $\ell \neq \ell_1$ one can check that 
\[
\leftexp{N\ell_1}{T(\ell)} \circ [\ell_1] = [\ell_1] \circ \leftexp{N}{T(\ell)}.
\]
From this it follows that $f_{N\ell_1}$ and $f$ have the same eigenvalues for $\leftexp{N\ell_1}{T(\ell)}$ for all primes $\ell \neq \ell_1$, as desired.
\end{proof}

We are interested in describing conjugate self-twists of an eigenform $f \in S(N, \chi; A)$.  Let $A'$ be the subalgebra of $A$ generated by $\{a(\ell, f) : \ell \nmid N \}$ over either $\Z[\chi]$ if $A = \OK$, or over $\Lambda_\chi$ when $A = \I$.  Note that if $f$ is a newform then $Q(A) = Q(A')$.  If $N^2 | M$ then the eigenform $f_M$ from Lemma \ref{constructing fM'} is an element of $S(M, \chi; A')$.  Write $\lambda_{f_M} : h(M, \chi; A') \to A'$ for the $A'$-algebra homomorphism corresponding to $f_M$.  That is, $\lambda_{f_M}(\leftexp{M}{T(n)}) = a(n, f_M)$ for all $n \in \Z^+$.  

\begin{proposition}\label{Iadic twisting}
Let $f \in S(N, \chi; A)$ be primitive and let $\eta$ be a primitive quadratic character.  Let $M = c(\eta)N^2$.  Then $f$ has a conjugate self-twist with character $\eta$ if and only if there is an automorphism $\sigma$ of $A'$ making the following diagram commute.
\[\xymatrix@=40pt@R=30pt{
	h(M, \chi; A')\ar@{->}[r]^{\theta_{\chi, \eta}(M)}\ar@{->}[d]_{\lambda_{f_M}} &h(M, \chi; A')\ar@{->}[d]_{\lambda_{f_M}} \\
	A'\ar@{-->}[r]^{\exists \sigma} & A'}
\]
\end{proposition}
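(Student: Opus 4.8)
The plan is to unwind both sides of the claimed equivalence through the duality between Hecke algebras and modular forms, using the explicit description of $\theta_{\chi, \eta}(M)$ from Lemma \ref{explicit theta} and the comparison eigenform $f_M$ from Lemma \ref{constructing fM'}. Concretely, the diagram in the statement commutes if and only if, for every $n \in \Z^+$, one has $\lambda_{f_M}(\theta_{\chi, \eta}(M)(\leftexp{M}{T(n)})) = \sigma(\lambda_{f_M}(\leftexp{M}{T(n)}))$. By Lemma \ref{explicit theta} the left-hand side is $\eta(n)\lambda_{f_M}(\leftexp{M}{T(n)}) = \eta(n)a(n, f_M)$, while the right-hand side is $\sigma(a(n, f_M))$. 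Since $f$ is primitive, $A'$ is generated over $\Z[\chi]$ (resp.\ $\Lambda_\chi$) by $\{a(\ell, f) : \ell \nmid N\}$, and for $\ell \nmid M$ we have $a(\ell, f_M) = a(\ell, f)$; moreover, since $M = c(\eta)N^2$, the primes dividing $M$ are exactly those dividing $Nc(\eta)$, and these are finite in number. So the commutativity of the diagram is equivalent to the existence of an automorphism $\sigma$ of $A'$ with $\sigma(a(\ell, f)) = \eta(\ell)a(\ell, f)$ for all $\ell \nmid M$, and once this holds on a cofinite set of primes and $\sigma$ is required to be a ring automorphism, it holds for all but finitely many primes — which is exactly the definition of a conjugate self-twist of $f$ with character $\eta$.

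First I would record the forward direction: assume $f$ has a conjugate self-twist with character $\eta$, i.e.\ there is an automorphism $\sigma$ of $A'$ with $\sigma(a(\ell, f)) = \eta(\ell)a(\ell, f)$ for almost all $\ell$. I need to verify that this same $\sigma$ makes the diagram commute, which by the computation above reduces to checking $\sigma(a(n, f_M)) = \eta(n)a(n, f_M)$ for all $n$, not just for almost all primes. For $n$ with $(n, M/N) > 1$ both sides vanish since $f_M|\leftexp{M}{T(n)} = 0$ by Lemma \ref{constructing fM'}. For $n$ coprime to $M/N$, $a(n, f_M)$ agrees with $a(n, f)$, so I must promote the relation $\sigma(a(\ell, f)) = \eta(\ell)a(\ell, f)$ from cofinitely many primes to all primes dividing $N$ (but not $M/N$) and then to all relevant composite $n$. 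The standard device is that $a(n, f)$ for primes $\ell \mid N$ is determined by the others via the absolute irreducibility / newform theory — or, more cleanly, one notes that $\sigma$ and $\eta$-twisting both act compatibly on the whole $q$-expansion of the primitive form, so the relation extends; Lemma \ref{comm diagram for reta} gives the multiplicativity needed to pass from primes to general $n$. This bookkeeping — getting the relation to hold on the nose at the bad primes rather than merely almost everywhere — is the main technical obstacle.

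For the converse, I would run the same computation backwards: given $\sigma$ making the diagram commute, Lemma \ref{explicit theta} and the duality immediately yield $\sigma(a(n, f_M)) = \eta(n)a(n, f_M)$ for all $n$; restricting to primes $\ell \nmid M$ gives $\sigma(a(\ell, f)) = \eta(\ell)a(\ell, f)$ for all but finitely many primes, so $(\sigma, \eta)$ is a conjugate self-twist of $f$ by definition. The one subtlety to address is that the definition of conjugate self-twist requires $\sigma$ to be an automorphism of the field $K$ generated by all the $a(n, f)$ (equivalently of $Q(A')$), whereas here $\sigma$ is given as an automorphism of $A'$; since $f$ is primitive, $Q(A) = Q(A')$ and any automorphism of $A'$ extends uniquely to $Q(A')$, so this causes no difficulty. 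Throughout, I would emphasize that the choice $M = c(\eta)N^2$ is exactly what makes $M$ a multiple of $M(\chi, \eta) = \operatorname{lcm}(N, c(\chi)^2, c(\chi)c(\eta))$ — using $c(\chi) \mid N$ — so that Lemma \ref{analogue to Shimura map}, Lemma \ref{comm diagram for reta}, and Lemma \ref{explicit theta} all apply, and simultaneously makes $N^2 \mid M$ so that $f_M \in S(M, \chi; A')$ as needed to form $\lambda_{f_M}$.
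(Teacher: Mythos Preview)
Your approach is the same as the paper's: reduce the commutativity of the diagram, via Lemma~\ref{explicit theta}, to the relation $\sigma(a(\ell,f_M)) = \eta(\ell)a(\ell,f_M)$, split into $\ell \mid M/N$ (both sides zero by construction of $f_M$) and $\ell \nmid M/N$ (where $a(\ell,f_M)=a(\ell,f)$), and observe that checking on the primes $\leftexp{M}{T(\ell)}$ suffices since $\lambda_{f_M}$, $\theta_{\chi,\eta}(M)$, and $\sigma$ are all algebra maps.

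There is one genuine confusion in your forward direction. You write that you must ``promote the relation \dots to all primes dividing $N$ (but not $M/N$)''. Since $M/N = c(\eta)N$, every prime dividing $N$ already divides $M/N$, so that set is empty and is not the issue. The actual gap between ``almost all primes'' and what you need is the finite exceptional set of primes $\ell \nmid N$ where the twist relation might a priori fail. The paper closes this gap in one line: from $\rho_f^\sigma \cong \rho_f \otimes \eta$ (which holds over $A'$ by absolute irreducibility, i.e.\ the Carayol--Serre lemma) and the fact that $\rho_f$ is unramified outside $N$, one gets $\sigma(a(\ell,f)) = \eta(\ell)a(\ell,f)$ for \emph{every} $\ell \nmid N$, hence in particular for every $\ell \nmid M/N$. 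Your appeals to ``newform theory'' and ``compatibility on the whole $q$-expansion'' are gesturing at this, but the Galois-representation argument is the clean mechanism; Lemma~\ref{comm diagram for reta} is not what provides the passage to composite $n$---that is automatic from the fact that both paths around the square are $A'$-algebra homomorphisms.
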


\begin{proof}
Let $f \in S(N, \chi; A)$ be an eigenform.

First suppose that we are given the above diagram for some $\sigma \in \Aut A'$.  Let $\ell$ be a prime not dividing $M$.  Then from the diagram and the definition of $f_M$ we have $\sigma(a(\ell, f)) = \lambda_{f_M} \circ \theta_{\chi, \eta}(M)(\leftexp{M}{T(\ell)})$.  From the description of $\theta_{\chi, \eta}(M)$ in Lemma \ref{explicit theta} and the fact that $\eta$ takes values in $A'$ and $\lambda_{f_{M}}$ is an $A'$-algebra homomorphism, we see that 
\[
\sigma(a(\ell, f)) = \eta(\ell)\lambda_{f_M}(\leftexp{M}{T(\ell)}) = \eta(\ell)a(\ell, f).
\]
Thus $\sigma$ is a conjugate self-twist of $f$ with character $\eta$.

Conversely assume that there is a conjugate self-twist $\sigma$ of $f$ with character $\eta$.  Then we have that $\rho_f^\sigma \cong \rho_f \otimes \eta$.  Since $\rho_f$ is unramified away from $N$ it follows that the only primes $\ell$ for which
\[
\sigma(a(\ell, f)) \neq \eta(\ell)a(\ell, f)
\]
are those dividing $N$.  We need only check that the diagram commutes for $\leftexp{M}{T(\ell)}$ for all primes $\ell$.  If $\ell | M/N$ then both compositions are zero.  If $\ell \nmid M/N = c(\eta)N$ then using the definition of $\theta_{\chi, \eta}(M)$ and $\lambda_{f_M}$ we see that
\[
\sigma \circ \lambda_{f_M}\left(\leftexp{M}{T(\ell)}\right) = \lambda_{f_M} \circ \theta_{\chi, \eta}(M)\left(\leftexp{M}{T(\ell)}\right),
\]
as desired.
\end{proof}

As the previous proposition shows, we will want  $\eta$ to be a twist character of $F$ or one of its specializations.  We had to impose the condition that $\eta$ be quadratic.  By Lemma \ref{powers of chi}, this can be achieved by assuming that the Nebentypus $\chi$ is quadratic.  We now show that in fact, for applications to fullness we need only assume that the order of $\chi$ is not divisible by four.

Note that the ring $\I_0$ depends on $F$.  However, let $\psi$ be a character and $M$ a positive integer multiple of $M(\chi, \psi)$.  Then $\R_{\chi, \psi}(M)(F)$ has the same group of conjugate self-twists as that of $F$, and thus also the same fixed ring $\I_0$.  Indeed, if $\sigma$ is a conjugate self-twist of $F$ with character $\eta$, then a straightforward calculation shows that $\psi^\sigma\eta\psi^{-1}$ is the twist character of $\sigma$ on $\R_{\chi, \psi}(M)(F)$.  

\begin{proposition}\label{preserving big image}
There is a Dirichlet character $\psi$ with values in $\Z[\chi]$ such that, if $M$ is any positive integer multiple of $M(\chi, \psi)$, then the Nebentypus of $\R_{\chi, \psi}(M)(F)$ has order a power of $2$.  Furthermore, $\rho_F$ is $\I_0$-full if and only if $\rho_{\psi F}$ is $\I_0$-full.  
\end{proposition}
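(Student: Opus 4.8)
The plan is to handle the two assertions separately.

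\emph{Choice of $\psi$.} First I would write the order of $\chi$ as $2^a m$ with $m$ odd and factor $\chi=\chi_{(2)}\chi_{(m)}$ into its $2$-primary part $\chi_{(2)}$ (of order $2^a$) and its odd part $\chi_{(m)}$ (of order $m$); both are powers of $\chi$. Since $\gcd(2,m)=1$ there is $t\in\Z$ with $2t\equiv-1\pmod m$, and I would take $\psi:=\chi_{(m)}^{\,t}$. Then $\psi$ is a power of $\chi$, hence has values in $\Z[\chi]$, and $\psi^2\chi=\chi_{(m)}^{\,2t}\chi=\chi_{(m)}^{-1}\chi=\chi_{(2)}$, whose order $2^a$ is a power of $2$. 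By the remark immediately preceding the statement, for any positive multiple $M$ of $M(\chi,\psi)$ the family $\psi F:=\R_{\chi,\psi}(M)(F)$ has Nebentypus $\psi^2\chi=\chi_{(2)}$, independently of $M$; that proves the first claim.

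\emph{Invariance of $\I_0$ and the twist.} The same paragraph preceding the statement shows that $\psi F$ has exactly the same group $\Gamma$ of conjugate self-twists as $F$ (each $\sigma\in\Gamma$ acts on $\psi F$ with twist character $\psi^\sigma\eta_\sigma\psi^{-1}$ in place of $\eta_\sigma$, but $\sigma$ itself is unchanged); and since $\psi(\ell)^{\pm1}\in\Z_p[\chi]\subseteq\Lambda_\chi$, the order $\Lambda_\chi[\psi(\ell)a(\ell,F):\ell\nmid N]$ attached to $\psi F$ equals $\I'$. Hence $\I_0=(\I')^\Gamma$ is literally the same ring for both families, so the assertion makes sense. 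From $a(\ell,\psi F)=\psi(\ell)a(\ell,F)$ for $\ell\nmid N$, together with the absolute irreducibility of $\brho_F$, Chebotarev, and the Carayol--Serre lemma, I get $\rho_{\psi F}\cong\rho_F\otimes\psi$ over $\I'$. Because $\psi^{-1}$ is again a finite-order character with values in $\Z_p[\chi]$ and $(\rho_F\otimes\psi)\otimes\psi^{-1}\cong\rho_F$, it suffices to prove one implication.

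\emph{One implication.} So I would assume $\rho_F$ is $\I_0$-full; replacing $\rho_F$ by a suitable conjugate I may arrange $\Gamma_{\I_0}(\bb)\subseteq\im\rho_F$ for some nonzero ideal $\bb$. Put $G_1:=\ker\psi$, an open subgroup of $G_\Q$ of finite index. On $G_1$ the representations $\rho_F$ and $\rho_F\otimes\psi$ coincide, so $(\rho_F\otimes\psi)(G_1)=\rho_F(G_1)$, which has finite index in $\im\rho_F$; therefore $N:=\rho_F(G_1)\cap\Gamma_{\I_0}(\bb)$ has finite index in $\Gamma_{\I_0}(\bb)$. Its normal core $N_0$ in $\Gamma_{\I_0}(\bb)$ is then normal of finite index in $\Gamma_{\I_0}(\bb)$, and since $\Gamma_{\I_0}(\bb)\trianglelefteq\SL_2(\I_0)$ the subgroup $N_0$ is subnormal in $\SL_2(\I_0)$; by Corollary~1 of \cite{T} (using $|\F|\neq3$, exactly as in Proposition~\ref{transferring congruence subgroup}) it contains $\Gamma_{\I_0}(\bb')$ for some nonzero ideal $\bb'$. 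Hence $\im(\rho_F\otimes\psi)\supseteq\rho_F(G_1)\supseteq N_0\supseteq\Gamma_{\I_0}(\bb')$, i.e. $\rho_{\psi F}\cong\rho_F\otimes\psi$ is $\I_0$-full. (One can avoid \cite{T} here by observing that $\Gamma_{\I_0}(\bb)$ may be taken to lie in $\ker(\SL_2(\I_0)\to\SL_2(\F))$ and is a topologically finitely generated pro-$p$ group in which the $\Gamma_{\I_0}(\m_0^{\,n})$ form a neighbourhood basis of the identity, so any finite-index subgroup already contains some $\Gamma_{\I_0}(\m_0^{\,n})$ with $\m_0^{\,n}\neq0$.) The step I expect to be the main obstacle is this last one: keeping careful track of how $\I_0$-fullness behaves under restriction to the open subgroup $G_1$, that is, checking that passing to a finite-index subgroup of a congruence subgroup still leaves a congruence subgroup inside. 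Everything else — the construction of $\psi$, the invariance of $\I_0$, and the isomorphism $\rho_{\psi F}\cong\rho_F\otimes\psi$ — is routine given the earlier results.
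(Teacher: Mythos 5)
Your proposal is correct and follows essentially the same route as the paper: construct a power $\psi$ of $\chi$ so that $\psi^2\chi$ is $2$-primary, note that $\rho_{\psi F}\cong\rho_F\otimes\psi$ and that the two representations agree on the open normal subgroup $\ker\psi$, and conclude. The only difference is that the paper simply asserts ``$\rho_{\psi F}|_{\ker\psi}$ is also $\I_0$-full'' and stops, whereas you supply the justification (a finite-index subgroup of a congruence subgroup is subnormal in $\SL_2(\I_0)$, hence by \cite{T} contains a congruence subgroup), filling in a step the paper leaves implicit.
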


\begin{proof}
It is well known (see, for example, Proposition 3.64 \cite{S}) that the Nebentypus of $\psi F$ is $\psi^2\chi$.  Write $\chi = \chi_2\xi$, where $\chi_2$ is a character whose order is a power of $2$ and $\xi$ is an odd order character.  Clearly $\xi$ takes values in $\Z[\chi]$. Let $2n - 1$ denote the order of $\xi$.  Then $\xi^{2n} = \xi$, so taking $\psi = \xi^{-n}$ we see that $\psi^2\chi = \chi_2\xi^{-2n}\xi = \chi_2$ is a character whose order is a power of $2$.

Suppose that $\rho_{\psi F}$ is $\I_0$-full.  Since $\psi$ is a finite order character, $\ker \psi$ is an open subgroup of $G_\Q$.  Thus $\rho_{\psi F}|_{\ker \psi}$ is also $\I_0$-full.  Note that $\rho_{\psi F}|_{\ker \psi} = \rho_{F}|_{\ker \psi}$.  Thus $\rho_F$ is $\I_0$-full.
\end{proof}

Using Proposition \ref{preserving big image} we may assume that the Nebentypus $\chi$ of $F$ has order a power of $2$.  If we want to use the automorphic lifting techniques developed in the previous section, we must further assume that 
\begin{equation}\label{quadratic Nebentypus}
\chi \text{ has order two.}
\end{equation}
This assumption will be in place for the rest of the appendix.  

\subsection{Reinterpreting $\I$-adic conjugate self-twists}
Fix an arithmetic prime $\Qq$ of $\I_0$ lying over $P_{k, \varepsilon}$.  The total ring of fractions $Q(\I'/\Qq\I')$ of $\I'/\Qq\I'$ breaks up as a finite product of fields indexed by the primes of $\I'$ lying over $\Qq$.  Namely 
\[
Q(\I'/\Qq\I') \cong \prod_{\Pp' | \Qq} Q(\I'/\Pp').  
\]
(This relies two facts.  First $\Qq$ is an arithmetic prime and hence unramified in $\I'$.  Secondly the cokernel of $\I'/\Qq\I' \hookrightarrow \prod_{\Pp' | \Qq} \I'/\Pp'$ is finite since $\dim \I' = 2$.)  Let $\Gamma_{\Qq}$ be the group of all automorphisms $\sigma$ of $Q(\I'/\Qq\I')$ for which there is a Dirichlet character $\eta_\sigma$ such that 
\[
\sigma(a(\ell, F) + \Qq\I') = \eta_\sigma(\ell)a(\ell, F) + \Qq\I'
\] 
for all but finitely many primes $\ell$.  Since $\Qq \subseteq \I_0$ and $\I_0$ is fixed by $\Gamma$, elements of $\Gamma$ preserve $\Qq\I'$.  Hence there is a natural group homomorphism
\[
\Psi : \Gamma \to \Gamma_{\Qq}
\]
by letting $\sigma \in \Gamma$ act on $Q(\I'/\Qq\I')$ via $\sigma(a(\ell, F) + \Qq\I') := \sigma(a(\ell, F)) + \Qq\I'$.  While we expect that $\Psi$ is an isomorphism in general, our techniques only allow us to lift certain elements of $\Gamma_{\Qq}$ to $\Gamma$.  Let
\begin{align*}
\Gamma^{2, p} &= \{\sigma \in \Gamma : \eta_\sigma^2 = 1 \text{ and } p \nmid c(\eta_\sigma)\}\\
\Gamma_{\Qq}^{2, p} &= \{\sigma \in \Gamma_{\Qq} : \eta_\sigma^2 = 1 \text{ and } p \nmid c(\eta_\sigma)\}.
\end{align*}
It is easy to check that $\Gamma^{2, p}$ is a subgroup of $\Gamma$ and $\Gamma_{\Qq}^{2, p}$ is a subgroup of $\Gamma_{\Qq}$.  Note that under assumption \eqref{quadratic Nebentypus} the condition $\eta_\sigma^2 = 1$ is automatic.  We shall show that $\Psi|_{\Gamma^{2, p}} : \Gamma^{2, p} \to \Gamma_{\Qq}^{2, p}$ is an isomorphism.  

\begin{proposition}\label{twist isomorphism}
The homomorphism $\Psi : \Gamma \to \Gamma_{\Qq}$ is injective.  Furthermore, $\Psi(\Gamma^{2, p}) = \Gamma_{\Qq}^{2, p}$.
\end{proposition}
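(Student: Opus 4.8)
The plan is to handle the two statements separately; injectivity of $\Psi$ and the inclusion $\Psi(\Gamma^{2,p})\subseteq\Gamma_\Qq^{2,p}$ are routine, and essentially all of the content is in the reverse inclusion.

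For injectivity, suppose $\sigma\in\Gamma$ lies in $\ker\Psi$, so $\sigma(a(\ell,F))\equiv a(\ell,F)\pmod{\Qq\I'}$ for almost all $\ell$, while $\sigma(a(\ell,F))=\eta_\sigma(\ell)a(\ell,F)$ for every $\ell\nmid N$. Reducing modulo a prime $\Pp'\mid\Qq$ of $\I'$ gives $\eta_\sigma(\ell)a(\ell,f_\Pp)=a(\ell,f_\Pp)$ in $Q(\I'/\Pp')$ for almost all $\ell$; since $\Qq$ is arithmetic, $f_\Pp$ is a classical eigenform, and since $F$ (hence $f_\Pp$) is not CM, this forces $\eta_\sigma=1$, whence $\sigma$ fixes $\I'=\Lambda_\chi[\{a(\ell,F):\ell\nmid N\}]$ and $\sigma=\mathrm{id}$. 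The inclusion $\Psi(\Gamma^{2,p})\subseteq\Gamma_\Qq^{2,p}$ is immediate: reducing the twisting identity modulo $\Qq\I'$ shows $\Psi(\sigma)$ has the same character $\eta_\sigma$, so the conditions $\eta_\sigma^2=1$ and $p\nmid c(\eta_\sigma)$ persist.

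For the reverse inclusion $\Gamma_\Qq^{2,p}\subseteq\Psi(\Gamma^{2,p})$, I would begin with $\tau\in\Gamma_\Qq^{2,p}$ of primitive quadratic character $\eta=\eta_\tau$ with $p\nmid c(\eta)$; by Lemma~\ref{powers of chi}, $\eta$ takes values in $\Z[\chi]\subseteq\I'$. Set $M=c(\eta)N^2$ and $\theta=\theta_{\chi,\eta}(M)$. By Lemma~\ref{explicit theta} and $\eta^2=1$, $\theta$ is an involution of $h(M,\chi;\I')$, in particular an automorphism. By Proposition~\ref{Iadic twisting} applied to the primitive form $F$, it is enough to produce an automorphism $\sigma$ of $\I'$ with $\lambda_{F_M}\circ\theta=\sigma\circ\lambda_{F_M}$; since $\theta$ is an automorphism and $\lambda_{F_M}$ surjects onto $\I'$ (primitivity of $F$, as in the proof of Theorem~\ref{deformation lifting}), this is equivalent to showing that $\ker\lambda_{F_M}$ is stable under $\theta$ — equivalently, that the Hida family $F$ and its twist $\eta F$ (ordinary by Lemma~\ref{analogue to Shimura map}, since $p\nmid c(\eta)$) are Galois-conjugate, i.e., their systems of Hecke eigenvalues agree after applying some automorphism of $\I'$.

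The crux, and the step I expect to be hardest, is to prove this Galois-conjugacy. Reducing modulo $\Qq\I'$, the hypothesis $\tau\in\Gamma_\Qq$ gives $\lambda_{F_M}\circ\theta\equiv\tau\circ\lambda_{F_M}\pmod{\Qq\I'}$, so modulo $\Qq$ the eigensystem of $\eta F$ is $\tau$ applied to that of $F$; concretely, for each prime $\Pp'\mid\Qq$ the twist $\eta f_{\Pp'}$ is a conjugate of a classical specialization of $F$. I would then upgrade this to the global statement by invoking the uniqueness of the Hida family passing through a $p$-distinguished classical ordinary eigenform — available here because $H_0$-regularity makes every $f_\Pp$ $p$-distinguished — to conclude that the components of the ordinary Hecke algebra carrying $F$ and $\eta F$ must coincide; the delicate point is keeping track of Galois-conjugacy of irreducible components, since $\I'$ need not be geometrically irreducible over $\Lambda_\chi$. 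Once $\ker\lambda_{F_M}$ is known to be $\theta$-stable, $\theta$ descends to an automorphism $\sigma$ of $h(M,\chi;\I')/\ker\lambda_{F_M}\cong\I'$; evaluating on $\leftexp{M}{T(\ell)}$ via Lemma~\ref{explicit theta} gives $\sigma(a(\ell,F))=\eta(\ell)a(\ell,F)$ for all $\ell\nmid M/N$, so $\sigma\in\Gamma^{2,p}$, and since $\Psi(\sigma)$ and $\tau$ act by the same formula on the generators $a(\ell,F)+\Qq\I'$ of $\I'/\Qq\I'$, we obtain $\Psi(\sigma)=\tau$, which finishes the argument.
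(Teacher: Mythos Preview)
Your proof follows the same strategy as the paper's: injectivity via non-CM, and for surjectivity onto $\Gamma_\Qq^{2,p}$, setting $M=c(\eta)N^2$, using the involution $\theta=\theta_{\chi,\eta}(M)$ on $\hh^{\ord}(M,\chi;\I')$, and invoking Proposition~\ref{Iadic twisting} to reduce to showing $\theta$ preserves $\ker\lambda_{F_M}$, equivalently the irreducible component $\Spec\I'$.

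The one place you leave incomplete is exactly the step you flag as ``hardest,'' and the paper resolves it more directly than your sketch suggests.  You propose to use uniqueness of the Hida family through a $p$-distinguished classical form and worry about tracking Galois-conjugacy of components.  The paper avoids both issues by arguing geometrically over $\I'$-coefficients: the commutative square you write down (your congruence $\lambda_{F_M}\circ\theta\equiv\tau\circ\lambda_{F_M}\pmod{\Qq\I'}$) shows that $\theta^*$ carries the set of closed points of $\Spec\I'$ lying over $\Qq$ into itself; hence the two irreducible components $\Spec\I'$ and $\theta^*(\Spec\I')$ of $\Spec\hh^{\ord}(M,\chi;\I')$ meet at an arithmetic point.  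Since $\Spec\hh^{\ord}(M,\chi;\I')$ is \'etale over $\Spec\Lambda$ at arithmetic points (Proposition~3.78 of \cite{HMI}), distinct components cannot meet there, so $\theta^*(\Spec\I')=\Spec\I'$.  This is precisely the ``uniqueness of Hida families'' you invoke, but stated so that no conjugacy bookkeeping is required---working over $\I'$ turns the conjugate specializations into literal shared points.  Note also that the \'etaleness input is a general structural fact about the ordinary Hecke algebra and does not need $H_0$-regularity or $p$-distinguishedness, so that hypothesis can be dropped from your argument.
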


\begin{proof}
Suppose $\sigma \in \Gamma$ such that $\Psi(\sigma)$ is trivial.  Thus for almost all primes $\ell$ we have
\[
a(\ell, F) + \Qq\I' = \sigma(a(\ell, F)) + \Qq\I' = \eta_\sigma(\ell)a(\ell, F) + \Qq\I'.
\]
Recall that $\Qq\I' = \cap_{\Pp' | \Qq} \Pp'$, so for all primes $\Pp$ of $\I$ lying over $\Qq$ and almost all rational primes $\ell$ we have
\[
a(\ell, f_\Pp) = \eta_\sigma(\ell)a(\ell, f_\Pp).
\]
Since $f_\Pp$ is a non-CM form it follows that $\eta_\sigma$ must be the trivial character.  Therefore $\sigma = 1$ and $\Psi$ is injective.

Now we show that we can lift elements of $\Gamma_{\Qq}^{2, p}$.  Let $\sigma \in \Gamma_{\Qq}^{2, p}$, so for almost all primes $\ell$,
\[
\sigma(a(\ell, F) + \Qq\I') = \eta_\sigma(\ell)a(\ell, F) + \Qq\I'.
\]  
Let $M = c(\eta_\sigma)N^2$ and consider the map $\theta_{\chi, \eta_\sigma}(M)$ defined before Lemma \ref{explicit theta} with $A = \I$.  Since $p \nmid c(\eta_\sigma)$ we know by Lemma \ref{analogue to Shimura map} that $\theta_{\chi, \eta_\sigma}(M)$ restricts to an endomorphism: 
\begin{align*}
\theta_{\chi, \eta_\sigma}(M) : \hh^{\ord}(M, \chi; \I) & \to \hh^{\ord}(M, \chi; \I)\\
\leftexp{M}{T(n)} &\mapsto \eta_\sigma(n)\leftexp{M}{T(n)}.
\end{align*}
Note that $\theta_{\chi, \eta_\sigma}(M)$ restricts to an endomorphism of $\hh^{\ord}(M, \chi; \I')$.  By Proposition \ref{Iadic twisting} it suffices to show that $\theta_{\chi, \eta_\sigma}(M)$ preserves the $\I'$-component of the Hecke algebra $\hh^{\ord}(M, \chi; \I')$.  The induced map $\theta_{\chi, \eta_\sigma}(M)^*$ on spectra must send irreducible components to irreducible components.  Furthermore since $\sigma \in \Gamma_{\Qq}$ we have the following commutative diagram:
\[\xymatrix@=40pt@R=30pt{
	\hh^{\ord}(M, \chi; \I')\ar@{->}[r]^{\theta_{\chi, \eta_\sigma}(M)}\ar@{->}[d]_{\lambda_{F_M} \bmod \Qq\I'} & \hh^{\ord}(M, \chi; \I')\ar@{->}[d]^{\lambda_{F_M} \bmod \Qq\I'} \\
	\I'/\Qq\I'\ar@{->}[r]^\sigma & \I'/\Qq\I'}
\]
That is, $\theta_{\chi, \eta_\sigma}(M)^*$ maps set of points of $\Spec \I'$ lying over $\Qq$ to itself.  Hence the two irreducible components $\Spec \I'$ and $\theta_{\chi, \eta_\sigma}(M)^*(\Spec \I')$ of $\Spec \hh^{\ord}(M, \chi; \I')$ have nonempty intersection. (Namely, they intersect in some points of $\I'$ lying over $\Qq$.)  Since $\Spec \hh^{\ord}(M, \chi; \I')$ is \'etale over $\Spec \Lambda$ at arithmetic points (see Proposition 3.78 \cite{HMI}) and $\Qq$ is arithmetic, we must have $\theta_{\chi, \eta_\sigma}(M)^*(\Spec \I') = \Spec \I'$.  That is, there is an automorphism $\tilde{\sigma} : \I' \to \I'$ such that the following diagram commutes:
\[\xymatrix@=40pt@R=30pt{
	\hh^{\ord}(M, \chi; \I')\ar@{->}[r]^{\theta_{\chi, \eta_\sigma}(M)}\ar@{->}[d]_{\lambda_{F_M}} & \hh^{\ord}(M, \chi; \I')\ar@{->}[d]_{\lambda_{F_M}} \\
	\I'\ar@{->}[r]^{\tilde{\sigma}}\ar@{->}[d] & \I'\ar@{->}[d]\\
	\I'/\Qq\I'\ar@{->}[r]^\sigma & \I'/\Qq\I'}
\]
By Lemma \ref{explicit theta} and the definition of $\lambda_{F_M}$ we see that $\tilde{\sigma} \in \Gamma$.  As the lower square of the above diagram commutes, it follows that $\Psi(\tilde{\sigma}) = \sigma$, as desired.
\end{proof}

\begin{remark}
With notation as in the above proof, if $p | c(\eta_\sigma)$ then the irreducible component $\theta_{\chi, \eta_\sigma}(M)^*(\Spec \I')$ is no longer ordinary.  Indeed, it corresponds to $\R_{\chi, \eta_\sigma}(M)(F)$ which has infinite slope.  Thus we would need an \'etaleness result for an eigencurve that includes the infinite slope modular forms in order for the above proof to allow us to lift $\sigma$ to $\Gamma$.    
\end{remark}

\subsection{Identifying $\I$-adic and classical decomposition groups}
We briefly recall the notation introduced in section \ref{proof of main thm}.  We have a fixed embedding $\iota_p : \overline{\Q} \hookrightarrow \overline{\Q}_p$.  Let $\Pp_0 \in \Spec(\I)(\overline{\Q}_p)$ be an arithmetic prime of $\I$, and let $\Qq$ be the prime of $\I_0$ lying under $\Pp_0$.  Let $D(\Pp_0' | \Qq) \subseteq \Gamma$ be the decomposition group of $\Pp_0'$ over $\Qq$.  Let
\[
K_{\Pp_0} = \Q(\{\iota_p^{-1}(a(n, f_{\Pp_0})) : n \in \Z^+\}) \subset \overline{\Q},
\]
and let $\Gamma_{\Pp_0}$ be the group of all conjugate self-twists of the classical modular form $f_{\Pp_0}$.  As in the previous section, define
\[
\Gamma_{\Pp_0}^{2, p} = \{\sigma \in \Gamma_{\Pp_0} : \eta_\sigma^2 = 1 \text{ and } p \nmid c(\eta_\sigma)\}.
\]  
Set $E_{\Pp_0} = K_{\Pp_0}^{\Gamma_{\Pp_0}}$.  Let $\q_{\Pp_0}$ be the prime of $K_{\Pp_0}$ corresponding to the embedding $\iota_p|_{K_{\Pp_0}}$, and set $\p_{\Pp_0} = \q_{\Pp_0} | \cap E_{\Pp_0}$.  Let $D(\q_{\Pp_0} | \p_{\Pp_0}) \subseteq \Gamma_\Pp$ be the decomposition group of $\q_{\Pp_0}$ over $\p_{\Pp_0}$.  Thus we have that the completion $K_{\Pp_0, \q_{\Pp_0}}$ of $K_{\Pp_0}$ at $\q_{\Pp_0}$ is equal to $Q(\I/\Pp_0)$ and $\Gal(K_{\Pp_0, \q_{\Pp_0}}/E_{\Pp_0, \p_{\Pp_0}}) = D(\q_{\Pp_0} | \p_{\Pp_0})$.  Thus we may view $D(\q_{\Pp_0} | \p_{\Pp_0})$ as the set of all automorphisms of $K_{\Pp_0, \q_{\Pp_0}}$ that are conjugate self-twists of $f_{\Pp_0}$.

Let
\[
\Phi : D(\Pp_0' | \Qq) \to D(\q_{\Pp_0} | \p_{\Pp_0})
\]
be the natural homomorphism defined in section \ref{proof of main thm}.  We saw that $\Phi$ is an isomorphism in section \ref{proof of main thm}.  In this section we give a second proof that 
\[
D(\q_{\Pp_0} | \p_{\Pp_0}) \cap \Gamma_{\Pp_0}^{2, p} \subseteq \im \Phi.
\]

\begin{theorem}\label{identify decomp gps}
We have $D(\q_{\Pp_0} | \p_{\Pp_0}) \cap \Gamma_{\Pp_0}^{2, p} \subseteq \im \Phi$.
\end{theorem}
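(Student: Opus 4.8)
The plan is to reuse the mechanism of the proof of Proposition~\ref{twist isomorphism}, but with the fixed arithmetic prime $\Pp_0$ playing the role that $\Qq$ played there. Let $\sigma \in D(\q_{\Pp_0}|\p_{\Pp_0}) \cap \Gamma_{\Pp_0}^{2,p}$ and set $\eta := \eta_\sigma$, so $\eta$ is a primitive quadratic character with $p \nmid c(\eta)$. Put $M = c(\eta)N^2$; since $c(\chi)\mid N$ this is a multiple of $M(\chi,\eta)$, and since $\eta^2 = 1$ we have $\eta^2\chi = \chi$, so $\theta := \theta_{\chi,\eta}(M)$ is an $\I$-algebra endomorphism of the big Hecke algebra $h(M,\chi;\I)$ with $\theta(\leftexp{M}{T(n)}) = \eta(n)\leftexp{M}{T(n)}$ by Lemma~\ref{explicit theta}. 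Because $p\nmid c(\eta)$, Lemma~\ref{analogue to Shimura map} shows that $\theta$ restricts to an endomorphism of $\hh^{\ord}(M,\chi;\I)$, and hence of $\hh^{\ord}(M,\chi;\I')$. Let $F_M \in \Ss(M,\chi;\I')$ be the level-$M$ eigenform attached to $F$ as in Lemma~\ref{constructing fM'} and let $\lambda_{F_M}$ be the associated algebra homomorphism, so $\Spec\I'$ is the irreducible component of $\Spec\hh^{\ord}(M,\chi;\I')$ cut out by $\ker\lambda_{F_M}$.

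The heart of the argument is to show that $\Spec\theta$ fixes this component, i.e. $\theta^*(\Spec\I') = \Spec\I'$ in the notation of Proposition~\ref{twist isomorphism}. As there, $\theta^*(\Spec\I')$ is the irreducible component with generic eigensystem $\leftexp{M}{T(\ell)}\mapsto\eta(\ell)a(\ell,F)$, and it is ordinary since $a(p,F)\in\I'^\times$ and $\eta(p)\neq 0$. I would then observe that the prime $\mathfrak{q} := \ker\big(\Pp_0\circ\lambda_{F_M}\colon \hh^{\ord}(M,\chi;\I')\to\I'/\Pp_0'\big)$ lies on \emph{both} components: it visibly contains $\ker\lambda_{F_M}$, and it also contains $\ker(\lambda_{F_M}\circ\theta)$, because reducing $\lambda_{F_M}\circ\theta$ modulo $\Pp_0$ sends $\leftexp{M}{T(\ell)}\mapsto\eta(\ell)a(\ell,f_{\Pp_0}) = \sigma(a(\ell,f_{\Pp_0}))$. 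Here is the only place the hypothesis $\sigma\in D(\q_{\Pp_0}|\p_{\Pp_0})$ enters: $\sigma$ extends to an automorphism of $Q(\I/\Pp_0) = Q(\I'/\Pp_0')$, so the reduced map equals $\sigma\circ(\Pp_0\circ\lambda_{F_M})$ and therefore has kernel exactly $\mathfrak{q}$. Since $\mathfrak{q}$ lies over the arithmetic prime of $\Lambda$ below $\Pp_0$ and $\hh^{\ord}(M,\chi;\I')$ is étale over $\Lambda$ at arithmetic points (Proposition~3.78~\cite{HMI}), $\mathfrak{q}$ lies on a unique irreducible component, forcing $\theta^*(\Spec\I') = \Spec\I'$.

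Once this is established, there is a ring automorphism $\tilde\sigma\colon\I'\to\I'$ with $\tilde\sigma\circ\lambda_{F_M} = \lambda_{F_M}\circ\theta$; evaluating on the $\leftexp{M}{T(\ell)}$ and using Lemma~\ref{explicit theta} gives $\tilde\sigma(a(\ell,F)) = \eta(\ell)a(\ell,F)$, so $\tilde\sigma\in\Gamma$ (indeed in $\Gamma^{2,p}$). Reducing $\tilde\sigma\circ\lambda_{F_M} = \lambda_{F_M}\circ\theta$ modulo $\Pp_0'$ and comparing with the computation above yields $\Pp_0\circ\tilde\sigma = \sigma\circ\Pp_0$ on $\I'$, whence $\tilde\sigma(\Pp_0') = \Pp_0'$, so $\tilde\sigma\in D(\Pp_0'|\Qq)$, and $\tilde\sigma$ induces $\sigma$ on $\I'/\Pp_0'$, i.e. $\Phi(\tilde\sigma) = \sigma$; thus $\sigma\in\im\Phi$. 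I expect the only delicate points to be bookkeeping ones — verifying that $M = c(\eta)N^2$ makes $\theta_{\chi,\eta}(M)$ simultaneously preserve ordinarity and the $\I'$-component, and that the quadratic, prime-to-$p$ conditions on $\eta$ are exactly what is needed for this — rather than anything conceptually new; the essential input, étaleness of the ordinary Hecke algebra at arithmetic points, is quoted, and the conductor restriction is precisely why the automorphic method recovers only $\Gamma_{\Pp_0}^{2,p}$ rather than all of $\Gamma_{\Pp_0}$.
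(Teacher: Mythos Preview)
Your argument is correct in substance but takes a genuinely different route from the paper's. The paper does \emph{not} rerun the \'etaleness argument at the single prime $\Pp_0'$. Instead it first \emph{spreads} $\sigma$ to an element of $\Gamma_\Qq^{2,p}$: for each prime $\Pp'$ of $\I'$ over $\Qq$ it picks $\gamma_{\Pp'}\in\Gamma$ with $\gamma_{\Pp'}(\Pp_0')=\Pp'$, transports $\sigma$ to $\bgamma_{\Pp'}^{-1}\circ\sigma\circ\bgamma_{\Pp'}$ acting on $Q(\I'/\Pp')$, checks (using that all twist characters are quadratic, hence $\pm1$ and automorphism-invariant) that the resulting automorphism of $Q(\I'/\Qq\I')=\prod_{\Pp'|\Qq}Q(\I'/\Pp')$ satisfies the twist relation with the \emph{same} character $\eta_\sigma$, and then simply invokes Proposition~\ref{twist isomorphism} as a black box. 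Your approach bypasses the spreading step and the transitivity of $\Gamma$ on primes over $\Qq$, at the cost of reproving the heart of Proposition~\ref{twist isomorphism}; the paper's approach is more modular but uses the extra input that $\Gamma$ acts transitively.

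One technical point deserves care in your write-up. Your sentence ``the reduced map equals $\sigma\circ(\Pp_0\circ\lambda_{F_M})$'' is not literally true as a statement about maps out of $\hh^{\ord}(M,\chi;\I')$: both sides agree on the Hecke operators $\leftexp{M}{T(\ell)}$, but on a scalar $a\in\I'$ the left side gives $a\bmod\Pp_0'$ while the right gives $\sigma(a\bmod\Pp_0')$, and these differ whenever $\sigma\neq 1$. What \emph{is} true, and suffices, is that the two $\Lambda_\chi$-algebra homomorphisms out of $\hh^{\ord}(M,\chi;\Lambda_\chi)$ agree (here the standing hypothesis~\eqref{quadratic Nebentypus} makes $\Lambda_\chi=\Lambda$ and $\sigma$ fixes $\Lambda/P\subset\Z_p$), so they have the same kernel; this is what pins down the irreducible component and lets \'etaleness finish the job. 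Phrase the key step at the level of $\hh^{\ord}(M,\chi;\Lambda_\chi)$ rather than its $\I'$-base-change and the argument is clean.
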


\begin{proof}
Let $\sigma \in D(\q_{\Pp_0} | \p_{\Pp_0}) \cap \Gamma_{\Pp_0}^{2, p}$.  For any prime$\Pp'$ of $\I'$ lying over $\Qq$, there is some $\gamma_{\Pp'} \in \Gamma$ such that $\gamma_{\Pp'}(\Pp_0') = \Pp'$.  Then $\gamma_{\Pp'}$ induces an automorphism $\bgamma_{\Pp'}$ of $\overline{\Q}_p$ such that $\bgamma_{\Pp'}(a(\ell, f_\Pp)) = \eta_{\gamma_{\Pp'}}(\ell)a(\ell, f_{\Pp_0})$ for almost all primes $\ell$.  Then 
\[
\bgamma_{\Pp'}^{-1} \circ \sigma \circ \bgamma_{\Pp'} \in D(\q_{\Pp'} | \p_{\Pp'}).
\]  
In fact, we can compute the action of this element explicitly.  This computation makes use of the fact that all twist characters are quadratic and hence their values are either $\pm 1$.  In particular, they are fixed by all automorphisms in question.  For almost all primes $\ell$ we have
\begin{equation}\label{computing action explicitly}
\bgamma_{\Pp'}^{-1} \circ \sigma \circ \bgamma_{\Pp'}(a(\ell, f_\Pp)) = \eta_\sigma(\ell)a(\ell, f_\Pp).
\end{equation}
This shows that the automorphism $\bgamma_{\Pp'}^{-1} \circ \sigma \circ \bgamma_{\Pp'}$ is independent of the choice of $\gamma_{\Pp'}$ sending $\Pp_0'$ to $\Pp'$.  

We can now put all of these automorphisms $\bgamma_{\Pp'}^{-1} \circ \sigma \circ \bgamma_{\Pp'}$ together to obtain an automorphism 
\[
\pi := \prod_{\Pp' | \Qq}\bgamma_{\Pp'}^{-1}\circ \sigma \circ \bgamma_{\Pp'}
\] 
of $Q(\I'/\Qq\I') \cong \prod_{\Pp' | \Qq} Q(\I'/\Pp') = \prod_{\Pp' | \Qq} K_{\Pp, \q_\Pp}$ by simply letting each $\bgamma_{\Pp'}^{-1} \circ \sigma \circ \bgamma_{\Pp'}$ act on $K_{Pp, \q_\Pp}$.  By equation \eqref{computing action explicitly} we see that $\pi$ is in fact an element of $\Gamma_{\Qq}^{2, p}$.  Thus by Proposition \ref{twist isomorphism} it follows that $\pi$, and hence $\sigma$, can be lifted to an element $\tilde{\sigma} \in \Gamma$.  It is clear from the definition of the action of $\sigma$ on $Q(\I'/\Qq\I')$ that $\tilde{\sigma} \in D(\Pp_0' | \Qq)$ and $\Phi(\tilde{\sigma}) = \sigma$.  
\end{proof}

\begin{remark}
Suppose that $\Qq$ lies over $P_{k, 1}$ with $k$ divisible by $p - 1$.  Then $f_\Pp \in S_k(\Gamma_0(N), \chi)$.  Under assumption \eqref{quadratic Nebentypus} it follows from Lemma \ref{twist characters and Nebentypus} that all twist characters of $f_\Pp$ are quadratic.
\end{remark}

\end{document}